\renewcommand*{\HyperDestNameFilter}[1]{\jobname-#1} 
\numberwithin{equation}{section}
\newtheorem{teo}{Theorem}[section]
\newtheorem{thm}[teo]{Theorem}
\newtheorem{prop}[teo]{Proposition}
\newtheorem{lemma}[teo]{Lemma}
\newtheorem{cor}[teo]{Corollary}
\newtheorem{def-lemma}[teo]{Definition-Lemma}
\newtheorem{notation}[teo]{Notation}
\theoremstyle{remark}
\newtheorem{rmk}[teo]{Remark}
\theoremstyle{remark}
\newtheorem{defn}[teo]{Definition}
\newtheoremstyle{named}{}{}{\itshape}{}{\bfseries}{.}{.5em}{\thmnote{#3 }#1}
\theoremstyle{named}
\newcommand{\neutralize}[1]{\expandafter\let\csname c@#1\endcsname\count@}
\numberwithin{equation}{section}
  \newcommand{\C}{\mathbb{C}}
  \newcommand{\G}{\mathbb{G}}
  \newcommand{\N}{\mathbb{N}}
  \newcommand{\Q}{\mathbb{Q}}
  \newcommand{\R}{\mathbb{R}}  
  \renewcommand{\S}{\mathbb{S}}
  \newcommand{\V}{\mathbb{V}}
  \newcommand{\Z}{\mathbb{Z}}
  \renewcommand{\cong}{\simeq}
  \renewcommand{\bar}{\overline}
  \renewcommand{\tilde}{\widetilde}
  \providecommand{\frac}[1]{\operatorname{Frac}(#1)}
  \renewcommand{\hom}{\operatorname{Hom}}
  \newcommand{\MT}{\operatorname{MT}}
  \newcommand{\GL}{\operatorname{GL}}
  \renewcommand{\lim}{\operatorname{lim}}
  \newcommand{\lie}{\operatorname{Lie}}
  \newcommand{\Gr}{\operatorname{Gr}}
  \newcommand{\Zar}{\mathrm{Zar}}
  \newcommand{\Ad}{\textnormal{Ad}}
  \newcommand{\an}{\textnormal{an}}
  \newcommand{\ol}{\overline}
\newcommand{\ZZ}{\mathbb{Z}}
  \newcommand{\QQ}{\mathbb{Q}}
  \newcommand{\CC}{\mathbb{C}}
\newcommand{\cD}{\mathcal{D}}
\newcommand{\cF}{\mathcal{F}}
\newcommand{\cM}{\mathcal{M}}
\newcommand{\cO}{\mathcal{O}}
\newcommand{\cQ}{\mathcal{Q}}
\newcommand{\cR}{\mathcal{R}}
\newcommand{\cV}{\mathcal{V}}
\newcommand{\cW}{\mathcal{W}}
\newcommand{\cX}{\mathcal{X}}
\newcommand{\cZ}{\mathcal{Z}}
\newcommand{\pullbackcorner}[1][dr]{\save*!/#1-1.7pc/#1:(-1.5,1.5)@^{|-}\restore}
\newcommand\supervisor[1]{\def\@supervisor{#1}}
\newcounter{elno}
\renewcommand{\cong}{\simeq}
\begin{document}
\title[]{The Ax--Schanuel conjecture for variations of mixed Hodge structures}
\author{Ziyang Gao, Bruno Klingler}

\address{Institute of Algebra, Number Theory and Discrete Mathematics; Leibniz University Hannover; Welfengarten 1, 30167 Hannover, Germany}
\email{ziyang.gao@math.uni-hannover.de}

\address{Dept. of Mathematics, Humboldt Universit\"{a}t, Berlin, Germany}
\email{bruno.klingler@math.hu-berlin.de}


\maketitle
\begin{abstract}
We prove in this paper the Ax--Schanuel conjecture for all admissible variations of mixed Hodge structures.
\end{abstract}

\tableofcontents

\section{Introduction}
In this paper we prove the Ax--Schanuel conjecture for all admissible, graded-polarized, integral variation of mixed Hodge structures over a smooth complex
quasi-projective variety $S$.

Let $(\V_\ZZ,W_\bullet,\cF^\bullet) \rightarrow
S^\an$ be an admissible, graded-polarized, integral
variation of mixed Hodge structures on the complex manifold
$S^\an$ associated to $S$. Let $[\Phi]\colon S^\an \rightarrow \Gamma\backslash
\cM$ be the associated complex analytic period map, where $\cM$
denotes the period domain classifying graded polarized mixed Hodge
structures of the relevant type and $\Gamma$ is an
arithmetic subgroup in the group of automorphisms of $\cM$. The
classifying space $\cM$
admits a natural realization as a real semi-algebraic subset, open in the usual
topology, of a complex algebraic variety $\cM^\vee$. The Ax--Schanuel
conjecture is a functional transcendence statement 
comparing the algebraic structure on $\cM^\vee$ 
and the algebraic structure on $S$, via $[\Phi]$ and $u \colon \cM \rightarrow \Gamma\backslash\cM$ . Consider the commutative diagram
in the category of complex analytic spaces
\[
\xymatrix{
S^\an \times \cM^\vee & S^\an \times \cM \ar@{_(->}[l] & S^\an
\times_{\Gamma\backslash\cM}\cM \ar@{_(->}[l] \ar[r]^>>>>>>{p_{\cM}} \ar[d]_-{p_S} \pullbackcorner & \cM \ar[d]^{u} \\
& & S^\an \ar[r]_{[\Phi]} & \Gamma \backslash \cM
}.
\]
We prove the following result, conjectured in \cite[Conj.~7.5]{Klingler-conj-} (we refer to \Cref{DefnMTDomain} for the
definition of weak Mumford--Tate subdomains of $\cM$):
\begin{thm}\label{ThmAS}
Let $\mathscr{Z}$ be a complex analytic irreducible subset of
$S^\an
\times_{\Gamma\backslash\cM}\cM$. Then
\begin{equation} \label{inequality}
\dim \mathscr{Z}^\Zar - \dim \mathscr{Z} \ge \dim p_{\cM}(\mathscr{Z})^{\mathrm{ws}},
\end{equation}
where $\mathscr{Z}^\Zar$ denotes the Zariski closure of $\mathscr{Z}$
in $S \times \cM^\vee$, and $p_{\cM}(\mathscr{Z})^{\mathrm{ws}}$ is
the smallest weak Mumford--Tate subdomain of $\cM$ containing $p_{\cM}(\mathscr{Z})$.
\end{thm}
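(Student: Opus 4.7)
The plan is to extend the Bakker--Tsimerman proof of the Ax--Schanuel theorem for pure VHS to the admissible mixed setting, arguing by contradiction. Assume $\mathscr{Z}\subset\Delta$ is an irreducible analytic set violating \eqref{inequality}; after passing to a smooth irreducible component, replacing $S$ by the Zariski closure of $p_S(\mathscr{Z})$, and $\cM$ by the smallest weak Mumford--Tate subdomain $Y=p_{\cM}(\mathscr{Z})^{\mathrm{ws}}$, I may assume $p_{\cM}(\mathscr{Z})$ is Zariski-dense in $Y$ and that $\mathscr{Z}$ is maximal among analytic subsets of $\mathscr{Z}^\Zar\cap\Delta$ with these properties. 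The aim is then to derive a contradiction from the putative inequality $\dim\mathscr{Z}^\Zar-\dim\mathscr{Z}<\dim Y$ by producing more complex-analytic freedom in $\mathscr{Z}$ than the algebraic structure of $\mathscr{Z}^\Zar$ can sustain.

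\textbf{Definability and o-minimal Chow.} The first input is that the admissible mixed period map $[\Phi]$ is definable in the o-minimal structure $\R_{\mathrm{an},\exp}$, extending the definability theorem of Bakker--Klingler--Tsimerman from pure to mixed admissible VMHS. Choosing a semi-algebraic fundamental set $\mathfrak{F}\subset\cM$ for the $\Gamma$-action, the restriction of $u$ to $\mathfrak{F}$ is definable, so $\mathscr{Z}\cap(S^\an\times\mathfrak{F})$ is a definable complex analytic subset of the algebraic variety $S\times\cM^\vee$. The definable Chow theorem then identifies $\mathscr{Z}^\Zar$ with a component of the Zariski closure of a finite union of $\Gamma$-translates of such a set and gives access to its algebraic geometry.

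\textbf{Monodromy and volume growth.} Let $\bH_\mathscr{Z}\subset\aut(\cM)$ denote the algebraic monodromy group of $\mathscr{Z}^{\Zar,\sm}$. By the mixed analogue of André's normality theorem for admissible graded-polarized integral VMHS, $\bH_\mathscr{Z}$ is normal in the generic Mumford--Tate group along $\mathscr{Z}$, and one identifies $Y$ with an $\bH_\mathscr{Z}(\R)^+$-orbit. The heart of the proof is now a volume estimate: using Griffiths transversality (which persists in the mixed setting along the horizontal distribution on $\cM$) together with the invariant semi-K\"ahler form coming from the Hodge metric, the volume $\vol(\mathscr{Z}\cap B_R)$ is bounded polynomially in $R$ in terms of $\deg\mathscr{Z}^\Zar$. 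If \eqref{inequality} were to fail, then $p_{\cM}(\mathscr{Z})$ would fill a direction inside $Y$ not accounted for by $\mathscr{Z}^\Zar$; the $\bH_\mathscr{Z}$-action would then force many $\Gamma$-translates of $\mathfrak{F}$ to meet $\mathscr{Z}$ and hence produce superpolynomial volume growth, contradicting the previous bound.

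\textbf{Main obstacle.} The principal new difficulty compared to the pure case is that the mixed period domain $\cM$ is not a Hermitian symmetric space: it fibres over a pure period domain with unipotent, hence flat, fibres, so volume estimates along the weight filtration are not controlled by negative curvature alone. I expect this to be the crux of the proof: admissibility at infinity, via the $\mathrm{SL}_2$-orbit theorems of Kashiwara, Cattani--Kaplan--Schmid and their extension to admissible VMHS by Pearlstein, must be used to substitute for the missing Hodge-theoretic curvature and to bound volumes along the unipotent fibre directions. Once this is in place, the argument can be arranged as a two-step process, first applying the pure Ax--Schanuel theorem to the graded VHS $\mathrm{Gr}^W_\bullet\V$, and then propagating across the unipotent extension using the minimality of $Y$ as a weak Mumford--Tate subdomain.
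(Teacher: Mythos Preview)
Your sketch gets several ingredients right --- definability of the mixed period map, Andr\'e's normality theorem, and the observation that the unipotent fibres of $\cM$ over the pure period domain are the new obstacle --- but the overall mechanism is not the one that actually works, and there are two concrete gaps.

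First, the o-minimal argument is not a volume-versus-degree contradiction. The engine, as in \cite{MokAx-Schanuel-for} and \cite{BTAS}, is the \emph{$\Q$-stabilizer} $H_{\mathscr{Z}^{\mathrm{Zar}}} = \big(\{\gamma\in\Gamma_P:\gamma\mathscr{Z}^{\mathrm{Zar}}=\mathscr{Z}^{\mathrm{Zar}}\}^{\mathrm{Zar}}\big)^\circ$, together with a downward induction on $\dim\mathscr{Z}^{\mathrm{Zar}}$. One proves a polynomial \emph{lower} bound on $\#\{\gamma\in\Gamma_P:H(\gamma)\le T,\ \gamma\mathfrak{F}\cap p_{\cD}(\mathscr{Z})\ne\emptyset\}$; Pila--Wilkie then yields semi-algebraic curves in the definable set $\Theta$; each such curve either fixes $\mathscr{Z}^{\mathrm{Zar}}$ (so $\dim H_{\mathscr{Z}^{\mathrm{Zar}}}>0$) or moves it (so one can pass to a larger Zariski closure and invoke the induction hypothesis). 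Your ``superpolynomial growth contradicts polynomial degree bound'' framing has the inequality the wrong way round and omits this dichotomy. Moreover the lower bound in the mixed case is not a consequence of negative curvature or admissibility alone: it requires an inductive lifting along the tower $\cD=\cD_m\to\cdots\to\cD_0$, splitting into a horizontal case and a vertical case at each step, and a separate base-step argument when the pure projection $p_0(\tilde Z)$ is a point (the ``unipotent'' case). Your proposed ``apply pure Ax--Schanuel to $\mathrm{Gr}^W$ and then propagate'' does not supply this, because when the graded variation is constant there is nothing to propagate from.

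Second, and more seriously for the mixed setting, even once $H_{\mathscr{Z}^{\mathrm{Zar}}}$ is positive-dimensional you need $H_{\mathscr{Z}^{\mathrm{Zar}}}\lhd P$ in order to form the quotient Mumford--Tate domain $\cD/H_{\mathscr{Z}^{\mathrm{Zar}}}$ and close the induction. Andr\'e's theorem, which you cite, only gives $H_{\mathscr{Z}^{\mathrm{Zar}}}\lhd N$ for $N$ the algebraic monodromy group. In the pure case $N\lhd P$ with both reductive forces $H_{\mathscr{Z}^{\mathrm{Zar}}}\lhd P$ immediately, but in the mixed case the unipotent radical obstructs this and a genuinely new argument is needed to show that $W_{-1}\cap H_{\mathscr{Z}^{\mathrm{Zar}}}$ is normal in $P$. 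Your outline does not address this at all, and without it the quotient step --- hence the entire induction --- does not go through.
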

\noindent 
In the course of the proof, we also explain how to construct $p_{\cM}(\mathscr{Z})^{\mathrm{ws}}$. Let $S'$ be the Zariski closure of $p_S(\mathscr{Z})$. Let $N$ be the connected algebraic monodromy group of $(\V_\ZZ,W_\bullet,\cF^\bullet)|_{S'} \rightarrow
S'^\an$. Then $p_{\cM}(\mathscr{Z})^{\mathrm{ws}}$ is the $N(\R)^+\cR_u(N)(\C)$-orbit of any point $\tilde{z} \in p_{\cM}(\mathscr{Z})$, where $\cR_u(N)$ is the unipotent radical of $N$; see \Cref{RmkSmallestWMT}.

\medskip
The idea of functional
transcendence statements related to Hodge theory first appeared in the
context of Shimura varieties, where 
$[\Phi]$ is the identity. Motivated by 
Pila's pioneer work \cite{PilaO-minimality-an} on 
the Andr\'e--Oort 
conjecture for copies of moduli curves, the Ax--Lindemann conjecture (a
special case of the Ax--Schanuel conjecture) was proved for various cases in \cite{PilaAbelianSurfaces, UllmoThe-Hyperbolic-, PilaAxLindemannAg} and ultimately for all pure Shimura varieties in 
\cite{KlinglerThe-Hyperbolic-}; this was extended to mixed Shimura varieties in \cite{GaoTowards-the-And}. 
After the proof of  the Andr\'e--Oort
conjecture \cite{TsimermanA-proof-of-the-} (see \cite{GaoAbout-the-mixed} for mixed Shimura varieties), and in order to attack the more general Zilber--Pink
conjecture, \Cref{ThmAS} was proved for copies of moduli curves in \cite{PilaAx-Schanuel-for} and for any pure Shimura
variety in \cite{MokAx-Schanuel-for}; this was extended 
to mixed Shimura varieties of Kuga type in \cite{GaoAxSchanuel}. In
\cite[Conj.~7.5]{Klingler-conj-} the second author suggested that
these functional transcendence statements should hold much more
generally for all admissible, graded
polarizable, integral variation of mixed Hodge structures over a smooth complex
quasi-projective variety $S$ and formulated 
\Cref{ThmAS}; this was proved in \cite{BTAS} if the variation of Hodge structures in question is pure.

All these works have been important ingredients in the proofs 
of various diophantine results: the Andr\'e--Oort conjecture for 
mixed Shimura varieties, results in the direction of the more general Zilber--Pink conjecture \cite{DawRenAppOfAS}, 
use of \cite{MokAx-Schanuel-for} to prove the submersivity of the Betti map in \cite{ACZBetti}, 
use of
\cite{BTAS} for Shafarevich type results in
\cite{LawrenceVenkatesh, LawrenceSawin}, use of \cite{GaoAxSchanuel} to fully study the Betti rank  in \cite{GaoBettiRank} which eventually was applied to prove a rather uniform bound on the number of rational points on curves \cite{DGHUnifML}. Hast \cite{Hast} recently proved a transcendence property of the unipotent Albanese map assuming \Cref{ThmAS}. 
We expect \Cref{ThmAS} to have more
applications in diophantine geometry, for instance in direction of the general
Hodge-theoretical atypical intersection conjecture \cite[Conj.
1.9]{Klingler-conj-} and its special case \cite[Conj.
5.2]{Klingler-conj-}.

\medskip
The strategy for proving \Cref{ThmAS} is similar in spirit to previous works, in particular \cite{BTAS},
\cite{MokAx-Schanuel-for} and \cite{GaoAxSchanuel}. However its implementation in the mixed non-Shimura 
case contains serious new difficulties.

\medskip
For readers' convenience, we start the paper by recalling basic knowledge on variations of mixed Hodge structures and mixed Mumford--Tate domains  in  \Cref{mhs}, \Cref{vmhs}, 
\Cref{SectionMHD} and \Cref{SectionMTDomains}. Unlike for the pure or the Shimura case, references to some of the results recalled hereby are not easy to find. We also give proofs in these sections and \Cref{Appendix} to some results which are surely known to experts but whose proofs we cannot find in existing references. For example, mixed Mumford--Tate domains are complex spaces and are stable under intersection; as an upshot, the classifying space $\cM$ in \Cref{ThmAS} can be replaced by a suitable mixed Mumford--Tate domain $\cD$. 
We also use mixed Hodge data developed in \cite{Klingler-conj-} to prove that we are able to take quotients by normal groups in the category of mixed Mumford--Tate domains, and each such quotient is a holomorphic map. All these results are fundamental to the proof of \Cref{ThmAS}. In fact, with these preparations, we can prove a particular case of \Cref{ThmAS}, called the \textit{logarithmic Ax theorem}, in \Cref{SectionPeriodMapLogAx}.

\medskip
Another formalism we do for our strategy is the fibered structure of mixed Mumford--Tate domains. We also need to discuss the real points of mixed Mumford--Tate domains; they correspond to mixed Hodge structures split over $\R$. This is done in \Cref{SectionFiberedStruRealPoints}.

\medskip
Then we move on to prove \Cref{ThmAS}. We start by some d\'{e}vissages
in \Cref{preparation}, and reduce to the case where the projection of $\mathscr{Z}$ in
$S$ is Zariski-dense in $S$ and that $\mathscr{Z}$ is an irreducible
component of the intersection of its Zariski-closure with $\Delta$:
see \Cref{LemmaSecondDevissage}. In order to obtain a better
group theoretical control of $\mathscr{Z}$, we also replace the classifying space $\cM$ by its refinement $\cD 
$, the mixed Mumford--Tate domain associated to the generic
Mumford--Tate group $P$ of the variation $(\V_\ZZ,W_\bullet,\cF^\bullet)$. 

\medskip
The first step in the proof of \Cref{ThmAS} consists of proving that the inequality~(\ref{inequality})
holds true if the $\Q$-stabilizer of $\mathscr{Z}^\Zar$ 
(for the
action of $P$ on the second factor of $S^\an \times \cD$), 
denoted by $H_{\mathscr{Z}^\Zar}$,  
is
zero dimensional; see \Cref{PropBignessStab}. To do so we use
o-minimal geometry (more precisely the result of \cite{BBKT}
generalizing \cite{BKT} saying that mixed period maps are definable in
some o-minimal structure, and the celebrated Pila-Wilkie theorem \cite[3.6]{PilaO-minimality-an}) to prove a counting result \Cref{ThmCounting}. 

More precisely, take a suitable semi-algebraic fundamental set $\mathfrak{F}$ for $\cD \rightarrow \Gamma\backslash \cD$. As in all proofs of Ax--Schanuel type transcendence results via o-minimality, we start by constructing a definable subset $\Theta$ of $P(\R)$ which contains all integer elements $\gamma \in \Gamma$ such that $\gamma(S\times \mathfrak{F}) \cap \mathscr{Z} \not = \emptyset$. We wish to prove that $\Theta$ contains semi-algebraic curves with arbitrarily many integer elements; this will yield the non-triviality of $H_{\mathscr{Z}^\Zar}$ unless \eqref{inequality} already holds true by induction. The Pila-Wilkie theorem then reduces the question to showing that the number of elements in $\Gamma \cap \Theta$ of height at most $T$ grows at least polynomially in $T$. The latter is precisely \Cref{ThmCounting}.

\medskip
The first main new difficulty lies in the proof of this counting result. It occupies the full section \Cref{SectionBigness} and is
quite technical. While in the pure case it follows from an explicit
description of the semi-algebraic fundamental set $\mathfrak{F}$ for $\Gamma$
in terms of Siegel sets furnished by reduction 
theory and from the non-positive curvature in the horizontal direction
for pure Mumford--Tate domains (see \cite{BTAS}), in the mixed case we
have only an implicit knowledge of
$\mathfrak{F}$:  its construction in \cite{BBKT} relies fundamentally on the
rather mysterious retraction of $\cD$ on its subvariety $\cD_\R$ of real
split mixed Hodge structures furnished by the
$\mathfrak{sl}_2$-splitting of mixed Hodge structures. Instead, we use
the natural fibered structure 
\begin{equation}\label{EqSuccessiveLifting}
 \cD= \cD_m \to 
\cD_{m-1} \to \cdots \to \cD_0
\end{equation}
 of mixed Mumford--Tate domains 
associated to the weight filtration of the variation of Hodge
structures. Each step is a vector bundle. Considering the successive
projections $\mathscr{Z}_k$ of $\mathscr{Z}$ to the
storeys $S \times \cD_{k}$, we proceed as follows:

- assuming that the required estimate holds
for $\mathscr{Z}_k$ we prove that we can  ``lift'' this estimate to
$\mathscr{Z}_{k+1}$: see \Cref{PropCounting} and
\Cref{SubsectionEstimateLifting}. As in \cite{GaoAxSchanuel},
there are two cases to consider for this lifting process, namely the ``horizontal'' case 
 \Cref{LemmaHorizontalGrowth} and the ``vertical'' case  \Cref{LemmaLiftingCaseVertical}. 

- we initiate the process at the
smallest integer $k_0$ such that the projection
of $\mathscr{Z}$ to $\cD_{k_{0}}$ is not a point. If $k_0= 0$ the required estimate
follows from \cite{BTAS} as $\cD_0$ is a pure Mumford--Tate
domain. 
On the other hand there is some non-trivial work to be done if $k_0
>0$ (the unipotent case, or equivalently when the maximal pure quotient of the variation is
constant): see \Cref{SubsectionCountingBaseStep}, more precisely \Cref{LemmaCountingBaseStep}.

\medskip
The second step in the proof of \Cref{ThmAS} consists of dealing with the case where the group
$H_{\mathscr{Z}^\Zar}$ is positive dimensional. In that case one wants
to reduce to the first step by working in the quotient Mumford--Tate
domain $\cD/H_{\mathscr{Z}^\Zar}$. Such a quotient exists as a
Mumford--Tate domain only if the group $H_{\mathscr{Z}^\Zar}$ is normal in the generic
Mumford--Tate group $P$. Following the guideline of \cite{MokAx-Schanuel-for}, we prove in \Cref{normality1} that $H_{\mathscr{Z}^\Zar}$ is normal in the algebraic monodromy group of this variation of mixed Hodge structures. While this immediately implies that $H_{\mathscr{Z}^\Zar}$ is normal in $P$ in the pure case, it
turns out to be more subtle in the mixed case. We solve this problem in \Cref{normality2}, by doing an intermediate quotient $(\cR_u(P)(\mathbb{Q})\cap \Gamma)\backslash \cD$, applying Pila--Wilkie in the unipotent part, and analyzing the unipotent part of the $H_{\mathscr{Z}^\Zar}$ by passing to a suitable quotient space which \textit{a priori} is only a real manifold. This guideline was executed for the universal abelian variety in \cite[$\mathsection$6.3]{GaoAxSchanuel}. A key new input at this step compared with \cite[$\mathsection$6.3]{GaoAxSchanuel}, as for the lifting process of point counting  from the first step explained above, is the retraction map $\cD \rightarrow \cD_{\R}$ from \cite{BBKT} obtained by the $\mathfrak{sl}_2$-splitting.


\medskip
Right before the first version of this paper was publicized, we received a preprint \cite{KennethChiuAS} from Chiu independently proving the same result. Both papers use extensively o-minimality and the Pila--Wilkie counting theorem,  rely on the estimate results for the pure case of Bakker--Tsimerman \cite{BTAS}, use the retraction map $\cD \rightarrow \cD_{\R}$, and use the idea of separating the ``horizontal'' and ``vertical'' cases for point counting as was done in \cite{GaoAxSchanuel}. 

The major differences of the two papers lie in the specific treatments of the two steps of the proof of \Cref{ThmAS}. For the first step, we obtain the desired point counting result by successive liftings explained in the paragraph containing \eqref{EqSuccessiveLifting}, while Chiu separate the unipotent part from the semi-simple part at the beginning. For the second step, we work in the Mumford--Tate domain $\cD$ and prove that the $\Q$-stabilizer $H_{\mathscr{Z}^{\mathrm{Zar}}}$ of $\mathscr{Z}^\Zar$ is positive dimensional unless $\mathscr{Z}$ takes some particular form and that \Cref{ThmAS} easily holds true, and then proceed to prove the normality of $H_{\mathscr{Z}^{\mathrm{Zar}}}$ in the Mumford--Tate group $P$ in \Cref{normality2} in order to do the quotient $P/H_{\mathscr{Z}^{\Zar}}$. Chiu works in the weak Mumford--Tate domain corresponding to a suitable normal subgroup $N$ of $P$ and 
does the estimates directly on $(N/H_{\mathscr{Z}^{\Zar}})(\R)$, and instead of proving the normality of $H_{\mathscr{Z}^\Zar}$ in $P$ he reduces to the case where $\mathscr{Z}$ is contained in one fiber and handles this case in \cite[$\mathsection$8]{KennethChiuAS}. Apart from these, we also include a summary of basic knowledge and results on variations of mixed Hodge structures and mixed Mumford--Tate domains in  \Cref{mhs}, \Cref{vmhs}, 
\Cref{SectionMHD}, \Cref{SectionMTDomains} and \Cref{Appendix}, as the references to some of the results are not easy to find in contrast to the pure  or the Shimura case.

In the end, we would like to point out that our first version had a serious (Hodge-theoretic) mistake in the previous \Cref{normality2}  while Chiu's proof was correct. To fix this mistake
, we had to go back to the argument of the first author's \cite[$\mathsection$6.3]{GaoAxSchanuel} and use again the retraction map $\cD \rightarrow \cD_{\R}$, and this makes our current \Cref{normality2} similar to Chiu's treatment in \cite[$\mathsection$8]{KennethChiuAS}.


\subsection*{Acknowledgements} ZG has received
fundings from the European Research Council (ERC) under the
European Union's Horizon 2020 research and innovation programme (grant
agreement n$^\circ$ 945714). 
BK has received
fundings from the European Research Council (ERC) under the
European Union's Horizon 2020 research and innovation programme (grant
agreement n$^\circ$ 101020009). 
The authors would like to thank Jacob Tsimerman for having pointed out a mistake in a previous version of \Cref{normality2} and for explaining to us the similarities of our new \Cref{normality2} and Chiu's \cite[$\mathsection$8]{KennethChiuAS}.


\section{Mixed Hodge structures, classifying space, and Mumford--Tate
  domains}  \label{mhs}

\subsection{Mixed Hodge structure}
In this subsection we recall some definitions and properties of $\Q$-mixed Hodge structures.
\begin{defn} Let $V$ be a finite dimensional $\Q$-vector space and $V_\C: =  V \otimes_\Q \C$ its complexification.
\begin{enumerate}
\item[(i)] A $\Q$-pure Hodge structure on $V$ of weight $n$ is a decreasing filtration $F^\bullet$ (the {\em Hodge filtration}) on $V_\C$ such that $V_\C =  F^p V_\C \oplus \bar{F^{n+1-p} V_{\C}}$ for all $p \in \Z$.
\item[(ii)] A $\Q$-mixed Hodge structure on $V$ consists of two filtrations, an increasing filtration $W_\bullet$ on $V$ (the {\em weight filtration}) and a decreasing filtration $F^\bullet$ on $V_{\C}$ (the {\em Hodge filtration}) such that for each $k \in \Z$ the $\Q$-vector space $\mathrm{Gr}_k^W V = W_k/W_{k-1}$ is a pure Hodge structure of weight $k$ for the filtration on $\mathrm{Gr}_k^W V \otimes_\Q \C$ deduced from $F^\bullet$. 
\end{enumerate}

The numbers
$ h^{p,q}(V) = \dim_{\C} 
F^p \mathrm{Gr}^W_{p+q}(V_{\C}) / F^{p+1} \mathrm{Gr}^W_{p+q}(V_{\C})$
are called the \textit{Hodge numbers} of $(V, W_\bullet, F^\bullet)$.
\end{defn}

$\QQ$-mixed Hodge structures, defined in terms of two filtrations, can be equivalently described in terms of {\em bigradings}. This is classical in the pure case, where a weight~$n$ $\QQ$-pure Hodge structure on $V$ is equivalently given by a direct sum decomposition $V_{\C} = \oplus_{p+q=n} V^{p,q}$ (the {\em Hodge decomposition}) into $\C$-vector spaces, such that the complex conjugate $\bar{V^{q,p}}$ coincides with $V^{p,q}$ for all $p, q \in \Z$ with $p+q=n$. The relation between the Hodge filtration and the Hodge decomposition is given by $F^pV_{\C} = \oplus_{p'\ge p}V^{p',n-p'}$. In the general mixed case Deligne \cite[1.2.8]{DeligneHodgeII}
proved the following: 
\begin{prop}\label{PropBiGradingMHS}
A $\Q$-mixed Hodge structure on $V$ is the datum of a bigrading
\begin{equation}
  V_{\C} = \bigoplus_{p,q \in \Z}I^{p,q}
\end{equation}
satisfying that each complex vector subspace $W_k V_{\C} = \bigoplus_{p+q\le k}I^{p,q}$ of $V_\C$ is defined  over $\QQ$ and 
\begin{equation}
I^{p,q} \equiv \bar{I^{q,p}} \bmod \bigoplus_{r<p,s<q}I^{r,s}.
\end{equation}
The Hodge filtration is then defined by $F^pV_{\C} = \bigoplus_{r \ge p}I^{r,q}$.
\end{prop}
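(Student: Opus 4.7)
My plan is to prove the two directions of the equivalence separately, with the forward direction (going from a filtered description to a bigrading) being by far the substantive one. The key construction, due to Deligne, is the explicit formula
\[
I^{p,q} := F^p \cap W_{p+q} \;\cap\; \left(\bar{F}^{q} \cap W_{p+q} \;+\; \sum_{j \geq 1} \bar{F}^{q-j} \cap W_{p+q-j-1} \right),
\]
which I would take as the definition of the candidate bigrading. The rationale is that the ``correction'' term $\sum_{j \geq 1} \bar{F}^{q-j} \cap W_{p+q-j-1}$ compensates for the fact that conjugation does not preserve $W_\bullet$ compatibly with $F^\bullet$ in the mixed setting; in the pure case this term vanishes and one recovers the classical Hodge decomposition.

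The forward direction proceeds in three steps. First, I would show by induction on the length of $W_\bullet$ that the natural projection $W_{p+q} \otimes \C \twoheadrightarrow \mathrm{Gr}^W_{p+q}(V) \otimes \C$ sends $I^{p,q}$ isomorphically onto the $(p,q)$-summand of the (pure) Hodge decomposition of $\mathrm{Gr}^W_{p+q}(V)$; this uses that the induced filtrations $F^\bullet \mathrm{Gr}^W_{p+q}$ and $\bar{F}^\bullet \mathrm{Gr}^W_{p+q}$ are $(p+q)$-opposed by the pure Hodge structure assumption. Second, combining this with the induction hypothesis applied to $W_{p+q-1}$ yields the direct sum decomposition $W_k V_\C = \bigoplus_{p+q \leq k} I^{p,q}$ and, summing over all $k$, the decomposition $V_\C = \bigoplus_{p,q} I^{p,q}$. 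Third, one recovers $F^p V_\C = \bigoplus_{r \geq p} I^{r,s}$ by a similar filtered induction, and one verifies the conjugation relation $I^{p,q} \equiv \overline{I^{q,p}} \bmod \bigoplus_{r<p,\,s<q} I^{r,s}$ by unwinding the formula: complex conjugation swaps $F$ with $\bar{F}$ and the correction terms in the definition of $I^{p,q}$ land in the required subspace.

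For the converse direction, given a bigrading $V_\C = \bigoplus I^{p,q}$ with $W_k V_\C := \bigoplus_{p+q \leq k} I^{p,q}$ defined over $\Q$ and satisfying the congruence, one simply defines $F^p V_\C := \bigoplus_{r \geq p} I^{r,s}$ and checks that $(W_\bullet, F^\bullet)$ is a mixed Hodge structure. The weight filtration is $\Q$-rational by hypothesis, and the key point is that on $\mathrm{Gr}^W_k$ the induced filtrations $F$ and $\bar{F}$ become $k$-opposed: the congruence relation guarantees that modulo lower weights, $I^{p,q}$ and $\overline{I^{q,p}}$ coincide in $\mathrm{Gr}^W_{p+q} \otimes \C$, giving the pure Hodge decomposition on each graded piece.

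The main obstacle will be the bookkeeping in the first step, namely verifying that Deligne's formula actually defines subspaces projecting isomorphically onto the graded pieces and summing to $V_\C$. This is a delicate inductive argument on the weight; one must carefully track how the ``error terms'' $\sum_{j \geq 1} \bar{F}^{q-j} \cap W_{p+q-j-1}$ interact with the filtration $W_\bullet$ and with complex conjugation, and the argument does not reduce formally to the pure case because at each step one must use the inductive hypothesis on a strictly smaller portion of the weight filtration. All other steps are essentially formal consequences once this has been established.
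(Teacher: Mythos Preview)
Your proposal is correct and follows precisely Deligne's original argument. Note, however, that the paper does not supply its own proof of this proposition: it is stated with attribution to \cite[1.2.8]{DeligneHodgeII} and treated as background. What you have written is a faithful sketch of Deligne's proof from that reference, including the explicit splitting formula for $I^{p,q}$ and the inductive verification along the weight filtration, so there is nothing to compare beyond observing that you have reproduced the cited argument rather than an alternative one.
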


We will use  a third, more group-theoretic, point of view on $\Q$-mixed Hodge structures. Let $\S = \mathrm{Res}_{\C/\R}\G_{\mathrm{m},\C}$ be the Deligne torus, this is the real algebraic group such that $\S(\R) = \C^*$ and $\S(\C) = \C^* \times \C^*$, with the action of the complex conjugation twisted by the automorphism that interchanges the two factors. The character group of $\S$, denoted by $X_*(\S)$, identifies with $\Z \oplus \Z$ under
$$\begin{array}{lll}
 \Z \oplus \Z &\xrightarrow{\sim}  &  X_*(\S) \\
 (p,q) &\mapsto & \big(z \in \S(\R) = \C^* \mapsto z^{-p}\bar{z}^{-q} \in \C^* \big).
  \end{array}
  $$
\noindent Given a $\Q$-vector space $V$ a bigrading $V_{\C} = \oplus_{p, q \in \ZZ} I^{p,q}$ is thus equivalent to a homomorphism $h \colon \S_{\C} \rightarrow \mathrm{GL}(V_{\C})$. In particular we deduce from the paragraph above that any mixed Hodge structure on $V$ defines a homomorphism $h \colon \S_{\C} \rightarrow \mathrm{GL}(V_{\C})$. In \cite{PinkThesis} Pink identified the conditions such a homomorphism has to satisfy to define a mixed Hodge structure on $V$:

\begin{prop} \cite[1.4 and 1.5]{PinkThesis} \label{propPink}
Let $V$ be a finite dimensional $\Q$-vector space. A morphism $h \colon \S_{\C} \rightarrow \mathrm{GL}(V_{\C})$  
defines a MHS on $V$ if and only if there exists a connected $\Q$-algebraic subgroup $P \subset \mathrm{GL}(V)$ such that $h$ factors through $P_{\C}$ and which satisfies the following conditions: 
\begin{itemize}
\item[(i)] The composite $\S_\C \stackrel{h}{\to} P_\C \to (P/W_{-1})_\C$ is
  defined over $\R$, where $W_{-1}$ denotes the unipotent radical of $P$. Call this composite $\bar{h}$.
\item[(ii)] The composite $\G_{m, \R} \stackrel{w}{\to} \S
  \stackrel{\bar{h}}{\to} (P/W_{-1})_\R$ is a cocharacter of the center of $(P/W_{-1})_\R$ defined over $\Q$.
\item[(iii)] The weight filtration on
  $\lie P$ defined by $\Ad_P \circ h$ satisfies $W_0 \lie P = \lie P$ and $W_{-1}(\lie P) = \lie W_{-1}$.
\end{itemize}
\end{prop}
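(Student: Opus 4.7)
My plan is to prove the two implications separately. In the forward direction, starting from a $\Q$-mixed Hodge structure $(V, W_\bullet, F^\bullet)$, I use the Deligne bigrading $V_\C = \bigoplus I^{p,q}$ of \Cref{PropBiGradingMHS} together with the identification $X_*(\S) \simeq \Z \oplus \Z$ to produce the morphism $h$, and take $P$ to be the identity component of the $\Q$-algebraic subgroup of $\GL(V)$ normalizing the weight filtration $W_\bullet V$. In the backward direction, I start from $(h, P)$ satisfying (i)--(iii), extract the bigrading $\{I^{p,q}\}$ of $V_\C$ from $h$, and verify the two conditions of \Cref{PropBiGradingMHS}: $\Q$-rationality of $W_k V_\C = \bigoplus_{p+q \le k} I^{p,q}$ and the Deligne congruence $I^{p,q} \equiv \overline{I^{q,p}}$ modulo $\bigoplus_{r<p,\, s<q} I^{r,s}$.

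For the forward direction, the verification of (iii) is tautological with the chosen $P$, since the stabilizer of $W_\bullet V$ has Lie algebra $W_0 \End(V)$ and its unipotent radical has Lie algebra $W_{-1} \End(V)$. The morphism $h$ factors through $P_\C$ because it preserves $W_\bullet V_\C$ by construction. The composite $\S_\C \to (P/W_{-1})_\C \hookrightarrow \prod_k \GL(\Gr^W_k V_\C)$ encodes the pure Hodge structures on the graded pieces, each given by an $\R$-morphism $\S_\R \to \GL(\Gr^W_k V_\R)$, giving (i); and $w$ acts on each $\Gr^W_k V$ as a scalar, so its image in $P/W_{-1}$ is central and $\Q$-rational, giving (ii).

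For the backward direction, condition (iii) implies that $\lie P \subset W_0 \End(V_\C)$, hence $P$ preserves the weight filtration $W_\bullet V_\C$ defined from $h$ and $W_{-1}$ acts trivially on the associated graded. Using Mostow's theorem I fix a $\Q$-Levi decomposition $P = L \ltimes W_{-1}$, lift the central $\Q$-cocharacter $\bar w \colon \G_{m,\R} \to (P/W_{-1})_\R$ of (ii) to a $\Q$-cocharacter $w^\Q \colon \G_m \to P$ via $L$, and claim that the $\Q$-rational filtration of $V$ defined by $w^\Q$ coincides with $W_\bullet V_\C$. This follows once I know that the two lifts $w$ (coming from $h$) and $w^\Q$ of $\bar w$ to $P_\C$ are $W_{-1}(\C)$-conjugate, since elements of $W_{-1}(\C)$ preserve $W_\bullet V_\C$. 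The Deligne congruence, after projecting to $\Gr^W V_\C$, becomes the reality condition $\overline{J^{q,p}} = J^{p,q}$ for a pure Hodge structure, which is guaranteed by the $\R$-rationality of the composite in (i).

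The main technical point is the $W_{-1}(\C)$-conjugacy of the two lifts $w$ and $w^\Q$ of $\bar w$. This amounts to the vanishing of the (nonabelian) $H^1$ of $\G_m$, acting via $\Ad \circ w^\Q$, with values in $W_{-1}$. D\'evissage along the descending central series of $W_{-1}$ reduces the problem to abelian $\G_m$-representations whose weights are all strictly negative, by condition (iii); on such representations $\G_m$-cohomology vanishes in positive degrees, which completes the argument.
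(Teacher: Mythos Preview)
The paper does not prove this proposition; it simply cites Pink's thesis. So there is no proof to compare against, and I can only comment on your argument itself.

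Your forward direction and your argument for the $\Q$-rationality of $W_\bullet$ in the backward direction are fine. The one genuine issue is your treatment of the Deligne congruence. You write that after projecting to $\Gr^W V_\C$ the congruence ``becomes'' the pure reality condition $\overline{J^{q,p}} = J^{p,q}$, but this is only the congruence modulo $W_{p+q-1}$, whereas the condition in \Cref{PropBiGradingMHS} is modulo the strictly smaller subspace $\bigoplus_{r<p,\,s<q} I^{r,s}$. In fact conditions (i)--(iii) do \emph{not} force the bigrading attached to $h$ to be the Deligne splitting: by \Cref{below 4.1} the fibre of $\varphi\colon\cX\to\cD$ over a point of $\cD$ is a torsor under $\exp(F^0_x(\lie W_{-1})_\C)$, so many different $h$'s satisfying (i)--(iii) induce the same mixed Hodge structure, and all but one of them fail the full Deligne congruence. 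Thus your plan of verifying the two conditions of \Cref{PropBiGradingMHS} cannot succeed as stated.

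The fix is painless: do not route the backward direction through \Cref{PropBiGradingMHS}. What the proposition actually requires is that the pair $(W_\bullet, F^\bullet)$ extracted from $h$ is a mixed Hodge structure in the original sense---$W_\bullet$ defined over $\Q$ and each $(\Gr^W_k V, F^\bullet)$ a pure Hodge structure of weight $k$. You have already established the first via the Levi lift and the $W_{-1}(\C)$-conjugacy of $w$ and $w^\Q$; and the second is exactly the content of condition (i), since the induced $\bar h\colon \S_\C \to \GL(\Gr^W_k V_\C)$ being defined over $\R$ says precisely that the induced bigrading on $\Gr^W_k V_\C$ satisfies $\overline{J^{q,p}} = J^{p,q}$, i.e.\ is a pure Hodge decomposition. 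So your argument goes through once you drop the reference to \Cref{PropBiGradingMHS} in the backward direction and verify the filtration definition directly.
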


\noindent
If $h \in \cM$ let us define the {\em Mumford--Tate group} $\MT(h)$ of the $\Q$-mixed Hodge structure $(M, h)$ as the smallest $\Q$-subgroup of $\GL(V)$ whose complexification contains $h(\S_\C)$. One easily checks that the groups $P$ satisfying the conditions of \Cref{propPink} are precisely the ones containing $\MT(h)$.

\medskip
We finish this subsection by recalling the definition of polarizations.
\begin{defn}
Let $(V,W_{\bullet},F^{\bullet})$ be a $\Q$-mixed Hodge structure. A {\em (graded) polarization} is a collection of non-degenerate $(-1)^k$-symmetric bilinear forms
\[
Q_k \colon \mathrm{Gr}_k^W(V) \otimes \mathrm{Gr}_k^W(V) \rightarrow \Q
\]
such that
\begin{enumerate}
\item[(i)] $Q_k(F^p\mathrm{Gr}^W_kV_\C , F^{k-p+1}\mathrm{Gr}^W_k V_\C) = 0$ for each $k$ (first Riemann bilinear relation);
\item[(ii)] the Hermitian form on $\mathrm{Gr}_k^W(V)_{\C}$ given by $Q_k(Cu,\bar{v})$ is positive-definite, where $C$ is the Weil operator ($C|_{I^{p,q}} = i^{p-q}$ for all $p,q$).
\end{enumerate}
\end{defn}

\noindent
One easily checks that the Mumford-Tate group of a polarizable pure $\Q$-Hodge structure is reductive.

\subsection{Classifying space}\label{SubsectionClassifyingSpace}
In this subsection, we discuss the classifying space of all $\Q$-mixed Hodge structures with given weight filtration, graded polarization and Hodge numbers.

Let $V$ be a finite dimensional $\Q$-vector space, endowed with the following additional data:
\begin{enumerate}
\item[(i)] a finite increasing filtration $W_\bullet$ of $V$;
\item[(ii)] a collection of non-degenerate $(-1)^k$-symmetric bilinear forms
$$
Q_k \colon \mathrm{Gr}_k^W(V) \otimes \mathrm{Gr}_k^W(V) \rightarrow \Q\;\; ;$$
\item[(iii)] a partition $\{h^{p,q}\}_{p, q \in \Z}$ of $\dim V_{\C}$ into non-negative integers.
\end{enumerate}

Given these data, one forms the classifying space $\cM$ parametrizing $\Q$-mixed Hodge structures $(V, W_\bullet, F^\bullet)$ with the following properties:
\begin{enumerate}
\item the $(p,q)$-constituent $V^{p,q}:= \Gr^p_F \Gr^W_{p+q} V_\C$ has complex dimension $h^{p,q}$;
\item $Q_k(F^p\mathrm{Gr}^W_kV_\C , F^{k-p+1}\mathrm{Gr}^W_k V_\C) = 0$ for each $k$ (first Riemann bilinear relation);
\item $(V, W_\bullet, F^\bullet)$ is graded-polarized by $Q_k$.
\end{enumerate}

Let us summarize the construction and basic properties of $\cM$; see \cite{Kaplan}, \cite[below (3.7) to Lemma~3.9]{Pearlstein2000} 
for more details.
First one defines the complex algebraic variety $\cM^\vee$ parametrizing mixed Hodge structures satisfying only the conditions (1) and (2) above (see \cite[Lem.~3.8]{Pearlstein2000}). This is a homogeneous space under $P^{\cM}(\C)$, where $P^{\cM}$ is the $\Q$-algebraic group defined as follows: for any $\Q$-algebra $R$, 
\begin{equation}\label{EquationQGroupPM}
P^{\cM}(R) := \{g \in \mathrm{GL}(V_R) : g(W_k) \subseteq W_k\text{ and }\mathrm{Gr}^W_k(g) \in \mathrm{Aut}_{R}(Q_k)\; \text{for all} \, k \in \Z\}.
\end{equation}
The classifying space $\cM$ is defined as the real semi-algebraic open subset of $\cM^\vee$ consisting of mixed Hodge structures which satisfy moreover condition~(3) above (see \cite[Lem.~3.9 and above]{Pearlstein2000}). The fact that $\cM$ is open in $\cM^\vee$ endows $\cM$ with a natural complex analytic structure. The real semi-algebraic group
\begin{equation} \label{l1}
\{g \in P^{\cM}(\C): \mathrm{Gr}^W_k(g) \in \mathrm{Aut}_{\R}(Q_k)\; \text{for all }k \in \Z\}
\end{equation}
identifies with $P^{\cM}(\R)^+W^{\cM}_{-1}(\C)$, where $W^{\cM}_{-1}$ is the unipotent radical of $P^{\cM}$, see \cite[Remark below Lem.~3.9]{Pearlstein2000}. It acts transitively on $\cM$. 


\subsection{Adjoint Hodge structure}
For each $h \in \cM$ \Cref{propPink} defines a natural $\Q$-mixed Hodge structure on $\lie P^{\cM}$ via $\mathrm{Ad}^{\cM} \circ h \colon \S_{\C} \rightarrow P^{\cM}_{\C} \rightarrow \mathrm{GL}(\lie P^{\cM})_{\C}$: the \textit{adjoint Hodge structure} associated with $h$. One easily checks that the corresponding weight filtration and graded polarization are independent of $h$. Indeed the weight filtration $W_\bullet$ on $\lie P^{\cM} \subseteq \mathrm{End}(V) = V \otimes V^\vee$ is the one deduced from the weight filtration $W_\bullet$ on $V$. Similarly for the graded-polarization.


\subsection{(Weak) Mumford--Tate domains}
\Cref{propPink} suggests to attack the problem of classifying mixed Hodge structures by rather considering mixed Hodge structures with prescribed Mumford-Tate group. This leads abstractly to the notion of mixed Hodge data, see \Cref{MHD}; and geometrically to the notion of (weak) Mumford-Tate domain refining the classifying space $\cM$.

\begin{defn}\label{DefnMTDomain}

\begin{enumerate}
\item[(i)] A subset $\cD$ of the classifying space $\cM$ is called a Mumford--Tate domain if there exists an element $h \in \cD$ such that $\cD = P(\R)^+W_{-1}(\C) h$, where $P = \MT(h)$ and $W_{-1} = \cR_u(P)$ is the unipotent radical of $P$.
\item[(ii)] A subset $\cD$ of the classifying space $\cM$ is called a weak Mumford--Tate domain if there exist an element $h \in \cD$ and a normal subgroup $N$ of $P=\MT(h)$ such that $\cD = N(\R)^+\cR_u(N)(\C) h$, where $\cR_u(N)$ is the unipotent radical of $N$.
\end{enumerate}
\end{defn}

In the definition, as $N \lhd P$, we have $\cR_u(N) = W_{-1}\cap N$. 
One easily checks that $\cM$ is a Mumford-Tate domain in itself, for $P = P^\cM$. A closer look at the geometry of general Mumford--Tate domains is given in \Cref{Appendix2}. In particular we will prove the following results (well-known in the pure case):
\begin{prop}\label{PropMTDomainComplex}
Every weak Mumford--Tate domain in $\cM$ is a complex analytic subspace of $\cM$.
\end{prop}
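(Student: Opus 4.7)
The plan is to identify $\cD$ with an open subset, in the analytic topology, of a locally closed complex algebraic subvariety of $\cM^\vee$. Set $\cD^\vee := N(\C) \cdot h \subset \cM^\vee$. Since $N$ is a connected $\Q$-algebraic group acting algebraically on the complex algebraic variety $\cM^\vee$, the orbit $\cD^\vee$ is a locally closed complex algebraic subvariety of $\cM^\vee$; hence $\cD^\vee \cap \cM$ is a locally closed complex analytic subspace of $\cM$, and $\cD \subset \cD^\vee \cap \cM$ by construction. It thus suffices to show that $\cD$ is open in $\cD^\vee$ in the analytic topology: then $\cD$ will be open in $\cD^\vee \cap \cM$ and will inherit its complex analytic structure, realizing $\cD$ as a locally closed analytic subspace of $\cM$.

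Write $U := \cR_u(N)$ and set $G := N(\R)^+ U(\C)$, a real Lie subgroup of $N(\C)$ (a group because $U \lhd N$), with real Lie algebra $\lie N(\R) + \lie U(\C) \subset \lie N(\C)$. By equivariance of both orbits under $G$, openness of $\cD = G\cdot h$ in $\cD^\vee = N(\C)\cdot h$ reduces to openness at $h$. By \Cref{propPink} the tangent space $T_h(N(\C)\cdot h)$ identifies canonically with $\lie N(\C)/F^0\lie N(\C)$, where $F^\bullet$ is the Hodge filtration of the adjoint MHS induced by $h$ on $\lie N$. Openness at $h$ therefore reduces to the linear identity
\[
  \lie N(\R) + \lie U(\C) + F^0\lie N(\C) = \lie N(\C).
\]

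To verify it, I use that $\lie U(\C) = W_{-1}\lie N(\C)$ (since $\cR_u(N) = N \cap W_{-1}$ by normality of $N$ in $P$). Modding out by $\lie U(\C)$ therefore reduces the identity to the reductive Levi quotient $\lie N^{\mathrm{red}} := \lie N/\lie U$, on which the adjoint MHS is pure of weight $0$. For any $\Q$-pure Hodge structure $V$ of weight $0$ one has $V_\C = F^0 V_\C + \overline{F^0 V_\C}$; writing $\overline{z} = (z+\overline{z}) - z$ with $z+\overline{z} \in V_\R$ upgrades this to $V_\R + F^0 V_\C = V_\C$. Applied to $\lie N^{\mathrm{red}}$, this yields the required surjectivity on the Levi quotient and hence the identity on $\lie N$.

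I expect the main obstacle to be isolating the correct complex algebraic ``dual'' $\cD^\vee$. Unlike in the pure case, where the complex orbit of a reductive Mumford--Tate group suffices, here the weight-$(-k)$ components of the adjoint MHS (for $k \geq 1$) contain Hodge types not covered by $F^0 + \overline{F^0}$, so no analogous identity holds on those summands. Including the complexified unipotent radical $U(\C)$, rather than merely its real form $U(\R)$, in the definition of $\cD$ supplies the missing direction $i\cdot \lie U(\R)$, and is precisely what makes $N(\C)\cdot h$ the correct complex algebraic envelope of $\cD$ and allows the infinitesimal surjectivity to go through.
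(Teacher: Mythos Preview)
Your argument is correct, and in fact supplies a step that the paper's proof leaves implicit. One small misattribution: the identification $T_h(N(\C)\cdot h)\simeq \lie N_\C/F^0\lie N_\C$ does not follow from \Cref{propPink} but from the description of the stabilizer of $h$ in $P^{\cM}(\C)$ (this is \Cref{LemmaTangentSpaceOfClassifyingSpace}, i.e.\ \cite[Thm.~3.13]{Pearlstein2000}) together with the fact that $\lie N$ is a sub-MHS of $\lie P^{\cM}$, so that $\lie N_\C\cap F^0\lie P^{\cM}_\C = F^0\lie N_\C$.

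The paper takes a slightly different route: rather than introducing the complex orbit $\cD^\vee=N(\C)\cdot h$ and proving $\cD$ is open in it, it argues directly that $T_h\cD$ is a complex subspace of $T_h\cM$ by asserting $T_h\cD=\lie N_\C/F^0\lie N_\C$ and using that $\lie N$ is a sub-MHS. That assertion, however, is exactly your surjectivity identity $\lie N(\R)+\lie U(\C)+F^0\lie N_\C=\lie N_\C$, which the paper does not spell out. So your proof is more explicit on the key infinitesimal point, and it has the side benefit of exhibiting the algebraic ``compact dual'' $\cD^\vee$---precisely the object the paper later invokes in \Cref{SubsectionBiAlg} when defining the bi-algebraic structure. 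The paper's formulation, on the other hand, makes it transparent that the only Hodge-theoretic input is that $\lie N\subset\lie P^{\cM}$ is a sub-MHS, which immediately covers the weak case once one notes $N\lhd \MT(h)$.
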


\begin{lemma}\label{LemmaIntersectionMTDomains}
Let $\cD_1$ and $\cD_2$ be Mumford--Tate domains in $\cM$. Then every irreducible component of $\cD_1\cap \cD_2$ is again a Mumford--Tate domain in $\cM$.
\end{lemma}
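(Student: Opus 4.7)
The plan is to pick a generic point $h_0$ in an irreducible component $\cD_0$ of $\cD_1\cap\cD_2$ and identify $\cD_0$ with the Mumford--Tate domain of $(\MT(h_0),h_0)$.

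By \Cref{PropMTDomainComplex} each $\cD_i$ is a closed complex analytic subspace of $\cM$, hence so is $\cD_1\cap\cD_2$, and $\cD_0$ is an irreducible complex analytic subspace of $\cM$. The orbit description $\cD_i=P_i(\R)^+\cR_u(P_i)(\C)\cdot h_i$ gives $\MT(h)\subseteq P_i$ for every $h\in\cD_i$: writing $h=g h_i$ with $g\in P_{i,\C}$, one has $h(\S_\C)=g h_i(\S_\C) g^{-1}\subseteq P_{i,\C}$. In particular every $h\in\cD_0$ has $\MT(h)\subseteq P_1\cap P_2$. Since the $\Q$-algebraic subgroups of $\GL(V)$ form a countable set, the function $h\mapsto\MT(h)$ is equal to some fixed $Q\subseteq P_1\cap P_2$ off a countable union of proper closed analytic subvarieties of $\cD_0$ (the Hodge loci attached to proper $\Q$-subgroups of $Q$). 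Choose a smooth point $h_0\in\cD_0$ off these loci, so that $\MT(h_0)=Q$.

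By the remark following \Cref{propPink}, $\cR_u(Q)=Q\cap\cR_u(P^\cM)\subseteq P_i\cap\cR_u(P^\cM)=\cR_u(P_i)$; combined with $Q\subseteq P_i$ this yields $Q(\R)^+\cR_u(Q)(\C)\subseteq P_i(\R)^+\cR_u(P_i)(\C)$. Applied to $h_0\in\cD_i$, this shows that the Mumford--Tate domain $\cD':=Q(\R)^+\cR_u(Q)(\C)\cdot h_0$ is contained in $\cD_1\cap\cD_2$, and, being connected and containing $h_0$, in $\cD_0$.

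It remains to establish the converse inclusion $\cD_0\subseteq\cD'$ by comparing tangent spaces at the smooth point $h_0$. Introduce $\cM_Q:=\{h\in\cM:h(\S_\C)\subseteq Q_\C\}\subseteq\cM$, the complex analytic subspace of $\cM$ cut out from $\cM^\vee$ by the closed algebraic condition that $h$ factor through $Q_\C\subseteq\GL(V)_\C$. By genericity of $h_0$ every $h\in\cD_0$ satisfies $\MT(h)\subseteq Q$, so $\cD_0\subseteq\cM_Q$. The main step, and the hardest part of the argument, is to show that $\cD'$ is open in $\cM_Q$ at $h_0$: in other words, at a point where $\MT=Q$ is maximal, every infinitesimal deformation of $h_0$ that stays in $\cM_Q$ is an infinitesimal $Q$-conjugate of $h_0$. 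This relies essentially on the description of the complex structure on Mumford--Tate domains as $Q(\R)^+\cR_u(Q)(\C)$-orbits developed in \Cref{Appendix2}. Granted it, $T_{h_0}\cD_0\subseteq T_{h_0}\cM_Q=T_{h_0}\cD'$, while $\cD'\subseteq\cD_0$ gives the reverse inclusion; then the irreducibility of $\cD_0$ and the complex analytic closedness of $\cD'$ force $\cD'=\cD_0$, so $\cD_0$ is a Mumford--Tate domain as required.
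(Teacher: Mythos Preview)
Your approach is essentially the same as the paper's, but you leave the decisive step unproved. The paper's one-line proof invokes the ``moduli interpretation'' established in \Cref{Appendix2}, namely \Cref{PropMTDomainIsHodgeLocus}: for any $h_0\in\cM$ with $Q=\MT(h_0)$, the orbit $\cD'=Q(\R)^+\cR_u(Q)(\C)h_0$ is exactly the irreducible component of $\mathrm{HL}(h_0)$ through $h_0$. Your set $\cM_Q$ coincides with $\mathrm{HL}(h_0)$ by \Cref{LemmaHodgeLocusInTermsOfMTGroup}(i), so the statement you isolate as ``the hardest part'' --- that $\cD'$ is open in $\cM_Q$ at $h_0$ --- is literally \Cref{PropMTDomainIsHodgeLocus}. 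You do not prove it; you only gesture at \Cref{Appendix2} and write ``Granted it''. That is the gap.

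Once you invoke \Cref{PropMTDomainIsHodgeLocus}, the tangent-space detour is unnecessary. You have already shown $\cD'\subseteq\cD_0$ and $\cD_0\subseteq\cM_Q=\mathrm{HL}(h_0)$; since $\cD_0$ is irreducible and contains $h_0$, it lies in the irreducible component of $\mathrm{HL}(h_0)$ through $h_0$, which is $\cD'$. Hence $\cD_0=\cD'$. This is exactly the paper's argument, spelled out. If instead you want to \emph{prove} the openness claim directly, you must carry out the tangent-space computation in the proof of \Cref{PropMTDomainIsHodgeLocus} (showing $T_{h_0}\mathrm{HL}(h_0)\subseteq\bigoplus_{r<0}(\lie Q_\C)^{r,s}$ via the action on Hodge tensors), which you have not done.
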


This lemma has the following immediate corollary.
 \begin{cor}\label{LemmaSmallestMTDomain}
Let  $\cZ$ be a complex analytic irreducible subset of $\cM$. Then there exists a smallest Mumford--Tate domain, denoted by $\cZ^{\mathrm{sp}}$ and called the {\em special closure} of $\cZ$, which contains $\cZ$.
 \end{cor}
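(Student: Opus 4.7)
The plan is to combine \Cref{LemmaIntersectionMTDomains} with a straightforward dimension-minimization argument. First I would observe that the collection of Mumford--Tate subdomains of $\cM$ containing $\cZ$ is non-empty: indeed, $\cM$ itself is a Mumford--Tate domain, obtained by taking $P = P^{\cM}$ and any base point $h \in \cZ$. By \Cref{PropMTDomainComplex} every such MT domain is a complex analytic subspace of $\cM$; moreover, being a smooth homogeneous space under the connected group $P(\R)^{+} W_{-1}(\C)$, it is connected, smooth, and hence irreducible as a complex analytic space, with well-defined complex dimension bounded above by $\dim \cM$.

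Next I would select, among all Mumford--Tate domains containing $\cZ$, one of minimal complex dimension and call it $\cD_{0}$. I claim that $\cD_{0}$ is the sought-for smallest MT domain, and then set $\cZ^{\mathrm{sp}} := \cD_{0}$. To verify the claim, let $\cD_{1}$ be any Mumford--Tate domain containing $\cZ$. Then $\cZ \subseteq \cD_{0} \cap \cD_{1}$, and since $\cZ$ is complex analytic irreducible, it is contained in some irreducible component $\cC$ of $\cD_{0} \cap \cD_{1}$. By \Cref{LemmaIntersectionMTDomains}, $\cC$ is itself a Mumford--Tate domain of $\cM$, and it contains $\cZ$. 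The minimality of $\dim \cD_{0}$ forces $\dim \cC \ge \dim \cD_{0}$. On the other hand $\cC \subseteq \cD_{0}$ with $\cD_{0}$ irreducible of the same dimension, so $\cC = \cD_{0}$. Hence $\cD_{0} = \cC \subseteq \cD_{1}$, as desired.

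This shows that $\cD_{0}$ is contained in every Mumford--Tate domain containing $\cZ$; uniqueness of such a minimum is then automatic (two minima would each contain the other). There is no genuine obstacle in this argument: all the substance has been absorbed into \Cref{PropMTDomainComplex} and \Cref{LemmaIntersectionMTDomains}, and the present corollary is merely the formal extraction of a smallest element from a non-empty family of complex analytic subspaces of bounded dimension that is stable, via irreducible components, under pairwise intersection.
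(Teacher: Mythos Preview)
Your proposal is correct and follows exactly the approach the paper intends: the corollary is stated right after \Cref{LemmaIntersectionMTDomains} as its ``immediate'' consequence, and your dimension-minimization argument is precisely the standard way to unpack that word. The only additional content in the paper is \Cref{LemmaDescriptionSmallestSpecialSubvariety} in the appendix, which goes further and identifies $\cZ^{\mathrm{sp}}$ explicitly as the $\mathrm{MT}(\cZ)(\R)^+\cR_u(\mathrm{MT}(\cZ))(\C)$-orbit of a Hodge-generic point, using the countability of Mumford--Tate domains; but that is a strengthening, not the proof of the corollary itself.
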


We close this subsection with some discussion on the \textit{generic Mumford--Tate group} of a complex analytic irreducible subvariety of $\cM$. In particular the discussion applies to weak Mumford--Tate domains.
The trivial local system $\V= \cM \times V$ underlies a natural family of mixed Hodge structures: for each $h \in \cM$ the triple $(V, (W_\bullet)_h, (\cF^\bullet)_h)$ is a mixed $\Q$-Hodge structure. For any complex analytic irreducible subset $\cZ$ of $\cM$, the first part of the proof of \cite[$\mathsection$4, Lemma~4]{AndreMumford-Tate-gr} applies: for a very general element $h \in \cZ$, the Mumford--Tate group $P(h)$ does not depend on $h$. Such an $h$ is said to be {\em Hodge--generic} in $\cZ$ and its Mumford-Tate group is called the {\em generic Mumford--Tate group} of $\cZ$. We write $\mathrm{MT}(\cZ)$ to denote the generic Mumford--Tate group of $\cZ$. It satisfies the following property: $\mathrm{MT}(h') < \mathrm{MT}(\cZ)$ for any $h' \in \cZ$.

\begin{lemma}\label{LemmaMTDomainOrbitUnderGenericMTGroup}
Let $\cD= P(\R)^+W_{-1}(\CC)h$ be a Mumford-Tate domain in $\cM$ (thus $h \in \cD$, $P= \MT(h)$ and $W_{-1}$ is the unipotent radical of $P$). Then $P= \MT(\cD)$. 
\end{lemma}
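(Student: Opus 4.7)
The plan is to show the two inclusions $\MT(\cD)\subseteq P$ and $P\subseteq \MT(\cD)$ separately, exploiting the fact just recalled that for any complex analytic irreducible subset $\cZ$ of $\cM$ and any $h'\in \cZ$ one has $\MT(h')\subseteq \MT(\cZ)$, with equality on a Hodge-generic locus.

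For the inclusion $\MT(\cD)\subseteq P$, I will show the stronger pointwise statement that $\MT(h')\subseteq P$ for every $h'\in \cD$. By definition of a Mumford--Tate domain, any such $h'$ can be written as $h'=g\cdot h$ with $g\in P(\R)^+W_{-1}(\C)\subseteq P(\C)$, where the action of the real semi-algebraic group from \eqref{l1} (restricted to $P$) on $\cM$ is by inner automorphisms in $P^{\cM}(\C)$, namely $h'(z)=g\,h(z)\,g^{-1}$ for $z\in \S_\C$. Since $h$ factors through $P_\C$ by \Cref{propPink} and $g\in P(\C)$, the morphism $h'\colon \S_\C\to \GL(V_\C)$ factors through $P_\C$ as well. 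Hence by definition of the Mumford--Tate group as the smallest $\Q$-subgroup of $\GL(V)$ whose complexification contains the image of the Deligne torus, we have $\MT(h')\subseteq P$. Choosing $h'$ Hodge-generic in $\cD$ we obtain $\MT(\cD)\subseteq P$.

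For the reverse inclusion, I simply apply the general property to the distinguished point $h\in \cD$ itself: since $\cD$ is a Mumford--Tate domain it is in particular a complex analytic irreducible subset of $\cM$ (by \Cref{PropMTDomainComplex} together with the fact that $\cD$ is the orbit of a connected real group acting holomorphically), so $\MT(h)\subseteq \MT(\cD)$. By assumption $\MT(h)=P$, giving $P\subseteq \MT(\cD)$.

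The only point that warrants care is the very first one, namely that the transitive action of $P(\R)^+W_{-1}(\C)$ on $\cD$ is genuinely the restriction of the $P^{\cM}(\C)$-action on $\cM^\vee$ which is by conjugation on the associated homomorphisms $\S_\C\to \GL(V_\C)$; this is built into the construction of the classifying space recalled in \Cref{SubsectionClassifyingSpace}. Once this is granted, the two inclusions combine to give $P=\MT(\cD)$, proving the lemma.
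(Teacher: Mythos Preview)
Your proof is correct and follows essentially the same approach as the paper's own proof: both obtain $P\subseteq \MT(\cD)$ from $h\in\cD$ and the general inclusion $\MT(h')\subseteq\MT(\cZ)$, and both obtain $\MT(\cD)\subseteq P$ by writing an arbitrary $h'\in\cD$ as $g h g^{-1}$ with $g\in P(\R)^+W_{-1}(\C)$, deducing that $h'$ factors through $P_\C$, hence $\MT(h')\subseteq P$, and then specializing to a Hodge-generic $h'$. Your version is slightly more explicit in justifying that $\cD$ is irreducible complex analytic and that the group action is by conjugation, but the argument is the same.
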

\begin{proof}
By definition of $\MT(\cD)$ the group $P$ is a subgroup of $\MT(\cD)$ . Thus we are reduced to proving the converse inclusion.

Each $h' \in \cD$ is of the form $g h g^{-1}$ for some $g \in P(\R)^+W_{-1}(\C)$, and hence the homomorphism $h' = \colon \S_{\C} \rightarrow \mathrm{GL}(V_{\C})$ factors through $g P_{\C} g^{-1} = P_{\C}$. This implies that $\mathrm{MT}(h') < P$ for all $h' \in \cD$. Looking at a Hodge generic point $h'$ we are done.
\end{proof}

The following lemma, whose proof is given \Cref{Appendix2}, is useful to determine when an orbit is a Mumford--Tate domain. 
\begin{lemma}\label{LemmaSameOrbitUnderNormalSubgroup}
Let $P$ be a $\Q$-subgroup of $\mathrm{GL}(V)$ with $W_{-1} = \cR_u(P)$ and let $\cD$ be a $P(\R)^+W_{-1}(\C)$-orbit in $\cM$. If some $h \in \cD$ satisfies that $h \colon \S_{\C} \rightarrow \mathrm{GL}(V_{\C})$ factors through $P_{\C}$  then $\cD$ is a Mumford--Tate domain and $\mathrm{MT}(\cD) \lhd P$.
\end{lemma}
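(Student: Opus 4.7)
The plan is to identify the generic Mumford--Tate group $M' := \MT(\cD)$ at a Hodge-generic $h' \in \cD$, prove $M' \lhd P$, and then identify $\cD$ with the Mumford--Tate domain $\cD_{M'} := M'(\R)^+ \cR_u(M')(\C) h'$. Pick such an $h'$. Every $h'' = g h g^{-1} \in \cD$ (with $g \in P(\R)^+ W_{-1}(\C)$) satisfies $h''(\S_\C) = g\, h(\S_\C)\, g^{-1} \subset g P_\C g^{-1} = P_\C$, so $M' \subset P$. By the remark following \Cref{propPink}, $\cR_u(M') = M' \cap W_{-1}$, whence $\cD_{M'} \subset \cD$ tautologically.

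To establish $M' \lhd P$, I would consider $\Sigma := \bigcup_{h'' \in \cD} h''(\S_\C) \subset P_\C$. By construction $\Sigma$ is stable under $P(\R)^+ W_{-1}(\C)$-conjugation, and $\Sigma \subset M'_\C$ since $\MT(h'') \subset M'$ for every $h'' \in \cD$. Conversely, $h'(\S_\C) \subset \Sigma$, so $M'$ is the smallest $\Q$-subgroup of $\GL(V)$ whose complexification contains $\Sigma$. The $\Q$-Zariski closure $\bar{\Sigma}^\Q \subset P$ inherits the $P(\R)^+ W_{-1}(\C)$-invariance under conjugation; the subgroup of $P(\C)$ stabilising $\bar{\Sigma}^\Q$ setwise is Zariski-closed and contains $P(\R)^+$, which is Zariski-dense in the connected group $P$. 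Thus $\bar{\Sigma}^\Q$ is $P(\C)$-invariant under conjugation, and so is $M'$ (the smallest $\Q$-subgroup containing it), yielding $M' \lhd P$.

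Knowing $M' \lhd P$, I would finish by a tangent-space comparison at $h'$. Setting $\bar{P} := P/M'$, the composite $\bar{h}' \colon \S_\C \xrightarrow{h'} P_\C \to \bar{P}_\C$ is trivial since $h'(\S_\C) \subset M'_\C = \ker(P_\C \to \bar{P}_\C)$. Hence the adjoint Hodge structure on $\lie \bar{P} \cong \lie P / \lie M'$ is trivial, i.e., $(\lie \bar{P})^{0,0} = \lie \bar{P}$; equivalently $\lie P^{p,q} = \lie M'^{p,q}$ for all $(p,q) \neq (0,0)$. In particular, the holomorphic tangent spaces
\[
\lie P_\C / F^0 \lie P_\C \quad \text{and} \quad \lie M'_\C / F^0 \lie M'_\C
\]
at $h'$ of $\cD$ and $\cD_{M'}$ respectively (both open in the corresponding $P(\C)$- and $M'(\C)$-orbits in $\cM^\vee$) coincide. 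Since $\cD_{M'} \subset \cD$ are connected complex submanifolds of the same dimension, $\cD_{M'}$ is open in $\cD$. Moreover, normality $M' \lhd P$ means the $P(\R)^+ W_{-1}(\C)$-action on $\cD$ permutes the $M'(\R)^+ \cR_u(M')(\C)$-orbits, so every such orbit is open in $\cD$; as they partition the connected set $\cD$, we must have $\cD = \cD_{M'}$.

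The crux of the argument is the normality step: upgrading invariance under the semi-algebraic group $P(\R)^+ W_{-1}(\C)$ to invariance under the complex algebraic group $P(\C)$, which relies on the Zariski density of $P(\R)^+$ in $P$. Once $M' \lhd P$ is known, the tangent-space identification, together with the connectedness/partitioning argument, is formal.
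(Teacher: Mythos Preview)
Your overall strategy is sound, and the second half (the tangent-space comparison plus the open-orbits/connectedness argument) is a clean, self-contained route to $\cD = \cD_{M'}$. The paper instead invokes \Cref{LemmaDescriptionSmallestSpecialSubvariety} to obtain $\cD \subset \cD_{M'}$ directly, which rests on the Hodge-locus description of Mumford--Tate domains and a countability argument; your approach bypasses all of that machinery.

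There is, however, a genuine gap in your normality step. You claim that the $\Q$-Zariski closure $\bar{\Sigma}^\Q$ ``inherits the $P(\R)^+W_{-1}(\C)$-invariance under conjugation''. But conjugation by $g \in P(\R)^+$ is only an $\R$-automorphism of $P$, so $g\,\bar{\Sigma}^\Q g^{-1}$ is $\C$-Zariski closed and contains $\Sigma$, yet there is no reason it should contain the (possibly larger) $\Q$-Zariski closure $\bar{\Sigma}^\Q$. Hence you cannot conclude that $P(\R)^+$ lies in the setwise stabiliser of $\bar{\Sigma}^\Q$.

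The fix is easy: work with the $\C$-Zariski closure $\bar{\Sigma}^{\C\text{-}\Zar}$ (or the $\C$-subgroup it generates) instead. Conjugation by any $g \in P(\C)$ is a $\C$-automorphism, so $\bar{\Sigma}^{\C\text{-}\Zar}$ genuinely inherits $P(\R)^+$-invariance; your Zariski-density argument then gives $P(\C)$-invariance. Now for $g \in P(\Q)$, the $\Q$-subgroup $gM'g^{-1}$ satisfies $(gM'g^{-1})_\C \supset g\,\bar{\Sigma}^{\C\text{-}\Zar}g^{-1} = \bar{\Sigma}^{\C\text{-}\Zar} \supset h'(\S_\C)$, whence $gM'g^{-1} \supset \MT(h') = M'$ and equality by dimension; Zariski density of $P(\Q)$ in the connected $\Q$-group $P$ finishes. (Alternatively, and closer to the paper: for $g \in P(\Q) \cap P(\R)^+$, which is still Zariski-dense in $P$, one has $g\cdot h' \in \cD$ directly, so $gM'g^{-1} = \MT(g\cdot h') \subset M'$.)
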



\section{Variation of mixed Hodge structures} \label{vmhs}

Let $f \colon X \to S$ be a morphism of algebraic varieties. If $f$ satisfies a sharp notion of topological local constancy (suffice it to say here it is automatically satisfied if $f$ is proper smooth, and is true over a Zariski-open subset of $S$ for any morphism of varieties), then $f$ gives rise to a family of mixed Hodge structures (pure when $f$ is proper smooth) on $H^n(X_s, \Q)$, as $s$ varies over $S^\an$, subject to certain rules. This leads to the notion of a (graded-polarizable) variation of mixed Hodge structures, which we now recall:

\begin{defn} Let $S$ be a connected complex manifold. A \textit{variation of mixed Hodge structures} (abbreviated VMHS) on $S$ is a triple $(\V_\ZZ,W_\bullet,\cF^\bullet)$ consisting of:
\begin{enumerate}
\item[(i)] a local system $\V_\ZZ$ of free $\Z$-modules of finite rank on $S$;
\item[(ii)] a finite increasing filtration $W_\bullet$ of the local system $\V:= \V_\ZZ\otimes_{\Z_S}\Q_S$ by local subsystems (weight filtration);
\item[(iii)] a finite decreasing filtration $\cF^\bullet$ of the holomorphic vector bundle $\cV := \V_\ZZ \otimes_{\Z}\cO_S$ by holomorphic subbundles (Hodge filtration),
\end{enumerate}
satisfying the following conditions:
\begin{enumerate}
\item for each $s \in S$, the triple $(\V_{s}, W_\bullet(s), \cF^\bullet(s))$ is a mixed Hodge structure;
\item the connection $\nabla \colon \cV \rightarrow \cV \otimes_{\cO_S}\Omega_S^1$ whose sheaf of horizontal sections is $\V_{\C}:= \V \otimes_\QQ \CC$ satisfies the Griffiths' transversality condition
\[
\nabla(\cF^p) \subseteq \cF^{p-1}\otimes \Omega_S^1.
\]
\end{enumerate}
\end{defn}

\begin{defn}
A VMHS $(\V_\ZZ,W_\bullet,\cF^\bullet)$ on $S$ is called {\em graded-polarizable} if the induced variations of pure $\Q$-Hodge structures (VHS) $\mathrm{Gr}^W_k \V$, $k \in \ZZ$, are all polarizable, \textit{i.e.} for each $k \in \ZZ$ there exists a morphism of local systems
\[
\cQ_k \colon \mathrm{Gr}^W_k \V \otimes \mathrm{Gr}^W_k \V \rightarrow \Q_S
\]
inducing on each fiber a polarization of the corresponding $\Q$-Hodge structure of weight~$k$.
\end{defn}

{\em From now on all VMHS are assumed to be graded-polarizable.}

\subsection{Mumford-Tate group and monodromy group}\label{SubsectionMTgroupMongroup}
Let $S$ be a connected complex manifold and $(\V_\ZZ,W_\bullet,\cF^\bullet)$ a VMHS on $S$. The pull-back $\pi^*\V_\ZZ$ of $\V_\ZZ$ along the universal covering map $\pi \colon \tilde{S} \rightarrow S$ is canonically trivialized: $\pi^*\V_\ZZ \cong \tilde{S} \times V_\ZZ$, with $V_\ZZ= H^0(\tilde{S}, \pi^*\V_\ZZ)$. 

For $s \in S$, we denote by $\MT_s \subseteq \mathrm{GL}(\V_s)$ the Mumford--Tate group of the Hodge structure $\V_s$ and by $H_s^{\mathrm{mon}} \subseteq \mathrm{GL}(\V_s)$ the \textit{connected algebraic monodromy group} at $s$,
that is the connected component of identity of the smallest $\QQ$-algebraic subgroup of $\mathrm{GL}(\V_s)$ containing the image under monodromy of $\pi_1(S, s)$.

By definition the algebraic monodromy group $H_s^{\mathrm{mon}}$ is locally constant on $S$.
By \cite[$\mathsection$4, Lemma~4]{AndreMumford-Tate-gr}, following \cite[$\mathsection$ 7.5]{Del} in the pure case, the Mumford-Tate group $\MT_s \subset \GL(\V_{s})$ is locally constant on $S^\circ = S \setminus \Sigma$ where $\Sigma$ denotes a meager subset of $S$; and $H_s^{\mathrm{mon}} $ is a subgroup of $P_s$ for all $s \in S^\circ$ as $(\V_{\Z},W_\bullet,\cF^\bullet)$ is graded-polarizable. We call $S^{\circ}$ the \textit{Hodge-generic locus}. For $s \in S^\circ$ the group $\mathrm{MT}_{s_{0}}$ is called the \textit{generic Mumford--Tate group} $\MT(S)$ of  $(\V_{\Z},W_\bullet,\cF^\bullet)$.

\subsection{Admissible VMHS}

\textit{Admissible} VMHSs are the ones with good asymptotic properties. The concept was introduced by Steenbrick--Zucker \cite[Properties~3.13]{SteenbrinkVariation-of-mi} on a curve and Kashiwara \cite[1.8 and 1.9]{KashiwaraA-study-of-vari} in general. All VMHSs which arise from geometry are admissible \cite{EZ} and all VHSs are automatically admissible. We recall briefly the definition.

\begin{defn}[admissible VMHS]
A VMHS $(\V_{\Z}, W_\bullet, \cF^\bullet)$ over the punctured unit disc $\Delta^*$ is called admissible if
\begin{itemize}
\item[(i)] it is graded-polarizable;
\item[(ii)] the monodromy $T$ around zero is quasi-unipotent and the logarithm $N$ of the unipotent part of $T$ admits a weight filtration $M(N,W_\bullet)$ relative to $W_\bullet$ (see \cite[$\mathsection$3.1]{KashiwaraA-study-of-vari});
\item[(iii)] Let $\ol{\cV}$, resp. $W_k \ol{\cV}$, be Deligne's canonical extension of $\cV$, resp. of $\mathcal{O}_{\Delta^*} \otimes_\QQ W_k \V$, to $\Delta$. The Hodge filtration $\cF^\bullet$ extends to a locally free filtration $\ol{\cF}^\bullet$ of $\ol{\cV}$ such that $\mathrm{Gr}^p_{\ol{\cF}} \mathrm{Gr}_k^W \ol{\cV}$ is locally free.
\end{itemize}

Let $S$ be a connected complex manifold compactifiable by a compact complex analytic space $\bar{S}$. A graded-polarizable variation of mixed Hodge structure $(\V_{\Z},W_\bullet,\cF^\bullet)$ on $S$ is said admissible with respect to $\bar{S}$ if for every holomorphic map $i \colon \Delta \rightarrow \bar{S}$ which maps $\Delta^*$ to $S$, the variation $i^*(\V_{\Z},W_\bullet,\cF^\bullet)$ is admissible.

\end{defn}

Let $S$ be a smooth complex quasi-projective variety. The property for a VHMS on $S^\an$ to be admissible with respect to a smooth projective compactification $\bar{S}^\an$ is easily seen to be independent of the choice of $\bar{S}$. Hence we can and will talk of admissible VMHSs on $S^\an$. {\em From now on, and in order to simplify notations, we will not distinguish between $S$ and $S^\an$, the meaning being clear from the context.}

\medskip
Admissible VMHSs have the following advantage (see Andr\'{e} \cite[$\mathsection$5, Theorem~1]{AndreMumford-Tate-gr}, following \cite[$\mathsection$7.5]{Del} in the pure case):

\begin{thm} (Deligne, Andr\'e) \label{ThmAndreNormal}
Let $(\V_{\Z},W_\bullet,\cF^\bullet)$ be an admissible VMHS over a smooth connected complex quasi-projective variety $S$. Then for any Hodge-generic point $s \in S^\circ$, the connected algebraic monodromy group $H_s^{\mathrm{mon}}$ is a normal subgroup of the derived group $\MT(S)^{\mathrm{der}}$ of the generic Mumford-Tate group of $S$.
\end{thm}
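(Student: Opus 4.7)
\textit{Plan.} The strategy is to establish two assertions separately: (a) $H_s^{\mon} \subseteq \MT(S)^{\der}$, and (b) $\MT(S)$ normalizes $H_s^{\mon}$. Together they give the theorem. After replacing $S$ by a finite étale cover --- which changes neither the connected algebraic monodromy group nor the generic Mumford--Tate group --- the local monodromy theorem applied at each branch of the boundary divisor of a smooth compactification of $S$ ensures that all local monodromies are unipotent; consequently $H_s^{\mon}$ is generated by unipotent elements.

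For (a), set $P := \MT(S)$ and $U := \cR_u(P)$. The quotient $P/U$ is the Mumford--Tate group of the polarizable pure VHS $\Gr^W_\bullet \V$, hence is reductive, and every character of $P$ factors through the $\Q$-torus $(P/U)^{\ab}$. Such characters vanish on unipotent elements, hence on the unipotent-generated group $H_s^{\mon}$. Moreover, the weight cocharacter of $R \subseteq P/U$ acts with nonzero weights on $U$, so $[R,U]=U$ and thus $U \subseteq P^{\der}$; in particular $\ker(P \to (P/U)^{\ab}) = P^{\der}$. This yields $H_s^{\mon} \subseteq P^{\der}$.

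For (b), set $d := \dim H_s^{\mon}$ and consider the $\Q$-line $L := \wedge^d \mathfrak{h}_s^{\mon} \subseteq \wedge^d \End(V_s)$. By elementary Lie theory $N_{\GL(V)}(H_s^{\mon}) = \stab_{\GL(V)}(L)$, so it suffices to show that $\MT(S)$ stabilizes $L$. The local system $\wedge^d \End(\V)$ underlies a natural admissible VMHS on $S$, and $L$ is monodromy-stable because $\pi_1(S,s) \subseteq H_s^{\mon}(\Q)$ normalizes $\mathfrak{h}_s^{\mon}$; since $H_s^{\mon}$ is generated by unipotents and every unipotent acts trivially on a one-dimensional representation, $L$ is in fact pointwise fixed by monodromy. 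By Kashiwara's theorem of the fixed part for admissible VMHS \cite{KashiwaraA-study-of-vari}, the space of monodromy-invariants in $\wedge^d \End(V_s)$ forms a canonical sub-MHS, hence is stabilized by $\MT(S)$. An analysis in the spirit of \cite[$\mathsection 5$]{AndreMumford-Tate-gr} --- studying the $\MT(S)$-orbit of $L$ inside this sub-MHS via the weight filtration and reducing to Deligne's pure case on the graded pieces $\Gr^W_\bullet \V$ --- then shows that $\MT(S)$ in fact stabilizes the line $L$ itself. Combined with (a), this gives $H_s^{\mon} \triangleleft \MT(S)^{\der}$.

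The principal technical obstacle is the invocation of Kashiwara's theorem of the fixed part in the admissible mixed setting in place of Deligne's pure analogue, and the refinement from stabilization of the ambient sub-MHS of monodromy-invariants to stabilization of the line $L$ itself. Admissibility is essential here: without it, monodromy-invariant vectors in an ambient VMHS need not assemble into a sub-MHS, and the Chevalley-type argument used in step~(b) collapses.
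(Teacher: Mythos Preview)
The paper does not give its own proof here; it simply cites Andr\'e \cite[\S 5, Thm.~1]{AndreMumford-Tate-gr} and Deligne \cite[\S 7.5]{Del}. Your two-step plan --- first $H_s^{\mon}\subseteq\MT(S)^{\der}$, then normality via a Chevalley line and the mixed fixed-part theorem --- does match the shape of Andr\'e's argument, and your identification of admissibility as the hypothesis enabling the fixed-part theorem is correct.

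There is, however, a genuine gap in step~(a). The inference ``local monodromies are unipotent, hence $H_s^{\mon}$ is generated by unipotent elements'' fails: loops around the boundary components of a compactification $\bar S\supset S$ generate $\pi_1(S,s)$ only when $\bar S$ is simply connected. When $S$ is projective there are no local monodromies at all, yet $H_s^{\mon}$ can certainly be nontrivial. The same flaw contaminates step~(b), where you invoke unipotent generation to upgrade ``$L$ is monodromy-stable'' to ``$L$ is pointwise monodromy-fixed''. The standard route to~(a) avoids unipotents entirely: every character $\chi\colon P\to\G_m$ is realized on a rank-one subquotient of some tensor construction $T^{m,n}\V$; being graded-polarizable of rank one this is a \emph{pure} polarizable VHS, whose monodromy therefore has finite image in $\G_m(\Q)$, so connectedness gives $\chi(H_s^{\mon})=1$ and hence $H_s^{\mon}\subseteq P^{\der}$. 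For~(b), rather than isolating the single line $L$, Andr\'e shows that the monodromy-stable subspace $\mathfrak h_s^{\mon}\subset\End(V_s)$ is itself a sub-MHS at the generic point --- this is where the full strength of the admissible fixed-part theorem and his analysis via the weight filtration enter --- and a sub-MHS is automatically $\MT(S)$-stable, which is precisely the statement that $\MT(S)$ normalizes $H_s^{\mon}$. Your final paragraph gestures at this step but does not carry it out.
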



\section{Mixed Hodge data}\label{SectionMHD}
Classifying mixed Hodge structures with prescribed Mumford-Tate group
leads to the formalism of mixed Hodge data introduced
in \cite{Klingler-conj-}, following \cite{PinkThesis} in the
Shimura case. This group theoretical
formalism is useful to relate VMHS and Mumford-Tate domains.

\subsection{Mixed Hodge data} \label{MHD}
\begin{defn}\label{DefnMixedHD}
A connected mixed Hodge datum is a pair $(P,\cX)$, where $P$ is a
connected linear algebraic group over $\Q$ whose unipotent radical we
denote by $W_{-1}$, and $\cX \subseteq
\hom(\S_{\C},P_{\C})$ is a $P(\R)^+W_{-1}(\C)$-conjugacy class
such that one (and then any) $h \in \cX$
satisfies property $(i)$, $(ii)$ and $(iii)$ of \Cref{propPink}.
A morphism $(P, \cX) \to (P', \cX')$ of mixed Hodge data is a morphism
$P \to P'$ of $\Q$-algebraic groups inducing an equivariant map $\cX
\to \cX'$.
\end{defn}

Let $(P,\cX)$ be a mixed Hodge datum.
As a homogeneous space under $P(\R)^+W_{-1}(\C)$, the set
$\cX$ is naturally endowed with a structure of real semi-algebraic
variety. In general however it does not carry any complex structure. 
To relate $\cX$ to complex geometry, let us fix $\rho \colon P
\rightarrow \mathrm{GL}(V)$ a $\Q$-representation. By \Cref{propPink},
for each $h \in
\cX$ the map $\rho \circ h$ endows $V$ with a rational mixed Hodge structure, whose weight filtration and Hodge
numbers are easily seen to be independent of $h \in \cX$. We thus obtain a
$P(\R)^+W_{-1}(\C)$-equivariant map $$\varphi_{\rho}
\colon \cX \rightarrow \cM, $$ for $\cM$ a classifying space as in \Cref{SubsectionClassifyingSpace}.
By \cite[1.7]{PinkThesis}, $\varphi_{\rho}$ factors through a complex
manifold $\cD$ which is independent of $\rho$ \footnote{ Take $\rho$ to be a faithful representation
  of $P$, then we can take $\cD = \varphi_{\rho}(\cX)$.}. From now on
we will just write
\begin{equation}\label{EqMHDToClassifyingSpace}
\varphi \colon \cX \rightarrow \cD
\end{equation}
and call this map {\em the classifying map of the Hodge datum} $(P, \cX)$.
The group $P(\R)^+W_{-1}(\C)$ acts on $\cD$ preserving its complex
structure, and the action of
$W_{-1}(\C)$ on $\cD$ is holomorphic.

\begin{lemma} (\!\!\cite[1.8(b)]{PinkThesis}) \label{below 4.1}
For each $x \in \cD$, the fiber $\varphi^{-1}(x)$ is a principal
homogeneous space under $\exp(F^0_x(\lie W_{-1})_{\C})$.
\end{lemma}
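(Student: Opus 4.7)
The plan is to exhibit $G_x := \exp(F^0_x(\lie W_{-1})_\C) \subseteq W_{-1}(\C)$ as a group acting on $\varphi^{-1}(x)$ freely and transitively by conjugation, using the ambient $W_{-1}(\C)$-action on $\cX$. To see that the action preserves the fiber, fix $\lambda \in F^0_x(\lie W_{-1})_\C$ and $h \in \varphi^{-1}(x)$. Via a faithful $\rho$, the endomorphism $\rho(\lambda) \in \End(V)_\C$ satisfies $\rho(\lambda)(F^p V_\C) \subseteq F^p V_\C$ (since $\rho(\lambda) \in F^0_x(\End V)_\C$) and $\rho(\lambda)(W_k V) \subseteq W_{k-1}V$ (since $\rho(\lambda) \in W_{-1}(\End V)_\C$), so $\rho(\exp\lambda)$ preserves $F^\bullet_x$, and hence the conjugate $\exp(\lambda)h\exp(-\lambda)$ still maps to $x$ under $\varphi$.

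For freeness, suppose $\exp(\lambda)h\exp(-\lambda) = h$ with $\lambda \in F^0_x(\lie W_{-1})_\C$. Then $\lambda$ commutes with $h(\S_\C)$, so it lies in the $(0,0)$-component $I^{0,0}_h(\lie P)$ of the bigrading of $\lie P_\C$ induced by $h$. On the other hand $F^0_x(\lie W_{-1})_\C = \bigoplus_{r\ge 0,\,r+s\le -1} I^{r,s}_h(\lie P)$, and the constraints $r \ge 0$ and $r+s \le -1$ force $s \le -1$; these two subspaces meet only at $0$, so $\lambda = 0$.

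The substantive step is transitivity. Given $h, h' \in \varphi^{-1}(x)$, the orbit property of $\cX$ yields some $g \in P(\R)^+W_{-1}(\C)$ with $h' = ghg^{-1}$, and the equality of induced Hodge filtrations forces $\rho(g)$ to preserve $F^\bullet_x$. I would first reduce to the case $g \in W_{-1}(\C)$ by modifying $g$ on the right by an element of the stabilizer of $h$: projecting to the pure quotient $P/W_{-1}$, the image $\bar g$ centralizes $\bar h$ (the pure-case classifying map being injective, as $W_{-1}(P/W_{-1}) = 1$); one lifts this centralizing element back to $g_0 \in P(\R)^+$ centralizing $h$, using the vanishing of Galois cohomology with coefficients in the unipotent kernel $W_{-1}$. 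After this reduction $g = \exp(\eta)$ with $\eta \in \lie W_{-1,\C}$, and the condition that $\rho(\exp\eta)$ preserves $F^\bullet_x$ forces $\eta \in F^0_x(\lie W_{-1})_\C$: decomposing $\eta = \sum \eta^{r,s}$ along the bigrading of $\lie W_{-1,\C}$ induced by $h$ and inducting on weight in the nilpotent algebra $\lie W_{-1,\C}$, each $\eta^{r,s}$ with $r < 0$ must vanish.

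The hard part is this final filtered-nilpotent argument. One may equivalently invoke the Deligne--Pink fact that inside $\End(V)_\C$ the set of bigradings inducing a fixed pair $(W_\bullet, F^\bullet)$ is a torsor under $\exp(F^0 \cap W_{-1}(\End V)_\C)$, and then restrict to $\lie P_\C$ via Pink's condition (iii), which identifies $W_{-1}(\lie P)$ with $\lie W_{-1}$. This restriction produces exactly $G_x$, matching the desired torsor structure on $\varphi^{-1}(x)$ and completing the argument.
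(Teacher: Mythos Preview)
The paper gives no proof; it simply cites \cite[1.8(b)]{PinkThesis}. Your argument is essentially correct, with one step needing a cleaner justification.

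The ``Galois cohomology'' remark in the reduction is beside the point, and the claim $g_0 \in P(\R)^+$ is slightly off. What you actually need is that $C_h := Z_{P_\C}(h(\S_\C))$ surjects onto $C_{\bar h} := Z_{G_\C}(\bar h(\S))$. This holds because both are connected (centralizers of tori in connected linear algebraic groups are connected), and the $h$-Levi $L_h \subset P_\C$ (with Lie algebra $\bigoplus_{r+s=0}I^{r,s}_h(\lie P)$, through which $h$ factors) maps isomorphically to $G_\C$, carrying $C_h \subset L_h$ onto $C_{\bar h}$. The resulting lift $g_0$ lies a priori only in $P(\C)$, but that already suffices: $w := g g_0^{-1}$ has $\bar w = 1$, so $w \in W_{-1}(\C)$ regardless. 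After this, the implication ``$\exp(\eta)$ preserves $F^\bullet_x \Rightarrow \eta \in F^0$'' follows directly from the fact that $F^0\End(V)_\C$ is a subalgebra (take $\log$ of $1 + (\rho(w)-1)$), without the bigraded induction you sketch. Your closing alternative, via Deligne's torsor description of bigradings compatible with a fixed $(W_\bullet,F^\bullet)$ restricted to $\lie P$, is essentially Pink's own route.
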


\noindent
In particular
$\varphi$ is an isomorphism in the pure case.

\subsection{Mixed Hodge data and Mumford-Tate domains}
We now relate mixed Hodge data and Mumford-Tate domains by showing
that the complex space $\cD$ in (\ref{EqMHDToClassifyingSpace}) is
a Mumford-Tate domain, and that conversely any Mumford-Tate domain appears as a target
in (\ref{EqMHDToClassifyingSpace}) for some connected mixed Hodge
datum. We start with the case where $\cD=\cM$ is a classifying space.

\begin{lemma}\label{LemmaMHDAssociatedWithClassifyingSpace}
 Let $\cM$ be a classifying space of mixed Hodge structure as in
 \Cref{SubsectionClassifyingSpace}, $P^\cM$ the corresponding group, and 
 $W_{-1}^{\cM} $ its unipotent radical.
 
There exists a mixed Hodge datum $(P^{\cM},\cX^{\cM})$ such that the
classifying map 
(\ref{EqMHDToClassifyingSpace}) for $(P^{\cM},\cX^{\cM})$ reads
$\varphi^{\cM} \colon \cX^{\cM} \rightarrow \cM$. For any $h \in
\cX^{\cM}$, the mixed Hodge structures on $\lie P^{\cM}$ induced by
$h$ and by $\varphi^{\cM}(h)$ coincide. 
\end{lemma}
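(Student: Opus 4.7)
The plan is to produce $\cX^{\cM}$ by reading off, from each $h\in\cM$, a homomorphism $\tilde h:\S_\C\to P^{\cM}_\C$, and to then check that this construction enjoys all the required properties. By \Cref{PropBiGradingMHS}, an MHS $h\in\cM$ is equivalent to a bigrading $V_\C=\bigoplus_{p,q}I^{p,q}$, and via the identification $X_*(\S)\simeq\Z\oplus\Z$ recalled before \Cref{propPink} this bigrading is equivalent to a homomorphism $\tilde h:\S_\C\to\GL(V_\C)$. I would first check that $\tilde h$ factors through $P^{\cM}_\C\subseteq\GL(V_\C)$: the weight subspaces $W_k V_\C=\bigoplus_{p+q\le k}I^{p,q}$ are direct sums of $\tilde h$-eigenspaces, so $\tilde h$ preserves $W_\bullet$; and on each $\Gr^W_k V_\C$ the induced homomorphism is the one canonically associated to a polarized pure $\Q$-Hodge structure of weight $k$, which lands in the $Q_k$-factor of the reductive quotient $P^{\cM}/W^{\cM}_{-1}$ in the sense of \eqref{EquationQGroupPM}. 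Set $\cX^{\cM}:=\{\tilde h : h\in\cM\}\subseteq\hom(\S_\C, P^{\cM}_\C)$.

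Next I would show that $\cX^{\cM}$ is a single $P^{\cM}(\R)^+W^{\cM}_{-1}(\C)$-conjugacy class. The bijection $h\leftrightarrow\tilde h$ intertwines the transitive action of $P^{\cM}(\R)^+W^{\cM}_{-1}(\C)$ on $\cM$ (recalled in \Cref{SubsectionClassifyingSpace}, via \eqref{l1}) with the conjugation action on $\hom(\S_\C,P^{\cM}_\C)$, because applying $g\in P^{\cM}(\R)^+W^{\cM}_{-1}(\C)$ to a bigrading produces the bigrading whose associated homomorphism is $g\tilde h g^{-1}$. I would then verify conditions (i)--(iii) of \Cref{propPink} for $(P^{\cM},\tilde h)$: condition~(i) reduces on each graded piece to the standard fact that a polarized pure Hodge structure over $\R$ yields a homomorphism defined over $\R$ after passage to the isometry group; condition~(ii) is the observation that the composite $\G_{m,\R}\xrightarrow{w}\S\xrightarrow{\tilde h}(P^{\cM}/W^{\cM}_{-1})_\R$ acts as multiplication by $r^{-k}$ on $\Gr^W_k V$ and is therefore a $\Q$-central cocharacter; condition~(iii) follows from the description of the weight filtration on $\lie P^{\cM}\subseteq V\otimes V^\vee$ as inherited from $V$, together with the identification of $W^{\cM}_{-1}$ as the kernel of $P^{\cM}\to\prod_k\Aut(Q_k)$.

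Having established that $(P^{\cM},\cX^{\cM})$ is a mixed Hodge datum, applying the classifying map construction \eqref{EqMHDToClassifyingSpace} to the tautological representation $\rho:P^{\cM}\hookrightarrow\GL(V)$ gives $\varphi^{\cM}(\tilde h)=\rho\circ\tilde h$, which by design recovers the MHS $h\in\cM$; in particular $\varphi^{\cM}:\cX^{\cM}\to\cM$ is surjective and realises the classifying map of $(P^{\cM},\cX^{\cM})$, forcing the complex manifold $\cD$ of \eqref{EqMHDToClassifyingSpace} to be $\cM$ itself. The equality of the two MHS on $\lie P^{\cM}$ is then formal: the one attached to $\tilde h$ via Pink's framework is $\Ad_{P^{\cM}}\circ\tilde h$, while the one attached to $\varphi^{\cM}(\tilde h)=h$ is the restriction to $\lie P^{\cM}\subseteq\End(V)=V\otimes V^\vee$ of the MHS on $\End(V)$ induced by $h$, i.e., of $\Ad_{\GL(V)}\circ h$; and these coincide by functoriality of the adjoint representation along the closed immersion $P^{\cM}\hookrightarrow\GL(V)$. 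The main technical point is the verification, hidden in the first paragraph, that on each graded piece $\tilde h$ really lands in the relevant automorphism group appearing in \eqref{EquationQGroupPM}, which requires carefully unwinding the conventions in that definition.
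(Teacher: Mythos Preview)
Your definition $\cX^{\cM}:=\{\tilde h : h\in\cM\}$ is not a full $P^{\cM}(\R)^+W^{\cM}_{-1}(\C)$-conjugacy class, so $(P^{\cM},\cX^{\cM})$ as you have written it is not a mixed Hodge datum in the sense of \Cref{DefnMixedHD}. The problem is the equivariance claim in your second paragraph: while it is true that for real $g\in P^{\cM}(\R)^+$ the Deligne bigrading of $g\cdot h$ is $g(I^{p,q}_h)$, this fails for non-real $g\in W^{\cM}_{-1}(\C)$. Concretely, if $g\in\exp\bigl(F^0_h(\lie W^{\cM}_{-1})_\C\bigr)\setminus\{1\}$, then $g$ preserves both $W_\bullet$ and $F^\bullet_h$, so $g\cdot h=h$ in $\cM$ and hence $\widetilde{g\cdot h}=\tilde h$; but $g$ does not preserve each $I^{p,q}$ (any nonzero element of $(\lie W^{\cM}_{-1})_\C$ strictly lowers weight), so $g\tilde h g^{-1}\neq\tilde h$. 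Thus the conjugacy orbit of $\tilde h$ is strictly larger than your $\{\tilde h:h\in\cM\}$: the Deligne bigrading furnishes only a \emph{section} of $\varphi^{\cM}$, whose fibres are principal homogeneous under $\exp\bigl(F^0_h(\lie W^{\cM}_{-1})_\C\bigr)$ by \Cref{below 4.1}. A two-step weight filtration with $\Gr^W_0$ containing a piece of type $(0,0)$ and $\Gr^W_{-1}$ containing a piece of type $(0,-1)$ already gives $F^0_h(\lie W^{\cM}_{-1})_\C\neq 0$.

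The fix is exactly what the paper does: pick one $h\in\cM$, view it via \Cref{PropBiGradingMHS} as an element of $\hom(\S_\C,P^{\cM}_\C)$, and \emph{define} $\cX^{\cM}$ to be its full $P^{\cM}(\R)^+W^{\cM}_{-1}(\C)$-conjugacy class. Your verification of conditions (i)--(iii) of \Cref{propPink} for this single $h$ is fine, so $(P^{\cM},\cX^{\cM})$ is a mixed Hodge datum; the classifying map $\varphi^{\cM}$ then arises from Pink's general construction \eqref{EqMHDToClassifyingSpace}, and its image is all of $\cM$ by equivariance together with the transitivity of the $P^{\cM}(\R)^+W^{\cM}_{-1}(\C)$-action on $\cM$. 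Your final paragraph on the coincidence of the two adjoint Hodge structures on $\lie P^{\cM}$ is correct and carries over unchanged.
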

\begin{proof} Take $h \in \cM$. Then $h \in
  \hom(\S_{\C},P^{\cM}_{\C})$ satisfies conditions $(i)$, $(ii)$ and
  $(iii)$ of \Cref{propPink}. In particular $(P^{\cM}, \cX^{\cM})$ is a mixed
  Hodge datum, where $\cX^{\cM} \colon = P^{\cM}(\R)^+W_{-1}^{\cM}(\C)h
  \subseteq \hom(\S_{\C},P^{\cM}_{\C})$. The existence of
  $\varphi^{\cM}$ follows from \cite[1.7]{PinkThesis}; it is precisely
  the $\varphi$ from (\ref{EqMHDToClassifyingSpace}) for $(P^{\cM},\cX^{\cM})$. 
\end{proof}



\begin{prop}\label{PropMTDomainMHD}
  Let $\cM$ be a classifying space of mixed Hodge structure as in
  \Cref{SubsectionClassifyingSpace}, with associated connected mixed
    Hodge datum $(P^{\cM},\cX^{\cM})$ and classifying map
    $\varphi^{\cM} \colon \cX^{\cM} \rightarrow \cM$ as in \Cref{LemmaMHDAssociatedWithClassifyingSpace}.
\begin{enumerate}
\item[(i)] For each Mumford--Tate domain $\cD$ in $\cM$, there exists
  a sub-mixed Hodge datum $(\mathrm{MT}(\cD),\cX)$ of
  $(P^{\cM},\cX^{\cM})$ such that $\varphi^{\cM}(\cX) = \cD$. Moreover
  $\varphi := \varphi^{\cM}|_{\cX} \colon \cX \rightarrow \cD$ is
  precisely the classifying map (\ref{EqMHDToClassifyingSpace}) for $(\mathrm{MT}(\cD),\cX)$. 
\item[(ii)] Conversely for any sub-mixed Hodge datum $(P,\cX)$ of $(P^{\cM},\cX^{\cM})$, the image $\varphi^{\cM}(\cX)$ is a Mumford--Tate domain in $\cM$ (whose generic Mumford--Tate group is a normal subgroup of $P$).
\end{enumerate}
\end{prop}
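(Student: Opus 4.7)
The plan is to produce the sub-datum by hand from any chosen base point of $\cD$ in part~(i), and to verify that the image of a sub-datum is a Mumford--Tate orbit in~$\cM$ by appealing to \Cref{LemmaSameOrbitUnderNormalSubgroup} in part~(ii); the key point throughout is to exploit the $P(\R)^+W_{-1}(\C)$-equivariance of $\varphi^{\cM}$.

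For~(i): let $\cD \subseteq \cM$ be a Mumford--Tate domain. By \Cref{DefnMTDomain} and \Cref{LemmaMTDomainOrbitUnderGenericMTGroup}, there exists $h \in \cD$ with $\MT(h)=\MT(\cD)=:P$ and $\cD = P(\R)^+W_{-1}(\C)\cdot h$, where $W_{-1}=\cR_u(P)$. Under \Cref{PropBiGradingMHS} the point $h \in \cM$ corresponds to a homomorphism $h\colon \S_{\C}\to\GL(V_{\C})$ which, by definition of the Mumford--Tate group, factors through $P_{\C}$; by the remark following \Cref{propPink}, $P$ itself satisfies conditions $(i)$--$(iii)$ of \Cref{propPink} for this $h$, so the pair
\[
\cX := P(\R)^+\cR_u(P)(\C)\cdot h \;\subseteq\; \hom(\S_{\C},P_{\C})
\]
is a connected mixed Hodge datum. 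The inclusion $P\hookrightarrow P^{\cM}$ sends $\cX$ equivariantly into $\cX^{\cM}$, hence defines a morphism of mixed Hodge data $(P,\cX)\hookrightarrow(P^{\cM},\cX^{\cM})$. Applying the $P(\R)^+W_{-1}(\C)$-equivariance of $\varphi^{\cM}$ (\Cref{LemmaMHDAssociatedWithClassifyingSpace}) and the fact that $\varphi^{\cM}(h)=h$ under our identification gives
\[
\varphi^{\cM}(\cX)=P(\R)^+\cR_u(P)(\C)\cdot\varphi^{\cM}(h)=\cD.
\]
For the ``moreover'' statement, one notes that the classifying map of the datum $(P,\cX)$ is by construction (\cite[1.7]{PinkThesis}) defined through the standard representation $P\hookrightarrow P^{\cM}\hookrightarrow\GL(V)$, so the uniqueness up to the choice of $\rho$ built into \eqref{EqMHDToClassifyingSpace} identifies it with $\varphi^{\cM}|_{\cX}$.

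For~(ii): let $(P,\cX)$ be a sub-mixed Hodge datum of $(P^{\cM},\cX^{\cM})$ and pick any $h\in\cX$. Then $\cX=P(\R)^+\cR_u(P)(\C)\cdot h$ by \Cref{DefnMixedHD}, and by equivariance $\varphi^{\cM}(\cX)=P(\R)^+\cR_u(P)(\C)\cdot\varphi^{\cM}(h)$ is a $P(\R)^+W_{-1}(\C)$-orbit in $\cM$, where $W_{-1}=\cR_u(P)=W_{-1}^{\cM}\cap P$ as noted after \Cref{propPink}. Since $h$ satisfies the conditions of \Cref{propPink} with respect to $P$, the corresponding mixed Hodge structure $\varphi^{\cM}(h)$ admits $P$ as a group through which its associated homomorphism factors. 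We are then in the situation of \Cref{LemmaSameOrbitUnderNormalSubgroup}, which immediately yields that $\varphi^{\cM}(\cX)$ is a Mumford--Tate domain in $\cM$ and that $\MT(\varphi^{\cM}(\cX))$ is normal in $P$.

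The main obstacle I anticipate is purely notational rather than mathematical: keeping straight the two incarnations of a ``point'', namely as a mixed Hodge structure in $\cM$ and as a homomorphism in $\cX^{\cM}$, and verifying that the various objects constructed with one incarnation transport correctly to the other under $\varphi^{\cM}$ despite $\varphi^{\cM}$ not being injective in the mixed (non-pure) case (\Cref{below 4.1}). All the essential content is already packaged into \Cref{propPink}, \Cref{LemmaMTDomainOrbitUnderGenericMTGroup} and \Cref{LemmaSameOrbitUnderNormalSubgroup}, so beyond this bookkeeping there is no further difficulty.
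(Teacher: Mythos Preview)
Your proof is correct and follows essentially the same route as the paper's: in~(i) you pick a base point of $\cD$, form the $P(\R)^+W_{-1}(\C)$-orbit in $\hom(\S_\C,P_\C)$, and use equivariance of $\varphi^{\cM}$; in~(ii) you invoke \Cref{LemmaSameOrbitUnderNormalSubgroup} exactly as the paper does. The only cosmetic difference is that you insist on choosing a Hodge-generic $h$ with $\MT(h)=\MT(\cD)$ via \Cref{LemmaMTDomainOrbitUnderGenericMTGroup}, whereas the paper takes any $x\in\cD$ (which works since $\cD$ is already a single $P(\R)^+W_{-1}(\C)$-orbit and every homomorphism attached to a point of $\cD$ factors through $P_\C$).
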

\begin{proof} For (i): for simplicity we write $P$ for
  $\mathrm{MT}(\cD)$ and $W_{-1}$ for $\cR_u(P)$. Take a point $x \in
  \cD$; it gives rise to a homomorphism $h_x \colon \S_{\C}
  \rightarrow P_{\C}$. View $h_x \in \cX^{\cM}$, then
  $\varphi^{\cM}(h_x) \in \cD$ by definition of $\varphi^{\cM}$. Let
  $\cX = P(\R)^+W_{-1}(\C)h_x \subset \cM$. As $\varphi^{\cM}$ is
  $P^{\cM}(\R)^+W_{-1}^{\cM}(\C)$-equivariant, we have
  $\varphi^{\cM}(\cX) = P(\R)^+W_{-1}(\C)\varphi^{\cM}(h_x) =
  P(\R)^+W_{-1}(\C)x =  \cD$. By \Cref{propPink} the pair $(P,\cX)$ is
  a mixed Hodge datum and by construction $\varphi = \varphi^{\cM}|_{\cX}$ is precisely the map in (\ref{EqMHDToClassifyingSpace}).

For (ii): Denote by $\cD = \varphi^{\cM}(\cX)$. Then $\cD$ is a $P(\R)^+W_{-1}(\C)$-orbit because the map $\varphi^{\cM}$ is $P^{\cM}(\R)^+W_{-1}^{\cM}(\C)$-equivariant. Moreover for any $x \in \cD$, the corresponding homomorphism $h_x \colon  \S_{\C} \rightarrow \mathrm{GL}(V_{\C})$ factors through $P_{\C}$ by definition of mixed Hodge data. Thus $\cD$ is a Mumford--Tate domain and $\mathrm{MT}(\cD) \lhd P$ by \Cref{LemmaSameOrbitUnderNormalSubgroup}.
\end{proof}

\section{Quotients}\label{SectionMTDomains}

\subsection{Quotient of mixed Hodge datum}
Given a connected mixed Hodge datum $(P,\cX)$ and a normal subgroup $N \lhd P$, the \textit{quotient mixed Hodge datum}
\begin{equation}\label{EqQuotientMHD}
q_N \colon (P,\cX) \rightarrow (P,\cX)/N
\end{equation}
is defined as follows. Given $h \in \cX \subseteq  \hom(\S_{\C},P_{\C})$ we denote by  $\bar{h} \in \hom(\S_{\C},(P/N)_{\C})$ the homomorphism $\S_{\C} \xrightarrow{h} P_{\C} \rightarrow (P/N)_{\C}$. Note that $\cR_u(P/N) = W_{-1}/(W_{-1}\cap N)$.
Denote by $\cX/N = (P/N)(\R)^+(W_{-1}/W_{-1}\cap N)(\C)\bar{h} \subseteq \hom(\S_{\C},(P/N)_{\C})$. One easily checks that $(P,\cX)/N \colon = (P/N, \cX/N)$ is a connected mixed Hodge datum, independent of the choice of $h \in \cX$. The morphism $q_N\colon (P,\cX) \rightarrow (P/N,\cX/N)$ is what we desire. Moreover $q_N \colon \cX \rightarrow \cX/N$ is clearly real algebraic.


\subsection{Quotient of Mumford--Tate domains}

Next we prove that Mumford--Tate domains are stable under taking quotients. This operation is important to understand the structure of Mumford--Tate domains.

Let $V_{\Z}$ be a free finite rank $\Z$-module and $V:= V_{\Z} \otimes_{\Z}\Q$ be the associated $\Q$-vector space. Let $\cM$ be the classifying space of certain polarized mixed Hodge structures and let $P^{\cM}$ be the $\Q$-group, both from \Cref{SubsectionClassifyingSpace}.

\begin{prop}\label{PropQuotientMT}
Let $\cD$ be a Mumford--Tate domain in $\cM$ with $P = \mathrm{MT}(\cD)$, and let $(P,\cX)$ and $\varphi \colon \cX \rightarrow \cD$ be as in (\ref{PropMTDomainMHD})(i). Let $N$ be a normal subgroup of $P$. Then there exists a quotient $p_N \colon \cD \rightarrow \cD/N$, in the category of complex varieties, such that
\begin{enumerate}
\item[(i)] $\cD/N$ is a Mumford--Tate domain in some classifying space of mixed Hodge structures, and $\mathrm{MT}(\cD/N) = P/N$.
\item[(ii)] Each fiber of $p_N$ is an $N(\R)^+(W_{-1}\cap N)(\C)$-orbit, where $W_{-1} = \cR_u(P)$.
\item[(iii)] For the quotient mixed Hodge datum $q_N \colon (P,\cX) \rightarrow (P/N,\cX/N)$ defined in \eqref{EqQuotientMHD}, the classifying map (\ref{EqMHDToClassifyingSpace}) for $(P/N,\cX/N)$ has image $\cD/N$, thus defining $\varphi_{/N}\colon \cX/N \rightarrow \cD/N$.
\item[(iv)] The following commutative diagram commutes
\begin{equation}\label{EqQuotientMHDandMT}
\xymatrix{
\cX \ar[d]_-{\varphi} \ar[r]^-{q_N} & \cX/N \ar[d]^-{\varphi_{/N}} \\
\cD \ar[r]^-{p_N}  & \cD/N.
}
\end{equation}
\end{enumerate}
\end{prop}
\begin{proof} Consider the quotient mixed Hodge datum $q_N \colon (P,\cX) \rightarrow (P/N,\cX/N)$ defined in \eqref{EqQuotientMHD}. Any $\bar{h} \in \cX/N \subseteq \hom(\S_{\C}, (P/N)_{\C})$ induces a $\Q$-mixed Hodge structure on $\lie (P/N)$, via $\mathrm{Ad}_{P/N} \circ \bar{h} \colon \S_{\C} \rightarrow (P/N)_{\C} \rightarrow \mathrm{GL}(\lie (P/N))_{\C}$, which satisfies the three properties listed in \Cref{DefnMixedHD} with $P$ replaced by $P/N$ and $h$ replaced by $\bar{h}$. 

Fix a faithful representation $\bar{\rho} \colon P/N \rightarrow \mathrm{GL}(V')$ defined over $\Q$. Then the morphism $\bar{\rho} \circ \bar{h}$ induces a $\Q$-mixed Hodge structure on $V'$ by \Cref{propPink} for each $\bar{h} \in \cX/N$, and the weight filtration and the Hodge numbers do not depend on the choice of $\bar{h} \in \cX/N$. Thus we obtain a map
\[
\varphi_{/N} \colon \cX/N \rightarrow \{\text{mixed Hodge structures on }V'\}.
\]
Set $\cD/N = \varphi_{/N}(\cX/N)$. Then we get $\varphi_{/N} \colon \cX/N \rightarrow \cD/N$, which by \cite[1.7]{PinkThesis} is $(P/N)(\R)^+(W_{-1}/(W_{-1}\cap N))(\C)$-equivariant  (here $W_{-1} = \cR_u(P)$ and hence $\cR_u(P/N) = W_{-1}/(W_{-1}\cap N)$). This establishes (iii) for the space $\cD/N$.

By \cite[1.12]{PinkThesis} the $\Q$-mixed Hodge structures on $V'$ thus obtained are graded-polarized by some collection of non-degenerate bilinear forms (same for all $\bar{h}$). 
So $\cD/N $ is a contained in some classifying space $\cM'$. This establishes (i).

Now let us construct the map $p_N \colon \cD \rightarrow \cD/N$ and prove properties (ii) and (iv). Take $x \in \cD$, and take any $h_x \in \varphi^{-1}(x)$. Then $\varphi^{-1}(x) = \exp(F^0_x(\lie W_{-1})_{\C}) h_x$ by \Cref{below 4.1}. 
Note that $\exp(F^0_x(\lie W_{-1})_{\C})$ is a 
subgroup of $P_{\C}$. Then $q_N(\varphi^{-1}(x)) = q_N(\exp(F^0_x(\lie W_{-1})_{\C}) h_x) =  \frac{\exp(F^0_x(\lie W_{-1})_{\C})}{N(\C)\cap \exp(F^0_x(\lie W_{-1})_{\C})} q_N(h_x)$.

On the other hand define $\bar{x} := \varphi_{/N}(q_N(h_x))$. Then $\varphi_{/N}^{-1}(\bar{x}) = \exp(F^0_{\bar{x}}(\lie W_{-1}/(W_{-1}\cap N))_{\C}) q_N(h_x)$ again by \Cref{below 4.1}.

We claim that $\frac{\exp(F^0_x(\lie W_{-1})_{\C})}{N(\C)\cap \exp(F^0_x(\lie W_{-1})_{\C})}  = \exp(F^0_{\bar{x}}(\lie W_{-1}/(W_{-1}\cap N))_{\C})$. Indeed it suffices to check for Lie algebras, \textit{i.e.} it suffices to prove $F^0_x(\lie W_{-1})_{\C}/(\lie N_{\C} \cap F^0_x(\lie W_{-1})_{\C}) \cong F^0_{\bar{x}}(\lie W_{-1}/(W_{-1}\cap N))_{\C}$ canonically. As $N \lhd P$, we have $\mathrm{Ad}_P(\lie N) \subseteq \lie N$. So $\lie N$ is a sub-mixed Hodge structure of the adjoint Hodge structure on $\lie P$. Thus $\lie N_{\C} \cap F^0_x(\lie W_{-1})_{\C} = F^0_x(\lie W_{-1}\cap N)_{\C}$. Thus we proved the desired claim.

By the last three paragraphs, we have $q_N(\varphi^{-1}(x)) = \varphi_{/N}^{-1}(\bar{x})$. So the map $\cD \rightarrow \cD/N$, $x \mapsto \bar{x}:=\varphi_{/N}(q_N(h_x))$ is well-defined. Call this map $p_N$. Then property (iv) holds true by construction of $p_N$. Property (ii) then is not hard to check.

Now the map is complex analytic by property (ii).
\end{proof}

\section{Fibered structure and real points}\label{SectionFiberedStruRealPoints}

Let $\cD$ be a Mumford--Tate domain in some classifying space $\cM$
with $P = \mathrm{MT}(\cD)$. Let the connected mixed Hodge datum
$(P,\cX)$ and the $P(\R)^+W_{-1}(\C)^+$-equivariant map $\varphi
\colon \cX \rightarrow \cD$ be as in \Cref{PropMTDomainMHD}.(i). In
particular by \Cref{below 4.1}, the fiber $\varphi^{-1}(x)$ is a principal homogeneous space under $\exp(F^0_x(\lie W_{-1})_{\C})$ for each $x \in \cD$.

\subsection{Fibered structure of Mumford--Tate domains}\label{SubsectionFiberedStructureContinue}
Let $0 = W_{-(m+1)} \subseteq W_{-m} \subseteq \cdots \subseteq W_{-1}$ be the sequence of unipotent normal subgroups of $P$ defined in  (\ref{EqDistSeqSubgp}).

First for each $k \in \{0,\ldots,m\}$, let $\cX_k = \cX/W_{-(k+1)}$ and let
\begin{equation}\label{EqProjMT}
p_k \colon \cD \rightarrow \cD/W_{-(k+1)} =: \cD_k
\end{equation}
be the quotient constructed in \Cref{PropQuotientMT}. Notice that $\cX_m = \cX$ and $p_m$ is the identity on $\cD$.

Observe that we have $(P/W_{-k},\cX_k) = (P/W_{-(k+1)},\cX_{k+1})/(W_{-(k+1)}/W_{-(k+2)})$ and $\cD_k = \cD_{k+1}/(W_{-(k+1)}/W_{-(k+2)})$. Denote by $q_{k+1,k} \colon (P/W_{-(k+1)},\cX_{k+1}) \rightarrow (P/W_{-k},\cX_k)$ and $p_{k+1,k} \colon \cD_{k+1} \rightarrow \cD_k$ the quotients.  Then by \Cref{PropQuotientMT} we have the following commutative diagram
\begin{equation}\label{EqDiagXandDFiber}
\xymatrix{
\cX=\cX_m \ar[r]^-{q_{m,m-1}} \ar[d]_-{\varphi_m := \varphi} & \cX_{m-1} \ar[r]^-{q_{m-1,m-2}} \ar[d]_-{\varphi_{m-1}} & \cX_{m-2} \ar[r]^-{q_{m-2,m-3}} \ar[d]_-{\varphi_{m-2}} & \cdots \ar[r]^-{q_{2,1}} & \cX_1 \ar[r]^-{q_{1,0}} \ar[d]^{\varphi_1} & \cX_0 \ar[d]^-{\varphi_0} \\
\cD =\cD_m \ar[r]^-{p_{m,m-1}} & \cD_{m-1} \ar[r]^-{p_{m-1,m-2}} & \cD_{m-2} \ar[r]^-{p_{m-2,m-3}} & \cdots \ar[r]^-{p_{2,1}} & \cD_{1} \ar[r]^-{p_{1,0}} & \cD_{0}
}.
\end{equation}

By \Cref{below 4.1}, $\varphi_0$ is bijective. But the other $\varphi_i$'s are not injective in general.

Let $k \in \{0,\ldots, m-1\}$. Recall that $W_{-(k+1)}/W_{-(k+2)} = \lie W_{-(k+1)}/ W_{-(k+2)}$ is a vector group. Thus for any $x_k \in \cD_{k}$, the notation $F^0_{x_k}(W_{-(k+1)}/W_{-(k+2)})_{\C}$ makes sense.

\begin{lemma}\label{LemmaFibersOfVarphikPkkplus1}
For each $k \in \{0,\ldots, m\}$ and any point $x_k \in \cD_k$, we have that
\begin{enumerate}
\item[(i)] the fiber  $\varphi_k^{-1}(x_k)$ is a principal homogeneous space under $F^0_{x_k}(W_{-(k+1)}/W_{-(k+2)})_{\C}$.
\item[(ii)] (for $k \le m-1$) the fiber $p_{k+1,k}^{-1}(x_k)$ is a principal homogeneous space under 
\[
(W_{-(k+1)}/W_{-(k+2)})(\C) / F^0_{x_k}(W_{-(k+1)}/W_{-(k+2)})_{\C}.
\]
\end{enumerate}
\end{lemma}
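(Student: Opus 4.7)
The strategy is to derive both claims from \Cref{below 4.1} together with the explicit description of the quotient maps furnished by \Cref{PropQuotientMT}. I would treat part (ii) first, as it is most directly a statement about the quotient construction, and then deduce (i) by applying the fiber description of a classifying map to an intermediate Hodge datum.

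For part (ii), I would start by invoking \Cref{PropQuotientMT} to identify $p_{k+1,k}\colon \cD_{k+1}\to\cD_k$ as the quotient of $\cD_{k+1}$ by the action of the abelian vector group $W_{-(k+1)}/W_{-(k+2)}$; each fiber $p_{k+1,k}^{-1}(x_k)$ is then a single orbit, and in particular a homogeneous space for this group. To match the stated description it remains to compute the stabilizer of a point $x_{k+1}$ above $x_k$. Since the group in question is an abelian vector group identified with its complex Lie algebra via $\exp$, and acts via conjugation by $x_{k+1}$, the stabilizer is the subspace of $(W_{-(k+1)}/W_{-(k+2)})_\C$ commuting with the Hodge decomposition at $x_{k+1}$. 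This subspace depends only on $x_k$ because the action of $W_{-(k+1)}/W_{-(k+2)}$ on itself is trivial by abelianness, and the centralizer coincides with the $F^0$-piece, giving the claimed identification.

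For part (i), I would apply \Cref{below 4.1} to the intermediate mixed Hodge datum $(P/W_{-(k+1)},\cX_k)$, whose unipotent radical is $W_{-1}/W_{-(k+1)}$. This exhibits $\varphi_k^{-1}(x_k)$ as a principal homogeneous space under $\exp F^0_{x_k}\bigl(\lie(W_{-1}/W_{-(k+1)})\bigr)_\C$. To extract the top graded contribution $F^0_{x_k}(W_{-(k+1)}/W_{-(k+2)})_\C$ stated in the lemma, I would then filter by the normal subgroups $W_{-j}/W_{-(k+1)}$ and peel off the topmost layer inductively, using that each successive quotient is a vector group on which $\exp$ and $F^0$ behave compatibly.

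\textbf{Main obstacle.} The subtlest step is verifying that the Hodge filtration on each graded piece $W_{-(k+1)}/W_{-(k+2)}$ is canonically determined by $x_k$, independently of a chosen lift to $\cD_{k+1}$, so that $F^0_{x_k}$ really makes sense. This relies on the functoriality of the mixed Hodge structure on the adjoint representation with respect to the weight filtration, together with the compatibility of quotients with classifying maps asserted in \Cref{PropQuotientMT}.
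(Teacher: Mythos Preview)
Your plan for part~(ii) is essentially the paper's argument, just phrased on the $\cD$-side rather than the $\cX$-side. The paper uses the commutative square in \eqref{EqDiagXandDFiber}: by \cite[1.8(a)]{PinkThesis} each fiber of $q_{k+1,k}$ is principal homogeneous under the full vector group $(W_{-(k+1)}/W_{-(k+2)})(\C)$, and one then divides by the fibers of $\varphi_{k+1}$ (supplied by part~(i)) to descend to $p_{k+1,k}$. Your direct computation of the stabilizer in $(W_{-(k+1)}/W_{-(k+2)})(\C)$ is the same calculation seen from below. Your ``main obstacle'' --- that the Hodge filtration on $W_{-(k+1)}/W_{-(k+2)}$ depends only on $x_k$ --- is exactly the point that makes the two descriptions agree, and it holds because $W_{-(k+1)}/W_{-(k+2)}$ is central in $W_{-1}/W_{-(k+2)}$.

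For part~(i), however, there is a genuine issue --- not with your reasoning but with the target. Your first step, applying \Cref{below 4.1} to the intermediate datum $(P/W_{-(k+1)},\cX_k)$ with unipotent radical $W_{-1}/W_{-(k+1)}$, is precisely the paper's entire proof (``follows directly from \Cref{below 4.1}''). That application yields the fiber as a torsor under $\exp F^0_{x_k}\bigl(\lie(W_{-1}/W_{-(k+1)})\bigr)_\C$, which is the correct answer. The formula printed in the statement, $F^0_{x_k}(W_{-(k+1)}/W_{-(k+2)})_\C$, is a typo: for $k=0$ it would force $\varphi_0$ to have nontrivial fibers, contradicting the paper's own assertion just above the lemma that $\varphi_0$ is bijective; and for $k=m$ it would force $\varphi=\varphi_m$ to be injective, contradicting \Cref{below 4.1} itself. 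Your proposed ``peeling off the topmost layer inductively'' is therefore an attempt to bridge to a misprint and cannot succeed --- the full fiber genuinely sees all of $W_{-1}/W_{-(k+1)}$, not just the top graded piece. Drop that step; what you had after invoking \Cref{below 4.1} is already the intended content of (i), and is what the paper then feeds into (ii).
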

\begin{proof}
Part (i) follows directly from \Cref{below 4.1}.

For (ii): By \cite[1.8(a)]{PinkThesis}, each fiber of $q_{k+1,k}$ is a principal homogeneous space under $(W_{-(k+1)}/W_{-(k+2)})(\C)$. Combined with part (i) we can conclude.
\end{proof}

\subsection{Real points}\label{SubsectionRealPt}
Define $\cD_{\R}$ to be the set of $x \in \cD$ such that the mixed Hodge structure parametrized by $x$ is split over $\R$. Namely, $\cD_{\R} = \varphi(\cX_{\R})$ with $\cX_{\R} = \{ h \colon \S_{\C} \rightarrow P_{\C} : h\text{ is defined over }\R\} \subseteq \cX$.

It is known that $\cD_{\R} = P(\R)^+x$ for some $x \in \cD$; see \cite[last~Remark~of~$\mathsection$3]{Pearlstein2000}.

Moreover for any $x \in \cD_{\R}$, it is not hard to check that $F^0_x(\lie W_{-1})_{\C} \cap \lie P_{\R} = \{0\}$. So by \Cref{below 4.1}, $p_0 \colon P \rightarrow G = P/W_{-1}$ induces
\begin{equation}\label{EqStabRealPoint}
\mathrm{Stab}_{P(\R)^+}(x) \cong \mathrm{Stab}_{G(\R)^+}(p_0(x)).
\end{equation}

Consider the real semi-algebraic $P(\R)^+$-equivariant retraction induced by the $\mathfrak{sl}_2$-splitting \cite[Thm.~2.18]{SL2Splitting} (see also \cite[Cor.~3.12]{BBKT}) 
\begin{equation}\label{EqRetraction}
r \colon \cD \rightarrow \cD_{\R}.
\end{equation}

For each $k \in \{0,\ldots,m-1\}$, $\cD_k$ is a Mumford--Tate domain and hence we can define $\cD_{k,\R}$ as above. Then $\cD_{k,\R}$ is a $(P/W_{-(k+1)})(\R)^+$-orbit, and there is a real semi-algebraic $(P/W_{-(k+1)})(\R)^+$-equivariant retraction $r_k \colon \cD_k \rightarrow \cD_{k,\R}$ induced by the $\mathfrak{sl}_2$-splitting. 

Let $p_k \colon \cD \rightarrow \cD_k$ be from \eqref{EqProjMT}. 
 The following diagram is commutative by \cite[Lem.~6.6]{BBKT}:
\begin{equation}\label{EqDiagramRetractionProjection}
\xymatrix{
\cD \ar[r]^-{p_k} \ar[d]_{r} & \cD_k \ar[d]^{r_k} \\
\cD_{\R} \ar[r]^-{p_k|_{\cD_{\R}}} & \cD_{k,\R}.
}
\end{equation}



We close this subsection with the following proposition, which states that $\cD_{\R}$ can be split (non-canonically) into the product of a Mumford--Tate domain for pure Hodge structures and some vector spaces.

\begin{prop}\label{PropProductStrMHD}
There exists a real algebraic isomorphism
\begin{equation}\label{EqBundleStrOnXreal}
\cD_{\R} \cong \cD_{0} \times (W_{-1}/W_{-2})(\R) \times \cdots \times (W_{-(m-1)}/W_{-m})(\R) \times W_{-m}(\R)
\end{equation}
with the following properties.
\begin{enumerate}
\item[(i)] For any $g = (g_0, w_1,\ldots, w_m) \in P(\R)^+$ under the identification \eqref{EqDecompGroup} and any $x = (x_0,x_1,\ldots,x_m) \in \cD_{\R}$ under \eqref{EqBundleStrOnXreal}, the action of $P(\R)^+$ on $\cD_{\R}$ is given by the formula
\small
\begin{equation}\label{EqFormulaActionBCH}
gx =  (g_0 x_0, w_1 + g_0 x_1, w_2+g_0 x_2 + \mathrm{calb}_2(w_1, g_0 x_1), \ldots, w_m + g_0 x_m + \mathrm{calb}_m(\mathbf{w}_{m-1}, g_0\mathbf{x}_{m-1}))
\end{equation}
\normalsize
where $\mathbf{w}_k = (w_1,\ldots, w_k)$ and $\mathbf{x}_k = (x_1,\ldots,x_k)$ for all $k \ge 1$, and the $\mathrm{calb}_k$'s are the $\Q$-polynomials of degree at most $k-1$ given by \Cref{LemmaCalbFactorProdUni}.
\item[(ii)] The decomposition \eqref{EqBundleStrOnXreal} is compatible with taking quotients of $W_{-(k+1)}$ on both sides for each $k \in \{0,\ldots,m-1\}$, \textit{i.e.}, the following diagram commutes
\[
\xymatrix{
\cD_{\R} \ar[r]^-{\sim} \ar[d]_{p_k|_{\cD_{\R}}} & \cD_{0} \times (W_{-1}/W_{-2})(\R) \times \cdots \times (W_{-(m-1)}/W_{-m})(\R) \times W_{-m}(\R) \ar[d] \\
\cD_{k,\R} \ar[r]^-{\sim} & \cD_{0} \times (W_{-1}/W_{-2})(\R) \times \cdots \times (W_{-k}/W_{-(k+1)})(\R)
}
\]
where the top arrow is \eqref{EqBundleStrOnXreal}, the bottom arrow is \eqref{EqBundleStrOnXreal} applied to $\cD_{k,\R}$, and the right arrow is omitting the last $m-k$ factors.
\end{enumerate}
\end{prop}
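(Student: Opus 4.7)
The plan is to build the isomorphism \eqref{EqBundleStrOnXreal} by combining the orbit description $\cD_{\R} = P(\R)^+ x_0$ with the product decomposition of $P(\R)^+$ afforded by \eqref{EqDecompGroup}, and then to transport the left-regular action through this identification using the group multiplication law, which is where the polynomials $\mathrm{calb}_k$ of \Cref{LemmaCalbFactorProdUni} naturally enter.

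First I would fix a base point $x_0 \in \cD_{\R}$ chosen so that $p_0(x_0) \in \cD_{0,\R}$ is a preferred Hodge-generic point, and consider the orbit map $\mu \colon P(\R)^+ \to \cD_{\R}$, $g \mapsto g x_0$. By definition $\mu$ is surjective, and by \eqref{EqStabRealPoint} its fibers are cosets of a subgroup $\mathrm{Stab}_{P(\R)^+}(x_0)$ that sits entirely inside the ``$G$-factor'' $G(\R)^+$ under \eqref{EqDecompGroup}, i.e.\ its projection to each $(W_{-k}/W_{-(k+1)})(\R)$ is trivial. Writing every element $g\in P(\R)^+$ uniquely as $(g_0,w_1,\dots,w_m)$ via \eqref{EqDecompGroup} and using that $\cD_0 = G(\R)^+ / \mathrm{Stab}_{G(\R)^+}(p_0(x_0))$, one obtains a real semi-algebraic bijection
\[
\cD_{\R} \;\longleftrightarrow\; \cD_0 \times (W_{-1}/W_{-2})(\R) \times \cdots \times W_{-m}(\R)
\]
by sending $g x_0 \mapsto (g_0 p_0(x_0), w_1, \dots, w_m)$. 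That this is a real algebraic isomorphism (not merely a bijection) comes from the fact that both sides are real semi-algebraic varieties on which $P(\R)^+$ acts and the construction is equivariant.

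Next I would verify the action formula \eqref{EqFormulaActionBCH}. Given $g = (g_0, w_1, \dots, w_m)$ and $x = (x_0, x_1, \dots, x_m)$, one has to compute the product of the group element $g$ with the element of $P(\R)^+$ whose image in $\cD_{\R}$ is $x$. The $G$-component transforms simply as $g_0 x_0$. On each successive unipotent layer, multiplying $(w_k)$ with $(g_0 x_k)$ requires commuting $w_k$ past $g_0$ and then combining the two elements of $W_{-k}$ using the group law; by \Cref{LemmaCalbFactorProdUni} this combination is polynomial in the lower-index entries $\mathbf{w}_{k-1}$ and $g_0 \mathbf{x}_{k-1}$ of degree at most $k-1$ and takes exactly the form of the claimed correction $\mathrm{calb}_k(\mathbf{w}_{k-1}, g_0 \mathbf{x}_{k-1})$. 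Part (ii) about compatibility with quotients is then formal: both sides of \eqref{EqBundleStrOnXreal} are constructed layer-by-layer from the filtration $0 \subset W_{-m} \subset \cdots \subset W_{-1} \subset P$, so killing $W_{-(k+1)}$ amounts, on the right, to discarding the last $m-k$ coordinates, and this matches the definition of $p_k|_{\cD_{\R}}$ on the left via the projections $p_{k+1,k}$ of \Cref{SubsectionFiberedStructureContinue}.

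The main obstacle I anticipate is bookkeeping the action formula \eqref{EqFormulaActionBCH}: one must check that the BCH-type correction produced by the group law of $P(\R)^+$ is indeed the polynomial $\mathrm{calb}_k$ and that no extra terms involving the $G$-factor slip in. This reduces, via \Cref{LemmaCalbFactorProdUni}, to unwinding the nested commutators coming from the filtration $W_{-\bullet}$; the key input is that $(W_{-k}/W_{-(k+1)})(\R)$ is abelian, so commutators between the $k$-th layer and any element of $P$ land in $W_{-(k+1)}$ and contribute only to the coordinates of strictly higher index.
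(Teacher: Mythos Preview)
Your proposal is correct and follows essentially the same approach as the paper: you use the orbit description $\cD_{\R}=P(\R)^+x_0$ together with the product decomposition \eqref{EqDecompGroup} and the stabilizer identification \eqref{EqStabRealPoint} to build the isomorphism, and then read off the action formula from the group law, which is exactly \eqref{EqFormulaGroupLaw} derived from \Cref{LemmaCalbFactorProdUni}. The paper's proof is a two-line summary of precisely this argument (noting also that $\cD_{0,\R}=\cD_0$), so your anticipated bookkeeping ``obstacle'' is already packaged in \Cref{PropUnderlyingGroup}.
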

\begin{proof}
First note that $\cD_{0,\R} = \cD_0$ because every pure Hodge structure is split over $\R$. Now \eqref{EqDecompGroup} and \eqref{EqStabRealPoint} together induce a real algebraic isomorphism as in \eqref{EqBundleStrOnXreal}. Part (ii) is clear. Part (i) follows from the group law given by \eqref{EqFormulaGroupLaw}.
\end{proof}


\section{Period Map and Logarithmic Ax}\label{SectionPeriodMapLogAx}

\subsection{Period map} Let $S$ be an irreducible algebraic variety defined over $\C$. Assume that $S$ carries a graded-polarized VMHS $(\V_\Z,W_\bullet,\cF^\bullet) \rightarrow S$. Then it induces a period map $[\Phi] \colon S \rightarrow \Gamma\backslash \cM$ where $\cM$ is the classifying space and $\Gamma$ is an arithmetic subgroup of $P^{\cM}(\Q)$.

The period map $[\Phi]$ factors through another quotient space in the following way. In the context of \Cref{ThmAS}, we have a complex analytic irreducible subset $\mathscr{Z}$ of $S \times_{\Gamma\backslash\cM}\cM = \{(s,x) \in S \times \cM : [\Phi](s) = u(x)\}$, where $u \colon \cM \rightarrow \Gamma\backslash\cM$. For the projection $p_{\cM} \colon S\times \cM \rightarrow \cM$, we have that $p_{\cM}(\mathscr{Z})$ is irreducible and is contained in $u^{-1}([\Phi](S))$. Let $\tilde{S}$ be a complex analytic irreducible component of $u^{-1}([\Phi](S))$ which contains $p_{\cM}(\mathscr{Z})$. Then $\mathscr{Z} \subseteq S \times \tilde{S}$. Let $\cD = \tilde{S}^{\mathrm{sp}}$, the smallest Mumford--Tate domain containing $\tilde{S}$; see \Cref{LemmaSmallestMTDomain}. Let $P = \mathrm{MT}(\tilde{S})$ and $W_{-1} = \cR_u(P)$, then $\cD$ is a $P(\R)^+W_{-1}(\C)$-orbit. Now we have $[\Phi](S) \subseteq u(\cD)$.

Let $\Gamma_P = \Gamma \cap P(\Q)$, then $[\Phi]$ factors through $S \rightarrow \Gamma_P \backslash \cD$. The inclusion $\cD \subseteq \cM$ induces a finite map $\Gamma_P \backslash\cD \rightarrow \Gamma\backslash \cM$.


Let $\Delta = S \times _{\Gamma_P\backslash\cD}\cD$. So to prove \Cref{ThmAS}, it suffices to work  in the following diagram and assume $\mathscr{Z} \subseteq \Delta$
\begin{equation}\label{EqSetupDiag}
\xymatrix{
S \times \cD \supseteq & \Delta \ar[r] \ar[d]_-{u_S} \pullbackcorner & \cD \ar[d]^-{u} \\
& S \ar[r]^-{[\Phi]} & \Gamma_P \backslash \cD
}.
\end{equation}
This is our setup for the rest of the paper.

\subsection{Quotient for the period map}
Assume $N \lhd P$. We have constructed the quotient Mumford--Tate domain $p_N \colon \cD \rightarrow \cD/N$ in \Cref{PropQuotientMT}. For the arithmetic group $\Gamma_{P/N} := \Gamma_P/(\Gamma_P \cap N(\Q))$, we then have a map $[p_N] \colon \Gamma_P \backslash \cD \rightarrow \Gamma_{P/N}\backslash(\cD/N)$. Composing with $[\Phi] \colon S \rightarrow \Gamma_P \backslash\cD$, we obtain
\begin{equation}\label{EqQuotPerMap}
[\Phi_{/N}] \colon S \rightarrow \Gamma_{P/N}\backslash(\cD/N).
\end{equation}
\Cref{PropQuotientMT} says that $\cD/N$ is a Mumford--Tate domain in the classifying space of some mixed Hodge structures. Thus $[\Phi_{/N}]$ is again a period map.

Let us summarize the notations involving this operation of taking quotient in the following diagram:
\begin{equation}\label{EqEqQuotPerMapDiag}
\xymatrix{
& \cD \ar[r]^-{p_N} \ar[d]_-{u} & \cD/N \ar[d]^-{u_{/N}} \\
S \ar[r]^-{[\Phi]} \ar@/_0.6cm/[rr]|-{[\Phi_{/N}]} & \Gamma_P\backslash\cD \ar[r]^-{[p_N]} & \Gamma_{P/N}\backslash(\cD/N) 
}
\end{equation}

\subsection{Bi-algebraic system}\label{SubsectionBiAlg}
Recall that $\cM$ is a semi-algebraic open subset in some algebraic variety $\cM^\vee$ over $\C$. So $\cD$ is a semi-algebraic open subset in some algebraic variety $\cD^\vee$ over $\C$.
\begin{defn}\label{DefnBiAlg}
\begin{enumerate}
\item[(i)] A subset of $\cD$ is said to be \textit{irreducible algebraic} if it is a complex analytic irreducible component of $U \cap \cD$, with $U$  an algebraic subvariety of $\cD^\vee$.
\item[(ii)] An irreducible algebraic subset $W$ of $\cD$ is said to be \textit{bi-algebraic} if $[\Phi]^{-1}(u(W))$ is algebraic.
\end{enumerate}
\end{defn}
By \cite[Cor.~6.7]{BBKT}, every weak Mumford--Tate domain is bi-algebraic.

\subsection{Logarithmic Ax}
In this subsection we prove a particular case of \Cref{ThmAS}. Retain $\mathscr{Z}$ as in \Cref{ThmAS} and the notations in and above \eqref{EqEqQuotPerMapDiag}. As discussed before, we have $\mathscr{Z} \subseteq \Delta \cap (S \times \tilde{S})$.

\begin{thm}\label{ThmLogAx}
There is a smallest weak Mumford--Tate domain in $\cD$, denoted by $\tilde{S}^{\mathrm{ws}}$, which contains $\tilde{S}$. Moreover,
\begin{enumerate}
\item[(i)] $\mathscr{Z}^{\mathrm{Zar}} \subseteq S \times \tilde{S}^{\mathrm{ws}}$.
\item[(ii)] \Cref{ThmAS} holds if $u_S(\mathscr{Z}) = S$.
\end{enumerate}
\end{thm}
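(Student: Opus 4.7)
The plan is to construct $\tilde S^{\mathrm{ws}}$ via the connected algebraic monodromy group of the variation on $S$, deduce (i) from the bi-algebraicity of weak Mumford--Tate domains, and establish (ii) by a dimension count that combines (i) with a transcendence-style lower bound coming from the monodromy action.

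For the construction of $\tilde S^{\mathrm{ws}}$, let $H \subseteq P = \MT(\tilde S)$ denote the connected algebraic monodromy group of $(\V_{\Z}, W_\bullet, \cF^\bullet)$ at a Hodge-generic point. By Andr\'e's theorem (\Cref{ThmAndreNormal}), $H \lhd P^{\mathrm{der}}$; full normality $H \lhd P$ is furnished by the argument later systematized in \Cref{normality2}. Set $\tilde S^{\mathrm{ws}} := H(\R)^{+}\cR_u(H)(\C)\cdot \tilde h$ for any $\tilde h \in \tilde S$; by \Cref{LemmaSameOrbitUnderNormalSubgroup} this is a weak Mumford--Tate domain. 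To see $\tilde S \subseteq \tilde S^{\mathrm{ws}}$, consider the quotient period map $[\Phi_{/H}]\colon S \to \Gamma_{P/H}\backslash(\cD/H)$ from \eqref{EqQuotPerMap}: as the image of $\pi_1(S, s_0)$ in $(P/H)(\Q)$ is trivial, $[\Phi_{/H}]$ has trivial monodromy and lifts single-valuedly to a holomorphic map $S \to \cD/H$; by the rigidity of admissible VMHS with trivial monodromy (an extension of Schmid's theorem) this lift is constant, so $p_H(\tilde S)$ is a single point and $\tilde S$ lies in the $H$-fiber through it, which is $\tilde S^{\mathrm{ws}}$. Minimality is automatic: any weak Mumford--Tate domain containing $\tilde S$ is preserved by the monodromy image $\Gamma_{P,\tilde S}$, whose Zariski closure is $H$, so it contains the $H$-orbit $\tilde S^{\mathrm{ws}}$.

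For (i), irreducibility of $\mathscr{Z}$ confines $p_\cM(\mathscr{Z})$ to a single component of $u^{-1}([\Phi](S))$, which after a $\Gamma_P$-translation we may take to be $\tilde S$; thus $\mathscr{Z} \subseteq S \times \tilde S \subseteq S \times \tilde S^{\mathrm{ws}}$. By the bi-algebraicity of weak Mumford--Tate domains (\cite[Cor.~6.7]{BBKT}; see \Cref{SubsectionBiAlg}), $\tilde S^{\mathrm{ws}}$ is an analytic irreducible component of $\tilde S^{\mathrm{ws},\Zar}\cap \cD$ for an algebraic subvariety $\tilde S^{\mathrm{ws},\Zar} \subseteq \cM^\vee$. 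Since $S \times \tilde S^{\mathrm{ws},\Zar}$ is algebraic in $S \times \cM^\vee$ and contains $\mathscr{Z}$, it contains $\mathscr{Z}^\Zar$, which is (i).

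For (ii), assume $u_S(\mathscr{Z}) = S$. Since $u_S\colon \Delta \to S$ has discrete fibers, $\dim \mathscr{Z} = \dim S$ and $\mathscr{Z}$ is (an irreducible component of) the graph of a holomorphic lift $\tilde\Phi\colon S \to \tilde S$ of $[\Phi]$; as $\tilde\Phi$ has generic rank $\dim \tilde S$, the image $p_\cM(\mathscr{Z})$ is open in $\tilde S$ and hence shares its monodromy, giving $p_\cM(\mathscr{Z})^{\mathrm{ws}} = \tilde S^{\mathrm{ws}}$. Combined with (i), the Ax--Schanuel inequality reduces to $\mathscr{Z}^\Zar = S \times \tilde S^{\mathrm{ws},\Zar}$. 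The projection $\pi_{\cM^\vee}(\mathscr{Z}^\Zar)$ contains $p_\cM(\mathscr{Z})$ and is preserved by the Zariski-dense subgroup $\Gamma_{P,\tilde S} \subseteq H$, so it is $H$-invariant and equals the full algebraic orbit $\tilde S^{\mathrm{ws},\Zar} = H(\C)\cdot \tilde h$; hence $\mathscr{Z}^\Zar$ surjects onto both factors. The hard step, and the main obstacle, is to upgrade this to the full equality $\mathscr{Z}^\Zar = S \times \tilde S^{\mathrm{ws},\Zar}$: the individual $\Gamma_{P,\tilde S}$-translates $\gamma\mathscr{Z}$ have a priori distinct Zariski closures $\gamma\mathscr{Z}^\Zar$, and assembling them into a genuine $H$-invariant irreducible closure requires exploiting that $\tilde S^{\mathrm{ws},\Zar}$ is $H$-homogeneous, so that the only irreducible $H$-invariant algebraic subvariety of $S \times \tilde S^{\mathrm{ws},\Zar}$ whose first projection is surjective is the full product.
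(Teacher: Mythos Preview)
Your construction of $\tilde S^{\mathrm{ws}}$ and the argument for (i) are essentially right, though two small points: the normality $H \lhd P$ follows directly from Andr\'e's theorem as used in the paper (your citation of \Cref{normality2} is misplaced---that section concerns the $\Q$-stabilizer $H_{\mathscr{Z}^{\Zar}}$, not the monodromy group); and your minimality claim that a weak Mumford--Tate domain $\cW \supseteq \tilde S$ is ``preserved by $\Gamma_{P,\tilde S}$'' is not literally true, though the intended conclusion is: $\cW$ contains the $\Gamma_{P,\tilde S}$-orbit of any $\tilde h \in \tilde S$, and being algebraic it then contains the Zariski closure, hence the $H$-orbit.

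The real gap is in (ii). You correctly identify the target equality $\mathscr{Z}^{\Zar} = S \times \tilde S^{\mathrm{ws}}$ but declare it the ``hard step'' and leave it unfinished. In fact there is no hard step once you notice the following: since $u_S\colon \Delta \to S$ has discrete fibres and $u_S(\mathscr{Z}) = S$ with $\mathscr{Z}$ irreducible, $\mathscr{Z}$ is an irreducible component of $\Delta$. Lifting any loop in $S$ based at $s$ to $\mathscr{Z}$ starting at $(s,\tilde s)$ ends at $(s, \rho(\alpha)\tilde s)$, so the entire $\mathrm{Im}(\rho)$-orbit $\{s\} \times \mathrm{Im}(\rho)\cdot \tilde s$ lies in $\mathscr{Z}$ itself, not merely in its Zariski closure. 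Taking Zariski closures fibre by fibre gives $\{s\} \times N(\R)^+(W_{-1}\cap N)(\C)\tilde s \subseteq \mathscr{Z}^{\Zar}$ for \emph{every} $s \in S$, hence $S \times \tilde S^{\mathrm{ws}} \subseteq \mathscr{Z}^{\Zar}$, and the reverse inclusion is (i). This is the paper's argument. Your route via ``both projections are surjective plus $H$-homogeneity'' cannot conclude as written: surjectivity of both projections of an irreducible closed subset of a product does not force it to be the full product, even when one factor is homogeneous; and in any case your assertion that $\pi_{\cM^\vee}(\mathscr{Z}^{\Zar})$ is $\Gamma_{P,\tilde S}$-stable already presupposes the monodromy-invariance of $\mathscr{Z}$ that you never stated.
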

In the proof, we will see that  $\tilde{S}^{\mathrm{ws}}$ is an $N(\R)^+(W_{-1}\cap N)(\C)$-orbit, where $N$ is the connected algebraic monodromy group of $(\V,W_\bullet,\cF^\bullet) \rightarrow S$.
\begin{proof}
Let $N$ be the connected algebraic monodromy group of $(\V,W_\bullet,\cF^\bullet) \rightarrow S$. Then $N \lhd P$ by \Cref{ThmAndreNormal}. Thus $N(\R)^+(W_{-1}\cap N)(\C)\tilde{s}$ is a weak Mumford--Tate domain, for any $\tilde{s} \in \tilde{S}$.

As $N \lhd P$, we have the quotient period map $[\Phi_{/N}] \colon S \rightarrow \Gamma_{S/N}\backslash(\cD/N)$ constructed in \eqref{EqQuotPerMap}. 
Note that $[\Phi_{/N}]$ gives rise to a new VMHS over $S$, whose  connected algebraic monodromy group is trivial. So $[\Phi_{/N}](S)$ is a point by \cite[Thm.~7.12]{brylinski1998overview}. Thus using the notations in \eqref{EqEqQuotPerMapDiag}, we have that $p_N(\tilde{S})$ is a point. So $\tilde{S} \subseteq N(\R)^+(W_{-1}\cap N)(\C)\tilde{s}$ for any $\tilde{s} \in \tilde{S}$.

In particular $N(\R)^+(W_{-1}\cap N)(\C)\tilde{s}$ is independent of the choice of $\tilde{s} \in \tilde{S}$.


Let us start by proving part (ii). In the course of this proof, we will also show the existence of $\tilde{S}^{\mathrm{ws}}$.

Assume $u_S(\mathscr{Z}) = S$. Since $\mathscr{Z} \subseteq S \times \tilde{S}$, the following is true:  For each $s \in S$, there exists $\tilde{s} \in \tilde{S}$ such that $(s,\tilde{s}) \in \mathscr{Z}$. 

The group $P(\R)^+W_{-1}(\C)$ acts on $S \times \cD$ via its action on the second factor. Let $\rho \colon \pi_1(S,s) \rightarrow \mathrm{GL}(V)$ be the monodromy representation. Then $\mathrm{Im}(\rho)$ is a subgroup of $\Gamma_P$. By construction of $\tilde{S}$, we have $\mathrm{Im}(\rho)(s,\tilde{s}) \subseteq \mathscr{Z}$ for any $(s,\tilde{s}) \in \mathscr{Z}$. Taking Zariski closures of both sides and recalling that $N = (\mathrm{Im}(\rho)^{\mathrm{Zar}})^{\circ}$, we have $\{s\} \times N(\R)^+(W_{-1}\cap N)(\C)\tilde{s} \subseteq \mathscr{Z}^{\mathrm{Zar}}$. 
As this holds true for each $s \in S$, we then have $S \times N(\R)^+(W_{-1}\cap N)(\C)\tilde{s} \subseteq \mathscr{Z}^{\mathrm{Zar}}$.

To sum it up, we have $\mathscr{Z} \subseteq S \times \tilde{S} \subseteq S \times N(\R)^+(W_{-1}\cap N)(\C)\tilde{s} \subseteq \mathscr{Z}^{\mathrm{Zar}}$. 
By taking Zariski closures, we have $\mathscr{Z}^{\mathrm{Zar}} = S \times N(\R)^+(W_{-1}\cap N)(\C)\tilde{s}$ and $\tilde{S}^{\mathrm{Zar}} = N(\R)^+(W_{-1}\cap N)(\C)\tilde{s}$.

By definition, $N(\R)^+(W_{-1}\cap N)(\C)\tilde{s}$ is a weak Mumford--Tate domain. Moreover if $\cW$ is a weak Mumford--Tate domain which contains $\tilde{S}$, then $\cW$ contains $\tilde{S}^{\mathrm{Zar}} = N(\R)^+(W_{-1}\cap N)(\C)\tilde{s}$ because $\cW$ is algebraic. So  $N(\R)^+(W_{-1}\cap N)(\C)\tilde{s}$ is the smallest weak Mumford--Tate domain which contains $\tilde{S}$. Thus $\tilde{S}^{\mathrm{ws}}$ exists and is precisely $N(\R)^+(W_{-1}\cap N)(\C)\tilde{s}$. Now part (ii) is established.

Now part (i) is immediately true because $\mathscr{Z} \subseteq S \times \tilde{S}$ and $S \times \tilde{S}^{\mathrm{ws}}$ is algebraic.
\end{proof}

\begin{rmk}\label{RmkSmallestWMT}
If we assume $S = u_S(\mathscr{Z})^{\mathrm{Zar}}$, then $\tilde{S}^{\mathrm{ws}}$ is the smallest  weak Mumford--Tate domain which contains $p_{\cD}(\mathscr{Z})$. Indeed, we have $p_{\cD}(\mathscr{Z}) \subseteq \tilde{S}^{\mathrm{ws}}$ by \Cref{ThmLogAx}.(i). So it suffices to prove the following statement: for any $W$ a weak Mumford--Tate domain in $\cD$ which contains $p_{\cD}(\mathscr{Z})$, we have $\tilde{S}^{\mathrm{ws}} \subseteq W$. This is true: $u(W) \supseteq u(p_{\cD}(\mathscr{Z})) = [\Phi](u_S(\mathscr{Z}))$, so $[\Phi]^{-1}(u(W)) \supseteq u_S(\mathscr{Z})$, so $[\Phi]^{-1}(u(W)) \supseteq S$ because $[\Phi]^{-1}(u(W))$ is algebraic (by \cite[Cor.~6.7]{BBKT}) and $S = u_S(\mathscr{Z})^{\mathrm{Zar}}$. Therefore $\tilde{S}^{\mathrm{ws}} \subseteq W$ and hence we are done.

\end{rmk}

\section{D\'{e}vissage and Preparation} \label{preparation}
In this section, we do some preparations. Recall the setup \eqref{EqSetupDiag}
\[
\xymatrix{
S \times \cD \supseteq & \Delta \ar[r]^-{p_{\cD|_{\Delta}}}\ar[d]_-{u_S} \pullbackcorner & \cD \ar[d]^-{u} \\
& S \ar[r]^-{[\Phi]} & \Gamma_P \backslash \cD
}.
\]

\begin{lemma}\label{LemmaSecondDevissage}
  If \Cref{ThmAS} holds true under the following two additional assumptions:
\begin{enumerate}
\item[(i)] $S = u_S(\mathscr{Z})^{\mathrm{Zar}}$.
\item[(ii)] $\mathscr{Z}$ is a complex analytic irreducible component of $\mathscr{Z}^{\mathrm{Zar}} \cap \Delta$.
\end{enumerate}

then it holds true in full generality.
\end{lemma}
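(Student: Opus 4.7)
The plan is to perform the two reductions in order: first reduce to assumption~(i), then to assumption~(ii), checking that the second reduction preserves the first.

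For the reduction to~(i), starting from an arbitrary analytically irreducible $\mathscr{Z} \subseteq \Delta$, I would set $S' := u_S(\mathscr{Z})^{\mathrm{Zar}} \subseteq S$, which is an irreducible closed subvariety (irreducibility is inherited from $\mathscr{Z}$). Applying Hironaka's theorem, fix a resolution $\pi \colon \tilde{S}' \to S'$ with $\tilde{S}'$ smooth quasi-projective. The pullback of $(\V_\ZZ, W_\bullet, \cF^\bullet)$ along the composition $\tilde{S}' \to S' \hookrightarrow S$ is again an admissible graded-polarized VMHS (admissibility is preserved under pullback by a morphism of smooth quasi-projective varieties), sharing the same classifying space $\cM$. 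Denote $\tilde{\Delta} := \tilde{S}' \times_{\Gamma \backslash \cM} \cM$ and let $\tilde{\mathscr{Z}}$ be an analytic irreducible component of the preimage of $\mathscr{Z}$ in $\tilde{\Delta}$ mapping surjectively onto $\mathscr{Z}$. Since $\pi$ is birational, $\tilde{\mathscr{Z}} \to \mathscr{Z}$ is generically injective, whence $\dim \tilde{\mathscr{Z}} = \dim \mathscr{Z}$; likewise $(\pi \times \mathrm{id})$ is an isomorphism over a dense open subset of $S'$, and $\mathscr{Z}^{\mathrm{Zar}}$ is algebraically irreducible, so $\dim \tilde{\mathscr{Z}}^{\mathrm{Zar}} = \dim \mathscr{Z}^{\mathrm{Zar}}$. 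Also $p_\cM(\tilde{\mathscr{Z}}) = p_\cM(\mathscr{Z})$, so the smallest weak Mumford--Tate subdomain is unchanged. By construction $u_{\tilde{S}'}(\tilde{\mathscr{Z}})$ is Zariski dense in $\tilde{S}'$, so (i) holds for $\tilde{\mathscr{Z}}$, and the inequality~(\ref{inequality}) for $\tilde{\mathscr{Z}}$ immediately gives the one for $\mathscr{Z}$.

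For the reduction to~(ii) while preserving~(i), I would now take $\mathscr{Z}$ satisfying~(i), note that $\mathscr{Z}^{\mathrm{Zar}}$ is algebraically irreducible (the Zariski closure of an irreducible analytic set is irreducible, as otherwise one of the algebraic components would contain $\mathscr{Z}$), and let $\mathscr{Z}'$ be an analytic irreducible component of $\mathscr{Z}^{\mathrm{Zar}} \cap \Delta$ containing $\mathscr{Z}$. The inclusions $\mathscr{Z} \subseteq \mathscr{Z}' \subseteq \mathscr{Z}^{\mathrm{Zar}}$ force $\mathscr{Z}'^{\mathrm{Zar}} = \mathscr{Z}^{\mathrm{Zar}}$, so (ii) is satisfied for $\mathscr{Z}'$; moreover $u_S(\mathscr{Z}') \supseteq u_S(\mathscr{Z})$ remains Zariski dense in $S$, so (i) is preserved. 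Using $\dim \mathscr{Z}' \ge \dim \mathscr{Z}$ and $p_\cM(\mathscr{Z})^{\mathrm{ws}} \subseteq p_\cM(\mathscr{Z}')^{\mathrm{ws}}$, the inequality~(\ref{inequality}) applied to $\mathscr{Z}'$ yields
\[
\dim \mathscr{Z}^{\mathrm{Zar}} - \dim \mathscr{Z} = \dim \mathscr{Z}'^{\mathrm{Zar}} - \dim \mathscr{Z} \ge \dim \mathscr{Z}'^{\mathrm{Zar}} - \dim \mathscr{Z}' \ge \dim p_\cM(\mathscr{Z}')^{\mathrm{ws}} \ge \dim p_\cM(\mathscr{Z})^{\mathrm{ws}}.
\]

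The only non-formal point is the dimension bookkeeping in the first step, namely that the Zariski closure behaves well under the birational pullback $\pi \times \mathrm{id}$ and that admissibility of the pulled-back VMHS genuinely passes through the resolution; both are standard consequences of Hironaka and of Kashiwara's pullback stability, but deserve an explicit citation. Everything else is a bookkeeping chain of inequalities between dimensions of analytic sets and their Zariski closures.
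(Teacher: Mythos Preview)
Your argument is correct and follows the same two–step d\'evissage as the paper, with two minor differences worth noting.

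For reduction~(i), the paper simply writes ``replace $S$ by $u_S(\mathscr{Z})^{\mathrm{Zar}}$'', glossing over the fact that this Zariski closure need not be smooth. Your passage through a resolution $\tilde{S}'\to S'$ makes this step rigorous; the dimension bookkeeping you outline (birationality of $\pi\times\mathrm{id}$ giving $\dim\tilde{\mathscr{Z}}=\dim\mathscr{Z}$ and $\dim\tilde{\mathscr{Z}}^{\mathrm{Zar}}=\dim\mathscr{Z}^{\mathrm{Zar}}$, and $p_\cM(\tilde{\mathscr{Z}})=p_\cM(\mathscr{Z})$) is straightforward.

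For reduction~(ii), the paper takes a slightly longer route: it invokes \Cref{RmkSmallestWMT} (a consequence of the logarithmic Ax theorem) to prove the \emph{equality} $p_\cD(\mathscr{Z}')^{\mathrm{ws}}=p_\cD(\mathscr{Z})^{\mathrm{ws}}$, arguing via $p_\cD(\mathscr{Z}')^{\mathrm{Zar}}=p_\cD(\mathscr{Z})^{\mathrm{Zar}}$. Your chain of inequalities uses only the trivial inclusion $p_\cM(\mathscr{Z})^{\mathrm{ws}}\subseteq p_\cM(\mathscr{Z}')^{\mathrm{ws}}$ coming from $\mathscr{Z}\subseteq\mathscr{Z}'$, which is all that is needed for the dimension bound. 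This is more economical and avoids appealing to logarithmic Ax at this stage.
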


\begin{proof}
Let $\mathscr{Z}$ be as in \Cref{ThmAS}.  
Notice that $\mathscr{Z}^{\mathrm{Zar}} \subseteq u_S(\mathscr{Z})^{\mathrm{Zar}} \times \cD$. The assumptions and the conclusion of \Cref{ThmAS} do not change if we replace $S$ by $u_S(\mathscr{Z})^{\mathrm{Zar}}$. So we may assume $S = u_S(\mathscr{Z})^{\mathrm{Zar}}$.

Let $\mathscr{Z}'$ be a complex analytic irreducible component of $\mathscr{Z}^{\mathrm{Zar}} \cap \Delta$ which contains $\mathscr{Z}$. Note that $\mathscr{Z} \subseteq \mathscr{Z}' \subseteq \mathscr{Z}^{\mathrm{Zar}}$. Thus by taking the Zariski closures, we obtain  $\mathscr{Z}'^{\mathrm{Zar}} = \mathscr{Z}^{\mathrm{Zar}}$.



Thus $p_{\cD}(\mathscr{Z}'^{\mathrm{Zar}}) = p_{\cD}(\mathscr{Z}^{\mathrm{Zar}})$, for the projection $p_{\cD} \colon S \times \cD \rightarrow \cD$. 
So for the algebraic structure on $\cD$ defined by \Cref{DefnBiAlg}, we have $p_{\cD}(\mathscr{Z}')^{\mathrm{Zar}} = p_{\cD}(\mathscr{Z})^{\mathrm{Zar}}$ because the projection $p_{\cD}$ is algebraic. But each weak Mumford--Tate domain is algebraic. So
\[
p_{\cD}(\mathscr{Z}') \subseteq p_{\cD}(\mathscr{Z}')^{\mathrm{Zar}} = p_{\cD}(\mathscr{Z})^{\mathrm{Zar}} \subseteq p_{\cD}(\mathscr{Z})^{\mathrm{ws}} = \tilde{S}^{\mathrm{ws}},
\]
where the last equality follows from \Cref{RmkSmallestWMT}. 
But $p_{\cD}(\mathscr{Z}) \subseteq p_{\cD}(\mathscr{Z}')$ because $\mathscr{Z} \subseteq \mathscr{Z}'$. So every weak Mumford--Tate domain containing $p_{\cD}(\mathscr{Z}')$ must also contain $p_{\cD}(\mathscr{Z})$, and thus contains $\tilde{S}^{\mathrm{ws}}$ by \Cref{RmkSmallestWMT}. Combined with the inclusion above, we get that $\tilde{S}^{\mathrm{ws}}$ is also the smallest weak Mumford--Tate domain which contains $p_{\cD}(\mathscr{Z}')$. So
\[
\dim \mathscr{Z}^{'\mathrm{Zar}} - \dim \mathscr{Z}' \ge \dim p_{\cD}(\mathscr{Z}')^{\mathrm{ws}} \Longrightarrow \dim \mathscr{Z}^{\mathrm{Zar}} - \dim \mathscr{Z} \ge \dim p_{\cD}(\mathscr{Z})^{\mathrm{ws}}
\]
as $\dim \mathscr{Z} \le \dim \mathscr{Z}'$ and $p_{\cD}(\mathscr{Z})^{\mathrm{ws}} = p_{\cD}(\mathscr{Z}')^{\mathrm{ws}} = \tilde{S}^{\mathrm{ws}}$. Replacing $\mathscr{Z}$ by $\mathscr{Z}'$, it is thus enough to prove \Cref{ThmAS} assuming furthermore (ii).
\end{proof}

Thus our main theorem is reduced to the following theorem, which we will prove in the rest of the paper.
\begin{thm}\label{ThmASDevissage}
\Cref{ThmAS} holds true under the additionnal assumption that $\mathscr{Z}$ is a complex analytic irreducible component of $\mathscr{Z}^{\mathrm{Zar}} \cap \Delta$ and $S = u_S(\mathscr{Z})^{\mathrm{Zar}}$.
\end{thm}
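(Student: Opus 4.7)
The plan is to prove \Cref{ThmASDevissage} by a case split according to the $\Q$-stabilizer $H_{\mathscr{Z}^\Zar}$ of $\mathscr{Z}^{\mathrm{Zar}}$ under the action of $P$ on the second factor of $S\times\cD$, combined with an induction on $\dim\cD$. By \Cref{RmkSmallestWMT}, the target $p_{\cD}(\mathscr{Z})^{\mathrm{ws}}$ equals $\tilde{S}^{\mathrm{ws}} = N(\R)^+(W_{-1}\cap N)(\C)\tilde{s}$, where $N$ is the connected algebraic monodromy group of the variation restricted to $S$. Replacing $\cD$ by $\tilde{S}^{\mathrm{ws}}$ (which is a weak Mumford--Tate domain and hence carries a mixed Hodge datum by \Cref{PropMTDomainMHD}), one reduces to showing $\dim\mathscr{Z}^{\mathrm{Zar}} - \dim \mathscr{Z} \ge \dim\cD$.

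\textbf{Case 1: $H_{\mathscr{Z}^{\mathrm{Zar}}}$ is zero-dimensional.} Here I would fix a semi-algebraic fundamental set $\mathfrak{F}$ for the action of $\Gamma_P$ on $\cD$ obtained from the $\mathfrak{sl}_2$-retraction $r\colon\cD\to\cD_\R$ of \eqref{EqRetraction} via \cite{BBKT}, and form the definable set
\[
\Theta = \{g \in P(\R) : g(S\times\mathfrak{F})\cap\mathscr{Z}\neq\emptyset\}.
\]
Definability of $\Theta$ follows from definability of the period map proved in \cite{BBKT}. If $\#\{\gamma\in\Gamma_P\cap\Theta : H(\gamma)\le T\}$ grows polynomially in $T$, the Pila--Wilkie theorem supplies a connected positive-dimensional semi-algebraic curve in $\Theta$ containing arbitrarily many integer points; transporting via monodromy and taking Zariski closures then produces a non-trivial connected algebraic subgroup of $P$ stabilizing $\mathscr{Z}^{\mathrm{Zar}}$, contradicting the standing assumption. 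The inequality thus follows from the counting estimate, whose proof proceeds by induction along the fibered tower
\[
\cD = \cD_m \xrightarrow{p_{m,m-1}} \cD_{m-1} \to \cdots \to \cD_0
\]
of \eqref{EqDiagXandDFiber}. Let $\mathscr{Z}_k$ be the image of $\mathscr{Z}$ in $S\times\cD_k$ and let $k_0$ be the smallest index such that $\mathscr{Z}_{k_0}$ is not a point in the $\cD_{k_0}$-direction; the base case $k_0=0$ reduces to the pure Ax--Schanuel counting of \cite{BTAS}, while $k_0>0$ (the unipotent case) requires a new direct count using the product decomposition \eqref{EqBundleStrOnXreal} of $\cD_\R$ into $\cD_0$ and the graded pieces of $W_\bullet$. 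The induction step lifts the estimate from $\mathscr{Z}_k$ to $\mathscr{Z}_{k+1}$ and splits according to whether the additional growth is \emph{horizontal} (in the $F^0(W_{-(k+1)}/W_{-(k+2)})_\C$-direction controlled by the Hodge filtration, using Griffiths transversality) or \emph{vertical} (in the complementary direction, where the action of $P(\R)$ on fibers is affine by \eqref{EqFormulaActionBCH}).

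\textbf{Case 2: $H_{\mathscr{Z}^{\mathrm{Zar}}}$ is positive-dimensional.} Here the idea is to quotient by $H_{\mathscr{Z}^{\mathrm{Zar}}}$ and reduce to Case 1 in the smaller Mumford--Tate domain $\cD/H_{\mathscr{Z}^{\mathrm{Zar}}}$, which is a legitimate Mumford--Tate domain only if $H_{\mathscr{Z}^{\mathrm{Zar}}}\lhd P$. I would first prove normality of $H_{\mathscr{Z}^{\mathrm{Zar}}}$ in the monodromy group $N$ by the monodromy-conjugation argument of \cite{MokAx-Schanuel-for}: an element $\gamma\in N(\Z)$ acts on $\mathscr{Z}^{\mathrm{Zar}}$ because $\mathscr{Z}^{\mathrm{Zar}}$ is the Zariski closure of a set saturated under monodromy, hence $\gamma H_{\mathscr{Z}^{\mathrm{Zar}}}\gamma^{-1}$ stabilizes $\mathscr{Z}^{\mathrm{Zar}}$, so lies in $H_{\mathscr{Z}^{\mathrm{Zar}}}$ by maximality. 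In the pure case Andr\'e's \Cref{ThmAndreNormal} upgrades normality in $N$ to normality in $P^{\mathrm{der}}$, and hence in $P$; in the mixed setting this step is more delicate and I would follow the strategy of \cite[$\mathsection$6.3]{GaoAxSchanuel}, using the quotient period map \eqref{EqQuotPerMap} of the reductive quotient to descend the normality statement fibrewise along the fibered structure of $\cD$. Once $H_{\mathscr{Z}^{\mathrm{Zar}}}\lhd P$ is established, \Cref{PropQuotientMT} produces $\cD/H_{\mathscr{Z}^{\mathrm{Zar}}}$ and the image $\mathscr{Z}'$ of $\mathscr{Z}$ therein has trivial $\Q$-stabilizer by construction; the inequality for $\mathscr{Z}'$ from Case 1 plus the dimension of the $H_{\mathscr{Z}^{\mathrm{Zar}}}$-orbits recovers the inequality for $\mathscr{Z}$.

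The hardest part will be the counting estimate in the unipotent base case $k_0>0$, because the fundamental set $\mathfrak{F}$ is only implicit (it comes from the $\mathfrak{sl}_2$-retraction and not from a Siegel-set description), so height growth must be controlled directly through the polynomial action formula \eqref{EqFormulaActionBCH} rather than via non-positive curvature as in the pure case. Secondarily, the promotion of normality of $H_{\mathscr{Z}^{\mathrm{Zar}}}$ from $N$ to $P$ in the mixed setting is expected to be subtle, as the naive adaptation of Andr\'e's theorem fails and one must exploit the interplay between monodromy, the weight filtration and the quotient period maps $[\Phi_{/N}]$.
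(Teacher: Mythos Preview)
Your overall architecture matches the paper's --- split on triviality of $H_{\mathscr{Z}^{\mathrm{Zar}}}$, Pila--Wilkie counting along the tower $\cD_m \to \cdots \to \cD_0$ in Case~1, normality plus quotient in Case~2 --- and you correctly flag the two hardest spots (the unipotent base step $k_0>0$ of the counting, and the promotion of normality from $N$ to $P$). But two of your execution sketches have genuine gaps.

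Your argument for $H_{\mathscr{Z}^{\mathrm{Zar}}} \lhd N$ is wrong: $\mathscr{Z}^{\mathrm{Zar}}$ is \emph{not} the Zariski closure of a monodromy-saturated set. The set $\mathscr{Z}$ is only one irreducible component of $\mathscr{Z}^{\mathrm{Zar}} \cap \Delta$, and $\gamma \in \Gamma_P$ sends it to a component of $\gamma\mathscr{Z}^{\mathrm{Zar}} \cap \Delta$; there is no reason that $\gamma\mathscr{Z}^{\mathrm{Zar}} = \mathscr{Z}^{\mathrm{Zar}}$ (if this held for all $\gamma$, the stabiliser would already contain $N$ and there would be nothing to prove). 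The paper instead places $[\mathscr{Z}^{\mathrm{Zar}}]$ in a Hilbert-scheme component $\mathbf{H}$, forms the locus $\mathbscr{Z}$ of points $(\tilde\delta,[\mathscr{B}])$ with $\dim_{\tilde\delta}(\Delta\cap\mathscr{B})\ge\dim\mathscr{Z}$, shows via definable Chow that its image in $S$ is algebraic (hence equals $S$ by the hypothesis $S=u_S(\mathscr{Z})^{\mathrm{Zar}}$), and runs the conjugation argument on the \emph{family}, where monodromy genuinely acts. Your Case~1 endgame is likewise incomplete: with your $\Theta$ a Pila--Wilkie curve does not directly yield a stabiliser of $\mathscr{Z}^{\mathrm{Zar}}$. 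The paper defines $\Theta$ through $\mathscr{Z}^{\mathrm{Zar}}$ (as $\{g:\dim(g^{-1}\mathscr{Z}^{\mathrm{Zar}}\cap(S\times\mathfrak{F}))=\dim\mathscr{Z}\}$) and argues by \emph{downward induction on $\dim\mathscr{Z}^{\mathrm{Zar}}$}: either the lattice points $c\in C\cap\Gamma_P$ all give the same $c^{-1}\mathscr{Z}^{\mathrm{Zar}}$ (producing many stabiliser elements), or they do not, in which case the Zariski closure of $C^{-1}\mathscr{Z}^{\mathrm{Zar}}$ is strictly larger and one applies the induction hypothesis. As a smaller point, the paper does not replace $\cD$ by $\tilde{S}^{\mathrm{ws}}$; \Cref{PropMTDomainMHD} is about Mumford--Tate domains, not weak ones, and the ambient $\cD$ remains the genuine Mumford--Tate domain throughout.
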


The rest of the paper is devoted to prove \Cref{ThmASDevissage}.

\section{Bigness of the $\Q$-stabilizer}\label{SectionBigness}
Recall our setup
\begin{equation}
\xymatrix{
S \times \cD \supseteq & \Delta \ar[r]^-{p_{\cD}|_{\Delta}} \ar[d]_-{u_S} \pullbackcorner & \cD \ar[d]^-{u} \\
& S \ar[r]^-{[\Phi]} & \Gamma_P \backslash \cD
}.
\end{equation}
We consider a subset $\mathscr{Z}$ of $\Delta$ satisfying the following properties: (i) $\mathscr{Z}$ is a complex analytic irreducible component of $\mathscr{Z}^{\mathrm{Zar}}\cap \Delta$; (ii) $S = u_S(\mathscr{Z})^{\mathrm{Zar}}$.

Let $H_{\mathscr{Z}^{\mathrm{Zar}}}$ be the $\Q$-stabilizer of $\mathscr{Z}^\Zar$, namely
\begin{equation}\label{EqQStabZZar}
H_{\mathscr{Z}^{\mathrm{Zar}}} = \left(\mathrm{Stab}_{P(\R)}(\mathscr{Z}^{\mathrm{Zar}}) \cap \Gamma_P \right)^{\mathrm{Zar},\circ} = \left(\{\gamma \in \Gamma_P : \gamma\mathscr{Z}^{\mathrm{Zar}} = \mathscr{Z}^{\mathrm{Zar}}\}^{\mathrm{Zar}}\right)^{\circ}.
\end{equation}

In this section we prove the following case of \Cref{ThmASDevissage}:
\begin{prop}\label{PropBignessStab}
\Cref{ThmASDevissage} holds true under the additional assumption $H_{\mathscr{Z}^{\mathrm{Zar}}}$ is the trivial group.
\end{prop}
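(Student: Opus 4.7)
The plan is to argue by contradiction via o-minimality and the Pila--Wilkie counting theorem, in the spirit of \cite{BTAS,MokAx-Schanuel-for,GaoAxSchanuel}. Suppose
\begin{equation*}
\dim \mathscr{Z}^{\mathrm{Zar}} - \dim \mathscr{Z} < \dim p_{\cD}(\mathscr{Z})^{\mathrm{ws}},
\end{equation*}
while $H_{\mathscr{Z}^{\mathrm{Zar}}}$ is trivial; we will derive a contradiction with the triviality of $H_{\mathscr{Z}^{\mathrm{Zar}}}$ by producing a positive-dimensional algebraic subgroup of $P$ that preserves $\mathscr{Z}^{\mathrm{Zar}}$ and is defined over $\Q$.

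First, using \cite{BBKT} I would fix a semi-algebraic fundamental set $\mathfrak{F} \subseteq \cD$ for the action of $\Gamma_P$, compatible with the $\mathfrak{sl}_2$-retraction $r \colon \cD \to \cD_{\R}$ and with the fibered structure $\cD \to \cD_{m-1} \to \cdots \to \cD_0$ of \Cref{SubsectionFiberedStructureContinue}. The mixed period map $[\Phi] \colon S \to \Gamma_P \backslash \cD$ is definable in $\R_{\mathrm{an},\exp}$ after restricting to $\mathfrak{F}$, so a lift $\tilde{\mathscr{Z}} \subseteq S \times \mathfrak{F}$ of $\mathscr{Z}$ is definable as well. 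Define the definable set
\begin{equation*}
\Theta := \bigl\{ g \in P(\R) : \dim_{\C}\bigl( (\{1\}\times g) \cdot \tilde{\mathscr{Z}} \,\cap\, \mathscr{Z}^{\mathrm{Zar}} \bigr) \geq \dim \mathscr{Z} \bigr\} \subseteq P(\R),
\end{equation*}
which parametrizes translates of $\tilde{\mathscr{Z}}$ meeting $\mathscr{Z}^{\mathrm{Zar}}$ in a subset of maximal possible complex dimension.

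The next step is to check that $\Gamma_P \cap \Theta$ contains ``many'' points. For each $\gamma$ in the image of the monodromy $\mathrm{Im}(\rho) \subseteq \Gamma_P$ satisfying $\gamma \cdot \mathfrak{F} \cap p_{\cD}(\mathscr{Z}) \neq \emptyset$ (after suitable translation to meet the fundamental set), the incidence $\gamma \tilde{\mathscr{Z}} \cap \mathscr{Z}^{\mathrm{Zar}}$ has dimension at least $\dim \mathscr{Z}$, because $\mathscr{Z}$ is an irreducible component of $\mathscr{Z}^{\mathrm{Zar}} \cap \Delta$ (assumption (ii) of \Cref{ThmASDevissage}). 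Thus $\Theta$ contains all such $\gamma$. The counting estimate \Cref{ThmCounting} --- whose proof occupies \Cref{SectionBigness} via the fibered structure and forms the main technical difficulty of the whole argument --- then yields constants $c, T_0 > 0$ such that
\begin{equation*}
|\{\gamma \in \Gamma_P \cap \Theta : H(\gamma) \leq T\}| \geq T^{c}, \qquad T \geq T_0,
\end{equation*}
where $H(\cdot)$ denotes a height function on $P(\R) \subseteq \mathrm{GL}(V_{\R})$.

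Now the Pila--Wilkie theorem \cite[3.6]{PilaO-minimality-an} applied to $\Theta$ produces a connected semi-algebraic curve $C \subseteq \Theta$ passing through the identity and containing arbitrarily many integer points of $\Gamma_P$. For every $\gamma \in C \cap \Gamma_P$ the variety $\gamma \mathscr{Z}^{\mathrm{Zar}} \cap \mathscr{Z}^{\mathrm{Zar}}$ has an irreducible component of dimension $\geq \dim \mathscr{Z}$. Applying this to enough distinct $\gamma$'s and using that $\mathscr{Z}^{\mathrm{Zar}}$ has only finitely many irreducible components of top dimension, one shows (as in \cite[Prop.~6.3]{MokAx-Schanuel-for} or the analogous step in \cite{BTAS}) that a positive proportion of elements of $C \cap \Gamma_P$ actually stabilize $\mathscr{Z}^{\mathrm{Zar}}$. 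Therefore $\mathrm{Stab}_{P(\R)}(\mathscr{Z}^{\mathrm{Zar}}) \cap \Gamma_P$ has Zariski closure of positive dimension, and its connected component is a positive-dimensional $\Q$-subgroup contained in $H_{\mathscr{Z}^{\mathrm{Zar}}}$. This contradicts the standing hypothesis $H_{\mathscr{Z}^{\mathrm{Zar}}} = 1$.

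The expected main obstacle is the counting estimate used to invoke Pila--Wilkie. In the pure Shimura and pure Hodge cases one exploits Siegel sets and non-positive curvature in the horizontal direction; in our mixed non-Shimura setting the fundamental set $\mathfrak{F}$ from \cite{BBKT} is given only implicitly via the $\mathfrak{sl}_2$-splitting. The strategy, carried out in the remainder of \Cref{SectionBigness}, will be to induct on the storeys $\cD_k$ using \Cref{PropProductStrMHD} and \Cref{LemmaFibersOfVarphikPkkplus1}, propagating the counting lower bound from $\mathscr{Z}_k$ to $\mathscr{Z}_{k+1}$ via separate ``horizontal'' and ``vertical'' arguments, with base case $k_0 = 0$ handled by \cite{BTAS} and the unipotent base case $k_0 > 0$ handled directly.
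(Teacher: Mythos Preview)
Your overall strategy---define a definable $\Theta$, invoke \Cref{ThmCounting} for a polynomial lower bound on $\#(\Theta\cap\Gamma_P)$, then apply Pila--Wilkie---matches the paper's. But the argument has a genuine gap at the stabilization step, and the contradiction framing is misleading.

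\medskip
\textbf{The gap.} From $\gamma\in\Theta$ you only know that $\gamma^{-1}\mathscr{Z}^{\mathrm{Zar}}\cap(S\times\mathfrak{F})$ has dimension $\dim\mathscr{Z}$; this does \emph{not} imply $\gamma\mathscr{Z}^{\mathrm{Zar}}=\mathscr{Z}^{\mathrm{Zar}}$. Your justification (``$\mathscr{Z}^{\mathrm{Zar}}$ has only finitely many irreducible components of top dimension'') does not apply: $\mathscr{Z}^{\mathrm{Zar}}$ is already irreducible, and the intersections $\gamma\mathscr{Z}^{\mathrm{Zar}}\cap\mathscr{Z}^{\mathrm{Zar}}$ have dimension $\ge\dim\mathscr{Z}$, which is strictly less than $\dim\mathscr{Z}^{\mathrm{Zar}}$---so no pigeonhole on top-dimensional components is available. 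The paper resolves this with a dichotomy and \emph{downward induction on} $\dim\mathscr{Z}^{\mathrm{Zar}}$: given the Pila--Wilkie curve $C_i\subseteq\Theta$, either $c^{-1}\mathscr{Z}^{\mathrm{Zar}}$ is constant for $c\in C_i\cap\Gamma_P$ (whence $H_{\mathscr{Z}^{\mathrm{Zar}}}$ is infinite, contradiction), or it varies, in which case one replaces $\mathscr{Z}$ by an irreducible component $\mathscr{Z}'$ of $(C_i^{-1}\mathscr{Z}^{\mathrm{Zar}})^{\mathrm{Zar}}\cap\Delta$, checks $\dim\mathscr{Z}'^{\mathrm{Zar}}=\dim\mathscr{Z}^{\mathrm{Zar}}+1$ and $\dim\mathscr{Z}'=\dim\mathscr{Z}+1$, and applies the induction hypothesis to $\mathscr{Z}'$. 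The base case of the induction is $\mathscr{Z}^{\mathrm{Zar}}=S\times\tilde{S}^{\mathrm{ws}}$, handled by the logarithmic Ax theorem.

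\medskip
\textbf{The contradiction framing.} You assume the Ax--Schanuel inequality fails, but you never use this hypothesis: \Cref{ThmCounting} needs only $\dim\tilde{Z}>0$, and nothing else in your sketch refers to it. The paper does not argue by contradiction on the inequality; rather, in case~(ii) of the dichotomy the inequality for $\mathscr{Z}$ is \emph{deduced} from the one for $\mathscr{Z}'$ (via $\dim\mathscr{Z}'^{\mathrm{Zar}}-\dim\mathscr{Z}'=\dim\mathscr{Z}^{\mathrm{Zar}}-\dim\mathscr{Z}$ and $p_{\cD}(\mathscr{Z}')^{\mathrm{ws}}\supseteq p_{\cD}(\mathscr{Z})^{\mathrm{ws}}$), while case~(i) is ruled out by the triviality of $H_{\mathscr{Z}^{\mathrm{Zar}}}$.
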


\subsection{Auxiliary set}\label{SubsectionAuxiliarySet}
Our proof of \Cref{PropBignessStab} heavily uses o-minimality. We are able to work in this framework thanks to the  following theorem proved by the second-named author, Bakker, Brunebarbe, and Tsimerman. In the pure case this theorem is the main result of \cite{BKT}.
\begin{thm}[{\!\!\cite[Prop.~3.13 and Thm.~4.4]{BBKT}}]\label{ThmFundSet}
Let $r \colon \cD \rightarrow \cD_{\R}$ be the retraction defined in \eqref{EqRetraction}, and identify $\cD_{\R}$ with $\cD_0 \times \prod_{1\le k \le m} (W_{-k}/W_{-k-1})(\R)$ under the real-algebraic isomorphism defined in \eqref{EqBundleStrOnXreal}.

There exist an $\R_{\mathrm{alg}}$-definable subset $\mathfrak{F}_0$  of $\cD_0$ and a real number $M > 0$ such that 
\begin{equation}\label{EqFundD}
\mathfrak{F}_{\R} := \mathfrak{F}_0 \times \prod_{1 \le k \le m} (-M,M)^{\dim (W_{-k}/W_{-(k+1)})(\R)},
\end{equation}
which is a $\R_{\mathrm{alg}}$-definable subset of  $\cD_{\R}$, 
satisfies the following properties:
\begin{enumerate}
\item[(i)] $u|_{r^{-1}(\mathfrak{F}_{\R})}$ is surjective;
\item[(ii)] $[\Phi]$ is $\R_{\mathrm{an},\exp}$-definable for the $\R_{\mathrm{alg}}$-structure on $\Gamma_P\backslash\cD$ defined by $r^{-1}(\mathfrak{F}_{\R})$.
\end{enumerate}
\end{thm}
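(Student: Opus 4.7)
The plan is to reduce the theorem to the pure case of \cite{BKT} combined with a concrete analysis of the unipotent fibers of $\cD \to \cD_0$. Since $\cD_0$ is a pure Mumford--Tate domain with arithmetic group $\Gamma_{P/W_{-1}}$, \cite{BKT} already provides a Siegel-type $\R_{\mathrm{alg}}$-definable $\mathfrak{F}_0 \subseteq \cD_0$ surjecting onto $\Gamma_{P/W_{-1}}\backslash \cD_0$ and making the induced pure period map $[\Phi_0]$ definable in $\R_{\mathrm{an},\exp}$. The task is thus to upgrade this along the tower \eqref{EqDiagXandDFiber}, one unipotent step at a time, using the product decomposition \eqref{EqBundleStrOnXreal} of $\cD_\R$ and the $P(\R)^+$-equivariant retraction $r$ of \eqref{EqRetraction}.

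For the construction of $\mathfrak{F}_\R$, first I would verify that for each $k$ the $\QQ$-points $\Gamma_P \cap W_{-k}(\QQ)$ project to a lattice in the real vector group $(W_{-k}/W_{-(k+1)})(\R)$; this follows from the fact that $\Gamma_P$ is arithmetic and $W_{-k}/W_{-(k+1)}$ is a unipotent $\QQ$-subquotient. Consequently, for $M>0$ large enough, the box $(-M,M)^{\dim(W_{-k}/W_{-(k+1)})(\R)}$ is a fundamental set for this lattice. Surjectivity of $u|_{r^{-1}(\mathfrak{F}_\R)}$ is then established inductively: given $x \in \cD$, apply $r$ to obtain $r(x) \in \cD_\R$, which under \eqref{EqBundleStrOnXreal} has coordinates $(x_0, x_1, \ldots, x_m)$; use the surjectivity from \cite{BKT} at level $0$ together with \eqref{EqFormulaActionBCH} and \eqref{EqDiagramRetractionProjection} to successively adjust the $x_k$'s into the box by elements of $\Gamma_P \cap W_{-k}$, the compatibility \eqref{EqFormulaActionBCH} ensuring that later adjustments do not destroy the earlier ones (the Baker--Campbell--Hausdorff terms $\mathrm{calb}_k$ are polynomials of bounded degree and can be absorbed by enlarging $M$).

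The definability statement (ii) is the genuinely hard part, and here I would follow the Cattani--Kaplan--Schmid asymptotic analysis. On a log-disc $(\Delta^*)^r \times \Delta^{n-r}$ covering a punctured neighborhood of a boundary point of $\bar S \setminus S$, admissibility and the $\SL_2$-orbit theorem for mixed Hodge structures give a nilpotent-orbit approximation of the lifted period map of the form $(z,w)\mapsto \exp\!\bigl(\sum_j z_j N_j\bigr)\cdot \psi(e^{2\pi i z},w)$, where $\psi$ extends holomorphically to the polydisc and the $N_j$ are the monodromy logarithms. The $\mathfrak{sl}_2$-splitting produces the retraction $r$ so that on $r^{-1}(\mathfrak{F}_\R)$ the imaginary parts of $z_j$ are controlled, and composing with the coordinates from \eqref{EqBundleStrOnXreal} expresses the map as a composition of $\R_{\mathrm{an}}$-definable boundary data with the restricted exponential of a linear form in logarithms; this is exactly the situation definable in $\R_{\mathrm{an},\exp}$. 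Globalizing by a finite cover of $\bar S$ by such charts and gluing using the arithmeticity of $\Gamma_P$ yields definability of $[\Phi]$ for the $\R_{\mathrm{alg}}$-structure on $\Gamma_P\backslash\cD$ cut out by $r^{-1}(\mathfrak{F}_\R)$.

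The main obstacle I expect is precisely this last globalization step: one must show that the locally defined nilpotent-orbit approximations on different boundary charts patch into a single $\R_{\mathrm{an},\exp}$-definable function on all of $S$. In the pure case \cite{BKT} this uses that the Siegel set for the pure arithmetic group has explicit reduction-theoretic behavior in the horizontal and vertical directions; in the mixed case the additional unipotent fibers mean that one must also control how the unipotent coordinates of $r(x)$ vary as $x$ approaches the boundary, which is exactly where the mysterious nature of the $\mathfrak{sl}_2$-splitting and the delicate estimates of \cite{SL2Splitting} enter. A secondary difficulty is verifying that the compatibility diagram \eqref{EqDiagramRetractionProjection} indeed lets one do the surjectivity induction without the boxes at higher levels needing to grow uncontrollably in $M$.
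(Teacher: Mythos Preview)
The paper does not prove this theorem; it is quoted verbatim as a result of \cite[Prop.~3.13 and Thm.~4.4]{BBKT} and used as a black box. So there is no ``paper's own proof'' to compare your proposal against. Your sketch is a reasonable outline of the strategy actually carried out in \cite{BBKT} (reduction to the pure case of \cite{BKT}, inductive construction of the fundamental set along the unipotent tower via the $\mathfrak{sl}_2$-retraction, and definability via nilpotent/$\SL_2$-orbit asymptotics), and you have correctly identified the genuine difficulties---the control of the unipotent coordinates under the $\mathfrak{sl}_2$-splitting near the boundary and the globalization of the local definable charts. But for the purposes of the present paper no argument is required here: the result is simply imported.
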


\vskip 0.5em

The following auxiliary set is important for the proof of Ax-Schanuel.
\begin{equation}\label{EqDefnSetTheta}
\Theta := \{g \in P(\R) : \dim (g^{-1} \mathscr{Z}^{\mathrm{Zar}} \cap (S \times\mathfrak{F}) \cap \Delta ) = \dim \mathscr{Z} \},
\end{equation}
with $\mathfrak{F} = r^{-1}(\mathfrak{F}_{\R})$. 
It is clear that $\Theta$ is definable in $\R_{\mathrm{an},\exp}$, and
\[
\{\gamma \in \Gamma_P : \gamma(S\times\mathfrak{F}) \cap \mathscr{Z} \not= \emptyset\} \subseteq \Theta.
\]
Denote for simplicity by $\tilde{Z} = p_{\cD}(\mathscr{Z})$, then
\[
p_{\cD}\left( \gamma(S\times\mathfrak{F}) \cap \mathscr{Z} \right) = p_{\cD}(p_{\cD}^{-1}(\gamma\mathfrak{F}) \cap \mathscr{Z}) = \gamma\mathfrak{F} \cap \tilde{Z}.
\]
Thus for any $\gamma \in \Gamma_P$, we have
\[
\gamma(S\times\mathfrak{F}) \cap \mathscr{Z} \not=\emptyset \Leftrightarrow \gamma\mathfrak{F} \cap \tilde{Z} \not=\emptyset.
\]
Therefore
\begin{equation}\label{EqDistSubsetOfTheta}
\{\gamma \in \Gamma_P : \gamma\mathfrak{F} \cap \tilde{Z} \not= \emptyset\} \subseteq \Theta.
\end{equation}


\begin{thm}\label{ThmCounting}
Assume $\dim \tilde{Z} > 0$. Then there exist constants $\epsilon > 0$, $c_{\epsilon}>0$ and a sequence of real numbers $\{T_i\}_{i\in \mathbb{N}}$ with $T_i \rightarrow \infty$ such that
\begin{equation}\label{EqTheEstimateLeadingToFindingRealAlgCurves}
\#\{\gamma \in \Theta \cap \Gamma_P : H(\gamma) \le T_i\} \ge c_{\epsilon} T_i^{\epsilon}.
\end{equation}
\end{thm}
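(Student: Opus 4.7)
The plan is to prove \Cref{ThmCounting} by induction on the level $k$ in the fibered tower $\cD = \cD_m \to \cD_{m-1} \to \cdots \to \cD_0$ from \eqref{EqDiagXandDFiber}. For each $k$ denote by $\mathscr{Z}_k$ the image of $\mathscr{Z}$ in $S \times \cD_k$ and by $\tilde{Z}_k = p_{\cD_k}(\mathscr{Z}_k) \subseteq \cD_k$; similarly let $\Theta_k \subseteq (P/W_{-(k+1)})(\R)$ be the natural analogue of $\Theta$ for $\mathscr{Z}_k$, and $\Gamma_{P,k}$ the image of $\Gamma_P$ in $(P/W_{-(k+1)})(\Q)$. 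By \Cref{ThmFundSet} together with \Cref{PropProductStrMHD} the fundamental set $\mathfrak{F}_\R$ decomposes as a product compatible with this tower, so that $\Theta$ can be controlled by the sequence of $\Theta_k$'s coupled with bounded unipotent coordinates coming from the boxes $(-M,M)^{\dim(W_{-k}/W_{-(k+1)})(\R)}$ in \eqref{EqFundD}.

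First I would handle the base of the induction, at the smallest $k_0$ with $\dim \tilde{Z}_{k_0} > 0$, which exists since $\dim \tilde{Z} > 0$. If $k_0 = 0$, then $\tilde{Z}_0$ is a positive-dimensional complex analytic subset of the pure Mumford--Tate domain $\cD_0$. I would apply the counting theorem of \cite{BTAS} to the induced VHS and its period map $[\Phi_{/W_{-1}}]$ from \eqref{EqQuotPerMap}, producing a sequence $T_i \to \infty$ with $\#\{\gamma_0 \in \Theta_0 \cap \Gamma_{P,0} : H(\gamma_0) \leq T_i\} \gg T_i^{\epsilon_0}$. If $k_0 > 0$, we are in the \emph{unipotent case}: the projection of $\tilde{Z}$ to $\cD_0$ is constant and $\tilde{Z}_{k_0}$ lies in a single fiber of $\cD_{k_0} \to \cD_{k_0-1}$, which by \Cref{LemmaFibersOfVarphikPkkplus1} is an affine complex vector space. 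I would establish the estimate directly in this case by a lattice-counting argument: the bi-algebraicity of weak Mumford--Tate domains (\Cref{SubsectionBiAlg}) forces $\tilde{Z}_{k_0}$ to leave the bounded box part of $\mathfrak{F}_\R$ in many $\Gamma_P$-translations, because the orbit of a fixed point in a vector space under an arithmetic lattice of translations has polynomially many elements in any expanding ball.

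Next I would carry out the inductive lifting step, assuming the counting estimate has been proved on level $k$ and deducing it on level $k{+}1$. The projection $p_{k+1,k} \colon \cD_{k+1} \to \cD_k$ has fibers which are (open subsets of) complex vector groups, with the group action given explicitly by \eqref{EqFormulaActionBCH}. Following \cite{GaoAxSchanuel}, I would distinguish two cases. In the horizontal case, where $\tilde{Z}_{k+1} \to \tilde{Z}_k$ is generically finite, each $\gamma_k \in \Theta_k \cap \Gamma_{P,k}$ of height $\leq T_i$ admits at most boundedly many lifts, but these lifts have heights $\leq T_i$ (the new fiber coordinate being confined to $(-M,M)^{\ast}$), so the previous estimate transfers essentially verbatim. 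In the vertical case, where $\tilde{Z}_{k+1} \to \tilde{Z}_k$ has positive-dimensional fibers, I would use the explicit polynomial group law \eqref{EqFormulaActionBCH} and the Baker--Campbell--Hausdorff correction terms $\mathrm{calb}_k$ from \Cref{PropProductStrMHD} to argue that each positive-dimensional vertical slice of $\tilde{Z}$, when translated to meet $\mathfrak{F}$, forces many \emph{new} integer translates in the fiber direction; combined with the definability of $[\Phi]$ in $\R_{\mathrm{an},\exp}$ from \Cref{ThmFundSet}.(ii), this produces additional integer points in $\Theta \cap \Gamma_P$ of polynomially bounded height.

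The main obstacle will be the vertical lifting step. Unlike the pure case treated in \cite{BTAS}, where reduction theory supplies explicit Siegel-set coordinates and negative curvature arguments apply, here the fundamental set is only implicitly described via the $\mathfrak{sl}_2$-retraction \eqref{EqRetraction} and has \emph{bounded} unipotent coordinates. Consequently the naive intuition that ``more vertical room produces more integer translates'' must be implemented by carefully tracking, via \eqref{EqFormulaActionBCH}, how the non-linear BCH terms $\mathrm{calb}_k$ transfer horizontal growth into the vertical directions; the delicate point is to ensure that the heights of the newly produced lattice elements remain polynomial in $T_i$ rather than exploding under these BCH corrections. Once this quantitative control is achieved, iterating the lifting from $k = k_0$ up to $k = m$ yields the estimate \eqref{EqTheEstimateLeadingToFindingRealAlgCurves} for $\Theta = \Theta_m$.
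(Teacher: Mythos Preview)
Your overall strategy matches the paper's: induction on the level $k$ of the fibered tower, with the base step at the smallest $k_0$ with $\dim \tilde{Z}_{k_0} > 0$ (using \cite{BTAS} if $k_0=0$, a direct lattice-counting argument in the unipotent fiber if $k_0 > 0$), followed by a lifting from level $k$ to $k+1$ that splits into horizontal and vertical cases following \cite{GaoAxSchanuel}.

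However, your implementation of the lifting step has a genuine gap. Your dichotomy ``generically finite vs.\ positive-dimensional fibers for $\tilde{Z}_{k+1} \to \tilde{Z}_k$'' is not the right one, and your horizontal-case argument fails as written. You claim that in the horizontal case the lifts of $\gamma_k$ have heights $\leq T_i$ because ``the new fiber coordinate [is] confined to $(-M,M)^{\ast}$''. But the box $(-M,M)^{\ast}$ bounds the fiber coordinate of the \emph{fundamental set} $\mathfrak{F}_{k+1,\R}$, not of the \emph{translate} $\gamma_{k+1}$ needed to bring a point of $\tilde{Z}_{k+1,\R}$ into $\mathfrak{F}_{k+1,\R}$. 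A point $\tilde{z} \in \tilde{Z}_{k+1,\R}$ lying over $\gamma_k \mathfrak{F}_{k,\R}$ can have arbitrarily large vertical coordinate $\lambda_{k+1}(\tilde{z})$ even when the fibers of $\tilde{Z}_{k+1} \to \tilde{Z}_k$ are finite (the graph can have large slope); the required $\gamma_{k+1}$ then has fiber coordinate roughly $\lambda_{k+1}(\tilde{z})$ plus BCH corrections, which is not bounded by $T_i$.

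The paper's fix is to make the dichotomy quantitative: one compares the vertical growth $|\lambda_{k+1}(\tilde{Z}(k+1,T)_\R)|$ against a threshold $T^{\delta_k}$ with $\delta_k > (k+1)\alpha_k$, where $\alpha_k$ comes from a preliminary polynomial height bound for lattice elements whose translate of $\mathfrak{F}_{k,\R}$ meets $\prod_i B_i(T)$. If vertical growth exceeds $T^{\delta_k}$ for all large $T$, the vertical direction alone supplies $\gg T^{\delta_k - (k+1)\alpha_k}$ lattice translates of controlled height; this is in fact the \emph{easier} case, contrary to your final paragraph. If vertical growth is $\leq T_i^{\delta_k}$ along a subsequence, then heights of all relevant lifts are $\ll T_i^{\delta_k}$ (not $\leq T_i$), and one transfers the level-$k$ count by showing that $p_{k+1,k}(\tilde{Z}(k+1,T_i)_\R)$ approximates $\tilde{Z}_k(T_i)_\R$ and hence hits comparably many $\Gamma$-translates of $\mathfrak{F}_{k,\R}$. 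The BCH height control you correctly flag as delicate is encoded in the bound that $\lambda_{k+1}(\gamma \mathfrak{F}_{k+1,\R})$ lies in a ball of radius $\ll H(\gamma_{0/-k-1})^k$ centered at $\gamma_{-k-1/-k-2}$, and this is used in \emph{both} cases, not only the vertical one.
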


\subsection{Proof of \Cref{PropBignessStab} assuming \Cref{ThmCounting}}
If $\dim \tilde{Z} = 0$, then $\dim \tilde{Z}^{\mathrm{ws}} = 0$ and hence \Cref{ThmASDevissage} clearly holds true. So we assume $\dim \tilde{Z} > 0$.

We prove \Cref{PropBignessStab} by (downward) induction on $\dim \mathscr{Z}^{\mathrm{Zar}}$. The starting point for this induction is when $\mathscr{Z}^{\mathrm{Zar}} = S \times \tilde{S}^{\mathrm{ws}}$ (see \Cref{ThmLogAx}). In this case, under the assumptions of \Cref{ThmASDevissage} we have $\mathscr{Z} = S\times_{\Gamma_P\backslash\cD} \tilde{S}^{\mathrm{ws}}$, and so $\dim \mathscr{Z} = \dim S$. Thus \Cref{ThmASDevissage} holds true in this case.

Let $c_{\epsilon}>0$, $\epsilon >0$ and $\{T_i\}$ be as in \Cref{ThmCounting}. Then by the Pila--Wilkie counting theorem \cite[3.6]{PilaO-minimality-an}, 
for each $T_i$ there exists a connected semi-algebraic curve $C_i \subseteq \Theta$ which contains $\ge c_{\epsilon}T_i^{\epsilon}$ points in $\Gamma_P$ of height at most $T_i$. 
For $T_i \gg 1$ we have $c_{\epsilon}T_i^{\epsilon} \ge 2$.

Fix $c_0 \in C_i \cap \Gamma_P$. Set $C := c_0^{-1} \cdot C_i$. Then $C$ is a semi-algebraic curve which contains $\ge c_{\epsilon}T_i^{\epsilon}$ in $\Gamma_P$.

For each $c' \in C_i \subseteq \Theta$, we have $\dim (c^{\prime-1}\mathscr{Z}^{\mathrm{Zar}} \cap \Delta) = \dim \mathscr{Z}$ by definition of $\Theta$ from \eqref{EqDefnSetTheta}. But $c_0 \Delta = \Delta$ since $c_0 \in \Gamma_P$. So we have
\begin{equation}\label{EqDimEqualTrans}
\dim (c^{-1}\mathscr{Z}^{\mathrm{Zar}} \cap \Delta) = \dim \mathscr{Z}\qquad \text{ for all }c \in C.
\end{equation}

Notice that $\mathscr{Z}^{\mathrm{Zar}} \subseteq C^{-1}\mathscr{Z}^{\mathrm{Zar}}$. Moroever 
since $C$ is a semi-algebraic curve, we have $\dim (C^{-1}\mathscr{Z}^{\mathrm{Zar}})^{\mathrm{Zar}} \le \dim \mathscr{Z}^{\mathrm{Zar}} + 1$.

We have the following alternative:
\begin{enumerate}
\item[(i)] $\dim (C^{-1}\mathscr{Z}^{\mathrm{Zar}})^{\mathrm{Zar}} = \dim \mathscr{Z}^{\mathrm{Zar}}$;
\item[(ii)] $\dim (C^{-1}\mathscr{Z}^{\mathrm{Zar}})^{\mathrm{Zar}} = \dim \mathscr{Z}^{\mathrm{Zar}}+1$.
\end{enumerate}

Assume we are in case (i). Then $C \subseteq \mathrm{Stab}_{P(\R)}(\mathscr{Z}^{\mathrm{Zar}})$. Hence $\#(\mathrm{Stab}_{P(\R)}(\mathscr{Z}^{\mathrm{Zar}}) \cap \Gamma_P) \ge C \cap \Gamma_P \ge c_{\epsilon}T_i^{\epsilon}$ for each $i$. Letting $T_i \rightarrow \infty$, we get $\#(\mathrm{Stab}_{P(\R)}(\mathscr{Z}^{\mathrm{Zar}}) \cap \Gamma) = \infty$. Hence $\dim H_{\mathscr{Z}^{\mathrm{Zar}}} > 0$. This contradicts the triviality of $H_{\mathscr{Z}^{\mathrm{Zar}}}$.

Thus we are in case (ii). Then there exists $c \in C$ such that $\mathscr{Z}^{\mathrm{Zar}} \not= c^{-1}\mathscr{Z}^{\mathrm{Zar}}$. Thus $\mathscr{Z} \not\subseteq c^{-1}\mathscr{Z}^{\mathrm{Zar}}$; otherwise taking the Zariski closures we get $\mathscr{Z}^{\mathrm{Zar}} \subseteq c^{-1}\mathscr{Z}^{\mathrm{Zar}}$, hence $\mathscr{Z}^{\mathrm{Zar}} = c^{-1}\mathscr{Z}^{\mathrm{Zar}}$ by comparing dimensions, contradicting the choice of $c$. Thus $c^{-1}\mathscr{Z}^{\mathrm{Zar}} \cap \Delta$ varies with $c \in C$. Therefore by \eqref{EqDimEqualTrans}, an irreducible component $\mathscr{Z}' \supseteq \mathscr{Z}$ of $(C^{-1}\mathscr{Z}^{\mathrm{Zar}})^{\mathrm{Zar}} \cap \Delta$ which has dimension $\ge \dim \mathscr{Z} + 1$.

We claim that  $\mathscr{Z}^{\prime\mathrm{Zar}} = (C^{-1}\mathscr{Z}^{\mathrm{Zar}})^{\mathrm{Zar}}$. Indeed, otherwise $\mathscr{Z}^{\prime\mathrm{Zar}} = \mathscr{Z}^{\mathrm{Zar}}$ by dimension comparisons. But then the assumption of \Cref{ThmASDevissage} says that $\mathscr{Z}$ is a component of $\mathscr{Z}^{\mathrm{Zar}} \cap \Delta = \mathscr{Z}^{\prime\mathrm{Zar}}\cap \Delta$. Hence $\mathscr{Z}' \subseteq \mathscr{Z}$. This contradicts $\dim \mathscr{Z}' \ge \dim \mathscr{Z}+1$.

So we can apply the induction hypothesis to $\mathscr{Z}'$ and obtain
\[
\dim \mathscr{Z}^{\prime\mathrm{Zar}} - \dim \mathscr{Z}' \ge \dim p_{\cD}(\mathscr{Z}')^{\mathrm{ws}}.
\]
But the left hand side $\le \dim \mathscr{Z}^{\mathrm{Zar}} - \dim \mathscr{Z}$ and the right hand side is  $\ge \dim p_{\cD}(\mathscr{Z})^{\mathrm{ws}}$. Hence \Cref{ThmASDevissage} holds true for $\mathscr{Z}$.

We are done. 

\subsection{Preparation of the proof of \Cref{ThmCounting}}\label{SubsectionPrepThmCounting}
We will prove \Cref{ThmCounting}, or more precisely \eqref{EqTheEstimateLeadingToFindingRealAlgCurves}, in the rest of this section. The proof is long. It will be divided in several steps for readers' convenience. In this subsection, we fix some notations.


The proof of \eqref{EqTheEstimateLeadingToFindingRealAlgCurves} uses the fibered structure of $\cD$ and the discussion on its real points, both explained in \Cref{SectionFiberedStruRealPoints}. 
We start by recollecting basic knowledge on both aspects.

Recall the sequence of normal subgroups 
\[
0 = W_{-(m+1)} \subseteq W_{-m} \subseteq \cdots W_{-1} = \cR_u(P) 
\]
 of $P$ from \eqref{EqDistSeqSubgp}, and the quotient Mumford--Tate domains $p_k \colon \cD \rightarrow \cD_k := \cD/W_{-k-1}$, for each $k \in \{0,\ldots,m\}$, from \eqref{EqProjMT}. Notice that $p_m$ is the identity map on $\cD$.

Let $r \colon \cD \rightarrow \cD_{\R}$ be the $P(\R)^+$-equivariant retraction of the inclusion $\cD_{\R}\subseteq \cD$ from \eqref{EqRetraction}. Applying \eqref{EqDiagramRetractionProjection} successively to $p_{k,k-1} \colon \cD_{k+1} \rightarrow \cD_k$ (defined in the diagram \eqref{EqDiagXandDFiber}), we obtain the following commutative diagram
\begin{equation}\label{EqDiagQuotientAndRealPoints}
\xymatrix{
\cD \ar[r]^-{p_{m,m-1}} \ar[d]_{r} & \cD_{m-1} \ar[r]^-{p_{m-1,m-2}} \ar[d]_{r_{m-1}} & \cD_{m-2} \ar[r]^-{p_{m-2,m-3}} \ar[d]_{r_{m-2}} & \cdots \ar[r]^-{p_{2,1}} & \cD_{1} \ar[r]^-{p_{1,0}} \ar[d]_{r_1} & \cD_{0} \ar[d]_{r_0} \\
\cD_{\R} \ar[r] & \cD_{m-1,\R} \ar[r] & \cD_{m-2,\R} \ar[r] & \cdots \ar[r] & \cD_{1,\R} \ar[r]   & \cD_{0,\R} 
}
\end{equation}
with each $r_k$ a $(P/W_{-k-1})(\R)^+$-equivariant retraction of $\cD_{k,\R} \subseteq \cD_k$. 
Recall that $\cD_{0}$ is a Mumford--Tate domain in a classifying space of pure Hodge structures, and $r_0$ is the identity map. There is a metric on $\cD_0$; see \cite[beginning of $\mathsection$2.1]{BTAS}.

In the proof, we often need to project subsets of $\cD$ to different levels and consider the real points. So it is convenient to fix the following notations. 

\begin{notation}\label{NotationEstimates}
For each $k \in \{0,1,\cdots,m\}$, 
\begin{itemize}
\item For any subset $A \subseteq \cD$, denote by $A_k := p_k(A) \subseteq \cD_{k}$. As convention $A_m = A$.
\item For any subset $A \subseteq \cD$, denote by $A_{\R} := r(A) \subseteq \cD_{\R}$, and $A_{k,\R} = r_k(A_k) \subseteq \cD_{k,\R}$.
\end{itemize}
\end{notation}

Let $\mathfrak{F} = r^{-1}(\mathfrak{F}_{\R})$ where $\mathfrak{F}_{\R} \subseteq \cD_{\R}$ is given by \Cref{ThmFundSet}, or more precisely by \eqref{EqFundD}.

Before moving on, let us sketch how \eqref{EqTheEstimateLeadingToFindingRealAlgCurves} is proved when $m = 0$, namely when $\cD = \cD_0$ and $P = P/W_{-1}$ is a reductive group. In this case, $\tilde{Z} = \tilde{Z}_0$, which has positive dimension by assumption. For each real number $T > 0$, take $\mathbf{B}_0(T) \subseteq \cD_{0}$ to be the ball centered at a fixed point of radius $\log T$ in $\cD_{0}$. Let $\tilde{Z}_0(T)$ be a complex analytic irreducible component of $\tilde{Z} \cap \mathbf{B}_0(T)$. The following estimate is a direct corollary of Thm.~1.2 and Thm.~4.2 of Bakker--Tsimerman \cite{BTAS}: There exist constants $c_0, \epsilon_0 > 0$, independent of $T$, such that
\[
\#\{\gamma \in \Gamma_P : \gamma \mathfrak{F} \cap \tilde{Z}_0(T) \not= \emptyset, ~ H(\gamma) \le T \} \ge c_0 T^{\epsilon_0}.
\]
See also \cite[Prop.~6.3]{BTASLectureNotes}  for the statement of this estimate. 
By \eqref{EqDistSubsetOfTheta}, the set on the left hand side is a subset of $\#\{\gamma \in \Theta \cap \Gamma_P : H(\gamma) \le T\}$. This yields \eqref{EqTheEstimateLeadingToFindingRealAlgCurves}. 

For a general $m$, we need to generalize this idea. A first thing to do is to find an appropriate generalization of $\mathbf{B}_0(T)$ for $\cD$. To achieve this, we make use of the retractions $r_k$'s (with $r_m = r$) and the following product structure on $\cD_{\R}$ \eqref{EqBundleStrOnXreal} (and the truncated version given by \Cref{PropProductStrMHD}.(ii) for each $k \in \{0,1,\cdots,m\}$)
\begin{equation}\label{EqXProdTrun}
\cD_{k,\R} \cong \cD_{0,\R} \times (W_{-1}/W_{-2})(\R) \times (W_{-2}/W_{-3})(\R) \times \cdots \times (W_{-k}/W_{-k-1})(\R).
\end{equation}
Now we are ready to give the generalization of the $\mathbf{B}_0(T)$ above. For each $k \in \{0,1,\cdots,m\}$ and each real number $T > 0$, define the following subset $\mathbf{B}_k(T) \subseteq \cD_k$ as follows.
\begin{itemize}
\item Let $\mathbf{B}_0(T) = B_0(T) \subseteq \cD_{0}$ be the ball of radius $\log T$ centered at a fixed point in $\tilde{Z}_0$.
\item For each $k \ge 1$, let $B_k(T)$ the $| \cdot |$-ball centered at $0$ of radius $T$ in $(W_{-k}/W_{-k-1})(\R)$, \textit{i.e.} $B_k(T) = \{w \in (W_{-k}/W_{-k-1})(\R): |w| < T\}$. Define $\mathbf{B}_k(T) = r_k^{-1}(\prod_{i=0}^k B_i(T))$. In particular, $p_{k+1,k}(\mathbf{B}_{k+1}(T)) = \mathbf{B}_k(T)$, and $\mathbf{B}_k(T)_{\R} = \prod_{i=0}^k B_i(T)$.
\end{itemize}

Next, we need to generalize the set $\tilde{Z}_0(T)$. For each $k \ge 0$:
\begin{itemize}
\item Let $\tilde{Z}_k(T)$ be a complex analytic irreducible component of $\tilde{Z}_k \cap \mathbf{B}_k(T) \subseteq \cD_k$.
\item We may choose such $\tilde{Z}_k(T)$'s that $p_{k+1,k} (\tilde{Z}_{k+1}(T)) \subseteq \tilde{Z}_k(T)$ for all $k$.\footnote{Notice that $\tilde{Z}_k \cap \mathbf{B}_k(T) = p_k(\tilde{Z}) \cap \mathbf{B}_k(T) = p_k(\tilde{Z} \cap p_k^{-1}(\mathbf{B}_k(T)))$. Thus $\tilde{Z}_k(T)$ equals $p_k(\tilde{Z}^k(T))$ for some complex analytic irreducible component $\tilde{Z}^k(T)$ of $\tilde{Z} \cap p_k^{-1}(\mathbf{B}_k(T))$. By definition of $\mathbf{B}_k(T)$, we have $p_{k+1}^{-1}(\mathbf{B}_{k+1}(T)) \subseteq p_k^{-1}(\mathbf{B}_k(T))$ for each $k$. Thus the $\tilde{Z}^k(T)$'s can be chosen such that $\tilde{Z}^{k+1}(T) \subseteq \tilde{Z}^k(T)$ for each $k$. For these choices, we then have $p_{k+1,k} (\tilde{Z}_{k+1}(T)) \subseteq \tilde{Z}_k(T)$.}
\end{itemize}


Finally for the purpose of lifting, we need to introduce the following sets, which generalize the set $\tilde{Z}(T)$ from \cite[proof of Thm.~5.2]{GaoAxSchanuel} (which handles the case where $m=1$). Let $k_0$ be such that $\dim \tilde{Z}_{k_0} > 0$, smallest for this property. 
For each $k \in \{k_0+1,\cdots,m\}$ and each real number $T > 0$ (the diagram \eqref{EqDiagQuotientAndRealPointsK} below, with $k$ replaced by $k-1$, is helpful to keep track of the notation):
\begin{itemize}
\item Let $\tilde{Z}(k,T):= \tilde{Z}_k \cap p_{k,k-1}^{-1}(\tilde{Z}_{k-1}(T)) \subseteq \cD_k$, and $\tilde{Z}(k,T)^+$ be a complex analytic irreducible component of $\tilde{Z}(k,T)$.
\item Similar to the $\tilde{Z}_k(T)$'s, we may choose such $\tilde{Z}(k,T)^+$'s that $p_{k+1,k} (\tilde{Z}(k+1,T)^+) \subseteq \tilde{Z}(k,T)^+$ for all $k$.
\end{itemize}
Notice that by definition, we have $p_{k,k-1}(\tilde{Z}(k,T)) = \tilde{Z}_{k-1} \cap \tilde{Z}_{k-1}(T) = \tilde{Z}_{k-1}(T) \subseteq \tilde{Z}_{k-1} \cap \mathbf{B}_{k-1}(T)$. 






\subsection{Sketch of the strategy of the proof of \Cref{ThmCounting}}\label{SubsectionSketchStrategyThmCounting}
For simplicity, we use the same notation $p_k$ to denote the projection $P \rightarrow P/W_{-k-1}$ and the projection $\cD \rightarrow \cD_k$. In the proof we need to work with many subscripts, and the following diagram is helpful to keep track of them.
\begin{equation}\label{EqDiagQuotientAndRealPointsK}
\xymatrix{
\tilde{Z}(k+1,T) \ar@{|->}[d] \subseteq & \cD_{k+1} \ar[r]^-{p_{k+1,k}} \ar[d]_{r_{k+1}} & \cD_k \ar[d]_{r_k} \\
\tilde{Z}(k+1,T)_{\R} \subseteq & \cD_{k+1,\R} \cong \cD_{k,\R} \times (W_{-k-1}/W_{-k-2})(\R)  \ar[r]_-{p_{k+1,k}|_{\cD_{k+1,\R}}} & \cD_{k,\R} 
}
\end{equation}
where the real-algebraic isomorphism $ \cD_{k+1,\R} \cong \cD_{k,\R} \times (W_{-k-1}/W_{-k-2})(\R) $ is from \eqref{EqXProdTrun}. Notice that $\tilde{Z}(k+1,T)_{\R}$ is a component of $\tilde{Z}_{k+1,\R} \bigcap (\prod_{i=0}^k B_i(T) \times (W_{-k-1}/W_{-k-2})(\R))$, and that $\tilde{Z}_k(T)_{\R}$ is a component of $\tilde{Z}_{k,\R} \bigcap \prod_{i=0}^k B_i(T)$. 


Suppose $\dim \tilde{Z}_0 = \dim p_0(\tilde{Z}) > 0$. By the results of Bakker and Tsimerman as explained above, we find $\#\{\gamma_0 \in p_0(\Gamma_P) : \gamma_0 \mathfrak{F}_0 \cap \tilde{Z}_0(T) \not= \emptyset, ~ H(\gamma_0) \le T \} \ge c_0T^{\epsilon_0}$. Consider the diagram \eqref{EqDiagQuotientAndRealPointsK} with $k=0$. We wish to lift at least polynomially many such $\gamma_0$'s to elements in $p_1(\Gamma_P)$ of height at most $T$ with the following property: each such lift $\gamma_1 \in p_1(\Gamma_P)$ satisfies $\gamma_1\mathfrak{F}_1 \cap \tilde{Z}(1,T) \not= \emptyset$, or equivalently $\gamma_1 r_1(\mathfrak{F}_1) \cap r_1(\tilde{Z}(1,T)) \not=\emptyset$ (since $\mathfrak{F}_1 = r_1^{-1}(\mathfrak{F}_{1,\R})$ by definition of $\mathfrak{F}$). This last condition, expressed with \Cref{NotationEstimates}, becomes $\gamma_1 \mathfrak{F}_{1,\R}\cap \tilde{Z}(1,T)_{\R} \not=\emptyset$. The intersection is taken in $\cD_{1,\R} \cong \cD_0 \times (W_{-1}/W_{-2})(\R)$. If the desired lifting can be realized, then we do similar liftings to $p_2(\Gamma_P)$, \textit{etc.}, under we obtain at least polynomially many elements $\gamma$ in $p_m(\Gamma_P) = \Gamma_P$ of height at most $T$ such that $\gamma\mathfrak{F}_{\R} \cap \tilde{Z}(T)_{\R} \not= \emptyset$.

At this stage, we can explain why the second bullet point in the constructions of the $\tilde{Z}(k,T)$'s is needed: in the lifting process, we need that $\tilde{Z}(k+1,T)_{\R}$ is mapped into $\tilde{Z}(k,T)_{\R}$ under $p_{k+1,k}$.

There is a problem in the procedure described above, namely it is possible that $\tilde{Z}_0$ is a point. In this case, we need to work with the smallest $k_0$ such that $\dim \tilde{Z}_{k_0} > 0$, which serves as the base step of the lifting process. Thus we need to introduce the set $\tilde{Z}_{k_0}(T)$, which is a complex analytic irreducible component of $\tilde{Z}_{k_0} \cap \mathbf{B}_{k_0}(T)$. We need to find at least polynomially many elements $\gamma_{k_0} \in p_{k_0}(\Gamma_P)$ of height at most $T$ such that $\gamma_{k_0} \mathfrak{F}_{k_0,\R} \cap \tilde{Z}_{k_0}(T)_{\R} \not= \emptyset$. Whereas this is guaranteed by the result of Bakker and Tsimerman when $k_0 = 0$, it is not known when $k_0 \ge 1$. We will prove this result in \Cref{SubsectionCountingBaseStep}, or more precisely \Cref{LemmaCountingBaseStep}.(ii).

Once we have established the base step, we need to realize the lifting. 
In view of \eqref{EqDiagQuotientAndRealPointsK}, 
in order to realize the lifting process from $k$ to $k+1$, we need to compare the growth of $\tilde{Z}(k+1,T)_{\R} \subseteq \cD_{k+1,\R}$ in the vertical direction $(W_{-k-1}/W_{-k-2})(\R)$ with its growth in the horizontal direction $\cD_{k,\R}$. This lifting process is done in \Cref{SubsectionEstimateLifting}. As in \cite[proof of Thm.~5.2]{GaoAxSchanuel}, we will divide into the two cases where $\tilde{Z}(k+1,T)_{\R}$ grows ``faster'' in the vertical direction $(W_{-k-1}/W_{-k-2})(\R)$ (\Cref{LemmaLiftingCaseVertical}) and where $\tilde{Z}(k+1,T)_{\R}$ grows ``faster'' in the horizontal direction $\cD_{k,\R}$ (the rest of \Cref{SubsectionEstimateLifting}).

\subsection{Proof of \Cref{ThmCounting}: the base step}\label{SubsectionCountingBaseStep}
The main goal of this subsection is to prove the base step for the lifting process, namely \Cref{LemmaCountingBaseStep}. At the end of this subsection we also state the result for the lifting process (\Cref{PropCounting}) and explain how it implies \Cref{ThmCounting}. The proof of the lifting process will be executed in the next subsection.

Let $k_0 \in \{0,\cdots,m\}$ be such that $\dim \tilde{Z}_{k_0} > 0$, smallest for this property. 
For simplicity, we introduce the following notation. For each real number $T \ge 0$, let 
\begin{align}\label{EqDefinitionXik_01time}
\Xi_{k_0}(T) & :=  \{g \in (W_{-k_0}/W_{-k_0-1})(\R) : g \mathfrak{F}_{k_0} \cap \tilde{Z}_{k_0}(T) \not=\emptyset\} \\
& = \{g \in (W_{-k_0}/W_{-k_0-1})(\R) : g \mathfrak{F}_{k_0,\R} \cap \tilde{Z}_{k_0}(T)_{\R} \not=\emptyset\}. \nonumber
\end{align}
Here the second equality holds true since $\mathfrak{F}_{k_0} = r_{k_0}^{-1}( \mathfrak{F}_{k_0,\R})$. 

We also denote by $\Gamma_{-k_0/-k_0-1} = (\Gamma_P \cap W_{-k_0}(\Q)) / (\Gamma_P \cap W_{-k_0-1}(\Q))$; it is a subgroup of $P/W_{-k_0-1}$ and acts on $\cD_{k_0} = \cD/W_{-k_0-1}$.

\begin{prop}\label{LemmaCountingBaseStep}
There exist constants $c_{k_0}, \epsilon_{k_0} > 0$ such that
\[
\#\{\gamma_{-k_0/-k_0-1} \in \Xi_{k_0}(T) \cap\Gamma_{-k_0/-k_0-1} : H(\gamma_{-k_0/-k_0-1}) \le T \} \ge c_{k_0} T^{\epsilon_{k_0}}\quad \text{ for all }T \gg 1.
\]
\end{prop}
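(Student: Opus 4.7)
The case $k_0 = 0$ follows directly from the counting result of \cite{BTAS} applied to the pure Mumford--Tate domain $\cD_0$, since $\dim \tilde Z_0 > 0$ by assumption and the image of $\Gamma_P$ in $p_0(P)(\R)$ contains a lattice commensurable with $\Gamma_{-0/-1}$. I therefore focus on the case $k_0 \ge 1$. By minimality of $k_0$, the variety $\tilde Z_{k_0-1} = p_{k_0,k_0-1}(\tilde Z_{k_0})$ is a single point $x_{k_0-1}$, so $\tilde Z_{k_0}$ is contained in the fiber $F$ of $p_{k_0,k_0-1}\colon \cD_{k_0}\to \cD_{k_0-1}$ over $x_{k_0-1}$. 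By \Cref{LemmaFibersOfVarphikPkkplus1}, $F$ is a principal homogeneous space under the complex vector group $V_\C/F^0_{x_{k_0-1}}V_\C$, where $V := W_{-k_0}/W_{-k_0-1}$ is an abelian unipotent $\Q$-group; in particular $F$ is an open subset of a complex affine space and $\tilde Z_{k_0}$ is a positive-dimensional complex analytic subvariety of it.

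The first step is to translate the counting problem into one on the real vector space $V(\R)$. Combining \eqref{EqDiagramRetractionProjection} with the product decomposition \eqref{EqBundleStrOnXreal} of $\cD_{k_0,\R}$, the image $r_{k_0}(\tilde Z_{k_0})$ is contained in $\{r_{k_0-1}(x_{k_0-1})\}\times V(\R)$. After translating by a suitable element of $\Gamma_P$, we may assume this basepoint lies in $\mathfrak{F}_{k_0-1,\R}$. Applied to an element $(e,0,\dots,0,w)\in P(\R)^+$ with $w\in V(\R)$, the action formula \eqref{EqFormulaActionBCH} reduces to pure translation of the last factor by $w$, so the condition $g\mathfrak{F}_{k_0,\R}\cap \tilde Z_{k_0}(T)_\R \neq \emptyset$ becomes
\[
w \;\in\; \pi_V\bigl(r_{k_0}(\tilde Z_{k_0}(T))\bigr) \,+\, (-M,M)^{\dim_\R V(\R)},
\]
where $\pi_V$ denotes the projection onto the $V(\R)$-factor. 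Since $V$ is a vector group, $\Gamma_{-k_0/-k_0-1}$ is a full lattice in $V(\R)$, and heights agree with Euclidean coordinates up to multiplicative constants; thus it is enough to exhibit $\gtrsim T^{\epsilon_{k_0}}$ lattice points of $\Gamma_{-k_0/-k_0-1}$ within distance $M$ of $\pi_V(r_{k_0}(\tilde Z_{k_0}(T)))$ and of Euclidean norm $\le T$.

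For this, one shows that $\pi_V(r_{k_0}(\tilde Z_{k_0}(T))) \subseteq V(\R)$ contains a connected definable subset of diameter $\gtrsim T$ as $T \to \infty$; a standard lattice-covering argument then produces $\gtrsim T$ distinct lattice points in its $M$-neighbourhood of height $\le T+O(1)$, yielding \eqref{EqTheEstimateLeadingToFindingRealAlgCurves} with $\epsilon_{k_0}=1$ after rescaling $T$. The diameter estimate rests on two ingredients. First, as a positive-dimensional complex analytic subset of the complex affine space $F$, the variety $\tilde Z_{k_0}$ is non-compact and leaves every bounded region by the maximum principle applied to any affine coordinate. Second, because the fiber $F$ lies over the single point $x_{k_0-1}$, the $\mathfrak{sl}_2$-splitting \cite[Thm.~2.18]{SL2Splitting} defining $r_{k_0}$ is constant along $F$, so $r_{k_0}|_F$ descends to an $\R$-linear projection $V_\C/F^0_{x_{k_0-1}}V_\C \to V(\R)$ whose fibers are totally real (using $F^0\cap V(\R)=0$ at real-split points, see \Cref{SubsectionRealPt}) and hence cannot contain any positive-dimensional complex analytic subvariety. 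The main obstacle is precisely this second ingredient: one must analyze the non-holomorphic retraction $r_{k_0}$ along the unipotent direction carefully enough to establish that $\tilde Z_{k_0}$ projects to a definable subset of $V(\R)$ of the correct real dimension and with linear-in-$T$ growth in diameter; granting this geometric input, the lattice point counting is elementary.
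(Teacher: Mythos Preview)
Your proposal is correct and follows essentially the same route as the paper: cite \cite{BTAS} when $k_0=0$; when $k_0\ge 1$, restrict to the fiber $F\cong V_\C/F^0$ over the point $\tilde{Z}_{k_0-1}$, reduce the counting to lattice points of $\Gamma_{-k_0/-k_0-1}$ in $V(\R)$ near $r_{k_0}(\tilde{Z}_{k_0})$, and argue that this image is unbounded because $\tilde{Z}_{k_0}$ is positive-dimensional complex analytic while the composite $V(\C)\to F\xrightarrow{r_{k_0}} V(\R)$ has totally real fibers. The only imprecision is your assertion that $r_{k_0}|_F$ is $\R$-linear: the paper notes (in a footnote) that this holds exactly for the Deligne $\delta$-splitting, whereas for the $\mathfrak{sl}_2$-splitting it holds only up to a self-map of $V(\R)$ preserving bounded sets---but since post-composition on the target does not change the fibers in $V(\C)$, your totally-real-fiber argument survives intact.
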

\begin{proof}[Proof of \Cref{LemmaCountingBaseStep}] 


If $k_0 = 0$, this is proved in \cite{BTAS}. We refer to \cite[Prop.~6.3]{BTASLectureNotes} for a precise statement.

From now on, assume $k_0 \ge 1$. We use \eqref{EqDiagQuotientAndRealPointsK} with $k = k_0-1$. Now $\tilde{Z}_{k_0-1} = \bar{h}$ is a point in $\cD_{k_0-1}$. Thus $\tilde{Z}_{k_0} \subseteq p_{k_0,k_0-1}^{-1}(\bar{h})$. Notice that $r_{k_0}(p_{k_0,k_0-1}^{-1}(\bar{h}))$ can be identified with $(W_{-k_0}/W_{-k_0-1})(\R)$.



\begin{lemma}\label{LemmaApproximationXiT}
Recall $M > 0$ the real number in the definition of $\mathfrak{F}_{\R}$ from \Cref{ThmFundSet}. Denote for simplicity $\mathfrak{F}'_{k_0} = (-M,M)^{\dim (W_{-k_0}/W_{-k_0-1})(\R)} \subseteq  (W_{-k_0}/W_{-k_0-1})(\R)$. Then any $\gamma_{-k_0/-k_0-1} \in  \Xi_{k_0}(T) \cap \Gamma_{-k_0/-k_0-1}$ satisfies $H(\gamma_{-k_0/-k_0-1}) \le T+M$.
\end{lemma}

\begin{proof}[Proof of \Cref{LemmaApproximationXiT}] 
We have $\tilde{Z}_{k_0,\R} = r_{k_0}(\tilde{Z}_{k_0}) \subseteq r_{k_0}(p_{k_0,k_0-1}^{-1}(\bar{h})) =  (W_{-k_0}/W_{-k_0-1})(\R)$. Recall the definition $\mathbf{B}_{k_0}(T) = r_{k_0}^{-1}(\prod_{i=0}^{k_0} B_i(T))$. So 
\scriptsize
\begin{equation}\label{EqLemmaApproximationXiT}
 \Xi_{k_0}(T) \cap \Gamma_{-k_0/-k_0-1} = \{\gamma_{-k_0/-k_0-1} \in \Gamma_{-k_0/-k_0-1} : \left(\gamma_{-k_0/-k_0-1} + \mathfrak{F}'_{k_0} \right) \cap \tilde{Z}_{k_0}(T)_{\R} \not= \emptyset\} .
\end{equation}
\normalsize
Hence each $\gamma_{-k_0/-k_0-1} \in  \Xi_{k_0}(T) \cap \Gamma_{-k_0/-k_0-1}$ satisfies $\left(\gamma_{-k_0/-k_0-1} + \mathfrak{F}'_{k_0} \right) \cap B_{k_0}(T)_{\R} \not= \emptyset$. We are done.
\end{proof}

Now we are ready to finish the proof of \Cref{LemmaCountingBaseStep}.


Consider $\{\gamma_{-k_0/-k_0-1} \in \Gamma_{-k_0/-k_0-1} : (\gamma_{-k_0/-k_0-1} + \mathfrak{F}'_{k_0}) \cap \tilde{Z}_{k_0,\R} \not= \emptyset\}$. We claim that it is infinite. Indeed, assume otherwise, then $\tilde{Z}_{k_0,\R}$ is contained in a bounded subset of $(W_{-k_0}/W_{-k_0-1})(\R)$. But $p_{k_0,k_0-1}^{-1}(\bar{h}) \cong (W_{-k_0}/W_{-k_0-1})(\C)/F^0_{\bar{h}}(W_{-k_0}/W_{-k_0-1})_{\C}$ by part (ii) of \Cref{LemmaFibersOfVarphikPkkplus1}, and the composite ($\varphi_{k_0}$ is the natural projection)
\begin{align}\label{EqComplexStructurecDEstimate}
(W_{-k_0}/W_{-k_0-1})(\C) & \xrightarrow{\varphi_{k_0}}  (W_{-k_0}/W_{-k_0-1})(\C) / F^0_{\bar{h}}(W_{-k_0}/W_{-k_0-1})_{\C} = p_{k_0,k_0-1}^{-1}(\bar{h}) \nonumber \\
& \xrightarrow{r_{k_0}}  (W_{-k_0}/W_{-k_0-1})(\R)
\end{align}
is, up to an automorphism of $(W_{-k_0}/W_{-k_0-1})(\R)$ sending bounded sets to bounded sets, the projection to the real part.\footnote{Recall that $r_{k_0}$ is the retraction given by the $\mathfrak{sl}_2$-splitting. If $r_{k_0}$ is replaced by the retraction induced by the Deligne $\delta$-splitting, then this composite is precisely the projection to the real part. But the $\mathfrak{sl}_2$-splitting is defined by universal Lie polynomials in the Hodge components of the Deligne $\delta$-splitting, so this claim holds true.} So $\varphi_{k_0}^{-1}(\tilde{Z}_{k_0}) \subseteq \varphi_{k_0}^{-1}(r_{k_0}^{-1}(\tilde{Z}_{k_0,\R}))$ is contained in a set whose real part is bounded. But $\varphi_{k_0}^{-1}(\tilde{Z}_{k_0})$ is complex analytic, so $\varphi_{k_0}^{-1}(\tilde{Z}_{k_0})$ is a point, and so is $\tilde{Z}_{k_0}$. This contradicts $\dim \tilde{Z}_{k_0} > 0$.

Next we claim that $\tilde{Z}_{k_0}(T)_{\R}$ passes through the boundary of $B_{k_0}(T)$. Assume otherwise, then $\tilde{Z}_{k_0,\R} \setminus \tilde{Z}_{k_0}(T)_{\R}$ and $\tilde{Z}_{k_0}(T)_{\R}$ are disjoint. 
But $\tilde{Z}_{k_0,\R}$ is connected since $\tilde{Z}_{k_0}$ is irreducible. So we must have $\tilde{Z}_{k_0,\R} = \tilde{Z}_{k_0}(T)_{\R}$. Hence $\tilde{Z}_{k_0,\R}$ is contained in a bounded subset of $(W_{-k_0}/W_{-k_0-1})(\R)$. This yields a contradiction by the same argument in the previous paragraph.


Note that $\mathfrak{F}'_{k_0}$ is a fundamental set for the action of $\Gamma_{-k_0/-k_0-1}$ on the Euclidean space $(W_{-k_0}/W_{-k_0-1})(\R)$. The claims in the previous two paragraphs together immediately imply that 
\[
\#\{\gamma_{-k_0/-k_0-1} \in \Gamma_{-k_0/-k_0-1} : (\gamma_{-k_0/-k_0-1} + \mathfrak{F}'_{k_0}) \cap \tilde{Z}_{k_0}(T)_{\R} \not= \emptyset\} \ge T
\]
for all $T \gg 1$. Now the conclusion follows from \Cref{LemmaApproximationXiT} with $c_{k_0} =1/2$ and  $\epsilon_{k_0} = 1$.
\end{proof}

\begin{rmk}\label{RmkUseOfComplexStructureInTheEstimate}
The proof of \Cref{LemmaCountingBaseStep} is the only place in the proof of \Cref{ThmCounting} where we use the complex structure of $\cD$. More precisely, the complex structure of $\cD$ is used only in the citation of \cite{BTAS} (if $k_0 = 0$) and in the paragraph involving \eqref{EqComplexStructurecDEstimate} (if $k_0 \ge 1$).
\end{rmk}

\subsection{A preliminary lifting process}

Let $k \ge 0$. For simplicity denote by $W_0 := P$. The following diagram is useful to keep track of the notations.
\small
\begin{equation}\label{EqDiagQuotientAndRealPointsKExpand}
\xymatrix{
 (P/W_{-k-2}, \cD_{k+1}) \ar[r]^-{p_{k+1,k}} \ar[d]_{r_{k+1}} & (P/W_{-k-1}, \cD_{k}) \ar[d]^{r_{k}} \\
 \cD_{k+1,\R} \cong \cD_{k,\R} \times (W_{-k-1}/W_{-k-2})(\R) \ar[d]_{\lambda_{k+1}} \ar[r]^-{p_{k+1,k}|_{\cD_{k+1,\R}}} & \cD_{k,\R} \\
 (W_{-k-1}/W_{-k-2})(\R) 
}
\end{equation}
\normalsize
 where the real-algebraic isomorphism $ \cD_{k+1,\R} \cong \cD_{k,\R} \times (W_{-k-1}/W_{-k-2})(\R) $ is from \eqref{EqXProdTrun}, and  $\lambda_{k+1}$ is the natural projection.

Consider the isomorphism of $\Q$-varieties given by \eqref{EqDecompGroup} $P/W_{-k-2} \cong G \times (W_{-1}/W_{-2}) \times \cdots \times (W_{-k-1}/W_{-k-2})$. 
It induces
\begin{equation}\label{Eq}
P/W_{-k-2} \cong P/W_{-k-1} \times W_{-k-1/-k-2}.
\end{equation}

The group $(P/W_{-k-2})(\R)^+$ acts on $\cD_{k+1,\R} = (\cD/W_{-k-2})_{\R}$. Write
\[
\Gamma_{0/-k-2} = \Gamma_P/(\Gamma_P \cap W_{-k-2}(\Q)).
\]

\begin{lemma}\label{LemmaNormPolynomialGrowth}
There exists a constant $\beta_k > 0$ with the following property. 
Consider the Euclidean norm $| \cdot |$ on $(W_{-k-1}/W_{-k-2})(\R)$. Then for any $\gamma_{0/-k-2} \in \Gamma_{0/-k-2}$, the set $\lambda_{k+1}(\gamma_{0/-k-2} \mathfrak{F}_{k+1,\R})$ is contained in a $|\cdot|$-ball of radius $\le \beta_k H(\gamma_{0/-k-1})^{k}$ in $(W_{-k-1}/W_{-k-2})(\R)$. Here, $\gamma_{0/-k-1} \in \Gamma_{0/-k-1}$ is the projection of $\gamma_{0/-k-2}$ under the natural projection $P/W_{-k-2} \rightarrow P/W_{-k-1}$.

Moreover, if we denote by $(\gamma_{0/-k-1}, \gamma_{-k-1/-k-2})$ the image of $\gamma_{0/-k-2}$ under the isomorphism \eqref{Eq}, then the $|\cdot|$-ball mentioned above can be taken to be centered at $\gamma_{-k-1/-k-2}$.
\end{lemma}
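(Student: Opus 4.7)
The proof is a direct computation using the explicit group action formula~\eqref{EqFormulaActionBCH}, applied to the truncated mixed Hodge datum $(P/W_{-k-2}, \cX_{k+1})$. First I would decompose $\gamma_{0/-k-2} = (\gamma_0, \gamma_{-1/-2}, \ldots, \gamma_{-k-1/-k-2})$ via \eqref{EqDecompGroup} applied to $P/W_{-k-2}$, and parametrize each $x \in \mathfrak{F}_{k+1,\R}$ as $(x_0, x_1, \ldots, x_{k+1})$ with $x_0 \in \mathfrak{F}_0$ and $|x_i| < M$ for $1 \le i \le k+1$ by definition~\eqref{EqFundD} of $\mathfrak{F}_{\R}$. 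Reading off the $(k+1)$-st component of \eqref{EqFormulaActionBCH} then yields
\begin{equation*}
\lambda_{k+1}(\gamma_{0/-k-2} \cdot x) \;=\; \gamma_{-k-1/-k-2} \;+\; \gamma_0 x_{k+1} \;+\; \mathrm{calb}_{k+1}\bigl(\gamma_{-1/-2}, \ldots, \gamma_{-k/-k-1},\; \gamma_0 x_1, \ldots, \gamma_0 x_k\bigr).
\end{equation*}
This identity already exhibits $\gamma_{-k-1/-k-2}$ as the center of the prospective ball, reducing the problem to bounding the last two summands uniformly for $x \in \mathfrak{F}_{k+1,\R}$.

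For this estimate, three ingredients combine. Each $\gamma_{-i/-i-1}$ with $1 \le i \le k$ is a coordinate of $\gamma_{0/-k-1}$ under~\eqref{EqDecompGroup}, so $|\gamma_{-i/-i-1}| \le H(\gamma_{0/-k-1})$. Next, the action of $\gamma_0 \in G(\R)$ on each vector space $(W_{-i}/W_{-i-1})(\R)$ is given by polynomial formulas in the matrix entries of $\gamma_0$ (the adjoint action on the associated graded of $\lie \cR_u(P)$), so for $|x_i| < M$ the value $|\gamma_0 x_i|$ is polynomially bounded in $H(\gamma_0) \le H(\gamma_{0/-k-1})$. Finally, $\mathrm{calb}_{k+1}$ is a $\Q$-polynomial of total degree at most $k$ in its $2k$ arguments by \Cref{LemmaCalbFactorProdUni}. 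Substituting the preceding estimates, every monomial of $\mathrm{calb}_{k+1}$ is bounded by a polynomial in $H(\gamma_{0/-k-1})$ of degree at most $k$, while $|\gamma_0 x_{k+1}|$ satisfies a strictly weaker polynomial bound; choosing $\beta_k > 0$ to absorb the resulting constants (depending only on $P$, $M$, and the step $k$) delivers the asserted inequality.

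The argument is essentially bookkeeping and I do not foresee serious obstacles: polynomiality of the group law and polynomial boundedness of linear algebraic group actions are standard, and everything else has been spelled out explicitly earlier in the paper. The only delicate point is matching the exact exponent~$k$ of the stated bound; this is secured by the degree bound on $\mathrm{calb}_{k+1}$ together with the fact that the linear summand $\gamma_0 x_{k+1}$ contributes only one factor of $H(\gamma_0)$, with any excess polynomial factors from a specific choice of norm on the graded pieces absorbed into the constant $\beta_k$.
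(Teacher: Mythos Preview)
Your proposal is correct and follows essentially the same route as the paper's proof: decompose $\gamma_{0/-k-2}$ via \eqref{EqDecompGroup}, apply the explicit action formula \eqref{EqFormulaActionBCH} to read off the $(k+1)$-st coordinate, and bound the resulting expression using the degree estimate on $\mathrm{calb}_{k+1}$ from \Cref{LemmaCalbFactorProdUni}. The paper's write-up is terser (and has a minor index slip, writing $\mathrm{calb}_k$ where your $\mathrm{calb}_{k+1}$ is the correct one), but the content is identical.
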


Before proving \Cref{LemmaNormPolynomialGrowth}, let us see an application.
\begin{lemma}\label{LemmaHtNormLatticePts}
There exist constants $\alpha_k >0$ and $\alpha'_k > 0$ satisfying the following property. If $\gamma_{0/-k-1} \in \Gamma_{0/-k-1}$ satisfies $\gamma_{0/-k-1} \mathfrak{F}_{k,\R} \cap \prod_{i=0}^{k} B_i(T) \not= \emptyset$, then $H(\gamma_{0/-k-1}) \le \alpha'_k T^{\alpha_k}$.
\end{lemma}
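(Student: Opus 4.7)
The plan is to prove Lemma~\ref{LemmaHtNormLatticePts} by induction on $k$, using the fibered structure of $\cD$ and Lemma~\ref{LemmaNormPolynomialGrowth} as the main engine. Recall that $\mathfrak{F}_{k,\R}$ inherits a product decomposition $\mathfrak{F}_0 \times \prod_{1 \le i \le k}(-M,M)^{\dim (W_{-i}/W_{-i-1})(\R)}$ in $\cD_{k,\R} \cong \cD_0 \times \prod_{1 \le i \le k}(W_{-i}/W_{-i-1})(\R)$, by \Cref{PropProductStrMHD}, and $B_0(T)$ is a ball of radius $\log T$ in $\cD_0$ while each $B_i(T)$ ($i \ge 1$) is a Euclidean ball of radius $T$. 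Under the decomposition \eqref{EqDecompGroup}, any $\gamma_{0/-k-1} \in \Gamma_{0/-k-1}$ splits as $(\gamma_{0/-k},\gamma_{-k/-k-1})$ via the projection $p_{k,k-1}$; and standard properties of height functions on linear algebraic groups imply $H(\gamma_{0/-k-1})$ is bounded polynomially by $\max\bigl(H(\gamma_{0/-k}),\, 1+|\gamma_{-k/-k-1}|\bigr)$ for a suitable identification of $(W_{-k}/W_{-k-1})(\Q)$ with a lattice, and vice versa.

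For the base case $k=0$: the group $P/W_{-1}$ is reductive and $\cD_0$ is a pure Mumford--Tate domain with an invariant metric. The required polynomial bound --- if $\gamma_0 \mathfrak{F}_0 \cap B_0(T) \ne \emptyset$ then $H(\gamma_0) \le \alpha'_0 T^{\alpha_0}$ --- is exactly the content of the height estimate derived in \cite{BTAS} via Siegel sets and reduction theory in the pure/reductive setting.

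For the inductive step, assume the statement holds for $k-1$ and suppose $\gamma_{0/-k-1} \mathfrak{F}_{k,\R} \cap \prod_{i=0}^{k}B_i(T) \ne \emptyset$. The compatibility of $\mathfrak{F}_{k,\R}$ with projection (part (ii) of \Cref{PropProductStrMHD}) combined with \eqref{EqDiagramRetractionProjection} shows that the image under $p_{k,k-1}|_{\cD_{k,\R}}$ satisfies $\gamma_{0/-k} \mathfrak{F}_{k-1,\R} \cap \prod_{i=0}^{k-1}B_i(T) \ne \emptyset$, so by induction $H(\gamma_{0/-k}) \le \alpha'_{k-1} T^{\alpha_{k-1}}$. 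Next, apply \Cref{LemmaNormPolynomialGrowth} (with its index shifted by one): the fibre of $\gamma_{0/-k-1}\mathfrak{F}_{k,\R}$ in the direction $(W_{-k}/W_{-k-1})(\R)$ is contained in a $|\cdot|$-ball centred at $\gamma_{-k/-k-1}$ of radius $\le \beta_{k-1}H(\gamma_{0/-k})^{k-1}$. Since this projected set meets $B_k(T)$, we obtain
\[
|\gamma_{-k/-k-1}| \;\le\; T + \beta_{k-1}H(\gamma_{0/-k})^{k-1} \;\le\; T + \beta_{k-1}(\alpha'_{k-1})^{k-1}T^{\alpha_{k-1}(k-1)}.
\]
Combining these two polynomial bounds via the comparison between $H(\gamma_{0/-k-1})$ and $\max(H(\gamma_{0/-k}),1+|\gamma_{-k/-k-1}|)$ yields the desired estimate $H(\gamma_{0/-k-1}) \le \alpha'_k T^{\alpha_k}$ with constants $\alpha_k = \max(\alpha_{k-1},\,\alpha_{k-1}(k-1),\,1)$ (plus a fixed additive adjustment) and a suitable $\alpha'_k$.

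The main point requiring care is the last comparison: making precise that the height $H$ on $\Gamma_{0/-k-1}$ is polynomially equivalent to the product height on the decomposition $(P/W_{-k}) \times (W_{-k}/W_{-k-1})$ coming from \eqref{Eq}. This reduces to the fact that the isomorphism of $\Q$-varieties in \eqref{EqDecompGroup} and its inverse are polynomial maps of bounded degree, so standard functorial properties of heights on affine algebraic groups give a two-sided polynomial comparison, enabling the induction to close. The base case invocation of \cite{BTAS} and the polynomial bookkeeping in \Cref{LemmaNormPolynomialGrowth} are otherwise the substantive inputs.
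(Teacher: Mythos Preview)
Your proposal is correct and follows essentially the same approach as the paper: upward induction on $k$ with base case $k=0$ from \cite{BTAS}, projecting via $p_{k,k-1}$ to invoke the inductive hypothesis on $\gamma_{0/-k}$, and then using \Cref{LemmaNormPolynomialGrowth} on the vertical factor to bound $|\gamma_{-k/-k-1}|$ by the triangle inequality in $(W_{-k}/W_{-k-1})(\R)$. The only cosmetic difference is that the paper takes $H(\gamma_{0/-k-1}) = \max\{H(\gamma_{0/-1}),\ldots,H(\gamma_{-k/-k-1})\}$ directly from the coordinate decomposition \eqref{EqDecompGroup}, so your final paragraph on polynomial height comparison is not needed there, but it does no harm.
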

\begin{proof}[Proof of \Cref{LemmaHtNormLatticePts}] We prove this lemma by upward induction on $k \ge 0$. The base step is $k = 0$, which is precisely \cite[Thm.~4.2]{BTAS}.

Use the notation from \eqref{EqDiagQuotientAndRealPointsKExpand}.


Assume \Cref{LemmaHtNormLatticePts} holds true for $k$, namely $H(\gamma_{0/-k-1}) \le \alpha'_k T^{\alpha_k}$ for each $\gamma_{0/-k-1} \in \Gamma_{0/-k-1}$ with $\gamma_{0/-k-1} \mathfrak{F}_{k,\R} \cap \prod_{i=0}^{k} B_i(T) \not= \emptyset$. We wish to prove the property for $k+1$.

Let $\gamma_{0/-k-2} \in \Gamma_{0/-k-2}$ be such that $\gamma_{0/-k-2} \mathfrak{F}_{k+1,\R} \cap \prod_{i=0}^{k+1} B_i(T) \not= \emptyset$. Denote by $(\gamma_{0/-k-1},\gamma_{-k-1/-k-2})$ the image of $\gamma_{0/-k-2}$ under the isomorphism \eqref{Eq}. In particular, $\gamma_{0/-k-1}$ is the image of $\gamma_{0/-k-2}$ under the projection $p_{k+1,k} \colon P/W_{-k-2} \rightarrow P/W_{-k-1}$.

Applying $p_{k+1,k}$ to both sides of $\gamma_{0/-k-2} \mathfrak{F}_{k+1,\R} \cap \prod_{i=0}^{k+1} B_i(T) \not= \emptyset$, we obtain that $\gamma_{0/-k-1} \mathfrak{F}_{k,\R} \cap \prod_{i=0}^{k} B_i(T) \not= \emptyset$. Thus by induction  hypothesis, we have $H(\gamma_{0/-k-1}) \le \alpha'_k T^{\alpha_k}$.

Next, 
applying $\lambda_{k+1}$ to both sides of $\gamma_{0/-k-2} \mathfrak{F}_{k+1,\R} \cap 
\prod_{i=0}^{k+1} B_i(T) \not=\emptyset$, we then have 
\begin{equation}\label{EqFundFiberTranslateIntersectBall}
\lambda_{k+1}(\gamma_{0/-k-2} \mathfrak{F}_{k+1,\R}) \cap B_{k+1}(T) \not=\emptyset.
\end{equation}
By \Cref{LemmaNormPolynomialGrowth},  $\lambda_{k+1}(\gamma_{0/-k-2} \mathfrak{F}_{k+1,\R})$ is contained in a $|\cdot|$-ball of radius $\beta_k H(\gamma_{0/-k-1})^{k}$ centered at $\gamma_{-k-1/-k-2}$. But $H(\gamma_{0/-k-1}) \le \alpha'_k T^{\alpha_k}$. Therefore the following two $|\cdot|$-balls in the Euclidean space $(W_{-k-1}/W_{-k-2})(\R)$ intersect: the one of radius $T$ centered at $0$, and the one of radius $\beta_k \alpha_k^{\prime k} T^{k \alpha_k}$ centered at $\gamma_{-k-1/-k-2}$. So $H(\gamma_{-k-1/-k-2}) \le T+\beta_k \alpha_k^{\prime k}  T^{k \alpha_k}$.

Thus the proposition holds true with $\alpha'_{k+1} := 1+\beta_k \alpha_k^{\prime k}$ and $\alpha_{k+1} = \max\{1,k\alpha_k\}$.
\end{proof}

We end this subsection with:

\begin{proof}[Proof of \Cref{LemmaNormPolynomialGrowth}]
Let $\gamma_{0/-k-2} \in \Gamma_{0/-k-2}$. Use the notation as in the lemma, namely $\gamma_{0/-k-2} \mapsto (\gamma_{0/-k-1} , \gamma_{-k-1/-k-2})$ under the isomorphism of $\Q$-varieties $P/W_{-k-2} \cong P/W_{-k-1} \times (W_{-k-1}/W_{-k-2})$ \eqref{Eq}.

Next, consider the isomorphism of $\Q$-varieties induced by \eqref{EqDecompGroup}
\[
P/W_{-k-2} \cong G \times (W_{-1}/W_{-2}) \times \cdots \times (W_{-k-1}/W_{-k-2}), 
\]
and suppose $\gamma_{0/-k-2} \mapsto (\gamma_{0/-1}, \gamma_{-1/-2}, \ldots, \gamma_{-k-1/-k-2})$ under this isomorphism. Then $H(\gamma_{0/-k-2}) = \max\{H(\gamma_{0/-1}), \ldots, H(\gamma_{-k-1/-k-2})\}$.

On the other hand, for the real-algebraic morphism induced by \eqref{EqXProdTrun}
\[
\cD_{k+1,\R} \cong \cD_0 \times (W_{-1}/W_{-2})(\R) \times \cdots \times (W_{-k-1}/W_{-k-2})(\R) \times (W_{-k-1}/W_{-k-2})(\R),
\]
we have defined, in \eqref{EqFundD}, $\mathfrak{F}_{k+1,\R}$ to be the inverse image of $\mathfrak{F}_0 \times \mathfrak{F}'_1 \times \cdots  \times \mathfrak{F}'_{k+1}$, where $M$ is a fixed real number and
\[
\mathfrak{F}'_i = (-M,M)^{\dim (W_{-i}/W_{-i-1})(\R)} \subseteq  (W_{-i}/W_{-i-1})(\R).
\]
The formula for the action of the group $(P/W_{-k-2})(\R)^+$ on $\cD_{k+1,\R}$ is given by \Cref{PropProductStrMHD}, or more precisely \eqref{EqFormulaActionBCH}. Thus
\scriptsize
\begin{align}\label{EqEstimateProjectionSet}
& \lambda_{k+1}(\gamma_{0/-k-2} \mathfrak{F}_{k+1,\R}) \nonumber \\		
 =&  \{ \gamma_{-k-1/-k-2} + \gamma_{0/-1} \tilde{x} + \mathrm{calb}_{k+1}(\gamma_{0/-k-1} , \gamma_{0/-1} \tilde{x}') : \tilde{x} \in \mathfrak{F}'_{k+1}, ~ \tilde{x}' \in \mathfrak{F}_0 \times \mathfrak{F}'_1 \times \cdots \times \mathfrak{F}'_{k}\} \\
= & \gamma_{-k-1/-k-2} + \gamma_{0/-1} \cdot \mathfrak{F}'_{k+1} + \mathrm{calb}_{k+1}(\gamma_{0/-k-1}, \gamma_{0/-1}  \mathfrak{F}_0 \times \gamma_{0/-1}  \mathfrak{F}'_1 \times \cdots \times \gamma_{0/-1} \mathfrak{F}'_{k} ),		\nonumber
\end{align}
\normalsize
where $\mathrm{calb}_{k+1}$ is a polynomial of degree at most $k$. 
Notice that $M$, $\mathfrak{F}_0$ and the $\mathfrak{F}'_i$'s are fixed, and that $H(\gamma_{0/-1}) \le H(\gamma_{0/-k-1})$. So
\small
\[
|\gamma_{0/-1} \cdot \mathfrak{F}'_{k+1} + \mathrm{calb}_{k+1}(\gamma_{0/-k-1}, \gamma_{0/-1} (\mathfrak{F}_0 \times \mathfrak{F}'_1 \times \cdots \mathfrak{F}'_{k}) )| \ll H(\gamma_{0/-1}) + H(\gamma_{0/-k-1})^{k} \ll H(\gamma_{0/-k-1})^{k}.
\]
\normalsize
Therefore, by \eqref{EqEstimateProjectionSet}, $\lambda_{k+1}(\gamma_{0/-k-2} \mathfrak{F}_{k+1,\R})$ is contained in the $|\cdot|$-ball of radius $\ll H(\gamma_{0/-k-1})^{k}$ centered at $ \gamma_{-k-1/-k-2}$. Hence we are done.
\end{proof}

\subsection{Setup and final conclusion of the lifting process}


For each $k \in \{k_0, \ldots, m\}$ and each real number $T \ge 0$:
\begin{itemize}
\item Let $\Gamma_{-k_0/-k-1} = (\Gamma_P \cap W_{-k_0}(\Q))/(\Gamma_P \cap W_{-k-1}(\Q))$. Then $\Gamma_{-k_0/-k-1}$ acts on $\cD_{k} = \cD/W_{-k-1}$.
\item If furthermore $k \ge k_0+1$, then set
\begin{align*} 
\qquad \Xi_{k}(T) & = \{g \in (W_{-k_0}/W_{-k-1})(\R) : g \mathfrak{F}_{k} \cap \tilde{Z}(k,T) \not= \emptyset\} \\
& =\{g \in (W_{-k_0}/W_{-k-1})(\R) : g \mathfrak{F}_{k,\R} \cap \tilde{Z}(k,T)_{\R} \not= \emptyset\}  \subseteq (P/W_{-k-1})(\R),
\end{align*} 
where the second equality follows from the construction $\mathfrak{F}_k = r_k^{-1}(\mathfrak{F}_{k,\R})$.
\end{itemize}

Notice that the above definition of $\Xi_k(T)$ for $k \ge k_0+1$ differs in an essential way from the definition for $\Xi_{k_0}(T)$  given in \eqref{EqDefinitionXik_01time}, because for $\Xi_{k_0}(T)$ we intersect with $\tilde{Z}_{k_0}(T)$ while in case $k \ge k_0+1$ the intersection is with $\tilde{Z}(k,T)_{\R}$ (which is larger than $\tilde{Z}_k(T)_{\R}$). This seemingly strange convention will make the induction step of the proof of \Cref{PropCounting} much cleaner.

Now we are ready to state the desired lifting proposition.

\begin{prop}\label{PropCounting}
For each $k \ge k_0$, 
there exist constants $c_k, \epsilon_k> 0$, and a sequence $\{T_i \in \R\}_{i \in \N}$ with $T_i \rightarrow \infty$, such that
\[
\#\{\gamma_{-k_0/-k-1} \in \Xi_{k}(T_i) \cap \Gamma_{-k_0/-k-1} : H(\gamma_{-k_0/-k-1}) \le T_i \} \ge c_k T_i^{\epsilon_k}.
\]
\end{prop}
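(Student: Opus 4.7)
The plan is to prove \Cref{PropCounting} by upward induction on $k \in \{k_0, \ldots, m\}$. The base case $k = k_0$ is exactly \Cref{LemmaCountingBaseStep}, which is valid for all $T \gg 1$ and so for any sequence $T_i \to \infty$.

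For the inductive step $k \to k+1$, suppose the estimate holds at level $k$ with constants $c_k, \epsilon_k$ and sequence $\{T_i\}$. Each $\gamma^{(k)} \in \Xi_k(T_i) \cap \Gamma_{-k_0/-k-1}$ satisfies $\gamma^{(k)} \mathfrak{F}_{k,\R} \cap \tilde{Z}(k,T_i)_\R \neq \emptyset$. Using the $\Q$-variety decomposition $P/W_{-k-2} \cong P/W_{-k-1} \times (W_{-k-1}/W_{-k-2})$ from \eqref{Eq}, a lift of $\gamma^{(k)}$ to $\Gamma_{-k_0/-k-2}$ is a pair $(\gamma^{(k)}, \gamma_{-k-1/-k-2})$ with $\gamma_{-k-1/-k-2} \in \Gamma_{-k-1/-k-2}$, and by \Cref{LemmaNormPolynomialGrowth} together with \Cref{LemmaHtNormLatticePts}, its height is polynomially bounded in a suitably rescaled parameter $T_i'$ provided $|\gamma_{-k-1/-k-2}|$ is. My goal is to produce, for each counted $\gamma^{(k)}$, many lifts $\gamma_{-k-1/-k-2}$ such that the composite element lies in $\Xi_{k+1}(T_i')$, i.e., the set $\gamma^{(k+1)} \mathfrak{F}_{k+1,\R} \cap \tilde{Z}(k+1,T_i')_\R$ is nonempty.

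Following the strategy sketched in \Cref{SubsectionSketchStrategyThmCounting}, I pass to a subsequence on which the vertical extent $L_i := \sup_{z \in \tilde{Z}(k+1,T_i')_\R} |\lambda_{k+1}(z)|$ of $\tilde{Z}(k+1,T_i')_\R$ (with $\lambda_{k+1}$ the projection in \eqref{EqDiagQuotientAndRealPointsKExpand}) either grows at least like a positive power of $T_i'$ (the \emph{vertical case}) or grows slower than every positive power (the \emph{horizontal case}). In the vertical case, the preimage of $\gamma^{(k)}\mathfrak{F}_{k,\R} \cap \tilde{Z}(k,T_i)_\R$ inside $\tilde{Z}(k+1,T_i')_\R$ has vertical diameter $\gg L_i$ (using that $\tilde{Z}(k+1,T)$ is a component of $p_{k+1,k}^{-1}(\tilde{Z}_k(T))$, third bullet in \Cref{SubsectionPrepThmCounting}), and a lattice-point count in $(W_{-k-1}/W_{-k-2})(\R)$ analogous to the endgame of \Cref{LemmaCountingBaseStep} yields $\gg L_i$ valid $\gamma_{-k-1/-k-2}$ per $\gamma^{(k)}$. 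In the horizontal case, $\tilde{Z}(k+1,T_i')_\R$ lives in a thin slab above $\tilde{Z}_k(T_i')_\R$, each counted $\gamma^{(k)}$ at parameter $T_i'$ lifts to at least one valid $\gamma^{(k+1)}$ by a continuity argument, and the slab thickness contributes only subpolynomially to the vertical height.

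The main obstacle I anticipate is balancing exponents uniformly to extract a single positive $\epsilon_{k+1}$. By \Cref{LemmaNormPolynomialGrowth}, the vertical displacement of $\mathfrak{F}_{k+1,\R}$ under $\gamma^{(k)}$ scales like $H(\gamma^{(k)})^k$, itself polynomial in $T_i$ via \Cref{LemmaHtNormLatticePts}. Accordingly, one must choose the rescaling $T_i' = T_i^\alpha$ for some $\alpha \geq 1$ depending on $k$ so that, in \emph{both} the vertical and the horizontal subcase, the product of the count per $\gamma^{(k)}$ times $c_k T_i^{\epsilon_k}$ dominates $T_i'^{\epsilon_{k+1}}$ for a fixed $\epsilon_{k+1} > 0$, while each lift still has height at most $T_i'$. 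This coherent bookkeeping is the core technical content, and mirrors the execution in \cite[$\mathsection$5]{GaoAxSchanuel} adapted to arbitrary unipotent graded pieces $W_{-k-1}/W_{-k-2}$ rather than those coming from a family of abelian varieties.
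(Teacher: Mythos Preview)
Your induction scheme and the vertical/horizontal dichotomy match the paper's strategy, but there is a genuine gap in your vertical case.

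You assert that for each counted $\gamma^{(k)}$ the preimage of $\gamma^{(k)}\mathfrak{F}_{k,\R}\cap\tilde{Z}(k,T_i)_\R$ inside $\tilde{Z}(k+1,T_i')_\R$ has vertical diameter $\gg L_i$, and hence admits $\gg L_i$ lattice lifts. This does not follow. The quantity $L_i$ is the \emph{global} supremum of $|\lambda_{k+1}|$ on $\tilde{Z}(k+1,T_i')_\R$; the third bullet of \Cref{SubsectionPrepThmCounting} only gives irreducibility of $\tilde{Z}(k+1,T)$, which says nothing about the vertical extent of its intersection with the tube over one particular base fundamental domain. That extent can be arbitrarily small even when the global $L_i$ is large. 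Your proposed per-$\gamma^{(k)}$ lifting therefore fails.

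The paper's \Cref{LemmaLiftingCaseVertical} avoids this entirely: in the vertical case the induction hypothesis at level $k$ is \emph{not used}. One argues directly that $\tilde{Z}(k+1,T)_\R$ is covered by translates $\gamma\mathfrak{F}_{k+1,\R}$, each of whose $\lambda_{k+1}$-image sits in a ball of radius $\ll T^{(k+1)\alpha_k}$ by \Cref{LemmaNormPolynomialGrowth} and \Cref{LemmaHtNormLatticePts}; since $\lambda_{k+1}(\tilde{Z}(k+1,T)_\R)$ is connected and reaches radius $T^{\delta_k}$, the number of distinct vertical components $\gamma_{-k-1/-k-2}$ arising is $\gg T^{\delta_k-(k+1)\alpha_k}$, which is a positive power of $T$ by the choice \eqref{EqDeltaCondition}. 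This already gives \eqref{EqCountingPropInductionInductionStep} outright.

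Your horizontal case is essentially the paper's argument via \Cref{LemmaHorizontalGrowth}, though note that the paper does not literally lift each $\gamma^{(k)}$ individually: since $p_{k+1,k}(\tilde{Z}(k+1,T_i)_\R)$ only \emph{approximates} $\tilde{Z}_k(T_i)_\R$, one instead bounds $\#(\Xi_{k+1}(T_i)\cap\Gamma_{-k_0/-k-2})$ below by the number of level-$k$ fundamental domains hit by this projection and then invokes the induction hypothesis (with a rescaling $T_i\mapsto T_i^k$ coming from \Cref{LemmaNormPolynomialGrowth}).
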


Let us finish the proof of \Cref{ThmCounting} assuming this proposition. 
\begin{proof}[Proof of \Cref{ThmCounting} assuming \Cref{PropCounting}]
Apply \Cref{PropCounting} to $k = m$. As $W_{-(m+1)} = 0$, the conclusion of the proposition becomes: there exist constants $c=c_m, \epsilon=\epsilon_m > 0$ and a sequence $\{T_i \in \R\}_{i \in \N}$ with $T_i \rightarrow \infty$, such that
\[
\#\{\gamma \in \Gamma_{-k} : H(\gamma) \le T_i , ~ \gamma\mathfrak{F} \cap \tilde{Z}(m,T_i) \not= \emptyset\} \ge c T_i^{\epsilon}.
\]
But $\Gamma_{-k_0} \subseteq \Gamma_P$ and $\tilde{Z}(m,T_i) \subseteq \tilde{Z}$, and so
\[
\# \{\gamma \in \Gamma_P : H(\gamma) \le T_i, ~ \gamma\mathfrak{F} \cap \tilde{Z} \not= \emptyset\} \ge c T_i^{\epsilon}.
\]
Thus we can conclude by \eqref{EqDistSubsetOfTheta}.
\end{proof}

\subsection{Proof of \Cref{PropCounting}}\label{SubsectionEstimateLifting}
We prove in this subsection \Cref{PropCounting} with a lifting process. The following diagram helps to keep track of the lifting, with $k \ge k_0$:
\scriptsize
\begin{equation}\label{EqBigDiagramLiftingChase}
\xymatrix{
&  \Xi_{k+1}(T)  \ar@{}[d]|-*[@]{\subseteq} \ar@{|->}[r] & p_{k+1,k}(\Xi_{k+1}(T)) \ar@{}[d]|-*[@]{\subseteq} \\
\tilde{Z}(k+1,T) \subseteq \ar[d] & (P/W_{-k-2}, \cD_{k+1}) \ar[r]^-{p_{k+1,k}} \ar[d]_{r_k} & (P/W_{-k-1}, \cD_{k}) \ar[d]^{r_{k-1}}  & \ar@{}[l]|-*[@]{\subset} \mathfrak{F}_{k}, ~  \mathbf{B}_{k}(T) \ar@{|->}[d] \\
\tilde{Z}(k+1,T)_{\R} \subseteq & \cD_{k+1,\R} \cong \cD_{k,\R} \times (W_{-k-1}/W_{-k-2})(\R) \ar[d]^-{\lambda_{k+1}} \ar[r]^-{p_{k+1,k}|_{\cD_{\R}}} & \cD_{k,\R} & \ar@{}[l]|-*[@]{\subset} \mathfrak{F}_{k,\R}, ~  \prod_{i=0}^{k} B_i(T). \\
&  (W_{-k-1}/W_{-k-2})(\R) 
}
\end{equation}
\normalsize
where the vertical inclusions are inclusions in the $\R$-points of the underlying groups, and the horizontal inclusions are inclusions in the underlying spaces.

We start with the following observation:
\begin{equation}\label{LemmaProjectionXiLevel}
p_{k+1,k}(\Xi_{k+1}(T)) \subseteq \{g \in (W_{-k}/W_{-k-1})(\R) : g \mathfrak{F}_{k,\R} \cap \prod_{i=0}^{k} B_i(T) \not= \emptyset\}.
\end{equation}
Indeed, take $g \in p_{k+1,k}(\Xi_{k+1}(T))$. Then $g = p_{k+1,k}(g')$ for some $g' \in \Xi_{k+1}(T)$. Thus $g' \mathfrak{F}_{k+1,\R}\cap \tilde{Z}(k+1,T)_{\R} \not=\emptyset$. Applying $p_{k+1,k}$ to both sides, we get $g \mathfrak{F}_{k,\R} \cap p_{k+1,k}(\tilde{Z}(k+1,T)_{\R}) \not=\emptyset$. But $p_{k+1,k}(\tilde{Z}(k+1,T)_{\R}) = p_{k+1,k}(r_{k+1}(\tilde{Z}(k+1,T)) = r_k(p_{k+1,k}(\tilde{Z}(k+1,T))) \subseteq r_k(\tilde{Z}_k \cap \mathbf{B}_k(T)) \subseteq r_k(\mathbf{B}_k(T)) = \prod_{i=0}^{k} B_i(T)$. Hence we get \eqref{LemmaProjectionXiLevel}.

\vskip 0.5em

For each $\alpha_k$ from \Cref{LemmaHtNormLatticePts}, let us fix a number $\delta_k$ such that
\begin{equation}\label{EqDeltaCondition}
\delta_k > (k+1) \alpha_k.
\end{equation}

\vskip 0.5em

Now we proceed to the proof. We start with the following lemma, 
which handles the case where the vertical direction of $\tilde{Z}(k+1,T_i)$ grows faster than the horizontal direction.

\begin{lemma}\label{LemmaLiftingCaseVertical}
Assume there exists $k \ge k_0$ such that 
\begin{equation}\label{EqHypVerticalGrowth}
|\lambda_{k+1}(\tilde{Z}(k+1,T)_{\R})| > T^{\delta_k} \qquad\text{ for all }T \gg 1.
\end{equation}
Then 
 there exist constants $c_{k+1} >0$ and $\epsilon_{k+1} >0$, both independent of $T$, such that
\begin{equation}\label{EqLiftingVerticalGrowthCase}
\#\{\gamma_{-k_0/-k-2} \in \Xi_{k+1}(T) \cap \Gamma_{-k_0/-k-2} : H(\gamma_{-k_0/-k-2}) \le T\} \ge c_{k+1}T^{\epsilon_{k+1}}.
\end{equation}
\end{lemma}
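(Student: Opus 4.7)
The plan is to produce many elements of $\Xi_{k+1}(T) \cap \Gamma_{-k_0/-k-2}$ of the form $\gamma = \gamma_h \cdot \gamma_v$, where the pure vertical factor $\gamma_v \in \Gamma_{-k-1/-k-2}$ is supplied by the large extent of $\lambda_{k+1}(\tilde{Z}(k+1,T)_{\R})$, and a small horizontal correction $\gamma_h \in \Gamma_{-k_0/-k-1}$ is furnished by \Cref{LemmaHtNormLatticePts} to shift the horizontal part of the associated witness point into $\mathfrak{F}_{k,\R}$.

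Concretely, I would first note that $\tilde{Z}(k+1,T)$ is complex analytic and irreducible, so its image $\tilde{Z}(k+1,T)_{\R} = r_{k+1}(\tilde{Z}(k+1,T))$ is connected, and hence so is its projection $\lambda_{k+1}(\tilde{Z}(k+1,T)_{\R})$ in the Euclidean space $(W_{-k-1}/W_{-k-2})(\R)$. A connected subset of Euclidean space of extent $> T^{\delta_k}$ meets $\gtrsim T^{\delta_k}$ distinct translates of the bounded polytope $\mathfrak{F}'_{k+1}$ by lattice elements $\gamma_v \in \Gamma_{-k-1/-k-2}$. For each such $\gamma_v$ pick $z \in \tilde{Z}(k+1,T)_{\R}$ with $\lambda_{k+1}(z) \in \gamma_v + \mathfrak{F}'_{k+1}$; since $p_{k+1,k}(z) \in r_k^{-1}(\prod_{i=0}^k B_i(T)) = \mathbf{B}_k(T)_{\R}$, \Cref{LemmaHtNormLatticePts} supplies $\gamma_h \in \Gamma_{-k_0/-k-1}$ with $H(\gamma_h) \le \alpha'_k T^{\alpha_k}$ satisfying $\gamma_h^{-1} p_{k+1,k}(z) \in \mathfrak{F}_{k,\R}$. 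Setting $\gamma := \gamma_h \cdot \gamma_v \in \Gamma_{-k_0/-k-2}$ via the splitting \eqref{EqDecompGroup}, the action formula \eqref{EqFormulaActionBCH} yields $\gamma^{-1} z \in \mathfrak{F}_{k+1,\R}$, so $\gamma \in \Xi_{k+1}(T)$. Distinctness of the resulting elements as $\gamma_v$ varies follows because $\gamma_v$ is recovered from $\gamma$ by projection to the top level.

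The main obstacle is the height bookkeeping. Writing $\gamma$ in the decomposition \eqref{EqDecompGroup}, the components at levels $\le k$ come only from $\gamma_h$ (BCH brackets of $\gamma_h$ with $\gamma_v \in W_{-k-1}$ land at levels $\ge k+2$ and so vanish in the quotient), whereas the component at level $k+1$ is $\gamma_v$ modified by a polynomial of degree $\le k$ in the entries of $\gamma_h$ via \Cref{LemmaCalbFactorProdUni}. Thus $H(\gamma) \lesssim \max(H(\gamma_h), |\gamma_v|)^{k+1} \lesssim \max(T^{\alpha_k}, |\gamma_v|)^{k+1}$. The condition $\delta_k > (k+1)\alpha_k$ from \eqref{EqDeltaCondition} is precisely what provides the required slack: since the vertical extent $T^{\delta_k}$ exceeds the threshold $T^{(k+1)\alpha_k}$ above which the polynomial correction would push $H(\gamma)$ past $T$, restricting to $\gamma_v$ of norm at most $T^{1/(k+1)}$ (so that $H(\gamma) \le T$) still leaves $\gtrsim T^{\epsilon_{k+1}}$ admissible choices for $\gamma_v$ for some $\epsilon_{k+1} > 0$, yielding the claimed polynomial lower bound $c_{k+1} T^{\epsilon_{k+1}}$.
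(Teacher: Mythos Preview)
Your construction has a genuine gap at the step ``the action formula \eqref{EqFormulaActionBCH} yields $\gamma^{-1} z \in \mathfrak{F}_{k+1,\R}$''. The action of $(P/W_{-k-2})(\R)$ on $\cD_{k+1,\R}$ does not split into independent horizontal and vertical pieces. With $\gamma=\gamma_h\cdot\gamma_v$ and $y:=\gamma^{-1}z$, one has $y_{\le k}=\gamma_h^{-1}z_h\in\mathfrak{F}_{k,\R}$ as you say, but the last coordinate reads
\[
y_{k+1}=z_v-\gamma_v-\mathrm{calb}_{k+1}\big((\gamma_h)_{\le k},\,y_{\le k}\big).
\]
You have arranged $z_v-\gamma_v\in\mathfrak{F}'_{k+1}$, yet the correction $\mathrm{calb}_{k+1}(\gamma_h,y_{\le k})$ has size $\ll H(\gamma_h)^{k}\ll T^{k\alpha_k}$, which is far larger than $M$; so in general $y_{k+1}\notin\mathfrak{F}'_{k+1}$ and there is no reason your $\gamma$ lies in $\Xi_{k+1}(T)$. (Relatedly, your height bookkeeping is off: since $\mathrm{calb}_{k+1}((\gamma_h)_{\le k},0)=0$, the level-$(k{+}1)$ component of $\gamma_h\cdot\gamma_v$ is just $g_0\cdot\gamma_v$, not $\gamma_v$ plus a degree-$k$ polynomial in $\gamma_h$; the coupling you are worried about happens in the \emph{action}, not in the group element.)

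The paper argues by covering rather than by explicit construction. One observes that $\tilde{Z}(k+1,T)_{\R}$ is covered by the translates $\gamma\,\mathfrak{F}_{k+1,\R}$ with $\gamma\in\Xi_{k+1}(T)\cap\Gamma_{-k_0/-k-2}$; for each such $\gamma$, \eqref{LemmaProjectionXiLevel} and \Cref{LemmaHtNormLatticePts} bound $H(\gamma_{-k_0/-k-1})\ll T^{\alpha_k}$, whence by \Cref{LemmaNormPolynomialGrowth} the set $\lambda_{k+1}(\gamma\,\mathfrak{F}_{k+1,\R})$ sits in a ball of radius $\ll T^{(k+1)\alpha_k}$ centred at the component $\gamma_{-k-1/-k-2}$. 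Since $\lambda_{k+1}(\tilde{Z}(k+1,T)_{\R})$ is connected of extent $>T^{\delta_k}$ and $\delta_k>(k+1)\alpha_k$, at least $\gg T^{\delta_k-(k+1)\alpha_k}$ distinct values of $\gamma_{-k-1/-k-2}$ must occur, giving that many elements of $\Xi_{k+1}(T)\cap\Gamma_{-k_0/-k-2}$, each of height $\ll T^{\delta_k}$; a rescaling of $T$ then gives \eqref{EqLiftingVerticalGrowthCase}. Your approach can be salvaged by choosing $\gamma_v$ \emph{after} $\gamma_h$ so as to absorb the $\mathrm{calb}_{k+1}$ term, but then the injectivity and counting argument is exactly this covering estimate.
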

\begin{proof}[Proof of \Cref{LemmaLiftingCaseVertical}]
Use the notation from \eqref{EqBigDiagramLiftingChase}. By definition \eqref{EqFundD}, we have $\mathfrak{F}_{k+1,\R} = \mathfrak{F}_{k,\R} \times (-M,M)^{\dim (W_{-k-1}/W_{-k-2})(\R)}$ for some fixed real number $M>0$.

Take $\gamma_{-k_0/-k-2} \in \Xi_{k+1}(T) \cap \Gamma_{-k_0/-k-2}$. Write $(\gamma_{-k_0/-k-1},\gamma_{-k-1/-k-2})$ for the image of $\gamma_{-k_0/-k-2}$ under the isomorphism \eqref{Eq} $P/W_{-k-2} \cong P/W_{-k-1}\times W_{-k-1/-k-2}$. Then $\gamma_{-k_0/-k-1} = p_{k+1,k}(\gamma_{-k_0/-k-2})$ for the group morphism $p_{k+1,k} \colon P/W_{-k-2} \rightarrow P/W_{-k-1}$. Thus $H(\gamma_{-k_0/-k-1}) \le \alpha'_k T^{\alpha_k}$ by \eqref{LemmaProjectionXiLevel} and \Cref{LemmaHtNormLatticePts}.

We temporarily work in the Euclidean space $(W_{-k-1}/W_{-k-2})(\R)$. Our hypothesis \eqref{EqHypVerticalGrowth} says that $\lambda_{k+1}(\tilde{Z}(k+1,T_i)_{\R})$ reaches the boundary of the $|\cdot|$-ball of radius $T_i^{\delta_k}$ centered at $0$.

We claim that $\lambda_{k+1}(\tilde{Z}(k+1,T_i)^+_{\R})$ also reaches the boundary of the $|\cdot|$-ball of radius $T_i^{\delta_k}$ centered at $0$ (which by our notation is $B_{k+1}(T_i^{\delta_k})$). Assume otherwise, then $\lambda_{k+1}(\tilde{Z}_{k+1,\R}) \setminus B_{k+1}(T_i^{\delta_k})$ and $\lambda_{k+1}(\tilde{Z}(k+1,T_i)^+_{\R})$ are disjoint. But $\lambda_{k+1}(\tilde{Z}_{k+1,\R})$ is connected because $\tilde{Z}$ is irreducible. So we must have $\lambda_{k+1}(\tilde{Z}_{k+1,\R}) = \lambda_{k+1}(\tilde{Z}(k+1,T_i)^+_{\R})$. By definition, we have $\tilde{Z}(k+1,T)_{\R} \subseteq \tilde{Z}_{k+1,\R}$. Thus $\lambda_{k+1}(\tilde{Z}(k+1,T_i)^+_{\R}) =  \lambda_{k+1}(\tilde{Z}_{k+1,\R})  \supseteq \lambda_{k+1}(\tilde{Z}(k+1,T)_{\R}) \supseteq \lambda_{k+1}(\tilde{Z}(k+1,T_i)^+_{\R}) $, and hence every inclusion is an equality. Now we are done by the assumption \eqref{EqHypVerticalGrowth}.

On the other hand by \Cref{LemmaNormPolynomialGrowth}, the subset $\lambda_{k+1}(\gamma_{-k_0/-k-2}\mathfrak{F}_{k+1,\R} \cap \tilde{Z}(k+1,T_i)_{\R})$ is contained in a $|\cdot|$-ball of radius $\ll H(\gamma_{-k_0/-k-1})^{k+1} \ll T_i^{(k+1)\alpha_k}$ centered at $\gamma_{-k-1/-k-2}$.

Since $(W_{-k-1}/W_{-k-2})(\R)$ is Euclidean and $\delta_k > (k+1)\alpha_k$ and that $\lambda_{k+1}(\tilde{Z}(k+1,T_i)^+_{\R})$ is connected, the previous two paragraphs together imply
\begin{align*}\label{}
\#\{\gamma_{-k-1/-k-2} \in & \Gamma_{-k-1/-k-2} :  H(\gamma_{-k-1/-k-2}) \le T^{\delta_k}, ~ (\gamma_{-k_0/-k-1}, \gamma_{-k-1/-k-2}) \in \Xi_k(T) \\
& \text{ for some }\gamma_{-k_0/-k-1} \in p_{k+1,k}(\Xi_{k+1}(T)) \cap \Gamma_{-k_0/-k-1} \} \gg T^{\delta_k - (k+1)\alpha_k}
\end{align*}
for all $T\gg 1$. As each $\gamma_{-k_0/-k-1} \in p_{k+1,k}(\Xi_{k+1}(T)) \cap \Gamma_{-k_0/-k-1}$ satisfies $H(\gamma_{-k_0/-k-1}) \le \alpha'_k T^{\alpha_k}$ by \eqref{LemmaProjectionXiLevel} and \Cref{LemmaHtNormLatticePts}, the counting above yields, for all $T \gg 1$,
\begin{align*}
\#\{(\gamma_{-k_0/-k-1},\gamma_{-k-1/-k-2}) \in \Xi_k(T) \cap \Gamma_{-k_0/-k-2} : & H(\gamma_{-k-1/-k-2}) \le T^{\delta_k}, \\
 & H(\gamma_{-k_0/-k-2}) \le \alpha'_k T^{\alpha_k} \}  \gg T^{\delta_k - (k+1)\alpha_k}.
\end{align*}
But the only assumption on $\delta_k$ is $\delta_k > (k+1) \alpha_k$. Hence we have proved \eqref{EqLiftingVerticalGrowthCase} by choosing appropriately $c_{k+1}$ and $\epsilon_{k+1}$. We are done.
\end{proof}

The next lemma is useful for the lifting in the case where the horizontal direction of $\tilde{Z}(k+1,T_i)$ grows faster than the vertical direction.

\begin{lemma}\label{LemmaHorizontalGrowth}
Let $k \ge k_0$. Assume there exists a sequence $\{T_i \in \R\}$, with $T_i \rightarrow \infty$, such that
\begin{equation}\label{EqHypHorizontalGrowth}
|\lambda_{k+1}(\tilde{Z}(k+1,T_i)_{\R})| \le T_i^{\delta_k}.
\end{equation}
Then $H(\gamma_{-k_0/-k-2}) \ll T_i^{\delta_k}$ for each $\gamma_{-k_0/-k-2} \in \Xi_{k+1}(T_i) \cap \Gamma_{-k_0/-k-2}$.
\end{lemma}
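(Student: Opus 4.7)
The plan is to decompose $\gamma_{-k_0/-k-2}$ along the factorisation \eqref{Eq}, writing it as a pair $(\gamma_{-k_0/-k-1}, \gamma_{-k-1/-k-2})$ so that $H(\gamma_{-k_0/-k-2}) = \max\{H(\gamma_{-k_0/-k-1}), H(\gamma_{-k-1/-k-2})\}$, and then to bound each factor separately by $T_i^{\delta_k}$. The first factor is the ``horizontal'' projection to the next level, and the second is the ``vertical'' unipotent shift in the newly adjoined direction $(W_{-k-1}/W_{-k-2})(\R)$.

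For the horizontal factor I would project via $p_{k+1,k}$: since $\gamma_{-k_0/-k-1} \in p_{k+1,k}(\Xi_{k+1}(T_i)) \cap \Gamma_{-k_0/-k-1}$, the inclusion \eqref{LemmaProjectionXiLevel} combined with \Cref{LemmaHtNormLatticePts} gives $H(\gamma_{-k_0/-k-1}) \le \alpha'_k T_i^{\alpha_k}$. Since \eqref{EqDeltaCondition} requires $\delta_k > (k+1)\alpha_k$, this is already comfortably $\ll T_i^{\delta_k}$.

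For the vertical factor I would apply $\lambda_{k+1}$ to the non-empty intersection $\gamma_{-k_0/-k-2}\mathfrak{F}_{k+1,\R} \cap \tilde{Z}(k+1,T_i)_{\R}$ (which witnesses $\gamma_{-k_0/-k-2} \in \Xi_{k+1}(T_i)$, using $\mathfrak{F}_{k+1} = r_{k+1}^{-1}(\mathfrak{F}_{k+1,\R})$). By \Cref{LemmaNormPolynomialGrowth}, the image of the fundamental set lies in a $|\cdot|$-ball of radius $\beta_k H(\gamma_{-k_0/-k-1})^{k}$ centred at $\gamma_{-k-1/-k-2}$, while the hypothesis \eqref{EqHypHorizontalGrowth} confines the image of $\tilde{Z}(k+1,T_i)_{\R}$ to a ball of radius $T_i^{\delta_k}$ around the origin. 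As these two balls meet, the triangle inequality gives
\[
|\gamma_{-k-1/-k-2}| \le \beta_k H(\gamma_{-k_0/-k-1})^{k} + T_i^{\delta_k} \ll T_i^{k\alpha_k} + T_i^{\delta_k} \ll T_i^{\delta_k},
\]
where the final step uses $k\alpha_k < (k+1)\alpha_k < \delta_k$. This converts to a height bound via the comparability of $H$ with the Euclidean norm on the lattice inside the Euclidean vector space $(W_{-k-1}/W_{-k-2})(\R)$, yielding $H(\gamma_{-k-1/-k-2}) \ll T_i^{\delta_k}$.

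I do not expect a serious obstacle: the choice of $\delta_k$ in \eqref{EqDeltaCondition} was engineered precisely so that the polynomial error from \Cref{LemmaNormPolynomialGrowth} and the height bound from \Cref{LemmaHtNormLatticePts} are both absorbed into $T_i^{\delta_k}$. Symmetrically with \Cref{LemmaLiftingCaseVertical}, where the same inputs are used to \emph{produce} many new lattice points in $\Xi_{k+1}(T_i)$, here they are used to \emph{cap} the height of any lattice point already in $\Xi_{k+1}(T_i)$.
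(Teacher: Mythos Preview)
Your proposal is correct and follows essentially the same approach as the paper: decompose $\gamma_{-k_0/-k-2}$ via \eqref{Eq}, bound the horizontal component by \eqref{LemmaProjectionXiLevel} and \Cref{LemmaHtNormLatticePts}, then bound the vertical component by combining \Cref{LemmaNormPolynomialGrowth} with the hypothesis \eqref{EqHypHorizontalGrowth} via the triangle inequality. The only cosmetic difference is that the paper uses the looser exponent $k+1$ rather than $k$ when invoking \Cref{LemmaNormPolynomialGrowth}, which is immaterial since $\delta_k > (k+1)\alpha_k$.
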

\begin{proof}
Use the notation from \eqref{EqBigDiagramLiftingChase}.

Take $\gamma_{-k_0/-k-2} \in \Xi_{k+1}(T_i) \cap \Gamma_{-k_0/-k-2}$. Write $(\gamma_{-k_0/-k-1},\gamma_{-k-1/-k-2})$ for the image of $\gamma_{-k_0/-k-2}$ under the isomorphism \eqref{Eq} $P/W_{-k-2} \cong P/W_{-k-1}\times W_{-k-1/-k-2}$. Then $\gamma_{-k_0/-k-1} \in p_{k+1,k}(\Xi_{k+1}(T_i) )$ for the group morphism $p_{k+1,k} \colon P/W_{-k-2} \rightarrow P/W_{-k-1}$. Thus $H(\gamma_{-k_0/-k-1}) \le \alpha'_k T_i^{\alpha_k}$ by \eqref{LemmaProjectionXiLevel} and \Cref{LemmaHtNormLatticePts}.

There exists a point $\tilde{z}_{k+1} \in \gamma_{-k_0/-k-2} \mathfrak{F}_{k+1,\R} \cap \tilde{Z}(k+1,T_i)_{\R}$. 
Write $\tilde{z}_{k+1,k} = \lambda_{k+1}(\tilde{z}_{k+1})$.

Now that $\tilde{z}_{k+1,k}$ is a point in $\lambda_{k+1}(\gamma_{-k_0/-k-2}\mathfrak{F}_{k+1,\R})$, which by  \Cref{LemmaNormPolynomialGrowth} is contained in a $|\cdot|$-ball of radius $\le \beta_k H(\gamma_{-k_0/-k-1})^{k+1} \le \beta_k \alpha_k^{\prime k} T_i^{(k+1)\alpha_k}$ centered at $\gamma_{-k-1/-k-2}$. On the other hand $\tilde{z}_{k+1,k}$ is a point in $\lambda_{k+1}(\tilde{Z}(k+1,T_i)_{\R})$. So the assumption \eqref{EqHypHorizontalGrowth} yields $|\tilde{z}_{k+1,k}| \le T_i^{\delta_k}$. Write $\beta'_k := \beta_k \alpha_k^{\prime k}$. This says that in the Euclidean space, the following two $|\cdot|$-balls intersect: the one of radius $T_i^{\delta_k}$ centered at $0$, and the one of radius $\beta'_k T_i^{(k+1)\alpha_k}$ centered at  $\gamma_{-k-1/-k-2}$. Thus we must have $H(\gamma_{-k-1/-k-2}) \le T_i^{\delta_k} + \beta'_k T_i^{(k+1)\alpha_k}$, which furthermore $\le (1+\beta'_k) T_i^{\delta_k}$ by assumption on $\delta_k$.

Therefore $H(\gamma_{-k_0/-k-2}) \le \max\{H(\gamma_{-k_0/-k-1}), H(\gamma_{-k-1/-k-2})\} \ll T_i^{\delta_k}$. 
\end{proof}

With these preparations, we are ready to prove \Cref{PropCounting}.
\begin{proof}[Proof of \Cref{PropCounting}]
We prove \Cref{PropCounting} by induction on $k \ge k_0$.

The base step is $k = k_0$. Recall that in this case, $\Xi_{k_0}(T)$ is defined in a different way than the other $\Xi_k(T)$'s (with $k \ge k_0+1$). Indeed
\begin{equation}\label{EqSetXik0}
\Xi_{k_0}(T) :=\{g \in (W_{-k_0}/W_{-k_0-1})(\R) : g \mathfrak{F}_{k_0,\R}\cap \tilde{Z}_{k_0}(T)_{\R}\not=\emptyset\}.
\end{equation}
The conclusion for the base step follows immediately from \Cref{LemmaCountingBaseStep}, which says: There exist constants $c_{k_0}, \epsilon_{k_0} > 0$ such that
\begin{equation}\label{EqCountingPropAddedInitialStep}
\#\{\gamma_{-k_0/-k_0-1} \in \Xi_{k_0}(T) \cap \Gamma_{-k_0/-k_0-1} : H(\gamma_{-k_0/-k_0-1}) \le T \} \ge c_{k_0} T^{\epsilon_{k_0}}.
\end{equation}

\vskip 0.5em

Assume \Cref{PropCounting} is proved for $k \ge k_0$, \textit{i.e.}  there exist constants $c_k, \epsilon_k> 0$, and a sequence $\{T_i \in \R\}_{i \in \N}$ with $T_i \rightarrow \infty$, such that
\begin{equation}\label{EqCountingPropInductionBaseStepHorizontal}
\#\{\gamma_{-k_0/-k-1} \in \Xi_{k}(T_i) \cap \Gamma_{-k_0/-k-1} : H(\gamma_{-k_0/-k-1}) \le T_i \} \ge c_k T_i^{\epsilon_k}.
\end{equation}
We wish to prove for $k+1$, \textit{i.e.} 
 up to replacing $\{T_i \in \R\}$ by a subsequence, find constants $c_{k+1}, \epsilon_{k+1} > 0$ such that 
\begin{equation}\label{EqCountingPropInductionInductionStep}
\#\{\gamma_{-k_0/-k-2} \in \Xi_{k+1}(T_i) \cap \Gamma_{-k_0/-k-2} : H(\gamma_{-k_0/-k-2}) \le T_i \} \ge c_{k+1} T_i^{\epsilon_{k+1}}.
\end{equation}

We have two alternatives:
\begin{enumerate}
\item[(i)] Either $|\lambda_{k+1}(\tilde{Z}(k+1,T)_{\R})| > T^{\delta_{k}}$ for all $T \gg 1$,
\item[(ii)] or $|\lambda_{k+1}(\tilde{Z}(k+1,T_i)_{\R})| \le T_i^{\delta_{k}}$ for some sequence $\{T_i\in \R\}$ with $T_i \rightarrow \infty$.
\end{enumerate}
For case (i), the conclusion follows from \Cref{LemmaLiftingCaseVertical}. 
So from now on, assume that we are in case (ii).

By \Cref{LemmaHorizontalGrowth} we have $H(\gamma_{-k_0/-k-2}) \ll T_i^{\delta_{k}}$ for each $\gamma_{-k_0/-k-2} \in \Xi_{k+1}(T_i) \cap \Gamma_{-k_0/-k-2}$. Thus, to establish \eqref{EqCountingPropInductionInductionStep}, it suffices to show that $\#(\Xi_{k+1}(T_i) \cap \Gamma_{-k_0/-k-2})$ is at least polynomial in $T_i$.

 Notice that
\[
\tilde{Z}(k+1,T_i)_{\R} = \bigcup_{\gamma_{-k_0/-k-2}\in \Xi_{k+1}(T_i) \cap \Gamma_{-k_0/-k-2}} \gamma_{-k_0/-k-2}\mathfrak{F}_{k+1,\R} \cap \tilde{Z}(k+1,T_i)_{\R}.
\]
Applying $p_{k+1,k}|_{\cD_{k+1,\R}}$ to both sides, we obtain
\[
p_{k+1,k}(\tilde{Z}(k+1,T_i)_{\R}) \subseteq \bigcup_{\gamma_{-k_0/-k-2}\in \Xi_{k+1}(T_i) \cap \Gamma_{-k_0/-k-2}} \gamma_{-k_0/-k-1}\mathfrak{F}_{k,\R}.
\]
Thus $p_{k+1,k}(\tilde{Z}(k+1,T_i)_{\R})$ hits $\le \#(\Xi_{k+1}(T_i) \cap \Gamma_{-k_0/-k-2})$ fundamental sets which are $\Gamma_{-k_0/-k-1}$-translates of $\mathfrak{F}_{k,\R}$.

On the other hand, we claim that $\tilde{Z}_k(T_i)_{\R}$ hits at least polynomially fundamental sets which are $\Gamma_{-k_0/-k-1}$-translates of $\mathfrak{F}_{k,\R}$. To show this, we need to divide into two cases: when $k = k_0$ or $k \ge k_0+1$. When $k = k_0$, this follows immediately from \eqref{EqCountingPropAddedInitialStep} and the definition of $\Xi_{k_0}(T_i)$ \eqref{EqSetXik0}. It remains to prove the claim for $k \ge k_0+1$. In this case, if $\gamma_{-k_0/-k-1} \in \Xi_{k}(T_i) \cap \Gamma_{-k_0/-k-1}$ with $H(\gamma_{-k_0/-k-1}) \le T_i$, then $\lambda_k(\gamma_{-k_0/-k-1}\mathfrak{F}_{k,\R})$ is contained in a $|\cdot|$-ball of radius $\ll T_i^k$ centered at $0$ by \Cref{LemmaNormPolynomialGrowth} (applied with $k$ replaced by $k-1$). Thus $\gamma_{-k_0/-k-1}\mathfrak{F}_{k,\R} \cap \tilde{Z}_k(T_i^k)_{\R} \not= \emptyset$. So \eqref{EqCountingPropInductionBaseStepHorizontal} yields the claim with $T_i$ replaced by $T_i^k$.
 

 By choice of $\tilde{Z}(k+1,T_i)$ and $\tilde{Z}_k(T_i)$, we have that  $p_{k+1,k}(\tilde{Z}(k+1,T)) = \tilde{Z}_k(T)$; see the end of \Cref{SubsectionPrepThmCounting} (or right above \Cref{SubsectionSketchStrategyThmCounting}). Thus $p_{k+1,k}(\tilde{Z}(k+1,T)_{\R}) = \tilde{Z}_k(T)_{\R}$. 
 Hence the previous two paragraphs imply that $\#(\Xi_{k+1}(T_i) \cap \Gamma_{-k_0/-k-2})$ is at least polynomial in $T_i$. Hence we are done.
\end{proof}

\section{Normality of the $\Q$-stabilizer: Part 1} \label{normality1}
Let $N$ be the connected algebraic monodromy group of the admissible VMHS $(\V_\Z,W_\bullet,\cF^\bullet)$ on $S$. Then $N \lhd P$ by \Cref{ThmAndreNormal}.

The goal of this section is to prove the following normality result.
\begin{prop} \label{normal1}
$H_{\mathscr{Z}^{\mathrm{Zar}}} \lhd N$.
\end{prop}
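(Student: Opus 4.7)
I would follow the strategy of Mok--Pila--Tsimerman \cite{MokAx-Schanuel-for}, adapted to the mixed setting. Since $\mathrm{Im}(\rho)\subseteq\Gamma_P$ is Zariski dense in $N$ by definition of $N$ as the connected algebraic monodromy group, the statement $H_{\mathscr{Z}^{\mathrm{Zar}}}\lhd N$ breaks into two parts: (a) the containment $H_{\mathscr{Z}^{\mathrm{Zar}}}\subseteq N$, and (b) the normalization of $H_{\mathscr{Z}^{\mathrm{Zar}}}$ by $N$.

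For (b), it suffices to show that every $n\in\mathrm{Im}(\rho)$ normalizes the arithmetic stabilizer $\mathrm{Stab}_{\Gamma_P}(\mathscr{Z}^{\mathrm{Zar}})$, since Zariski-closing and passing to the connected component then transfers the normalization to $N$ acting on $H_{\mathscr{Z}^{\mathrm{Zar}}}$. The heart of the matter is the identity $n\mathscr{Z}^{\mathrm{Zar}}=\mathscr{Z}^{\mathrm{Zar}}$ for every $n\in\mathrm{Im}(\rho)$: granting this, for $\gamma\in\mathrm{Stab}_{\Gamma_P}(\mathscr{Z}^{\mathrm{Zar}})$ one computes $(n\gamma n^{-1})\mathscr{Z}^{\mathrm{Zar}} = n\gamma(n^{-1}\mathscr{Z}^{\mathrm{Zar}}) = n\gamma\mathscr{Z}^{\mathrm{Zar}} = n\mathscr{Z}^{\mathrm{Zar}} = \mathscr{Z}^{\mathrm{Zar}}$, so $n\gamma n^{-1}$ still stabilizes $\mathscr{Z}^{\mathrm{Zar}}$. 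To establish the identity, I would pass to the universal cover $\pi\colon\tilde S\to S$ with an equivariant lift $\tilde\Phi\colon\tilde S\to\cD$ of the period map: the preimage $\tilde{\mathscr{Z}}:=(\pi\times\mathrm{id})^{-1}(\mathscr{Z})$ is invariant under the diagonal $\pi_1(S,s_0)$-action (deck transformations on $\tilde S$, $\rho$-action on $\cD$). Analyzing how this action permutes the irreducible components of $\tilde{\mathscr{Z}}$ and pushing back to $\Delta$, combined with the fact that $\mathscr{Z}^{\mathrm{Zar}}$ is algebraic and $\Gamma_P$ acts by algebraic automorphisms of $S\times\cD^\vee$, should force $n\mathscr{Z}^{\mathrm{Zar}}\subseteq\mathscr{Z}^{\mathrm{Zar}}$, with equality by dimension.

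For (a), I would invoke the logarithmic Ax theorem: by \Cref{ThmLogAx} together with \Cref{RmkSmallestWMT} (applicable since $S=u_S(\mathscr{Z})^{\mathrm{Zar}}$), the projection $p_{\cD}(\mathscr{Z}^{\mathrm{Zar}})$ is contained in the weak Mumford--Tate domain $\tilde S^{\mathrm{ws}}=N(\R)^+(W_{-1}\cap N)(\C)\tilde s_0$, which is a single $N(\R)^+(W_{-1}\cap N)(\C)$-orbit. Every $h\in H_{\mathscr{Z}^{\mathrm{Zar}}}$ preserves $\mathscr{Z}^{\mathrm{Zar}}$ and hence its projection, so $h$ sends the orbit $\tilde S^{\mathrm{ws}}$ into itself. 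Using the characterization of $N$ as the generic Mumford--Tate group of $\tilde S^{\mathrm{ws}}$ (via \Cref{LemmaMTDomainOrbitUnderGenericMTGroup} applied to $\tilde S^{\mathrm{ws}}$ viewed as a Mumford--Tate domain for $N$ through \Cref{LemmaSameOrbitUnderNormalSubgroup}), the connected component of any $\Q$-algebraic subgroup of $P$ preserving this orbit must be contained in $N$, so $H_{\mathscr{Z}^{\mathrm{Zar}}}\subseteq N$.

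The main obstacle will be the monodromy-invariance identity $n\mathscr{Z}^{\mathrm{Zar}}=\mathscr{Z}^{\mathrm{Zar}}$. Although the $\pi_1$-invariance of $\tilde{\mathscr{Z}}$ is built in by construction, translating it into invariance of the algebraic Zariski closure on the quotient $\Delta=S\times_{\Gamma_P\backslash\cD}\cD$ requires controlling how monodromy permutes the (possibly many) irreducible components of $\mathscr{Z}^{\mathrm{Zar}}\cap\Delta$ and ruling out that the monodromy orbit of $\mathscr{Z}$ is Zariski dense in a strictly larger algebraic subvariety. This is particularly delicate in the mixed setting owing to the additional unipotent directions from $W_{-1}$, which is precisely why the upgrade $H\lhd N\Rightarrow H\lhd P$ (immediate in the pure case) requires the separate argument of \Cref{normality2}.
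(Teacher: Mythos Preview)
Your part (b) rests on the identity $n\mathscr{Z}^{\mathrm{Zar}}=\mathscr{Z}^{\mathrm{Zar}}$ for every $n\in\mathrm{Im}(\rho)$, and this identity is simply false in general. Observe that if it held, then $\mathrm{Im}(\rho)\subseteq\mathrm{Stab}_{\Gamma_P}(\mathscr{Z}^{\mathrm{Zar}})$, so passing to the identity component of the Zariski closure would give $N\subseteq H_{\mathscr{Z}^{\mathrm{Zar}}}$; combined with your (a) this forces $H_{\mathscr{Z}^{\mathrm{Zar}}}=N$ for \emph{every} $\mathscr{Z}$ satisfying the standing hypotheses. That is absurd: \Cref{PropBignessStab} treats precisely the case where $H_{\mathscr{Z}^{\mathrm{Zar}}}$ is trivial, and the whole architecture of the proof depends on reaching that case after quotienting by $H_{\mathscr{Z}^{\mathrm{Zar}}}$. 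For a concrete instance, take two copies of the Kummer VMHS on $S=(\C^*)^2$, so that $\cD=\C^2$, $\Gamma_P=(2\pi i\,\Z)^2$, $N=\G_a^2$, and $\Delta=\{(e^{z_1},e^{z_2},z_1,z_2)\}$. Let $\mathscr{Z}=\{(e^{e^w},e^w,e^w,w):w\in\C\}$; classical Ax gives $\mathscr{Z}^{\mathrm{Zar}}=\{z_1=s_2\}$ and $u_S(\mathscr{Z})^{\mathrm{Zar}}=S$, yet the monodromy element $(2\pi i,0)$ sends $\{z_1=s_2\}$ to the distinct hyperplane $\{z_1=s_2+2\pi i\}$. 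Here $H_{\mathscr{Z}^{\mathrm{Zar}}}=\{0\}\times\G_a\subsetneq N$. Your universal-cover maneuver does not rescue the argument either: the diagonal $\pi_1(S)$-action on $\tilde S\times\cD$ descends to the \emph{trivial} action on $\Delta\subseteq S\times\cD$, not to the $\Gamma_P$-action on the second factor that defines the stabilizer, so invariance of $(\pi\times\mathrm{id})^{-1}(\mathscr{Z})$ carries no information about $\mathrm{Im}(\rho)$-invariance of $\mathscr{Z}^{\mathrm{Zar}}$.

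The paper's proof avoids this obstruction by replacing the single variety $\mathscr{Z}^{\mathrm{Zar}}$ with the flat family through it, namely the component $\mathbf{H}$ of the Hilbert scheme of $S\times\cD^\vee$ containing $[\mathscr{Z}^{\mathrm{Zar}}]$. Monodromy genuinely acts on this family (permuting its members rather than fixing $\mathscr{Z}^{\mathrm{Zar}}$), and a definable-Chow argument (\Cref{PropFoliAlg}) shows the relevant incidence locus still surjects onto $S$, so the induced monodromy group $\Gamma_0$ has Zariski closure $N$. A very general member of the family has arithmetic stabilizer equal to a common group $\Gamma_{\mathbscr{F}}$; since each $\gamma\in\Gamma_0$ sends very general members to very general members, conjugation by $\gamma$ fixes $\Gamma_{\mathbscr{F}}$. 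This yields $N$-normalization of $H_{\mathscr{Z}^{\mathrm{Zar}}}=(\Gamma_{\mathbscr{F}}^{\mathrm{Zar}})^\circ$ without ever claiming that any individual monodromy element fixes $\mathscr{Z}^{\mathrm{Zar}}$. Your part (a) via logarithmic Ax is more reasonable, though the final step from ``$H$ preserves the $N$-orbit $\tilde S^{\mathrm{ws}}$'' to ``$H\subseteq N$'' is not immediate: preserving an $N$-orbit in $\cD$ only places $H(\R)$ inside $N(\R)^+\cR_u(N)(\C)$ times the isotropy of a basepoint, and extracting $H\subseteq N$ from this requires a further argument.
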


\subsection{Family associated with $\mathscr{Z}$}

Let $\mathbf{H}$ be the component of the Hilbert scheme of $S \times \cD^\vee$ which contains $[\mathscr{Z}^{\mathrm{Zar}}]$, the point representing $\mathscr{Z}^{\mathrm{Zar}}$. Then $\mathbf{H}$ is proper. Consider the (modified) universal family
\[
\mathbscr{B} = \{(x,\tilde{m} , [\mathscr{B}]) \in (S \times \cD) \times \mathbf{H} : (x,\tilde{m}) \in \mathscr{B}\}.
\]
The projection
\begin{equation}\label{EqProjPerpToHilb}
\psi \colon \mathbscr{B} \rightarrow S \times \cD
\end{equation}
is a proper map since $\mathbf{H}$ is proper.

Define
\[
\mathbscr{Z} = \{(\tilde{\delta}, [\mathscr{B}]) \in (\Delta \times \mathbf{H}) \cap \mathbscr{B} : \dim_{\tilde{\delta}}(\Delta \cap \mathscr{B}) \ge \dim \mathscr{Z} \}.
\]
Then $\mathbscr{Z}$ is a closed complex analytic subset of $\mathbscr{B}$. So $\psi(\mathbscr{Z})$ is closed complex analytic in $S \times \cD$ as $\psi$ is proper. Note that $\psi(\mathbscr{Z}) \subseteq \Delta$.

Let us summarize the notations in the following diagram.
\[
\xymatrix{
\mathbscr{B} \ar[d]_-{\psi} & \mathbscr{Z} \ar@{}[l]|{\supseteq} \ar[d]^-{\psi|_{\mathbscr{Z}}} & \\
S \times \cD  & \Delta \ar@{}[l]|-{\supseteq} \ar[r] \ar[d]_-{u_S} & \cD \ar[d]^-{u} \\
& S \ar[r]^-{[\Phi]} & \Gamma_P \backslash \cD
}
\]

Recall that the arithmetic group $\Gamma_P$ acts on $S \times \cD$ by its action on the second factor. We claim that $\Gamma_P \psi(\mathbscr{Z}) = \psi(\mathbscr{Z})$. Indeed, this action of $\Gamma_P$ on $S \times \cD$ induces an action of $\Gamma_P$ on $\mathbscr{B}$ by
\begin{equation}\label{EqActionOfGammaOnB}
\gamma(x,\tilde{m},[\mathscr{B}]) = (x,\gamma\tilde{m},[\gamma\mathscr{B}]).
\end{equation}
Thus $\Gamma_P \Delta = \Delta$ implies $\Gamma_P \mathbscr{Z} = \mathbscr{Z}$. But $\psi$ is $\Gamma_P$-invariant. So $\Gamma_P \psi(\mathbscr{Z}) = \psi(\mathbscr{Z})$.

As the map $u_S \colon \Delta  \rightarrow S$ is $\Gamma_P$-invariant (for the trivial action of $\Gamma_P$ on $S$), we have that $T: = u_S(\psi(\mathbscr{Z}))$ is closed complex analytic in $S$.


\begin{prop}\label{PropFoliAlg}
$T$ is an algebraic subvariety of $S$.
\end{prop}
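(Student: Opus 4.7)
The plan is to show that $T$ is not just complex analytic but also definable in the o-minimal structure $\R_{\mathrm{an,exp}}$, and then invoke o-minimal GAGA (definable Chow, as used repeatedly in \cite{BKT,BBKT}) to conclude that $T$ is algebraic in $S$.

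First, I would set up the definable structures. The Hilbert component $\mathbf{H}$ is a projective complex variety, hence $\R_{\mathrm{alg}}$-definable. The universal family $\tilde{\mathbscr{B}} \subset (S \times \cD^\vee) \times \mathbf{H}$ is an algebraic subscheme, and $\mathbscr{B} = \tilde{\mathbscr{B}} \cap \bigl((S \times \cD) \times \mathbf{H}\bigr)$ is therefore $\R_{\mathrm{alg}}$-definable since $\cD \subset \cD^\vee$ is semi-algebraic open. Using the fundamental set $\mathfrak{F} = r^{-1}(\mathfrak{F}_{\R})$ of \Cref{ThmFundSet}, the set $\Delta \cap (S \times \mathfrak{F}) = \{(s,\tilde m) \in S \times \mathfrak{F} : [\Phi](s) = u(\tilde m)\}$ is $\R_{\mathrm{an,exp}}$-definable, because $u|_{\mathfrak{F}}$ and $[\Phi]$ are both definable for the structure on $\Gamma_P\backslash\cD$ prescribed by $\mathfrak{F}$.

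Next, I would verify that $\mathbscr{Z}$ itself is definable. The condition $\dim_{\tilde \delta}(\Delta \cap \mathscr{B}) \ge \dim \mathscr{Z}$ is a fiberwise local-dimension constraint on the definable family $\bigl(\mathbscr{B} \cap (\Delta \times \mathbf{H})\bigr) \to \mathbf{H}$; local dimension of fibers of a definable map is definable in any o-minimal structure, so $\mathbscr{Z} \cap \bigl((S \times \mathfrak{F}) \times \mathbf{H}\bigr)$ is $\R_{\mathrm{an,exp}}$-definable. Projecting along $\psi$ then along $u_S$ (both $\R_{\mathrm{alg}}$-maps, in particular definable, and $u_S$ being a projection) yields a definable subset of $S$. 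The crucial reduction is the identity
\[
T = u_S(\psi(\mathbscr{Z})) = u_S\bigl(\psi(\mathbscr{Z}) \cap (S \times \mathfrak{F})\bigr),
\]
which holds because $\Gamma_P \psi(\mathbscr{Z}) = \psi(\mathbscr{Z})$ (by the $\Gamma_P$-equivariance of $\psi$ coming from \eqref{EqActionOfGammaOnB}) and $\Gamma_P \mathfrak{F} = \cD$ (by \Cref{ThmFundSet}(i)). Hence $T$ is $\R_{\mathrm{an,exp}}$-definable.

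Finally, combining definability with the fact (already established just before the statement) that $T$ is a closed complex analytic subset of the smooth complex quasi-projective variety $S$, o-minimal Chow for closed analytic definable subsets of algebraic varieties (\emph{cf.}\ Peterzil--Starchenko, and its $\R_{\mathrm{an,exp}}$-version used in \cite{BKT,BBKT}) implies that $T$ is algebraic in $S$. The main technical point I would want to write out carefully is the local-dimension definability used to cut out $\mathbscr{Z}$ from $\mathbscr{B}$, together with the reduction to a fundamental set so that the $\Gamma_P$-orbit structure is absorbed before applying the definability of $[\Phi]$; everything else is a routine concatenation of definable operations followed by a direct appeal to definable Chow.
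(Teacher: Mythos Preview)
Your proposal is correct and follows essentially the same approach as the paper: reduce to a fundamental set using the $\Gamma_P$-invariance of $\psi(\mathbscr{Z})$, show that $\mathbscr{Z}\cap(S\times\mathfrak{F}\times\mathbf{H})$ is definable via the definability of $[\Phi]$ and of local dimension, project, and apply definable Chow. Your write-up is in fact slightly more explicit than the paper's in justifying the equality $T=u_S\bigl(\psi(\mathbscr{Z})\cap(S\times\mathfrak{F})\bigr)$ via $\Gamma_P\mathfrak{F}=\cD$ and $\Gamma_P\psi(\mathbscr{Z})=\psi(\mathbscr{Z})$.
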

\begin{proof} By definable Chow (\!\!\cite[Thm.~4.5]{PeterzilComplex-analyti} or \cite[Thm.~2.2]{MokAx-Schanuel-for}), it suffices to prove that $T$ is definable in $\R_{\mathrm{an},\exp}$. In the rest of the proof, when we say ``definable'' we mean \textit{definable in} $\R_{\mathrm{an},\exp}$.

Let $\mathfrak{F}_{\R}$ and $\mathfrak{F} = r^{-1}(\mathfrak{F}_{\R})$ be as in \Cref{ThmFundSet}.

Note that $u_S$ is the restriction of the natural projection $p_S \colon S \times \mathcal{D} \rightarrow S$ to ${\Delta}$. So $T = u_S(\psi(\mathbscr{Z})) = p_S(\psi(\mathbscr{Z})) = p_S(\psi(\mathbscr{Z}) \cap (S \times\mathfrak{F}))$. Thus it suffices to prove that $\psi(\mathbscr{Z}) \cap (S \times\mathfrak{F})$ is definable.

But $\psi(\mathbscr{Z}) \cap (S \times\mathfrak{F}) = \psi(\mathbscr{Z} \cap (S \times \mathfrak{F} \times \mathbf{H}))$. So it suffices to prove that $\mathbscr{Z} \cap (S \times \mathfrak{F} \times \mathbf{H})$ is definable.

By property (ii) of \Cref{ThmFundSet}, the period map $[\Phi]$ is definable if we endow $\Gamma_P \backslash\cD$ with the definable structure given by $u|_{\mathfrak{F}}$. So
\[
\Delta \cap (S \times \mathfrak{F}) = \{(x,\tilde{m}) \in S \times \mathfrak{F} : u(\tilde{m}) = [\Phi](x)\}
\]
is a definable subset of $S \times \cD$. So
\[
\Big( \big(\Delta \cap (S \times \mathfrak{F}) \big)  \times \mathbf{H} \Big) \cap \mathbscr{B}
\]
is a definable subset of $S \times \cD \times \mathbf{H}$. So
\[
\mathbscr{Z} \cap (S \times \mathfrak{F} \times \mathbf{H}) = \{(\tilde{\delta},[\mathscr{B}]) \in \Big( \big(\Delta \cap (S \times \mathfrak{F}) \big)  \times \mathbf{H} \Big) \cap \mathbscr{B} : \dim_{\tilde{\delta}}(\Delta\cap(S\times\mathfrak{F})\cap\mathscr{B}) \ge \dim \mathscr{Z}\}
\]
is definable. Hence we are done.
\end{proof}

\subsection{Monodromy} 
\begin{proof}[Proof of \Cref{normal1}]
Recall that $\Gamma_P \mathbscr{Z} = \mathbscr{Z}$. So $\Gamma_P \backslash \mathbscr{Z}$ is a complex analytic space. The proper map $\psi$ \eqref{EqProjPerpToHilb} induces
\[
\bar{\psi} \colon \Gamma_P \backslash \mathbscr{Z} \rightarrow \Gamma_P \backslash \psi(\mathbscr{Z}) = u_S(\psi(\mathbscr{Z})) = T,
\]
which is surjective and  proper.

We have  $\mathscr{Z} = \psi(\mathscr{Z} \times [\mathscr{Z}^{\mathrm{Zar}}]) \subseteq \psi(\mathbscr{Z})$. Applying $u_S$ to both sides, we get
\[
u_S(\mathscr{Z}) \subseteq T.
\]
Recall the assumption $S = u_S(\mathscr{Z})^{\mathrm{Zar}}$. So taking the Zariski closures of both sides, we get $T = S$ by \Cref{PropFoliAlg}.

Let $\mathbscr{Z}_0$ be an irreducible component of $\mathbscr{Z}$ which contains $\mathscr{Z} \times [\mathscr{Z}^{\mathrm{Zar}}]$. By abuse of notation, we use $\Gamma_P\backslash\mathbscr{Z}_0$ to denote the image of $\mathbscr{Z}_0$ under the map $\mathbscr{Z} \rightarrow \Gamma_P \backslash \mathbscr{Z}$. Then  $\bar{\psi}(\Gamma_P\backslash\mathbscr{Z}_0) = S$ because $T=S$ is irreducible.

Thus $\bar{\psi}$ induces a map $\bar{\psi}_* \colon \pi_1(\Gamma_P \backslash \mathbscr{Z}_0) \rightarrow \pi_1(S)$, and so a subgroup $\Gamma_0$ of $N(\Q)$. We have then $\Gamma_0 \mathbscr{Z}_0 = \mathbscr{Z}_0$. But $\mathrm{Im}(\bar{\psi}_*)$ has finite index in $\pi_1(S)$ (since $\bar{\psi}$ is proper), so $\Gamma_0^{\mathrm{Zar}} = N$.

Next denote by $\theta \colon \mathbscr{B} \subseteq (S \times \cD) \times \mathbf{H} \rightarrow \mathbf{H}$ the restriction of the natural projection. Let $\mathbscr{F} = \theta^{-1}(\theta(\mathbscr{Z}_0)) = \{(x,\tilde{m},[\mathscr{B}]) : [\mathscr{B}] \in \theta(\mathbscr{Z}_0), ~(x,\tilde{m}) \in \mathscr{B}\}$. Then $\mathbscr{F} \subseteq \mathbscr{B}$ is the family of algebraic varieties parametrized by $\theta(\mathbscr{Z}_0) \subseteq \mathbf{H}$, with the fiber over each $[\mathscr{B}] \in \theta(\mathbscr{Z}_0)$ being $\mathscr{B}$. Then we have
\[
\Gamma_0 \mathbscr{F} \subseteq \mathbscr{F}
\]
for the action of $\Gamma_P$ on $\mathbscr{B}$ defined by \eqref{EqActionOfGammaOnB}. Thus every $\gamma \in \Gamma_0$ sends a very general fiber of $\mathbscr{F}$ to a very general fiber of $\mathbscr{F}$.

Define
\[
\Gamma_{\mathbscr{F}} = \{ \gamma \in \Gamma_P  : \gamma \mathscr{B} \subseteq \mathscr{B}, ~ \text{for all }[\mathscr{B}] \in \theta(\mathbscr{Z}_0) \}.
\]
Then for a very general $[\mathscr{B}] \in \theta(\mathbscr{Z}_0)$, we have
\begin{equation}\label{EqZStabB}
\mathrm{Stab}_{\Gamma_P}(\mathscr{B}) = \Gamma_{\mathbscr{F}}.
\end{equation}
By construction of $\mathbscr{F}$, without loss of generality we may assume that $\mathscr{Z}^{\mathrm{Zar}}$ is a very general fiber of $\mathbscr{F}$. The conclusion of the last paragraph implies that any $\gamma \in \Gamma_0$ sends $\mathscr{Z}^{\mathrm{Zar}}$ to a very general fiber of $\mathbscr{F}$. By taking the stabilizers of the two fibers in consideration, we get $\Gamma_{\mathbscr{F}} = \gamma \Gamma_{\mathbscr{F}} \gamma^{-1}$ for all $\gamma \in \Gamma_0$. By taking the Zariski closures, we get
\[
(\Gamma_{\mathbscr{F}}^{\mathrm{Zar}})^{\circ} \lhd N.
\]

On the other hand \eqref{EqZStabB} implies $(\Gamma_{\mathbscr{F}}^{\mathrm{Zar}})^{\circ} = H_{\mathscr{Z}^{\mathrm{Zar}}}$. Hence we are done.
\end{proof}

\section{Normality of the $\Q$-stabilizer: Part 2} \label{normality2}
In this section, we finish the proof of the following proposition.
\begin{prop}\label{PropFinalNormality}
$H_{\mathscr{Z}^\Zar} \lhd P$.
\end{prop}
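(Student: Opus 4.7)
From \Cref{normal1} and \Cref{ThmAndreNormal} we have the chain $H := H_{\mathscr{Z}^{\mathrm{Zar}}} \lhd N \lhd P^{\mathrm{der}} \lhd P$. The task of \Cref{PropFinalNormality} is to promote normality in the monodromy group to normality in the full generic Mumford--Tate group. In the pure case, Mok's argument \cite{MokAx-Schanuel-for} gives the stronger statement $H \lhd P^{\mathrm{der}}$ already at the level of \Cref{normal1} (from which $H \lhd P$ follows since $P = P^{\mathrm{der}} \cdot Z(P)$ and the centre centralizes everything); in the mixed case, the unipotent direction requires additional work. Writing $\mathrm{Norm}_P(H)$ for the $\Q$-normalizer, the goal is to show $\mathrm{Norm}_P(H) = P$. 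Following \cite[$\mathsection$6.3]{GaoAxSchanuel}, the strategy is a two-step reduction: first handle the reductive quotient, then lift through the unipotent radical.

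\textbf{Step 1: the reductive quotient.} Consider the projection $p_0 \colon P \to G := P/W_{-1}$ onto the pure Mumford--Tate quotient and the corresponding pure quotient VHS $\mathrm{Gr}^W \V \to S$. The image $p_0(N)$ is the connected algebraic monodromy of this pure VHS, and by the pure case of \Cref{ThmAndreNormal} we have $p_0(N) \lhd G^{\mathrm{der}}$. Applying the argument of \Cref{normal1} (which transports through $p_0$ because the Hilbert scheme family $\mathbscr{F}$ and the action \eqref{EqActionOfGammaOnB} are compatible with the quotient) to the pure projection $p_0(\mathscr{Z})$ yields $p_0(H) \lhd p_0(N)$. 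The Mok-style argument of \cite{MokAx-Schanuel-for}, in the Hodge-theoretic generality treated by \cite{BTAS}, then upgrades this to $p_0(H) \lhd G$: the key point is that $p_0(N)$ is semisimple and its decomposition into simple factors is rigidified by the polarization data on the pure Hodge structure, so that $G$ cannot permute the simple factors of $p_0(H)$ with those of its complement.

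\textbf{Step 2: lifting through the weight filtration.} It remains to show that $W_{-1}$ normalizes $H$. Since $W_{-1} \lhd P$, conjugation by $W_{-1}$ preserves $N$, and since $H \lhd N$, the orbit $wHw^{-1}$ for $w \in W_{-1}$ lies in $N$; the issue is to verify it lies in $H$ itself. Using the decomposition \eqref{EqDecompGroup} and writing $P \cong G \ltimes W_{-1}$, apply the explicit action formula \eqref{EqFormulaActionBCH} from \Cref{PropProductStrMHD}: for $w \in W_{-1}$ and $h \in H$, the commutator $[w,h]$ lies in a lower-weight piece $W_{-k}$ governed by the polynomials $\mathrm{calb}_k$. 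Running a descending induction on $k$ through the chain $0 \subseteq W_{-m} \subseteq \cdots \subseteq W_{-1}$ of \Cref{SubsectionFiberedStructureContinue}, at each stage the inductive hypothesis plus Step~1 (which controls the pure direction of the correction) forces $[w,h] \in H \cap W_{-k}$. This gives $W_{-1} \subseteq \mathrm{Norm}_P(H)$, and combining with Step~1 yields $\mathrm{Norm}_P(H) = P$.

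\textbf{Main obstacle.} The hardest part is the descending induction in Step~2: one must check at each weight $k$ that the commutator corrections produced by the semi-direct product action \eqref{EqFormulaActionBCH} actually remain inside $H$ and not merely inside $N$. This is precisely where the mixed case is strictly harder than the pure case: in a pure Mumford--Tate domain there is no $W_{-1}$ and the question does not arise, whereas in the mixed setting the non-trivial unipotent radical allows $a$ $priori$ that conjugation by $W_{-1}$ drifts $H$ inside $N$ without preserving it. The input that breaks this potential obstruction is the admissibility of the VMHS, which constrains the Hodge-theoretic behaviour of the unipotent part sufficiently strongly to control the $\mathrm{calb}_k$ corrections, in close analogy with the mixed Shimura treatment of \cite[$\mathsection$6.3]{GaoAxSchanuel}.
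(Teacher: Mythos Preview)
Your Step~2 is where the argument breaks down. You claim that for $w \in W_{-1}$ and $h \in H$ the commutator $[w,h]$ lies in $H \cap W_{-k}$ by a descending induction, with the inductive hypothesis ``plus Step~1'' forcing this. But Step~1 concerns only the reductive quotient $G = P/W_{-1}$, whereas $[w,h] \in W_{-1}$ lies entirely in the unipotent radical; Step~1 gives no information about it. All that group theory tells you is $[w,h] \in N \cap W_{-1}$ (since $N \lhd P$), and there is no formal mechanism---neither the $\mathrm{calb}_k$ polynomials nor admissibility---that forces it into $H$. The appeal to \cite[\S6.3]{GaoAxSchanuel} is misleading: Gao's argument in the mixed Shimura case is not a commutator calculation either, but a geometric one.

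The paper's proof is structurally different. It reduces to showing $W_{-1} \cap H \lhd P$ (your Step~1 conclusion $p_0(H) \lhd G$ then follows formally from semisimplicity of $p_0(N)$ and connectedness of $G$, and these two together yield $H \lhd P$ via a rigidity argument: reductive-to-unipotent homomorphisms vanish). To prove $W_{-1} \cap H \lhd P$, the paper runs a second o-minimal counting argument (parallel to \Cref{SectionBigness}) over a fixed reductive fundamental domain $\gamma_0\mathfrak{F}_0$, showing via Pila--Wilkie that the image of $\tilde{Z}$ in $\Gamma_{-1}\backslash\cD$ is closed. Combined with logarithmic Ax (\Cref{ThmLogAx}), this forces the fiber of $\tilde{Z}^{\Zar}$ over a Hodge-generic point of $\cD_0$ to be a single $(W_{-1} \cap H)(\C)$-orbit. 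Definable Chow makes $[\Phi]^{-1}$ of its image an algebraic subvariety of $S$, whose connected algebraic monodromy group is exactly $W_{-1} \cap H$; then Andr\'e's \Cref{ThmAndreNormal} gives $W_{-1} \cap H \lhd P$. The geometric input---finding an algebraic subvariety of $S$ realizing $W_{-1}\cap H$ as a monodromy group---is the missing idea in your sketch.
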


For simplicity we write $H$ for $H_{\mathscr{Z}^\Zar}$. By \Cref{ThmAndreNormal} and \Cref{normal1}, we have $H \lhd N \lhd P$. 

Recall $W_{-1} = \cR_u(P)$. Fix a suitable Levi decomposition $P = W_{-1} \rtimes G$. 
 In order to prove $H\lhd P$, it suffices to establish the following two assertions.
\begin{enumerate}
\item[(i)] $W_{-1}\cap H$ is a normal subgroup of $P$;
\item[(ii)] $H/(W_{-1}\cap H)$, as a subgroup of $G$, acts trivially on $W_{-1}/ (W_{-1}\cap H)$.
\end{enumerate}
Assume that (i) holds true. Then
 $N/(W_{-1}\cap N)$ acts trivially on $W_{-1}/(W_{-1}\cap N)$ since $N\lhd P$, and $H/(W_{-1}\cap H)$ acts trivially on $(W_{-1}\cap N)/(W_{-1}\cap H)$ since $H \lhd N$. But $H/(W_{-1}\cap H)$ is a subgroup of  $N/(W_{-1}\cap N)$. So part (ii) is established.

Now it remains to prove part (i). We will finish this in the rest of this section.

Recall the set $\Theta$ defined in \eqref{EqDefnSetTheta}
\[
\Theta = \{g \in P(\R) : \dim (g^{-1} \mathscr{Z}^{\mathrm{Zar}} \cap (S \times\mathfrak{F}) \cap \Delta ) = \dim \mathscr{Z} \},
\]
where $\mathfrak{F}$ is defined as follows; see \Cref{ThmFundSet}.
\begin{equation}\label{EqDiagramNormal2Prel}
\xymatrix{
\cD \ar[r]^-{r} & \cD_{\R} \cong  \cD_0 \times \prod_{1\le k \le m} (W_{-k}/W_{-k-1})(\R)  \\
\mathfrak{F}= r^{-1}(\mathfrak{F}_{\R}) \ar@{}[u]|-*[@]{\subseteq} \ar@{|->}[r]  & \mathfrak{F}_{\R} := \mathfrak{F}_0 \times \prod_{1 \le k \le m} (-M,M)^{\dim (W_{-k}/W_{-(k+1)})(\R)}  \ar@{}[u]|-*[@]{\subseteq} .
}
\end{equation}

Recall $p_0 \colon (P,\cD) \rightarrow (G,\cD_0)$. Denote by $\Gamma_G := p_0(\Gamma_P)$. 

Let $\gamma_0 \in p_0(\Theta) \cap \Gamma_G \supseteq \{\gamma_0 \in \Gamma_G : \gamma_0 \mathfrak{F}_0 \cap \tilde{Z}_0 \not= \emptyset\}$, and let $\tilde{Z}|_{\gamma_0\mathfrak{F}_0}^+$ be a complex analytic irreducible component of $\tilde{Z}|_{\gamma_0\mathfrak{F}_0} := \tilde{Z} \cap p_0^{-1}(\gamma_0\mathfrak{F}_0)$.

Write $\Gamma_{-1} := W_{-1}(\Q) \cap \Gamma_P$. The quotient $\cD \rightarrow \cD_0$ induces $\Gamma_{-1} \backslash \cD \rightarrow \cD_0$. Write $(\Gamma_{-1}\backslash\cD)|_{\gamma_0\mathfrak{F}_0} \subseteq \Gamma_{-1} \backslash \cD$ for the inverse image of $\gamma_0\mathfrak{F}_0$.

\begin{lemma}\label{LemmaClosedInUsualTopologyUnipotent}
For the map $\bar{u} \colon \cD \rightarrow \Gamma_{-1} \backslash \cD$, the set $\bar{u}(\tilde{Z}|_{\gamma_0\mathfrak{F}_0}^+)$ is closed in $(\Gamma_{-1}\backslash\cD)|_{\gamma_0\mathfrak{F}_0}$ in the usual topology.
\end{lemma}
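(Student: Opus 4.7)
The claim is equivalent to showing that the $\Gamma_{-1}$-saturation $\Gamma_{-1}\cdot\tilde{Z}|_{\gamma_0\mathfrak{F}_0}^+$ is closed in $p_0^{-1}(\gamma_0\mathfrak{F}_0)$, since $\bar{u}$ is the quotient map of the (properly discontinuous) $\Gamma_{-1}$-action. My plan is to take a convergent sequence $\delta_n y_n\to y$ with $y_n\in\tilde{Z}|_{\gamma_0\mathfrak{F}_0}^+$, $\delta_n\in\Gamma_{-1}$, and $y\in p_0^{-1}(\gamma_0\mathfrak{F}_0)$, and to produce $\delta\in\Gamma_{-1}$ and $z\in\tilde{Z}|_{\gamma_0\mathfrak{F}_0}^+$ with $y=\delta z$. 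Three structural ingredients will drive the argument: (i) the Siegel-type finite-overlap property of $\mathfrak{F}_0$; (ii) the $P(\R)^+$-equivariant retraction $r\colon\cD\to\cD_{\R}$ together with the product coordinates of \Cref{PropProductStrMHD}; (iii) the explicit polynomial action formula \eqref{EqFormulaActionBCH}.

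First I would use that, by reduction theory applied to the definable fundamental set $\mathfrak{F}_0$ of \Cref{ThmFundSet}, the set $F:=\{\gamma'\in\Gamma_G:\gamma'\mathfrak{F}_0\cap\gamma_0\mathfrak{F}_0\neq\emptyset\}$ is finite. Since $\mathfrak{F}=r^{-1}(\mathfrak{F}_{\R})$ is a fundamental set for $\Gamma_P$ and $\tilde{Z}\subset\Gamma_P\cdot\mathfrak{F}$, this confines $\tilde{Z}|_{\gamma_0\mathfrak{F}_0}^+$ to the finite union $\bigcup_{\gamma\in p_0^{-1}(\gamma_0F)\cap\Gamma_P}(\gamma\mathfrak{F}\cap p_0^{-1}(\gamma_0\mathfrak{F}_0))$. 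Each piece has real-fiber coordinates confined to a translate of the bounded box $\prod_k(-M,M)^{\dim(W_{-k}/W_{-(k+1)})(\R)}$ from \eqref{EqFundD}. Passing to a subsequence I may therefore write $y_n=\tilde\tau_n m'_n$ with $\tilde\tau_n\in\Gamma_{-1}$ and each $m'_n$ lying in one fixed translate $\gamma_*\mathfrak{F}\cap p_0^{-1}(\gamma_0\mathfrak{F}_0)$, so that $r(m'_n)$ has real fiber coordinates bounded uniformly in $n$.

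Next, set $\eta_n:=\delta_n\tilde\tau_n\in\Gamma_{-1}$, so $\delta_n y_n=\eta_n m'_n\to y$. Applying $r$ yields $\eta_n r(m'_n)\to r(y)$ in $\cD_{\R}$. In the coordinates of \Cref{PropProductStrMHD} write $\eta_n=(w_{n,1},\ldots,w_{n,m})$ and $r(m'_n)=(p_0(m'_n),x_{n,1},\ldots,x_{n,m})$; the action formula \eqref{EqFormulaActionBCH} then reads, coordinate by coordinate, $w_{n,k}+x_{n,k}+\mathrm{calb}_k(\mathbf{w}_{n,k-1},\mathbf{x}_{n,k-1})\to (r(y))_k$. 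Inducting on $k$: at each step the previously bounded $\mathbf{w}_{n,k-1}$ and the uniformly bounded $\mathbf{x}_{n,\bullet}$ make $\mathrm{calb}_k$ bounded, so $w_{n,k}$ lies in a bounded subset of the image of $\Gamma_{-1}$ in $(W_{-k}/W_{-(k+1)})(\R)$; discreteness forces $w_{n,k}$ to take finitely many values, and after refining the subsequence $\eta_n=\eta$ is constant. Then $m'_n=\eta^{-1}\cdot(\eta m'_n)\to\eta^{-1}y$ in $\cD$ by continuity of $\eta^{-1}$.

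The main obstacle I anticipate is upgrading ``$\eta_n=\delta_n\tilde\tau_n$ is eventually constant'' to ``$\tilde\tau_n$ is eventually constant'' (equivalently, bounding $\delta_n$). The strategy is to exploit that the $m'_n$ were all forced, in the first paragraph, into the \emph{same} translate $\gamma_*\mathfrak{F}\cap p_0^{-1}(\gamma_0\mathfrak{F}_0)$ of bounded fiber size, and that $\Gamma_{-1}$ acts freely on the fibers of $p_0$ (by \eqref{EqStabRealPoint} the $P(\R)^+$-stabilizer of a real point meets $W_{-1}(\R)$ trivially, and freeness propagates to all of $\cD$ via $r$). Combined with the proper discontinuity of $\Gamma_{-1}$, the decomposition $y_n=\tilde\tau_n m'_n$ is essentially unique once $\gamma_*$ is fixed, and only finitely many $\tilde\tau_n$'s can send the convergent sequence $m'_n\to\eta^{-1}y$ into the closed analytic set $\tilde{Z}|_{\gamma_0\mathfrak{F}_0}^+$; refining once more gives $\tilde\tau_n=\tilde\tau$. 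Then $y_n=\tilde\tau m'_n\to\tilde\tau\eta^{-1}y$, and closedness of $\tilde{Z}|_{\gamma_0\mathfrak{F}_0}^+$ in $p_0^{-1}(\gamma_0\mathfrak{F}_0)$ places the limit in $\tilde{Z}|_{\gamma_0\mathfrak{F}_0}^+$. Setting $z:=\tilde\tau\eta^{-1}y\in\tilde{Z}|_{\gamma_0\mathfrak{F}_0}^+$ and $\delta:=\eta\tilde\tau^{-1}\in\Gamma_{-1}$ yields $y=\delta z$, finishing the proof.
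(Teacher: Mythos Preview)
Your argument has a genuine gap in the ``main obstacle'' paragraph. The claim that only finitely many $\tilde\tau_n$'s can send the convergent sequence $m'_n\to\eta^{-1}y$ into $\tilde{Z}|_{\gamma_0\mathfrak{F}_0}^+$ does not follow from proper discontinuity or freeness of the $\Gamma_{-1}$-action: those only bound how many translates of a \emph{compact} set meet another compact set, whereas $\tilde{Z}|_{\gamma_0\mathfrak{F}_0}^+$ is in general noncompact in the fiber direction. A clean model for the failure is $p_0^{-1}(\gamma_0\mathfrak{F}_0)=D\times\C$ over a disk $D\ni 0$, $\Gamma_{-1}=\Z$ acting by translation on $\C$, and the closed analytic set $A=\{(u,z):uz=1\}$. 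The image of $A$ in $D\times(\C/\Z)$ is not closed: the points $(1/n,n)\equiv(1/n,0)$ converge to $(0,0)$, which is not in the image. Thus no argument using only closedness/analyticity of $\tilde{Z}|_{\gamma_0\mathfrak{F}_0}^+$ together with proper discontinuity can establish the lemma; some input specific to the situation is required. Your steps up through ``$\eta_n=\eta$ constant and $m'_n\to\eta^{-1}y$'' are fine, but they only reduce to exactly the statement you started with.

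The paper's proof is of a completely different nature and supplies precisely this missing input via o-minimality. It works inside the proof of \Cref{PropFinalNormality}, where one already knows $H=H_{\mathscr{Z}^{\Zar}}$ and may assume that every positive-dimensional semi-algebraic block in $\Theta$ lies in a left coset of $\mathrm{Stab}_{P(\R)}(\mathscr{Z}^{\Zar})$. One argues by contradiction: if $\{\gamma_{-1}\in\Gamma_{-1}:(\gamma_{-1},\gamma_0)\mathfrak{F}\cap\tilde{Z}|^+_{\gamma_0\mathfrak{F}_0}\neq\emptyset\}$ is \emph{not} contained in finitely many $\Gamma_{-1,H}$-cosets (where $\Gamma_{-1,H}=H(\Q)\cap\Gamma_{-1}$), then a polynomial height-count in the definable set $\Theta'\subset W_{-1}(\R)$, combined with Pila--Wilkie, produces semi-algebraic blocks with many lattice points lying \emph{outside} any $(W_{-1}\cap H)(\R)$-coset; feeding these back through $\Theta$ contradicts the definition of $H$ as the $\Q$-stabilizer of $\mathscr{Z}^{\Zar}$. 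The resulting finiteness modulo $\Gamma_{-1,H}$ is what yields closedness of $\bar{u}(\tilde{Z}|_{\gamma_0\mathfrak{F}_0}^+)$. This Pila--Wilkie step, and the use of $H$, are the ideas your proposal is missing.
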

\begin{proof}
Fix a Levi decomposition $P = W_{-1} \rtimes G$. 

Define the following set
\[
\Theta' := \{ w \in W_{-1}(\R) : \left( (w^{-1}, \gamma_0^{-1})\mathscr{Z}^{\Zar} \cap (S \times \mathfrak{F}) \cap \Delta \right) = \dim \mathscr{Z} \} \subseteq W_{-1}(\R).
\]
Then $\Theta' \cap \Gamma_P \supseteq \{ \gamma_{-1} \in \Gamma_{-1} : (\gamma_{-1},\gamma_0)\mathfrak{F} \cap \tilde{Z}|_{\gamma_0\mathfrak{F}_0}^+ \not= \emptyset\}$, and $(\Theta' , \gamma_0^{-1}) \subseteq \Theta$.  Denote by $\Gamma_{-1,H} := H(\Q) \cap \Gamma_{-1}$.

Denote by
\[
\lambda \colon \cD_{\R} \cong  \cD_0 \times \prod_{1\le k \le m} (W_{-k}/W_{-k-1})(\R) \rightarrow   \prod_{1\le k \le m} (W_{-k}/W_{-k-1})(\R)
\]
the natural projection. For each $\gamma_{-1} \in \Gamma_{-1}$, write $\gamma_{-k}$ the image of $\gamma_{-1}$ under the quotient $W_{-1} \rightarrow W_{-k}$ (with $k \le m$). We say that $\gamma_{-1}$ has reduced multi-height $\le (T_1, \ldots, T_{m-1})$ if $H(\gamma_{-k}) \le T_k$ for each $k \in \{1,\ldots,m-1\}$. Now if $\gamma_{-1} \in \Gamma_{-1}$ has reduced multi-height at most $(T^{1/\prod_{i=2}^m i}, \ldots ,T^{1/(m-1)m}, T^{1/m})$, then $\lambda((\gamma_{-1},\gamma_0)\mathfrak{F})$ is contained in a $|\cdot|$-ball of radius $\ll T$ by applying \Cref{LemmaNormPolynomialGrowth} iteratively (the constant involved may depend on $\gamma_0$).

Suppose that $\{\gamma_{-1} \in \Gamma_{-1} : (\gamma_{-1},\gamma_0)\mathfrak{F} \cap \tilde{Z}|^+_{\gamma_0\mathfrak{F}_0} \not= \emptyset\}$ is not contained in a finite union of $\Gamma_{-1,H}$-cosets.  Because $\tilde{Z}|^+_{\gamma_0\mathfrak{F}_0}$ is connected and $\prod_{1\le k \le m} (W_{-k}/W_{-k-1})(\R)$ is Euclidean, we get for free (when $T \gg 1$) that it contains $\ge T$ elements $\gamma_{-1} \in \Gamma_{-1}\setminus \Gamma_{-1,H}$ with the following property: the reduced multi-height of $\gamma_{-1}$ is $\le (T^{1/\prod_{i=2}^m i}, \ldots ,T^{1/(m-1)m}, T^{1/m})$ and $H(\gamma_{-m}) \ll T$. Notice that each such $\gamma_{-1}$ has height $\ll T$. Hence by Pila--Wilkie, there exist constants $c,\epsilon > 0$ with the following property: for each $T \gg 1$, $\Theta'$ contains a semi-algebraic block $B'$ which is not in any coset of $(W_{-1}\cap H)(\R)$ and which contains $\ge c T^{\epsilon}$ elements in $\Gamma_{-1}$ outside $\Gamma_{-1,H}$ of height at most $T$. Recall our assumption that every positive diemensional semi-algebraic block in $\Theta$ is contained in a left coset of $\mathrm{Stab}_{P(\R)}(\mathscr{Z}^{\Zar})$. In particular, $(B', \gamma_0^{-1}) \subseteq (\gamma_{-1}, \gamma_0^{-1}) \cdot \mathrm{Stab}_{P(\R)}(\mathscr{Z}^{\Zar})$ for some $\gamma_{-1} \in B' \cap \Gamma_{-1}$. Hence $(\gamma_{-1}^{-1} \cdot (\gamma_0 B') , 1) \subseteq \mathrm{Stab}_{P(\R)}(\mathscr{Z}^{\Zar})$. So 
\[
(\gamma_{-1}^{-1} \cdot (\gamma_0 B') \cap \Gamma_{-1} , 1) \subseteq \mathrm{Stab}_{P(\R)}(\mathscr{Z}^{\Zar}) \cap \Gamma \subseteq H(\Q).
\]
By letting $T \rightarrow \infty$ and varying $B'$ accordingly, we see that this inclusion cannot hold true because $B' \subseteq W_{-1}(\R)$ is not contained in any coset of $(W_{-1}\cap H)(\R)$.

Therefore $\{\gamma_{-1} \in \Gamma_{-1} : (\gamma_{-1},\gamma_0)\mathfrak{F} \cap \tilde{Z}|^+_{\gamma_0\mathfrak{F}_0} \not= \emptyset\}$ is contained in a finite union of $\Gamma_{-1,H}$-cosets. Hence we are done.
\end{proof}

From now on we work with the usual topology. 
Now $\tilde{Z}|^+_{\gamma_0\mathfrak{F}_0}$ is an open subset of $\tilde{Z}$ because $\gamma_0\mathfrak{F}_0$ is open in $\cD_0$. In particular, there exists $\gamma_0 \in \Gamma_G$ such that $\dim \tilde{Z}|^+_{\gamma_0\mathfrak{F}_0} = \dim \tilde{Z}$. Let $Y$ be the closure of $\bar{u}( \tilde{Z}|^+_{\gamma_0\mathfrak{F}_0} )$ in $\Gamma_{-1}\backslash \cD$. Then \Cref{LemmaClosedInUsualTopologyUnipotent} yields $Y^{\circ} \subseteq \bar{u}( \tilde{Z}|^+_{\gamma_0\mathfrak{F}_0} ) \subseteq Y$, where $Y^{\circ}$ is the interior of $Y$. Let $\tilde{Z}'$ be a complex analytic irreducible component of $\bar{u}^{-1}(Y)$ which contains $\tilde{Z}|^+_{\gamma_0\mathfrak{F}_0}$. Then $\tilde{Z}' \subseteq \tilde{Z}$ and $\dim \tilde{Z}|^+_{\gamma_0\mathfrak{F}_0}  \le \dim \tilde{Z}' \le \dim \tilde{Z}$. Therefore $\dim \tilde{Z}' = \dim \tilde{Z}$.

By analytic continuation, we then have $(\tilde{Z}')^{\Zar} = \tilde{Z}^{\Zar}$. Moreover, an irreducible component of $(S \times \tilde{Z}' ) \cap \Delta$, which we denote by $\mathscr{Z}'$, satisfies $(\mathscr{Z}')^{\Zar} = \mathscr{Z}^{\Zar}$.

Set
\[
\Gamma' := \mathrm{Im} \left( \pi_1(\bar{u}(\tilde{Z}')) \rightarrow \pi_1(\Gamma_{-1}\backslash\cD) = \Gamma_{-1} \right) \subseteq \Gamma_{-1}.
\]
Then $\Gamma'$ stabilizes $\tilde{Z}'$. So it also stabilizes $\mathscr{Z}'$, and hence $(\Gamma')^{\Zar}(\R)$ stabilizes $(\mathscr{Z}')^{\Zar} = \mathscr{Z}^{\Zar}$. Thus $(\Gamma')^{\Zar} \subseteq H$ because $H$ is the $\Q$-stabilizer of $\mathscr{Z}^{\Zar}$.

By logarithmic Ax \Cref{ThmLogAx} and its remark, $(\tilde{Z}')^{\Zar} = \tilde{Z}^{\Zar}$ is contained in an $N(\R)^+(W_{-1}\cap N)(\C)$-orbit in $\cD$. Call this orbit $\cD_N$. For each $k \ge 1$, let $W_{-k,N} := W_{-k}\cap N$. Then the top row of \eqref{EqDiagramNormal2Prel} induces $\cD_N \xrightarrow{r} \cD_{N,\R} \cong \cD_{N,0} \times \prod_{i=1}^m (W_{-k,N} / W_{-k-1,N})(\R)$.

Denote by $\Gamma_{-k,N} := \Gamma_P \cap W_{-k,N}$, and by $\Gamma_{-k/-k-1,N} := \Gamma_{-k,N}/\Gamma_{-k-1,N}$. Then we have the following diagram
\[
\xymatrix{
\cD_N \ar[r]^-{r} \ar[d]_-{\bar{u}} & \cD_{N,\R} \cong \cD_{N,0} \times \prod_{i=1}^m (W_{-k,N} / W_{-k-1,N})(\R)  \ar[d]^-{\bar{u}_{\R}}  \\
\Gamma_{-1,N} \backslash \cD_N \ar[r] & \Gamma_{-1,N} \backslash \cD_{N,\R} \cong  \cD_{N,0} \times \prod_{i=1}^m \Gamma_{-k/-k-1,N}\backslash (W_{-k,N} / W_{-k-1,N})(\R) 
}
\] 
and $\tilde{Z}' \subseteq \cD_N$. 

Since $W_{-1}\cap H \lhd N$ (because $W_{-1}\cap H = \cR_u(H)$), we can take the quotient of $\cD_{N,\R}$ by $(W_{-1}\cap H)(\R)$ and get a real manifold. Call this quotient $q$. Denote by $W_{-k, N/H} := W_{-k,N} / (W_{-k} \cap H)$, and by $\Gamma_{-k/-k-1, N/H}$ the image of $\Gamma_{-k,N}$ under the quotient $W_{-k, N} \rightarrow W_{-k, N/H} \rightarrow W_{-k, N/H}/W_{-k-1, N/H}$. Then the diagram above expands to (notice $\tilde{Z}' \subseteq \cD_N$)
\scriptsize
\[
\xymatrix{
\cD_N \ar[r]^-{\bar{u}} \ar[d]_-{r} & \Gamma_{-1,N} \backslash \cD_N \ar[d] \\
\cD_{N,\R} \cong \cD_{N,0} \times \prod_{i=1}^m (W_{-k,N} / W_{-k-1,N})(\R) \ar[d]_-{q} \ar[r]^-{\bar{u}_{\R}} & \Gamma_{-1,N} \backslash \cD_{N,\R} \cong  \cD_{N,0} \times \prod_{i=1}^m \Gamma_{-k/-k-1,N}\backslash (W_{-k,N} / W_{-k-1,N})(\R) \ar[d]^-{[q]} \\
 q(\cD_N) \cong \cD_{N,0} \times \prod_{i=1}^m (W_{-k,N/H} / W_{-k-1,N/H})(\R) \ar[r] \ar[d]_{\lambda_{N/H}} & \cD_{N,0} \times \prod_{i=1}^m  \Gamma_{-k/-k-1, N/H} \backslash(W_{-k,N/H} / W_{-k-1,N/H})(\R) \ar[d]^-{[\lambda_{N/H}]} \\
 \prod_{i=1}^m (W_{-k,N/H} / W_{-k-1,N/H})(\R) \ar[r]^-{u_{W,\R}} & \prod_{i=1}^m  \Gamma_{-k/-k-1, N/H} \backslash(W_{-k,N/H} / W_{-k-1,N/H})(\R).
}
\] 
\normalsize
Since $\Gamma' \subseteq H$, we have that $[\lambda_{N/H}] \circ [q] \circ \bar{u}_{\R}(r(\tilde{Z}'))$ is simply connected. But $\lambda_{N/H} \circ q \circ r(\tilde{Z}')$ is connected. So it is contained in a fundamental domain of $u_{W,\R}$, and hence is bounded. 
 So $\lambda_{N/H} \circ q \circ r(\tilde{Z}')$ is a point because $\tilde{Z}'$ is complex analytic.

Recall that $\mathscr{Z}' \subseteq \Delta$ and $\tilde{Z}'$ is the projection of $\mathscr{Z}'$ to $\cD$. Thus the previous paragraph implies that for $\tilde{z}_0 \in p_0(\tilde{Z}') \subseteq \cD_0$, the fiber of $(\tilde{Z}'
)^{\mathrm{Zar}}$ over $\tilde{z}_0$ is contained in an $(W_{-1}\cap H)(\C)$-orbit. Since $H(\R)(W_{-1}\cap H)(\C)$ stabilizes $\tilde{Z}^{\mathrm{Zar}} = (\tilde{Z}'
)^{\mathrm{Zar}}$, this fiber is indeed an $(W_{-1}\cap H)(\C)$-orbit. Call this fiber $\tilde{X}'$. We may furthermore assume that $\tilde{z}_0$ is Hodge generic in $\cD_0$.

As $W_{-1}\cap H$ is a $\QQ$-group, the set $u(\tilde{X}')$ is closed in $\Gamma\backslash \cD$ under the usual topology. It is a definable subset, and hence $[\Phi]^{-1}(u(\tilde{X}'))$ is a definable complex analytic subvariety of $S$; see \Cref{ThmFundSet}. So $[\Phi]^{-1}(u(\tilde{X}'))$ is algebraic by definable Chow. Its connected algebraic monodromy group is $W_{-1} \cap H$. Hence $W_{-1}\cap H$ is normal in $P$ by Andr\'{e} \Cref{ThmAndreNormal}.

\section{End of the proof}
In this section, we prove \Cref{ThmASDevissage}, which finishes the proof of \Cref{ThmAS}.

\medskip
Let $\mathscr{Z}$ as in \Cref{ThmASDevissage}. If $\dim
H_{\mathscr{Z}^{\mathrm{Zar}}} =0$ then we are done by
\Cref{PropBignessStab}. Thus we may assume $\dim
H_{\mathscr{Z}^{\mathrm{Zar}}} > 0$. For simplicity we write $H:=
H_{\mathscr{Z}^{\mathrm{Zar}}}$. 

\Cref{PropFinalNormality} says that $H \lhd P$. Thus we can take the quotient $\cD/H$ and obtain
\begin{equation}\label{EqEqQuotPerMapDiagEnd}
\xymatrix{
& \cD \ar[r]^-{p_H} \ar[d]_-{u} & \cD/H \ar[d]^-{u_{/H}} \\
S \ar[r]^-{[\Phi]} \ar@/_0.6cm/[rr]|-{[\Phi_{/H}]} & \Gamma_P\backslash\cD \ar[r]^-{[p_H]} & \Gamma_{P/H}\backslash(\cD/H) 
}.
\end{equation}
We can apply \Cref{PropBignessStab} to the new period map $[\Phi_{/H}] \colon S \rightarrow \Gamma_{P/H}\backslash(\cD/H)$ and
\[
\mathscr{Z}_{/H} := (\mathrm{id}_S, p_H)(\mathscr{Z}) \subseteq S \times_{\Gamma_{P/H}\backslash(\cD/H)} (\cD/H).
\]
But $H = H_{\mathscr{Z}^{\mathrm{Zar}}}$ is the $\Q$-stabilizer of $\mathscr{Z}^{\mathrm{Zar}}$, so the $\Q$-stabilizer of $\mathscr{Z}_{/H}^{\mathrm{Zar}}$ must be $1$. Thus \Cref{PropBignessStab} implies
\begin{equation}\label{EqASInduction}
\dim \mathscr{Z}_{/H}^{\mathrm{Zar}} - \dim \mathscr{Z}_{/H} \ge \dim p_{\cD/H}(\mathscr{Z}_{/H})^{\mathrm{ws}},
\end{equation}
where $p_{\cD/H} \colon S \times \cD/H \rightarrow \cD/H$ is the natural projection.

Let $\cR_u(H)$ be the unipotent radical of $H$. As $H(\R)^+\cR_u(H)(\C)\mathscr{Z}^{\mathrm{Zar}} = \mathscr{Z}^{\mathrm{Zar}}$, we have (for any $\tilde{s} \in \tilde{S}$)
\begin{equation}\label{EqDimZarInduction}
\dim \mathscr{Z}^{\mathrm{Zar}} = \dim \mathscr{Z}_{/H}^{\mathrm{Zar}} + \dim H(\R)^+\cR_u(H)(\C)\tilde{s}
\end{equation}
and
\begin{equation}\label{EqDimWSInduction}
\dim p_{\cD}(\mathscr{Z})^{\mathrm{ws}} = \dim p_{\cD/H}(\mathscr{Z}_{/H})^{\mathrm{ws}} + \dim H(\R)^+\cR_u(H)(\C)\tilde{s}.
\end{equation}
By \eqref{EqASInduction}, \eqref{EqDimZarInduction} and \eqref{EqDimWSInduction}, we then have
\begin{equation}
\dim \mathscr{Z}^{\mathrm{Zar}} - \dim \mathscr{Z}_{/H} \ge \dim p_{\cD}(\mathscr{Z})^{\mathrm{ws}}.
\end{equation}
So it remains to prove $\dim \mathscr{Z} = \dim \mathscr{Z}_{/H}$. Hence it remains to prove that each fiber of
\[
(\mathrm{id}_S, p_H) \colon S \times_{\Gamma_P\backslash \cD} \cD \rightarrow S \times_{\Gamma_{P/H}\backslash(\cD/H)} (\cD/H)
\]
is at most a countable set. This is true: Suppose $(s_1,\tilde{x}_1)$ and $(s_2,\tilde{x}_2)$ are in the same fiber, then $s_1 = s_2$. But any point $(s,\tilde{x}) \in S \times_{\Gamma_P\backslash \cD} \cD$ satisfies $[\Phi](s) = u(\tilde{x})$. So we have $u(\tilde{x}_1) = u(\tilde{x}_2)$, and hence $\tilde{x}_1 \in \Gamma_P \tilde{x}_2$. So each fiber of the map $(\mathrm{id}_S, p_H)$ above is contained in a $\Gamma_P$-orbit, and thus is at most a countable set.

\appendix
\renewcommand{\thesection}{\Alph{section}}
\setcounter{section}{0}

\section{Basic knowledge on Mumford--Tate domains}\label{Appendix}

\subsection{Some fundamental properties of Mumford--Tate domains}\label{Appendix2}
The goal of this subsection is to prove \Cref{PropMTDomainComplex} and \Cref{LemmaSmallestMTDomain}.

Let $V$ be a finite-dimensional $\Q$-vector space, and let $\cM$ be the classifying space of $\Q$-mixed Hodge structures constructed in \Cref{SubsectionClassifyingSpace}. We have seen that $\cM$ is a homogeneous space under $P^{\cM}(\R)^+W_{-1}^{\cM}(\C)$ for the $\Q$-algebraic group $P^{\cM}$ constructed in \eqref{EquationQGroupPM} and $W_{-1}^{\cM} = \cR_u(P^{\cM})$.

Let $h \in \cM$. Recall that the adjoint Hodge structure on $\lie P^{\cM}$ defined by $h$ has weight $\le 0$ by part (iii) of \Cref{propPink}. The following lemma is a rephrase of \cite[Thm.~3.13]{Pearlstein2000}.

\begin{lemma}\label{LemmaTangentSpaceOfClassifyingSpace}
The tangent space $T_h\cM$ can be canonically identified with
\[
\bigoplus_{r < 0, ~ r+s\le 0} (\lie P^{\cM}_{\C})^{r,s} = \bigoplus_{r < 0} (\lie P^{\cM}_{\C})^{r,s}.
\]
\end{lemma}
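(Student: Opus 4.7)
The plan is to compute $T_h \cM$ as a Lie-theoretic quotient and translate the answer through the Deligne bigrading. Since $\cM$ is open in $\cM^\vee$ in the usual topology, we have $T_h \cM = T_h \cM^\vee$. Using that $\cM^\vee$ is a $P^\cM(\C)$-homogeneous space, this tangent space identifies with the quotient $\lie P^\cM_\C / \lie \mathrm{Stab}_{P^\cM(\C)}(h)$. Reading $h$ as the Hodge filtration $F^\bullet V_\C$, the stabilizer consists of elements preserving $F^\bullet V_\C$; the standard computation of the induced Hodge filtration on $\mathrm{End}(V_\C) \supseteq \lie P^\cM_\C$ via the adjoint action $\mathrm{Ad} \circ h$ then yields $\lie \mathrm{Stab}_{P^\cM(\C)}(h) = F^0 \lie P^\cM_\C$.

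In terms of Deligne's bigrading $\lie P^\cM_\C = \bigoplus_{r,s}(\lie P^\cM_\C)^{r,s}$ coming from the adjoint Hodge structure, one has $F^0 \lie P^\cM_\C = \bigoplus_{r \ge 0} (\lie P^\cM_\C)^{r,s}$, so the complementary subspace $\bigoplus_{r < 0}(\lie P^\cM_\C)^{r,s}$ projects isomorphically onto the quotient $\lie P^\cM_\C / F^0 \lie P^\cM_\C$. This gives the right-hand identification in the statement. For the equality of the two sums, I would invoke condition $(iii)$ of \Cref{propPink}: since $h$ factors through $P^\cM_\C$ and $\MT(h) \subseteq P^\cM$, the weight filtration on $\lie P^\cM$ induced by $\mathrm{Ad} \circ h$ satisfies $W_0 \lie P^\cM = \lie P^\cM$. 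Hence the adjoint Hodge structure has weight $\le 0$, forcing $(\lie P^\cM_\C)^{r,s} = 0$ whenever $r+s > 0$; in particular the constraint $r+s \le 0$ is automatic under $r < 0$, and the two direct sums coincide.

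The only substantive step is the identification $\lie \mathrm{Stab}_{P^\cM(\C)}(h) = F^0 \lie P^\cM_\C$; everything else is bookkeeping with the bigrading. This identification is classical in the pure case and transports to the mixed setting without change, because the argument only uses the infinitesimal action on the filtered vector space $(V_\C, F^\bullet)$ and is insensitive to the weight filtration. The inclusion $F^0 \lie P^\cM_\C \subseteq \lie \mathrm{Stab}$ is immediate from the definition of the induced filtration, and the reverse follows from the fact that an element $X \in \lie \mathrm{GL}(V_\C)$ preserves $F^\bullet V_\C$ iff $X(F^q V_\C) \subseteq F^q V_\C$ for all $q$, which is exactly the condition $X \in F^0 \mathrm{End}(V_\C)$. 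For a careful reference one cites Pearlstein's \cite[Thm.~3.13]{Pearlstein2000}, as the paper does.
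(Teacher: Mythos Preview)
Your proof is correct and matches the paper's approach: the paper does not give its own argument but simply remarks that the lemma is a rephrasing of \cite[Thm.~3.13]{Pearlstein2000}, and your write-up supplies exactly the standard details behind that citation (homogeneity of $\cM^\vee$, stabilizer Lie algebra equal to $F^0\lie P^{\cM}_\C$, and the weight~$\le 0$ constraint on the adjoint Hodge structure from \Cref{propPink}(iii) to justify the equality of the two direct sums).
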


With this lemma, we are ready to prove \Cref{PropMTDomainComplex}.
\begin{proof}[Proof~of~\Cref{PropMTDomainComplex}]
Let $\cD = P(\R)^+W_{-1}(\C)h$ be a Mumford--Tate domain contained in $\cM$, where $P = \mathrm{MT}(h)$ and $W_{-1} = \cR_u(P)$.

Because $\cD$ and $\cM$ are homogeneous spaces, to prove that $\cD$ is a complex submanifold of $\cM$ it suffices to prove that $T_h\cD$ is a complex subspace of $T_h\cM$.

 $\lie P$ is a sub-Hodge structure of $\lie P^{\cM}$ for the adjoint Hodge structure on $\lie P^{\cM}$ induced by $h$. So $F^0 \lie P_{\C} = F^0 \lie P^{\cM}_{\C} \cap \lie P_{\C}$. By \Cref{LemmaTangentSpaceOfClassifyingSpace}, the complex structure on $T_h\cM$ is given by
\[
\lie P^{\cM}_{\C}/F^0\lie P^{\cM}_{\C} =  \bigoplus_{r < 0} (\lie P^{\cM}_{\C})^{r,s}.
\]
Thus $T_h\cD = \lie P_{\C}/ \left( F^0 \lie P^{\cM}_{\C} \cap \lie P_{\C}\right) = \lie P_{\C}/F^0\lie P_{\C}$ is a complex subspace of $T_h\cM$. Thus we can conclude that $\cD$ is a complex submanifold of $\cM$. Moreover we have shown that
\begin{equation}\label{EqTangentSpaceMTDomain}
T_h\cD = \bigoplus_{r<0}(\lie P_{\C})^{r,s}.
\end{equation}

The proof for weak Mumford--Tate domains is the similar. The only new input is to prove that $\lie N$ is a sub-Hodge structure of $\lie P^{\cM}$ for the normal subgroup $N$ of $P:=\mathrm{MT}(h)$ from \Cref{DefnMTDomain}.(2). This is true because the adjoint action of $P$ on $\lie  P$ leaves $\lie N$ stable (since $N\lhd P$), and the adjoint action $\mathrm{Ad} \colon P \rightarrow \mathrm{GL}(\lie P)$ is precisely the restriction of $\mathrm{Ad}^{\cM} \colon P^{\cM} \rightarrow \mathrm{GL}(\lie P^{\cM})$ restricted to $P$ (which leaves $\lie P$ stable).
\end{proof}

Next we turn to the Mumford--Tate group $\mathrm{MT}(h)$. For $m, n \in \Z_{\ge 0}$, denote by $T^{m,n}V := V^{\otimes m} \otimes (V^\vee)^{\otimes n}$. Then $h$ induces a $\Q$-mixed Hodge structure on $T^{m,n}V$, whose weight filtration we denote by $W_{\bullet}$ and Hodge filtration we denote by $F^{\bullet}$. 

The elements of $(T^{m,n}V_{\C})^{0,0} \cap T^{m,n}V = F^0(T^{m,n}V_{\C})  \cap W_0(T^{m,n}V)$, with $m$ and $n$ running over all non-negative integers, are called the \textit{Hodge tensors} for  $h$. Denote by $\mathrm{Hdg}_h$ the set of all Hodge tensors for $h$.

The following result is proved by Andr\'{e} \cite[Lem.~2.(a)]{AndreMumford-Tate-gr}, with pure case by Deligne.
\begin{lemma}\label{LemmaMTGroupCentralizerOfHodgeTensors}
We have
\begin{enumerate}
\item[(i)] Any element in some $T^{m,n}V$ fixed by $\mathrm{MT}(h)(\Q)$ is a Hodge tensor for $h$;
\item[(ii)] $\mathrm{MT}(h) = Z_{\mathrm{GL}(V)}(\mathrm{Hdg}_h)$.
\end{enumerate}
\end{lemma}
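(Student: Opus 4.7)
The plan is to identify the $\mathrm{MT}(h)$-fixed tensors in $T^{m,n}V$ with $\mathrm{Hdg}_h \cap T^{m,n}V$, and then to deduce (i) via a Zariski-density argument and (ii) via Chevalley's tensor reconstruction. The key observation is: for any $t \in T^{m,n}V$, the stabilizer $\mathrm{Stab}_{\mathrm{GL}(V)}(t)$ is a $\Q$-algebraic subgroup of $\mathrm{GL}(V)$, and by the minimality in the definition of $\mathrm{MT}(h)$ it contains $\mathrm{MT}(h)$ iff its $\C$-points contain $h(\S_\C)$, i.e.\ iff $h(\S_\C)$ fixes $t \otimes 1$. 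Under the bigrading $T^{m,n}V_\C = \bigoplus_{p,q}(T^{m,n}V_\C)^{p,q}$ induced by $h$, the torus $\S_\C$ acts on $(T^{m,n}V_\C)^{p,q}$ by the character $(z_1,z_2)\mapsto z_1^{-p}z_2^{-q}$, so fixing $t \otimes 1$ is equivalent to $t \in (T^{m,n}V_\C)^{0,0}$; combined with $t \in T^{m,n}V$ this is exactly the Hodge-tensor condition. Hence $\{t \in T^{m,n}V : \mathrm{MT}(h) \text{ fixes } t\} = \mathrm{Hdg}_h \cap T^{m,n}V$.

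For (i) I would argue that $\mathrm{MT}(h)(\Q)$ is Zariski dense in $\mathrm{MT}(h)$, so that a tensor fixed by $\mathrm{MT}(h)(\Q)$ is fixed by $\mathrm{MT}(h)$ and the previous identification concludes. The group $\mathrm{MT}(h)$ is connected: its identity component is a $\Q$-algebraic subgroup still containing the connected image $h(\S_\C)$, so it coincides with $\mathrm{MT}(h)$ by minimality. Over $\Q$ it admits a Levi decomposition $\mathrm{MT}(h) = \cR_u(\mathrm{MT}(h)) \rtimes L$, and $L$ surjects onto the Mumford--Tate group of the graded polarizable pure Hodge structure $\bigoplus_k \mathrm{Gr}^W_k V$, which is reductive, so $L$ itself is reductive. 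Unipotent $\Q$-groups in characteristic zero are $\Q$-isomorphic as varieties to affine space via the exponential map, and connected reductive $\Q$-groups are $\Q$-unirational; so both $\cR_u(\mathrm{MT}(h))(\Q)$ and $L(\Q)$ are Zariski dense in their respective factors, which makes $\mathrm{MT}(h)(\Q) = \cR_u(\mathrm{MT}(h))(\Q) \rtimes L(\Q)$ Zariski dense in $\mathrm{MT}(h)$.

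For (ii), the inclusion $\mathrm{MT}(h) \subseteq Z_{\mathrm{GL}(V)}(\mathrm{Hdg}_h)$ is immediate from the identification. For the reverse inclusion I would invoke Chevalley's tensor reconstruction theorem (as used by Deligne in \emph{Hodge Cycles on Abelian Varieties}, Prop.~3.1): any affine $\Q$-algebraic subgroup $G$ of $\mathrm{GL}(V)$ coincides with the scheme-theoretic common stabilizer in $\mathrm{GL}(V)$ of its $\Q$-rational invariant tensors in $\bigoplus_{m,n} T^{m,n}V$. Applied to $G = \mathrm{MT}(h)$, whose $\Q$-rational invariant tensors are by the identification above precisely $\mathrm{Hdg}_h$, this yields $\mathrm{MT}(h) = Z_{\mathrm{GL}(V)}(\mathrm{Hdg}_h)$. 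The only delicate point of the whole argument is the Zariski density of $\mathrm{MT}(h)(\Q)$: in the pure case this is immediate from the reductivity of $\mathrm{MT}(h)$, while in the mixed case the Levi analysis above is needed.
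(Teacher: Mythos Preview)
Your proof is correct. The paper does not give its own proof of this lemma but simply attributes it to Andr\'{e} \cite[Lem.~2.(a)]{AndreMumford-Tate-gr} (pure case: Deligne), and your argument follows the standard line of that reference: identify $\MT(h)$-fixed tensors with rational elements of $I^{0,0}$ via the defining minimality of $\MT(h)$, then use Chevalley--Deligne reconstruction for (ii) and Zariski density of $\MT(h)(\Q)$ for (i). One minor redundancy: in your d\'evissage for Zariski density, the Levi factor $L$ is reductive by definition of a Levi decomposition, so the extra sentence about $L$ surjecting onto the (reductive) Mumford--Tate group of $\bigoplus_k \mathrm{Gr}^W_k V$ is not needed. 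Note also that your identification ``$t \in (T^{m,n}V_\C)^{0,0}$ and $t$ rational $\Leftrightarrow$ $t$ is a Hodge tensor'' relies on the equality $(T^{m,n}V_\C)^{0,0}\cap T^{m,n}V = F^0(T^{m,n}V_\C)\cap W_0(T^{m,n}V)$, which the paper states just above the lemma; this is a genuine (if easy) fact about mixed Hodge structures, following from the functoriality of the Deligne bigrading applied to the morphism $\Q(0)\to T^{m,n}V$, $1\mapsto t$.
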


By dimension reasons, \Cref{LemmaMTGroupCentralizerOfHodgeTensors}.(ii) has the following consequence.
\begin{cor}\label{CorMumfordTateGroupCentralizerOfHodgeTensors}
There exists a finite set $\mathfrak{I} \subseteq \mathrm{Hdg}_h$  such that $\mathrm{MT}(h) = Z_{\mathrm{GL}(V)}(\mathfrak{I})$.
\end{cor}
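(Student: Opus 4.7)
The plan is to extract a finite defining set from the characterization $\mathrm{MT}(h) = Z_{\mathrm{GL}(V)}(\mathrm{Hdg}_h)$ of \Cref{LemmaMTGroupCentralizerOfHodgeTensors}.(ii) by a standard noetherianness argument on the algebraic group $\mathrm{GL}(V)$.

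First, I would observe that for any subset $\mathfrak{J} \subseteq \mathrm{Hdg}_h$, the centralizer $Z_{\mathrm{GL}(V)}(\mathfrak{J})$ is a $\Q$-algebraic subgroup of $\mathrm{GL}(V)$, since each condition ``fix $t$'' for $t \in T^{m,n}V$ cuts out a closed $\Q$-subscheme. Moreover one has the tautological identity
\[
Z_{\mathrm{GL}(V)}(\mathfrak{J}) = \bigcap_{t \in \mathfrak{J}} Z_{\mathrm{GL}(V)}(\{t\}),
\]
and in particular
\[
\mathrm{MT}(h) = Z_{\mathrm{GL}(V)}(\mathrm{Hdg}_h) = \bigcap_{\mathfrak{I}' \subseteq \mathrm{Hdg}_h \text{ finite}} Z_{\mathrm{GL}(V)}(\mathfrak{I}').
\]

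Next, I would invoke noetherianness. The algebraic group $\mathrm{GL}(V)$ is an affine $\Q$-scheme of finite type, hence noetherian. Ordering the finite subsets $\mathfrak{I}' \subseteq \mathrm{Hdg}_h$ by inclusion gives a filtered system whose intersections $Z_{\mathrm{GL}(V)}(\mathfrak{I}')$ form a descending family of closed subschemes of $\mathrm{GL}(V)$. By the descending chain condition on closed subschemes of a noetherian scheme, there exists a finite subset $\mathfrak{I} \subseteq \mathrm{Hdg}_h$ such that $Z_{\mathrm{GL}(V)}(\mathfrak{I}) = Z_{\mathrm{GL}(V)}(\mathfrak{I} \cup \{t\})$ for every $t \in \mathrm{Hdg}_h$. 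Intersecting over $t$ yields $Z_{\mathrm{GL}(V)}(\mathfrak{I}) = Z_{\mathrm{GL}(V)}(\mathrm{Hdg}_h) = \mathrm{MT}(h)$, which is the desired conclusion.

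I do not expect a serious obstacle here: the statement really is a formal consequence of \Cref{LemmaMTGroupCentralizerOfHodgeTensors}.(ii) together with the fact that $\mathrm{GL}(V)$ has finite Krull dimension. The only point worth being slightly careful about is that $\mathrm{Hdg}_h$ lives in the (infinite) direct sum $\bigoplus_{m,n} T^{m,n}V$, so a priori the intersection is over an infinite (even uncountable, if one allows arbitrary $\Q$-linear combinations) index set; but this is immaterial since noetherianness handles arbitrary descending intersections. The ``dimension reasons'' hinted at in the statement amount precisely to this finiteness of the Krull dimension of $\mathrm{GL}(V)$.
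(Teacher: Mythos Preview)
Your argument is correct and is precisely the elaboration of the paper's one-line justification ``by dimension reasons'': the descending chain condition on closed subschemes of the noetherian scheme $\mathrm{GL}(V)$ is exactly what is meant, and you have spelled it out carefully.
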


Now we are ready to characterize Mumford--Tate domains contained in $\cM$ as irreducible components of \textit{Hodge loci}.


\begin{defn} For each $h \in \cM$, the \textbf{Hodge locus} at $h$ is defined as
\begin{equation}\label{EqDefnHodgeLocus}
\mathrm{HL}(h) = \{ h' \in \cM : \mathrm{Hdg}_h\subseteq \mathrm{Hdg}_{h'}\}.
\end{equation}
\end{defn}

\begin{lemma}\label{LemmaHodgeLocusInTermsOfMTGroup}
We have
\begin{enumerate}
\item[(i)]  $\mathrm{HL}(h) = \{h' \in \cM : \mathrm{MT}(h') < \mathrm{MT}(h)\}$.
\item[(ii)] $\mathrm{HL}(h) = \{ h' \in \cM : \mathfrak{I} \subseteq \mathrm{Hdg}_{h'}\}$ where $\mathfrak{I}$ is the finite set from \Cref{CorMumfordTateGroupCentralizerOfHodgeTensors}.
\end{enumerate}
\end{lemma}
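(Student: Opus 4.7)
The plan is to deduce the lemma almost immediately from \Cref{LemmaMTGroupCentralizerOfHodgeTensors} and \Cref{CorMumfordTateGroupCentralizerOfHodgeTensors}, which provide the two-way dictionary between Hodge tensors and Mumford--Tate groups.

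For part (i), I would argue by double inclusion. Take $h' \in \mathrm{HL}(h)$, so $\mathrm{Hdg}_h \subseteq \mathrm{Hdg}_{h'}$. Taking centralizers in $\mathrm{GL}(V)$ reverses inclusions, so by \Cref{LemmaMTGroupCentralizerOfHodgeTensors}(ii)
\[
\mathrm{MT}(h') = Z_{\mathrm{GL}(V)}(\mathrm{Hdg}_{h'}) \subseteq Z_{\mathrm{GL}(V)}(\mathrm{Hdg}_{h}) = \mathrm{MT}(h).
\]
Conversely, suppose $\mathrm{MT}(h') \subseteq \mathrm{MT}(h)$, and let $t \in \mathrm{Hdg}_h$. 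By the definition of $\mathrm{MT}(h)$ as the centralizer of $\mathrm{Hdg}_h$ in its natural action on the mixed tensor spaces $T^{m,n}V$, the tensor $t$ is fixed by $\mathrm{MT}(h)(\Q)$, hence a fortiori by the subgroup $\mathrm{MT}(h')(\Q)$. By \Cref{LemmaMTGroupCentralizerOfHodgeTensors}(i), any such invariant tensor is a Hodge tensor for $h'$, so $t \in \mathrm{Hdg}_{h'}$. Thus $\mathrm{Hdg}_h \subseteq \mathrm{Hdg}_{h'}$, i.e.\ $h' \in \mathrm{HL}(h)$.

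For part (ii), the inclusion $\subseteq$ is immediate: if $h' \in \mathrm{HL}(h)$ then $\mathfrak{I} \subseteq \mathrm{Hdg}_h \subseteq \mathrm{Hdg}_{h'}$. For the reverse inclusion, suppose $\mathfrak{I} \subseteq \mathrm{Hdg}_{h'}$. Then $\mathrm{MT}(h') = Z_{\mathrm{GL}(V)}(\mathrm{Hdg}_{h'}) \subseteq Z_{\mathrm{GL}(V)}(\mathfrak{I}) = \mathrm{MT}(h)$ by \Cref{CorMumfordTateGroupCentralizerOfHodgeTensors}, and part (i) then gives $h' \in \mathrm{HL}(h)$.

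There is no real obstacle: the content of the lemma is a direct translation, and all the work is already done in \Cref{LemmaMTGroupCentralizerOfHodgeTensors} and \Cref{CorMumfordTateGroupCentralizerOfHodgeTensors}. The only point deserving a brief comment is that when passing from $\mathrm{Hdg}_h \subseteq \mathrm{Hdg}_{h'}$ to an inclusion of Mumford--Tate groups one must use \Cref{LemmaMTGroupCentralizerOfHodgeTensors}(i) (not just the definition of Hodge tensors) to identify the elements of $\mathrm{Hdg}_{h'}$ with $\mathrm{MT}(h')(\Q)$-invariants, which is what makes the centralizer computation work.
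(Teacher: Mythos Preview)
Your proof is correct and follows essentially the same approach as the paper's own proof: both argue by double inclusion using \Cref{LemmaMTGroupCentralizerOfHodgeTensors} and \Cref{CorMumfordTateGroupCentralizerOfHodgeTensors}. Your argument for the inclusion $\subseteq$ in part (ii) is in fact slightly more direct than the paper's (you use $\mathfrak{I} \subseteq \mathrm{Hdg}_h \subseteq \mathrm{Hdg}_{h'}$ straight from the definitions, whereas the paper routes through part (i) and \Cref{LemmaMTGroupCentralizerOfHodgeTensors}(i)), but the difference is negligible.
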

\begin{proof}
\begin{enumerate}
\item[(i)] 
The inclusion $\subseteq$ is clear by \Cref{LemmaMTGroupCentralizerOfHodgeTensors}.(ii). Conversely suppose $\mathrm{MT}(h') < \mathrm{MT}(h)$. Then any $t \in \mathrm{Hdg}_h$ is fixed by $\mathrm{MT}(h)$ by \Cref{LemmaMTGroupCentralizerOfHodgeTensors}.(ii), and so is also fixed by $\mathrm{MT}(h')$, and thus is a Hodge tensor for $h'$ by \Cref{LemmaMTGroupCentralizerOfHodgeTensors}.(i). Therefore $\mathrm{Hdg}_h \subseteq \mathrm{Hdg}_{h'}$.
\item[(ii)] We first prove the inclusion $\subseteq$. Let $h' \in \mathrm{HL}(h)$. By \Cref{CorMumfordTateGroupCentralizerOfHodgeTensors} and (i), each $t \in \mathfrak{I}$ is fixed by $\mathrm{MT}(h')(\Q)$, and hence is a Hodge tensor for $h'$ by \Cref{LemmaMTGroupCentralizerOfHodgeTensors}.(i). So $\mathfrak{I}  \subseteq \mathrm{Hdg}_{h'}$. This proves the desired inclusion.

Conversely suppose that $h' \in \cM$ satisfies $\mathfrak{J} \subseteq \mathrm{Hdg}_{h'}$. Then $Z_{\mathrm{GL}(V)}(\mathrm{Hdg}_{h'}) \subseteq Z_{\mathrm{GL}(V)}(\mathfrak{I})$. Thus $\mathrm{MT}(h') < \mathrm{MT}(h)$ by \Cref{LemmaMTGroupCentralizerOfHodgeTensors}.(ii) and  \Cref{CorMumfordTateGroupCentralizerOfHodgeTensors}. So $h' \in \mathrm{HL}(h)$ by part (i) of the current lemma. This proves the inclusion $\supseteq$. Now we are done. \qedhere
\end{enumerate}
\end{proof}

By \Cref{LemmaHodgeLocusInTermsOfMTGroup}.(ii), $\mathrm{HL}(h)$ is the complex analytic subvariety of $\cM$ which parametrizes $\Q$-mixed Hodge structures (satisfying the properties (1)-(3) in \Cref{SubsectionClassifyingSpace}) together with the Hodge tensors in the finite set $\mathfrak{I}$.

\begin{prop}\label{PropMTDomainIsHodgeLocus}
Let $h \in \cM$, with $P=\mathrm{MT}(h)$ and $W_{-1}=\cR_u(P)$. Then $P(\R)^+W_{-1}(\C)h$ is the complex analytic irreducible component of $\mathrm{HL}(h)$ passing through $h$.
\end{prop}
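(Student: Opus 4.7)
The plan is to first verify the inclusion $\cD := P(\R)^+W_{-1}(\C)h \subseteq \mathrm{HL}(h)$, then show that $\cD$ is a connected complex submanifold whose tangent space at $h$ already exhausts the Zariski tangent space of $\mathrm{HL}(h)$, and finally deduce the equality by a dimension count.

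For the inclusion, any $h' \in \cD$ has the form $h' = g h g^{-1}$ with $g \in P(\R)^+W_{-1}(\C)$; in particular $h' \colon \S_\C \to P^\cM_\C$ factors through $g P_\C g^{-1} = P_\C$, so $\MT(h') \subseteq P = \MT(h)$. By \Cref{LemmaHodgeLocusInTermsOfMTGroup}(i), this yields $h' \in \mathrm{HL}(h)$. On the other hand, \Cref{PropMTDomainComplex} tells us that $\cD$ is a complex submanifold of $\cM$; as the orbit of the connected group $P(\R)^+W_{-1}(\C)$, it is connected and hence irreducible as a complex analytic set, with tangent space at $h$ given (see the proof of \Cref{PropMTDomainComplex}) by
\begin{equation*}
T_h\cD = \bigoplus_{r<0}(\lie P_\C)^{r,s}.
\end{equation*}

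The key step is to compute the Zariski tangent space of $\mathrm{HL}(h)$ at $h$. By \Cref{CorMumfordTateGroupCentralizerOfHodgeTensors} and \Cref{LemmaHodgeLocusInTermsOfMTGroup}(ii), a neighborhood of $h$ in $\mathrm{HL}(h)$ is cut out by the condition that each tensor $t$ in a fixed finite set $\mathfrak{I}$ of Hodge tensors for $h$ lies in $F^0$. Using the exponential parametrization, a tangent vector $X \in T_h\cM \cong \bigoplus_{r<0}(\lie P^\cM_\C)^{r,s}$ corresponds to the first-order deformation $h_X := \exp(X)\cdot h$, under which the condition $t \in F^0_{h_X}$ linearizes to $X\cdot t \in F^0(T^{m,n}V_\C)$. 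Now $t$ has type $(0,0)$ and $X$ has components of types $(r,s)$ with $r<0$, so $X\cdot t$ lies in $\bigoplus_{r<0}(T^{m,n}V_\C)^{r,s}$, which meets $F^0$ only in $0$. Therefore the linearized condition is $X\cdot t = 0$ for all $t \in \mathfrak{I}$, i.e.\ $X \in \lie P^\cM_\C$ centralizes $\mathfrak{I}$. Combined with $P = Z_{\mathrm{GL}(V)}(\mathfrak{I})$ and the fact that the adjoint Hodge structure on $\lie P^\cM$ restricts to the one on $\lie P$, we conclude
\begin{equation*}
T_h\mathrm{HL}(h) = \bigoplus_{r<0}(\lie P_\C)^{r,s} = T_h\cD.
\end{equation*}

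To conclude, since $\cD \subseteq \mathrm{HL}(h)$ and $\cD$ is smooth at $h$ of dimension $\dim T_h\cD$, the general inequality $\dim_h \mathrm{HL}(h) \le \dim T_h\mathrm{HL}(h)$ combined with the chain $\dim \cD \le \dim_h \mathrm{HL}(h) \le \dim T_h\mathrm{HL}(h) = \dim T_h\cD = \dim \cD$ forces equality throughout. Hence $\mathrm{HL}(h)$ is smooth at $h$, so it has a unique irreducible component $Y$ through $h$; this $Y$ contains the irreducible analytic set $\cD$ of the same dimension, so $Y = \cD$. The main subtlety to handle carefully will be the local computation of $T_h\mathrm{HL}(h)$: one must verify that the analytic equations defining $\mathrm{HL}(h)$ near $h$ are genuinely the Hodge-tensor conditions on $F^\bullet$ (not a thicker non-reduced structure), so that the linearization $X\cdot t \in F^0$ gives the true Zariski tangent space. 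This is essentially the infinitesimal period relation argument and will be the only non-formal input in the proof.
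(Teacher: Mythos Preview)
Your proposal is correct and follows essentially the same approach as the paper's own proof: establish the inclusion $\cD \subseteq \mathrm{HL}(h)$ via $\MT(h') \subseteq P$, then show $T_h\mathrm{HL}(h) \subseteq \bigoplus_{r<0}(\lie P_\C)^{r,s} = T_h\cD$ by linearizing the Hodge-tensor condition and using that $X\cdot t$ lands in strictly negative $F$-degree while the constraint forces it into $F^0$, hence $X\cdot t=0$. The paper simply states that this tangent-space inclusion suffices, whereas you spell out the smoothness/dimension argument explicitly; your caveat about the reduced analytic structure is a reasonable point of care, but the linearization you describe is exactly what the paper uses (its displayed equivalence $\xi\in T_h\mathrm{HL}(h)\Leftrightarrow \xi\cdot t\in\mathrm{Hdg}_h$ amounts to your $X\cdot t\in F^0$).
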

\begin{proof} The proof is simply \cite[Prop.~17.1.2]{CMP} adapted to the mixed case. For completeness we include it here.

Denote by $\cD = P(\R)^+W_{-1}(\C) h$. Each $h' \in \cD$ equals $g\cdot h$ for some $g \in P(\R)^+W_{-1}(\C)$, and hence the homomorphism $h' \colon \S_{\C}\rightarrow \mathrm{GL}(V_{\C})$ factors through $g P_{\C} g^{-1} = P_{\C}$. Thus $\mathrm{MT}(h') < P$. So \Cref{LemmaHodgeLocusInTermsOfMTGroup}.(i) implies $h' \in \mathrm{HL}(h)$ for each $h' \in \cD$. Therefore
\begin{equation}\label{EqMTDomainContainedInHodgeLocus}
\cD  \subseteq \mathrm{HL}(h).
\end{equation}
Next we study $T_h(\mathrm{HL}(h)) \subseteq T_h\cM = \bigoplus_{r < 0} (\lie P^{\cM}_{\C})^{r,s}$; see \Cref{LemmaTangentSpaceOfClassifyingSpace} for the last equality. By \eqref{EqMTDomainContainedInHodgeLocus} and \eqref{EqTangentSpaceMTDomain}, to prove the proposition it suffices to prove
\begin{equation}\label{EquationTangentSpaceOfHodgeLocusContainedInMTDomain}
T_h (\mathrm{HL}(h)) \subseteq  \bigoplus_{r<0} (\lie P_{\C})^{r,s}.
\end{equation}
Indeed the action of $P^{\cM}$ on $T^{m,n}V$ induces an action of $T_h\cM$ on $T^{m,n}V$ in the following way: $\xi \cdot t = \frac{\mathrm{d}}{\mathrm{d}u}(e^{u \xi}\cdot t)|_{u=0}$, for $\xi \in T_h\cM = \bigoplus_{r < 0} (\lie P^{\cM}_{\C})^{r,s}$ and $t \in T^{m,n}V$. Then for any vector $\xi \in T_h\cM = \bigoplus_{r < 0} (\lie P^{\cM}_{\C})^{r,s}$, we have
\begin{equation}\label{EqCharacterizationOfTangentSpaceOfHodgeLocus}
\xi \in T_h(\mathrm{HL}(h))  \Leftrightarrow  \xi\cdot t \in \mathrm{Hdg}_h \text{ for each } t \in \mathrm{Hdg}_h.
\end{equation}
Now take $\xi \in T_h(\mathrm{HL}(h))$. Suppose $t \in T:=T^{m,n}V$ is a Hodge tensor, namely $t \in F^0T_{\C}  \cap W_0T \subseteq T_{\C}^{0,0}$.\footnote{Here the notation $T_{\C}^{0,0}$ means the $(0,0)$-constituent for the bi-grading of $T$ given by \Cref{PropBiGradingMHS}.} Then \eqref{EqCharacterizationOfTangentSpaceOfHodgeLocus} implies $\xi \cdot t \in F^0T_{\C}  \cap W_0T \subseteq T_{\C}^{0,0}$. On the other hand $\xi \in \bigoplus_{r < 0} (\lie P^{\cM}_{\C})^{r,s}$. Write $\xi = \sum_{r<0} \xi^{r,s}$. Then $\xi \cdot t =\sum_{r<0} \xi^{r,s}\cdot t \in \bigoplus_{r<0}T_{\C}^{r,s}$. Thus $\xi\cdot t \in T^{0,0} \cap \bigoplus_{r<0}T_{\C}^{r,s} = 0$. In summary
\begin{equation}
\xi \in T_h(\mathrm{HL}(h)) \Rightarrow \xi\cdot t = 0 \text{ for each }t \in \mathrm{Hdg}_h.
\end{equation}
But part (ii) of \Cref{LemmaMTGroupCentralizerOfHodgeTensors} implies that $\{\xi \in \lie P^{\cM}_{\C} : \xi\cdot t = 0 \text{ for each }t \in \mathrm{Hdg}_h\} \subseteq \lie P_{\C}$ with $P= \mathrm{MT}(h)$. Thus $T_h(\mathrm{HL}(h)) \subseteq \lie P_{\C}$. So
\[
T_h(\mathrm{HL}(h)) \subseteq \lie P_{\C} \cap \oplus_{r>0} (\lie P^{\cM}_{\C})^{r,s} = \oplus_{r>0}(\lie P_{\C})^{r,s}.
\]
This is precisely \eqref{EquationTangentSpaceOfHodgeLocusContainedInMTDomain}. Hence we are done.
\end{proof}

Now by \Cref{PropMTDomainIsHodgeLocus} and \Cref{LemmaHodgeLocusInTermsOfMTGroup}.(ii), the Mumford--Tate domains contained in $\cM$ are precisely the complex irreducible components of the moduli spaces parametrizing $\Q$-mixed Hodge structures (satisfying the properties (1)-(3) in \Cref{SubsectionClassifyingSpace}) together with a finite number of Hodge tensors.

\begin{proof}[Proof~of~\Cref{LemmaIntersectionMTDomains}] This is an immediate consequence of the moduli interpretation of Mumford--Tate domains above.
\end{proof}

Another application is as follows.
\begin{cor}\label{CorMTDomainCountablyMany}
There are at most countably many Mumford--Tate domains in $\cM$.
\end{cor}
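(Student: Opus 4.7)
The plan is to reduce to a countability statement about finite sets of rational tensors. By \Cref{PropMTDomainIsHodgeLocus}, every Mumford--Tate domain $\cD \subseteq \cM$ arises as the irreducible component through $h$ of some Hodge locus $\mathrm{HL}(h)$, and by \Cref{LemmaHodgeLocusInTermsOfMTGroup}.(ii) combined with \Cref{CorMumfordTateGroupCentralizerOfHodgeTensors}, $\mathrm{HL}(h)$ only depends on a finite set of Hodge tensors $\mathfrak{I} \subseteq \bigsqcup_{m, n \ge 0} T^{m,n}V$. Hence the collection of all Hodge loci of the form $\mathrm{HL}(h)$ is indexed (with repetitions) by the set of finite subsets of $\bigsqcup_{m,n} T^{m,n}V$.

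First I would observe that for each pair $(m,n)$, the $\Q$-vector space $T^{m,n}V$ is countable, and the countable union $\bigsqcup_{m,n} T^{m,n}V$ is therefore countable. The set of finite subsets of a countable set is itself countable, so there are at most countably many distinct Hodge loci obtainable as $\mathrm{HL}(h)$ when $h$ varies in $\cM$.

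Next I would show that each such Hodge locus has at most countably many complex analytic irreducible components. Indeed, $\mathrm{HL}(h) = \{h' \in \cM : \mathfrak{I} \subseteq \mathrm{Hdg}_{h'}\}$ is a closed complex analytic subvariety of the (second countable) complex manifold $\cM$: it is cut out by the finitely many holomorphic conditions that each $t \in \mathfrak{I}$ lie in $F^0 T^{m,n}V_\C$ (the condition $t \in W_0$ is automatic since $t$ is rational and the weight filtration on $T^{m,n}V$ is $h$-independent). Any closed analytic subvariety of a second countable complex manifold has at most countably many irreducible components, since its set of components is locally finite and $\cM$ admits a countable base of open sets.

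Combining these two countability statements, a countable union of at most countable sets of components yields at most countably many Mumford--Tate domains in $\cM$. The only step that requires a modicum of care is the second one, namely checking that the Hodge locus is a genuine closed analytic subvariety (not merely a constructible set) so that the standard decomposition into countably many analytic irreducible components applies; but this is immediate from its defining equations, which are the vanishing of the holomorphic projections of the fixed tensors $t \in \mathfrak{I}$ onto $T^{m,n}V_{\C}/F^0$.
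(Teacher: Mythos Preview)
Your proposal is correct and follows essentially the same approach as the paper: both argue that Mumford--Tate domains are irreducible components of Hodge loci indexed by finite sets of rational tensors, that the rational tensors form a countable set, and that each Hodge locus (being a closed analytic subvariety of a second countable manifold) has at most countably many irreducible components. Your version is simply more explicit about the details; one minor imprecision is that ``$t \in W_0$ is automatic since $t$ is rational'' should rather read ``since $t$ was already a Hodge tensor for $h$ and $W_\bullet$ is independent of $h'$''.
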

\begin{proof} We have the moduli interpretation of Mumford--Tate domains above. On the other hand, every complex analytic variety has at most countably many irreducible components, and by definition there are countably many Hodge tensors. Hence there are at most countably many Mumford--Tate domains contained in $\cM$.
\end{proof}

This allows to prove a stronger version of \Cref{LemmaSmallestMTDomain}.
\begin{lemma}\label{LemmaDescriptionSmallestSpecialSubvariety}
Let $\cZ$ be a complex analytic irreducible subvariety  of $\cM$. Let $P = \mathrm{MT}(\cZ)$ be the generic Mumford--Tate group of $\cZ$. Then $\cZ^{\mathrm{sp}}$, the smallest Mumford--Tate domain which contains $\cZ$, is precisely $P(\R)^+W_{-1}(\C)h$ for some $h \in \cZ$, where $W_{-1} = \cR_u(P)$.
\end{lemma}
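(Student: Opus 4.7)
The plan is to pick a Hodge-generic point $h \in \cZ$, so $\MT(h) = \MT(\cZ) = P$ (such points exist by the discussion preceding \Cref{LemmaMTDomainOrbitUnderGenericMTGroup}), and then show that the candidate $\cD_h := P(\R)^+W_{-1}(\C)h$ is the desired smallest Mumford--Tate domain. By \Cref{DefnMTDomain}(i), $\cD_h$ is a Mumford--Tate domain, and by \Cref{LemmaMTDomainOrbitUnderGenericMTGroup} its generic Mumford--Tate group is $P$.

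First I would establish the containment $\cZ \subseteq \cD_h$. By definition of the generic Mumford--Tate group, every $h' \in \cZ$ satisfies $\MT(h') \le \MT(\cZ) = P = \MT(h)$, so \Cref{LemmaHodgeLocusInTermsOfMTGroup}(i) gives $\cZ \subseteq \mathrm{HL}(h)$. On the other hand, \Cref{PropMTDomainIsHodgeLocus} identifies $\cD_h$ with the complex analytic irreducible component of $\mathrm{HL}(h)$ passing through $h$. Since $\cZ$ is complex analytic irreducible and contains $h$, it is contained in this component, hence $\cZ \subseteq \cD_h$.

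Next I would verify minimality. Let $\cD' = Q(\R)^+\cR_u(Q)(\C)h_0$ be any Mumford--Tate domain containing $\cZ$, with $Q = \MT(h_0)$. By \Cref{LemmaMTDomainOrbitUnderGenericMTGroup}, $Q = \MT(\cD')$, and since $\cZ \subseteq \cD'$, every Hodge-generic point of $\cZ$ has Mumford--Tate group contained in $Q$; hence $P \le Q$. Now $h \in \cZ \subseteq \cD'$, so the homomorphism $h \colon \S_\C \to \GL(V_\C)$ factors through $Q_\C$, and by \Cref{propPink}(iii) applied to both $P$ and $Q$, the weight filtration on $\lie Q$ defined by $h$ satisfies $W_{-1}(\lie Q) = \lie \cR_u(Q)$ and similarly $W_{-1}(\lie P) = \lie \cR_u(P)$. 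Since $\lie P \subseteq \lie Q$ and the weight filtration is induced, we obtain $\lie \cR_u(P) = \lie \cR_u(Q) \cap \lie P$, and therefore $\cR_u(P) \le \cR_u(Q)$. Combining these inclusions yields
\[
\cD_h = P(\R)^+W_{-1}(\C)h \;\subseteq\; Q(\R)^+\cR_u(Q)(\C)h \;=\; \cD',
\]
where the last equality follows from the fact that $\cD'$ is a $Q(\R)^+\cR_u(Q)(\C)$-orbit containing $h$. This shows $\cD_h = \cZ^{\mathrm{sp}}$ and completes the argument.

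The main subtlety lies in the minimality step, specifically the inclusion $\cR_u(P) \le \cR_u(Q)$. In general, unipotent radicals of subgroups need not embed into unipotent radicals of ambient groups; here the argument crucially exploits that both $P$ and $Q$ arise as Mumford--Tate groups of the same point $h$ and thus satisfy property (iii) of \Cref{propPink} for a common weight filtration. Everything else is a direct unwinding of the moduli-theoretic description of Mumford--Tate domains developed in \Cref{Appendix2}.
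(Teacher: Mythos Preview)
Your proof is correct and takes a genuinely different route from the paper's. The paper argues as follows: the Hodge-generic locus $\cZ^o$ is irreducible (complement of a countable union of proper analytic subvarieties), and it is covered by the orbits $P(\R)^+W_{-1}(\C)h$ as $h$ ranges over $\cZ^o$; since there are only countably many Mumford--Tate domains (\Cref{CorMTDomainCountablyMany}) and distinct $P(\R)^+W_{-1}(\C)$-orbits are disjoint, irreducibility forces $\cZ^o$---and hence $\cZ$---into a single orbit. The paper does not spell out minimality, relying instead on the prior existence of $\cZ^{\mathrm{sp}}$ from \Cref{LemmaSmallestMTDomain}.

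Your argument replaces the countability trick by the moduli-theoretic description: you use \Cref{PropMTDomainIsHodgeLocus} to identify $\cD_h$ with the irreducible component of $\mathrm{HL}(h)$ through $h$, which immediately swallows the irreducible $\cZ$. You then prove minimality directly via a group-theoretic comparison, the key point being $\cR_u(P) \le \cR_u(Q)$; this is exactly the remark following \Cref{propPink} that $\cR_u(P) = \cR_u(P^\cM) \cap P$ for any $P$ containing $\MT(h)$, which makes your weight-filtration computation transparent. Your approach is slightly heavier in that it invokes \Cref{PropMTDomainIsHodgeLocus}, but it has the advantage of giving an explicit, self-contained minimality argument rather than appealing to the abstract existence of $\cZ^{\mathrm{sp}}$.
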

\begin{proof}
Denote by $\cZ^o$ the set of Hodge generic points in $\cZ$. Then $\cZ^o$ is the complement of the union of countably many proper complex analytic irreducible subvarieties of $\cZ$. In particular, $\cZ^o$ is irreducible since $\cZ$ is.

It is clearly true that $\cZ^o \subseteq \bigcup_{h \in \cZ^o} P(\R)^+W_{-1}(\C)h$. Each member in the union is by definition a Mumford--Tate domain, and hence the union is at most a countable union by \Cref{CorMTDomainCountablyMany}. Moreover two $P(\R)^+W_{-1}(\C)$-orbits either coincide or are disjoint. So $\cZ^o$ is contained in a countable disjoint union of some $P(\R)^+W_{-1}(\C)$-orbits. But $\cZ^o$ is irreducible, so it is contained some member in the union. Thus $\cZ^o \subseteq P(\R)^+W_{-1}(\C)h$ for some $h \in \cZ^o$. But then $\cZ \subseteq P(\R)^+W_{-1}(\C)h$. Hence we are done.
\end{proof}

Now we are ready to prove \Cref{LemmaSameOrbitUnderNormalSubgroup}.
\begin{proof}[Proof~of~\Cref{LemmaSameOrbitUnderNormalSubgroup}]
By assumption $\cD = P(\R)^+W_{-1}(\C)h$. From now on we fix $h' \in \cD$ Hodge generic, namely $\mathrm{MT}(h') = \mathrm{MT}(\cD)$.

By \Cref{LemmaDescriptionSmallestSpecialSubvariety} we have
\begin{equation}\label{Eq11111}
P(\R)^+W_{-1}(\C)h' = \cD \subseteq \mathrm{MT}(\cD)(\R)^+\cR_u(\mathrm{MT}(\cD))(\C)h'.
\end{equation}

Let us  prove $\mathrm{MT}(\cD)< P$. Indeed, each point $h' \in \cD$ is of the form $g \cdot h$ for some $g \in P(\R)^+W_{-1}(\C)$. The homomorphism $h' = g \cdot h \colon \S_{\C} \rightarrow \mathrm{GL}(V_{\C})$ factors through $g h(\S_{\C}) g^{-1} \subseteq g P_{\C} g^{-1} = P_{\C}$. Hence $\mathrm{MT}(h') < P$ for all $h' \in \cD$. So $\mathrm{MT}(\cD)< P$.

Next we show that $\mathrm{MT}(\cD)$ is normal in $P$. Indeed   for any $g \in P(\Q)$, we have
\[
\mathrm{MT}(\cD) \supseteq \mathrm{MT}(g\cdot h') = g \mathrm{MT}(h') g^{-1} = g\mathrm{MT}(\cD)g^{-1}.
\]
By comparing dimensions, we have $\mathrm{MT}(\cD)= g\mathrm{MT}(\cD) g^{-1}$. By letting $g$ run over elements in $P(\Q)$, we get $\mathrm{MT}(\cD) \lhd P$. In particular $\cR_u(\mathrm{MT}(\cD)) = W_{-1} \cap \mathrm{MT}(\cD)$.

This implies
\begin{equation}\label{Eq22222}
  \mathrm{MT}(\cD)(\R)^+\cR_u(\mathrm{MT}(\cD))(\C)h' \subseteq P(\R)^+W_{-1}(\C)h' .
\end{equation}
Thus $\cD = \mathrm{MT}(h')(\R)^+ \cR_u(\mathrm{MT}(h'))(\C) h'$ by \eqref{Eq11111} and \eqref{Eq22222}. So $\cD$ is a Mumford--Tate domain.
\end{proof}

\section{Underlying group}\label{SubsectionUnderlyingGroup}
Let $\cD$ be a Mumford--Tate domain in some classifying space $\cM$
with $P = \mathrm{MT}(\cD)$. Each $h \in \cD$
defines an adjoint Hodge structure on $\lie P$. Write $W_\bullet$ for
the weight filtration. By property (iii) of \Cref{propPink}
$W_\bullet$ does not depend on the choice of $h \in \cD$ and satisfies
$W_0(\lie P) = \lie P$ and $W_{-1}= \cR_u(P)$.

The weight filtration $0 = W_{-m-1}(\lie P) \subseteq W_{-m}(\lie P)
\subseteq \cdots \subseteq W_{-1}(\lie P)$ defines a sequence of
connected subgroups
\begin{equation}\label{EqDistSeqSubgp}
0 = W_{-(m+1)} \subseteq W_{-m} \subseteq \cdots \subseteq W_{-1}
\end{equation}
of $P$.  Each $W_{-k}$, $k \in \{1,\ldots,m\}$, is a normal
unipotent subgroup of $P$.

Write as before $G = P/W_{-1}$ the reductive part of $P$. We wish to
reconstruct $P$ from $G$ and the $W_{-k}$'s.

Let us start with the
unipotent radical $W_{-1}$. 

\begin{lemma}\label{LemmaGroupDecomp}
  \begin{itemize}
  \item[(a)]
    For each $k \in \{1,\ldots,m\}$, $W_{-k}/W_{-(k+1)}$ is a vector
    group.
    \item[(b)] There is an isomorphism of $\Q$-algebraic varieties  
\begin{equation} \label{EqDecompUnipotentRad}
\begin{array}{lll}
W_{-1} & \to & (W_{-1}/W_{-2}) \times \cdots \times (W_{-(m-1)}/W_{-m}) \times
W_{-m}\\
\mathbf{w} &\mapsto & (w_{1}, \cdots, w_{m-1}, w_{m}) \;\;.
\end{array}
\end{equation}
\end{itemize}
\end{lemma}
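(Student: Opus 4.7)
The key input for both parts is that the weight filtration $W_\bullet \lie P$ is compatible with the Lie bracket, i.e.\ $[W_i \lie P , W_j \lie P] \subseteq W_{i+j}\lie P$. This is a standard property of the weight filtration induced by an adjoint Hodge structure (it follows from the fact that the bigrading $(\lie P)_\C = \bigoplus I^{p,q}$ from \Cref{PropBiGradingMHS} satisfies $[I^{p,q}, I^{p',q'}] \subseteq I^{p+p',q+q'}$, because each $h \in \cD$ acts by Lie algebra automorphisms on $\lie P$).

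For part $(a)$, I would argue as follows. Taking $i = j = -k$ with $k \geq 1$, the compatibility above gives
\[
[W_{-k} \lie P, W_{-k}\lie P] \subseteq W_{-2k}\lie P \subseteq W_{-(k+1)}\lie P,
\]
because $2k \geq k+1$ for $k \geq 1$. Hence $\lie W_{-k}/\lie W_{-(k+1)} = W_{-k}\lie P/W_{-(k+1)}\lie P$ is an abelian Lie algebra. Since $W_{-k}/W_{-(k+1)}$ is a connected unipotent algebraic group over $\Q$ in characteristic zero whose Lie algebra is abelian, it is a vector group.

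For part $(b)$, I would use the exponential map $\exp \colon \lie W_{-1} \to W_{-1}$, which is an isomorphism of $\Q$-algebraic varieties since $W_{-1}$ is unipotent. On the Lie algebra side, I construct the desired product decomposition by induction: given $w \in W_{-1}$, let $w_1$ be its image in $W_{-1}/W_{-2}$. Fix once and for all a $\Q$-linear section $s_k \colon W_{-k}\lie P/W_{-(k+1)}\lie P \hookrightarrow W_{-k}\lie P$ of the projection (which exists since we are working with $\Q$-vector spaces), and set $\tilde{w}_1 = \exp(s_1(w_1)) \in W_{-1}$. Then $\tilde{w}_1^{-1} w \in W_{-2}$ (as its image in $W_{-1}/W_{-2}$ is trivial), and we define $w_2$ to be its image in $W_{-2}/W_{-3}$, etc. Iterating yields a map $W_{-1} \to \prod_{k=1}^{m-1}(W_{-k}/W_{-(k+1)}) \times W_{-m}$. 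An inverse is constructed by $(w_1, \ldots, w_m) \mapsto \exp(s_1(w_1)) \cdots \exp(s_{m-1}(w_{m-1})) \cdot w_m$, and both maps are polynomial in the coordinates coming from $\exp$, so this is an isomorphism of $\Q$-algebraic varieties.

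The only subtlety is making sure the construction produces a morphism of $\Q$-varieties (rather than just a $\Q$-point bijection), but this is automatic since $\exp$, multiplication, and inversion in the unipotent group $W_{-1}$ are all $\Q$-polynomial, and the sections $s_k$ are $\Q$-linear maps between $\Q$-vector spaces. There is no obstacle of substance; the statement is essentially a restatement of the fact that a filtered unipotent $\Q$-group with abelian graded pieces is variety-isomorphic (though not group-isomorphic) to the product of its graded pieces via the exponential and a choice of $\Q$-splittings.
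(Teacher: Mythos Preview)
Your argument for part~(a) is correct and essentially identical to the paper's: both use the weight compatibility $[W_{-k}\lie P, W_{-k}\lie P] \subseteq W_{-2k}\lie P \subseteq W_{-(k+1)}\lie P$ to conclude that the graded pieces have abelian Lie algebra, hence are vector groups.

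For part~(b) your argument is also correct, but the construction differs from the paper's. The paper takes a single $\Q$-linear splitting of the \emph{entire} filtration on $\lie W_{-1}$, obtaining $\lie W_{-1} \cong \bigoplus_{k=1}^m \lie(W_{-k}/W_{-(k+1)})$, and then applies $\exp$ once; thus their coordinates $(w_1,\ldots,w_m)$ are determined by the additive decomposition $\log(\mathbf{w}) = \sum_k s_k(w_k)$. Your construction instead produces a \emph{multiplicative} normal form $\mathbf{w} = \exp(s_1(w_1))\cdots\exp(s_{m-1}(w_{m-1}))\cdot w_m$ by iterated division in the group. Both are legitimate isomorphisms of $\Q$-varieties, but they are genuinely different maps whenever $W_{-1}$ is nonabelian. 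The paper's additive version is tailored to the subsequent \Cref{LemmaCalbFactorProdUni}, where the group law is computed directly via the Baker--Campbell--Hausdorff formula applied to $\log(\mathbf{w}) + \log(\mathbf{w}') + \tfrac{1}{2}[\cdot,\cdot] + \cdots$; with your coordinates one would obtain analogous polynomials $\mathrm{calb}_k$ of the same shape, but via a slightly different bookkeeping. Either route proves the lemma as stated.
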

\begin{proof}
We first prove (a).
For each $k \in \{1,\ldots,m\}$, the algebraic group $W_{-k}/W_{-(k+1)}$ is
unipotent since $W_{-k}$ is unipotent. On the other hand $[\lie
W_{-k},\lie W_{-k}] \subseteq W_{-2k}$ by reason of weight, and
$W_{-2k} \subseteq W_{-(k+1)}$ as $k \ge 1$. Thus $\lie
W_{-k}/W_{-(k+1)}$ is a commutative Lie algebra, hence
$W_{-k}/W_{-(k+1)}$ is an abelian algebraic group. Finally the
algebraic group $W_{-k}/W_{-(k+1)}$
is a vector group as it is abelian and unipotent.

\medskip
We now turn to the description of the isomorphism~(\ref{EqDecompUnipotentRad}).
As $W_{-1}$ is unipotent, the exponential map $\exp \colon \lie W_{-1} \rightarrow W_{-1}$ is an
isomorphism of $\Q$-algebraic varieties.

Fix an isomorphism of $\Q$-vector spaces $\lie W_{-1} \cong \bigoplus_{j=1}^m
\lie W_{-j}/W_{-(j+1)}$. As part (a) provides a canonical
identification of $\Q$-algebraic varieties $\lie
W_{-k}/W_{-(k+1)} = W_{-k}/W_{-(k+1)}$ between a vector group and its
Lie algebra, we get the desired the isomorphism~(\ref{EqDecompUnipotentRad}) by
\[
W_{-1} \xleftarrow[\sim]{\exp}  \lie W_{-1} = \bigoplus_{j=1}^m \lie (W_{-j}/W_{-(j+1)}) = \prod_{j=1}^m  W_{-j}/W_{-(j+1)}.   \qedhere
\]
\end{proof}

Notice that this isomorphism (\ref{EqDecompUnipotentRad}) is not
canonical. In this paper, we fix such an isomorphism once and for all.


Next we give the formula for the group law on $W_{-1}$ under 
this identification given by (\ref{EqDecompUnipotentRad}).

\begin{defn} For $k \in \{1, \cdots, m\}$ we define the $k$-truncation $\mathbf{w}_k \in W_{-1}/
W_{-k-1} \simeq \prod_{j=1}^k  W_{-j}/W_{-(j+1)}$ of an element
$\mathbf{w} \in W_{-1}$ as follows. If $\mathbf{w} =  (w_{1}, \cdots, w_{m-1}, w_{m})$ under the
identification (\ref{EqDecompUnipotentRad}), then $\mathbf{w}_k=  (w_{1}, \cdots, w_{k}) $.
\end{defn}

\begin{lemma}\label{LemmaCalbFactorProdUni}
  For each $k \ge 2$, there exists a polynomial map
  $$\mathrm{calb}_k \colon  W_{-1}/
W_{-k-1} \times W_{-1}/
W_{-k-1} \to W_{-k}/
W_{-k-1}$$ of degree at most $k-1$ and constant term $0$
such that for any $\mathbf{w}, \mathbf{w}' \in W_{-1}$, their product
is given under the identification (\ref{EqDecompUnipotentRad}) by
\begin{equation}\label{EqGroupLawUnipotent}
\mathbf{w} \cdot \mathbf{w}' = (w_1+w'_1,w_2+w'_2+\mathrm{calb}_2(\mathbf{w}_{1},\mathbf{w}_{1}'),\ldots ,w_m+w'_m+\mathrm{calb}_m(\mathbf{w}_{m-1},\mathbf{w}'_{m-1})).
\end{equation}
\end{lemma}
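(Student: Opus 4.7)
The plan is to extract the multiplication formula directly from the Baker--Campbell--Hausdorff (BCH) formula. Since $W_{-1}$ is unipotent, the exponential $\exp\colon \lie W_{-1}\to W_{-1}$ is an isomorphism of $\Q$-algebraic varieties (as already used in the proof of \Cref{LemmaGroupDecomp}), and nilpotency makes the pulled-back group law
\[
X\star Y \,:=\, \log(\exp X\cdot \exp Y) \,=\, X+Y+\tfrac{1}{2}[X,Y]+\tfrac{1}{12}\bigl([X,[X,Y]]+[Y,[Y,X]]\bigr)+\cdots
\]
a finite $\Q$-polynomial in $X,Y$. I use the vector-space splitting $\lie W_{-1}=\bigoplus_{j=1}^m V_j$ with $V_j\cong \lie W_{-j}/W_{-(j+1)}$ from the proof of \Cref{LemmaGroupDecomp}, which may be taken filtration-compatible, so that $W_{-k}=\bigoplus_{j\ge k}V_j$. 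Decomposing $X=\sum X_j$, $Y=\sum Y_j$ and projecting $X\star Y$ onto $V_k$, the linear part contributes exactly $X_k+Y_k$, and I package everything else into $\mathrm{calb}_k$.

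The key step is a weight argument showing that $\mathrm{calb}_k$ depends only on $\mathbf{w}_{k-1}$ and $\mathbf{w}'_{k-1}$. By condition (iii) of \Cref{propPink}, the weight filtration $W_\bullet$ on $\lie P$ comes from a $\Q$-mixed Hodge structure, so the Lie bracket is a morphism of mixed Hodge structures and hence $[W_{-i},W_{-j}]\subseteq W_{-(i+j)}$. An $n$-fold nested commutator whose entries lie in $V_{i_1},\ldots,V_{i_n}$ therefore lands in $W_{-(i_1+\cdots+i_n)}$, and projects to zero in $V_k$ whenever $i_1+\cdots+i_n>k$. With each $i_\ell\ge 1$, this forces $n\le k$; and any single $i_\ell\ge k$ would force $\sum i_{\ell'}\ge k+1$, killing the contribution. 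Hence only the variables $X_j,Y_j$ with $j\le k-1$ can appear, and $\mathrm{calb}_k(0,0)=0$ since every nonlinear BCH summand is a bracket.

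For the degree bound, I will combine $n\le k$ with the elementary observation that every BCH summand of total multilinear degree $n\ge 2$ involves both an $X$-letter and a $Y$-letter (because $\log\exp X=X$ contributes no higher-order pure-$X$ terms, and symmetrically for $Y$). Such a commutator therefore has degree at most $n-1\le k-1$ in the $X$-variables and, symmetrically, in the $Y$-variables, yielding the asserted polynomial-degree bound on $\mathrm{calb}_k$. The whole argument is essentially a bookkeeping exercise on BCH; the one mild subtlety is to keep the weight accounting transparent under the chosen splitting, which is handled by taking it filtration-compatible. I do not foresee any serious obstacle.
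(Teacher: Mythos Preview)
Your approach is correct and essentially the same as the paper's: both pull the group law back to $\lie W_{-1}$ via $\exp$, apply the Baker--Campbell--Hausdorff formula, and use the weight inequality $[W_{-i},W_{-j}]\subseteq W_{-(i+j)}$ to control each $V_k$-component. The paper's proof is terser and leaves the degree bound and the truncation-dependence implicit; your explicit bookkeeping (total nesting depth $n\le k$, and each nonlinear BCH summand containing both an $X$-letter and a $Y$-letter so that the degree in each argument is $\le n-1\le k-1$) is a correct and welcome expansion of exactly that step.
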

\begin{proof}
Let $\mathbf{w} = (w_1,\ldots,w_m)$ and $\mathbf{w}'
= (w'_1, \ldots, w'_m)$ under (\ref{EqDecompUnipotentRad}). The Baker--Campbell--Hausdorff formula gives:
\begin{equation}\label{EqBCH}
\mathbf{w}\cdot \mathbf{w}' = \exp\left( (w_1,\ldots,w_m) + (w'_1, \ldots, w'_m) + \frac{1}{2} [(w_1, \ldots, w_m), (w'_1, \ldots, w'_m)] + \ldots \right),
\end{equation}
where all operations in the exponential are taken in $\lie W_{-1}$,
and the sum is finite as $\lie W_{-1}$ is nilpotent.
Noticing  that
\begin{equation*}\label{EqLieBracket}
[W_{-k}/W_{-(k+1)}, W_{-k'}/W_{-(k'+1)}] \subseteq W_{-(k+k')}/W_{-(k+k'+1)},
\end{equation*}
one can rewrite~(\ref{EqBCH}) as 
\[
\mathbf{w} \cdot \mathbf{w}' = \exp\left( (w_1+w'_1,w_2+w'_2+\mathrm{calb}_2(\mathbf{w}_{1},\mathbf{w}_{1}'),\ldots ,w_m+w'_m+\mathrm{calb}_m(\mathbf{w}_{m-1},\mathbf{w}'_{m-1})) \right),
\]
with polynomials $\mathrm{calb}_k$ for each $k\ge 2$ as required by the lemma.
\end{proof}

The next lemma explains how $G = P/W_{-1}$ acts on $W_{-1} = \cR_u(P)$
under the identification~(\ref{EqDecompUnipotentRad}).

\begin{lemma}
For each $k \ge 1$, $W_{-k}/W_{-(k+1)}$ is a $G$-module. Moreover this $G$-module structure is induced by the action of $G$ on $W_{-1}$.

As a consequence, for each $g_0 \in G$ and $\mathbf{w} = (w_1,\ldots,w_m) \in W_{-1}$ under (\ref{EqDecompUnipotentRad}), we have
\begin{equation}\label{EqActionOfGonW}
g_0\cdot \mathbf{w} = (g_0w_1,\ldots,g_0w_m).
\end{equation}
\end{lemma}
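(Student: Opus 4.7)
The plan is to proceed in three steps: first establish that the conjugation action of $P$ on $W_{-k}/W_{-(k+1)}$ factors through $G$, then verify the vector-group structure is $G$-equivariant, and finally conclude the componentwise formula by choosing the splitting in (\ref{EqDecompUnipotentRad}) appropriately.

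First I would show that each $W_{-k}$ is a normal subgroup of $P$. This is built into Pink's condition (iii) in \Cref{propPink}: the weight filtration $W_\bullet(\lie P)$ is intrinsic to the adjoint mixed Hodge structure and in particular independent of $h \in \cD$, so $\Ad(P)$ preserves each $W_{-k}(\lie P)$, hence $P$ normalizes each $W_{-k}$. I would then verify that $W_{-1}$ acts trivially on $W_{-k}/W_{-(k+1)}$. For $w \in W_{-1}$ and $x \in W_{-k}$, the compatibility of the weight filtration with the Lie bracket gives $[\lie W_{-1}, \lie W_{-k}] \subseteq \lie W_{-(k+1)}$, which by the Baker--Campbell--Hausdorff formula (already exploited in the proof of \Cref{LemmaCalbFactorProdUni}) implies $wxw^{-1}x^{-1} \in W_{-(k+1)}$. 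Consequently conjugation descends to an action $G \to \aut(W_{-k}/W_{-(k+1)})$; since $W_{-k}/W_{-(k+1)}$ is a vector group by \Cref{LemmaGroupDecomp}(a), this gives the desired $G$-module structure, and by construction this structure is the one obtained from the conjugation action of $G$ (via any Levi splitting) on the successive graded pieces of $W_{-1}$.

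For the formula (\ref{EqActionOfGonW}), the essential observation is that the splitting $\lie W_{-1} \cong \bigoplus_j \lie W_{-j}/W_{-(j+1)}$ used to build (\ref{EqDecompUnipotentRad}) can, and should, be chosen $G$-equivariantly. Such a choice exists because $G$ is reductive (it is the quotient of $P$ by its unipotent radical $W_{-1}$), so complete reducibility applied to the $G$-stable filtration $W_\bullet(\lie W_{-1})$ yields a $G$-equivariant splitting; we fix such a splitting as the one underlying (\ref{EqDecompUnipotentRad}). Under this choice, the exponential map, which intertwines the adjoint action on $\lie W_{-1}$ with conjugation on $W_{-1}$, becomes $G$-equivariant, and the decomposition on $\lie W_{-1}$ is $G$-equivariant, so for any $g_0 \in G$ and $\mathbf{w} = (w_1, \ldots, w_m)$ we have
\[
g_0 \cdot \mathbf{w} = g_0 \exp(x_1 + \cdots + x_m) g_0^{-1} = \exp(g_0 x_1 + \cdots + g_0 x_m) = (g_0 w_1, \ldots, g_0 w_m),
\]
where $x_j \in \lie W_{-j}/W_{-(j+1)}$ corresponds to $w_j$ under the vector-group identification.

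The main obstacle is conceptual rather than computational: one must reconcile the a priori choice of splitting in (\ref{EqDecompUnipotentRad}) (made arbitrarily in \Cref{LemmaGroupDecomp}) with the requirement of $G$-equivariance needed for the clean formula. My plan is to refine the choice at this point using reductivity of $G$ and adopt this refined splitting for the remainder of the paper; this is a harmless enhancement since the previous computations (the group law \eqref{EqGroupLawUnipotent}) depend only on having a vector-space splitting of the weight filtration, not on its $G$-equivariance.
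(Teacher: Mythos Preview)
Your proposal is correct and in fact more careful than the paper's own argument. The paper's proof of this lemma is two sentences: it observes that $[\lie G,\lie W_{-k}]\subseteq \lie W_{-k}$ by weight reasons, hence $G$ acts on each graded piece $W_{-k}/W_{-(k+1)}$, and then simply declares ``This concludes the lemma.'' It does not address why the identification \eqref{EqDecompUnipotentRad}, which was fixed in \Cref{LemmaGroupDecomp} via an \emph{arbitrary} vector-space splitting of $\lie W_{-1}$, is $G$-equivariant---and without that, the componentwise formula \eqref{EqActionOfGonW} does not follow.

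You correctly isolate this point and supply the missing ingredient: refine the splitting to a $G$-equivariant one using reductivity of $G$ (complete reducibility over $\Q$ applied to the $G$-stable filtration $W_\bullet(\lie W_{-1})$), and note that $\exp$ intertwines $\Ad$ with conjugation. This is exactly the right fix, and your remark that the earlier computations (the Baker--Campbell--Hausdorff group law \eqref{EqGroupLawUnipotent}) are insensitive to this refinement is also correct. So your argument follows the same route as the paper but closes a genuine gap the paper leaves implicit.
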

\begin{proof}
As $\mathrm{Gr}_0^{W_\bullet}(\lie P) = \lie G$ and $W_{-k}(\lie P) = \lie W_{-k}$ for each $k \ge 1$, we have $[\lie G, \lie W_{-k}] \subseteq \lie W_{-k}$. Hence the action of $G$ on $W_{-1}$ preserves $W_{-k}$ for each $k \ge 1$, and hence furthermore induces an action on $W_{-k}/W_{-(k+1)}$ which is a $\Q$-vector space. This concludes the lemma.
\end{proof}

We are now ready to state the result to reconstruct $P$ from $G$ and the $W_{-k}$'s. First let us fix a Levi decomposition $P = W_{-1} \rtimes G$.

\begin{prop}\label{PropUnderlyingGroup}
The fixed Levi decomposition $P = W_{-1} \rtimes G$ and the fixed isomorphism \eqref{EqDecompUnipotentRad} together induce an isomorphism as algebraic varieties defined over $\Q$
\begin{equation}\label{EqDecompGroup}
P \cong G \times (W_{-1}/W_{-2}) \times \cdots (W_{-(m-1)}/W_{-m}) \times W_{-m}.
\end{equation}

The group law on the right hand side of \eqref{EqDecompGroup} is given as follows. Let $(g_0,w_1,\ldots,w_m)$ and $(g_0',w_1',\ldots,w_m')$ be two elements in $P$ under the identification \eqref{EqDecompGroup}. Denote by $\mathbf{w} = (w_1,\ldots,w_m)$ and $\mathbf{w}' = (w_1',\ldots,w_m')$. Then
\scriptsize
\begin{equation}\label{EqFormulaGroupLaw}
(g_0, \mathbf{w})\cdot(g_0',\mathbf{w}') = (g_0 g_0', w_1 + g_0 w_1', w_2+g_0 w_2' + \mathrm{calb}_2(w_1, g_0 w_1'), \ldots, w_m + g_0 w_m' + \mathrm{calb}_m(\mathbf{w}_{m-1}, g_0\mathbf{w}'_{m-1}))
\end{equation}
\normalsize
where $\mathrm{calb}_2,\ldots,\mathrm{calb}_m$ are the $\Q$-polynomials from \Cref{LemmaCalbFactorProdUni}, $\mathbf{w}_k$ (resp. $\mathbf{w}'_k$) is the $k$-th truncation as in \Cref{LemmaCalbFactorProdUni}, and $g_0 \mathbf{w}_k' = (g_0w_1',\ldots,g_0w_k')$ for each $k \ge 1$.
\end{prop}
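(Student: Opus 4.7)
\textbf{Proof plan for \Cref{PropUnderlyingGroup}.} The variety isomorphism~\eqref{EqDecompGroup} is essentially a formal consequence of what has been established. The fixed Levi decomposition $P = W_{-1}\rtimes G$ gives an isomorphism of $\Q$-algebraic varieties $P \cong W_{-1}\times G$ via $(\mathbf{w}, g_0) \mapsto \mathbf{w}\cdot g_0$. Combining this with the isomorphism of $\Q$-algebraic varieties~\eqref{EqDecompUnipotentRad} from \Cref{LemmaGroupDecomp}(b), I obtain \eqref{EqDecompGroup} at the level of varieties. This step requires nothing more than assembling the two fixed choices.

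The content of the proposition lies in the group law formula~\eqref{EqFormulaGroupLaw}. Working in the semidirect product $P = W_{-1}\rtimes G$, I compute
\[
(\mathbf{w}\cdot g_0)(\mathbf{w}'\cdot g_0') = \mathbf{w}\cdot (g_0\mathbf{w}'g_0^{-1})\cdot g_0 g_0' = \bigl(\mathbf{w}\cdot (g_0\mathbf{w}')\bigr)\cdot (g_0 g_0'),
\]
so that under \eqref{EqDecompGroup} the product reads $(g_0, \mathbf{w})\cdot (g_0', \mathbf{w}') = (g_0 g_0',\, \mathbf{w}\cdot (g_0\mathbf{w}'))$. It then remains to expand the $W_{-1}$-product $\mathbf{w}\cdot (g_0\mathbf{w}')$ via the formula \eqref{EqGroupLawUnipotent} of \Cref{LemmaCalbFactorProdUni}, after applying \eqref{EqActionOfGonW} to describe $g_0\mathbf{w}'$ coordinate by coordinate as $(g_0 w_1', \ldots, g_0 w_m')$. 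Substituting yields exactly
\[
\mathbf{w}\cdot (g_0\mathbf{w}') = \bigl(w_1 + g_0 w_1',\ w_2 + g_0 w_2' + \mathrm{calb}_2(\mathbf{w}_1, g_0\mathbf{w}_1'),\ \ldots\bigr),
\]
which is the desired formula \eqref{EqFormulaGroupLaw}.

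The only minor verification needed is that the truncation used in the $\mathrm{calb}_k$ arguments is compatible with the $G$-action, i.e.\ that $(g_0\mathbf{w}')_{k-1} = g_0\mathbf{w}'_{k-1}$. This is immediate from \eqref{EqActionOfGonW}, since $G$ preserves each subgroup $W_{-j}$ of the filtration and its action on $W_{-1}$ is diagonal with respect to the fixed splitting~\eqref{EqDecompUnipotentRad}. No step is a genuine obstacle here; the whole proof is a direct substitution of \Cref{LemmaCalbFactorProdUni} and equation~\eqref{EqActionOfGonW} into the semidirect product multiplication.
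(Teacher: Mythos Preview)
Your proposal is correct and follows essentially the same approach as the paper: both obtain \eqref{EqDecompGroup} by combining the Levi decomposition with \eqref{EqDecompUnipotentRad}, then compute the group law via the semidirect product identity $(\mathbf{w}\,g_0)(\mathbf{w}'\,g_0') = (\mathbf{w}\cdot(g_0\mathbf{w}'))(g_0g_0')$, apply \eqref{EqActionOfGonW} to write $g_0\mathbf{w}'$ coordinatewise, and substitute into \eqref{EqGroupLawUnipotent}. The paper carries out the same computation in slightly more elementary steps, but the ingredients and logic are identical.
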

\begin{proof}
\eqref{EqDecompGroup} follows directly from the fixed Levi decomposition and \eqref{EqDecompUnipotentRad}.

To prove \eqref{EqFormulaGroupLaw}, first note that $(g_0,\mathbf{w}) = (1,\mathbf{w})\cdot (g_0,0)$ for $P = W_{-1}\rtimes G$. Similarly $(g_0',\mathbf{w}') = (1,\mathbf{w}')\cdot (g_0',0)$. So
\begin{align*}
(g_0,\mathbf{w}) \cdot (g_0',\mathbf{w}') & = (1,\mathbf{w})\cdot \left( (g_0,0) \cdot (1,\mathbf{w}') \right) \cdot (g_0',0) \\
& = (1,\mathbf{w}) \cdot (g_0, g_0\cdot \mathbf{w}') \cdot (g_0',0) \\
& = (1,\mathbf{w}) \cdot \left( (1,g_0\cdot \mathbf{w}') \cdot (g_0,0) \right) \cdot (g_0',0) \\
& = (1,w_1,\ldots,w_m) \cdot (1,g_0w_1', \ldots, g_0w_m') \cdot (g_0,0) \cdot (g_0',0) \quad\text{by \eqref{EqActionOfGonW}}\\
& = (1, w_1 + g_0 w_1', w_2+g_0 w_2' + \mathrm{calb}_2(w_1, g_0 w_1'), \ldots, \\
& \qquad \quad w_m + g_0 w_m' + \mathrm{calb}_m(\mathbf{w}_{m_1}, g_0\mathbf{w}'_{m-1})) \cdot (g_0g_0',0) \quad\text{by \eqref{EqGroupLawUnipotent}} \\
& = (g_0g_0', w_1 + g_0 w_1', w_2+g_0 w_2' + \mathrm{calb}_2(w_1, g_0 w_1'), \ldots, \\
& \qquad \qquad \qquad \qquad \qquad \qquad w_m + g_0 w_m' + \mathrm{calb}_m(\mathbf{w}_{m_1}, g_0\mathbf{w}'_{m-1})).   \qedhere
\end{align*}
\end{proof}

\bibliographystyle{alpha}
\bibliography{bibliography}




\end{document}